%% file: NEWMAIN.tex
\documentclass[11pt, dvipsnames]{amsart}

\input{NEWPREAMBLE}
\title{\textsc{A model structure for cartesian \texorpdfstring{$2$}{2}-fibrations}}

\author[C. Bardomiano]{Cesar Bardomiano Martinez}
\address{Department of Mathematics, Johns Hopkins University, Baltimore MD, 21218, USA}
\email{cbardom1@jhu.edu}

\author[J. K. Nickel]{Jana K. Nickel}
\address{{Department of Mathematics, University of Hamburg, Bundesstrasse 55, 20146 Hamburg, Germany}}
\email{jana@nickel-math.com}

\author[M. Sarazola]{Maru Sarazola}
\address{{School of Mathematics, University of Minnesota, Minneapolis MN, 55415, USA}}
\email{maru@umn.edu}

\author[D. Teixeira]{Daniel Teixeira}
\address{{Department of Mathematics \& Statistics,
Dalhousie University, Halifax, Nova Scotia}}
\email{daniel.teixeira@dal.ca}

\author[S. Toro]{Santiago Toro Oquendo}
\address{{Univ. Artois, \textsc{UR 2462}, Laboratoire de Mathématiques de Lens (LML), F-62300 Lens, France}}
\email{santiago.torooquendo@univ-artois.fr}

\author[P. Verdugo]{Paula Verdugo}
\address{{Max Planck Institute for Mathematics, Bonn NRW, 53111, Germany}}
\email{verdugo@mpim-bonn.mpg.de}

\begin{document}

%----------------------------------------------------------------------------------------
% Abstract
%----------------------------------------------------------------------------------------

\begin{abstract} 
  Cartesian 2-fibrations provide a way to understand indexed categories, but their classical ``straightening'' construction requires several layers of weak coherence data. This paper develops a homotopical framework that replaces much of this bookkeeping with a fully strict model. By using marked 2-categories to record the  cartesian morphisms and 2-cells, we construct a model structure whose fibrant objects are precisely the cartesian 2-fibrations over a fixed 2-category $\cC$. We then show that the marked Grothendieck construction identifies these 2-fibrations, up to weak equivalence, with strict 2-functors from $\cC$ into $\twocat$. 
  As an additional contribution, we construct localizations of 2-categories that simultaneously invert selected morphisms and 2-cells.
\end{abstract}

\maketitle

%========================================================================================
% DOCUMENT SECTIONS
%========================================================================================

\setcounter{tocdepth}{1}
\tableofcontents

\section{Introduction}
The theory of cartesian fibrations was first developed by Grothendieck \cite{grothendieckdescent} to describe notions of descent in algebraic geometry. Since then, fibrations have established themselves as a particularly useful notion, not only within the foundations of category theory \cite{street1, street2, benabou} but also in applications ranging from \v{C}ech cohomology \cite{graydescent}, non-abelian cohomology \cite{giraud1,giraud2} and six functor formalisms \cite{Cisinski-Deglise2019}, to categorical logic \cite{catlogic1,lawverelogic,catlogic2} and type theory \cite{typetheory}. 

Our main connection to the world of fibrations is given by the Grothendieck construction \cite{grothendieck}, which plays a central role in category theory: it relates fibrations and indexed categories, providing a powerful framework for understanding how categories ``vary'' over a fixed base. To obtain all fibrations over a fixed category $\mathscr{C}$, however, we are forced to work with \emph{pseudo-functors} $\mathscr{C}^\op\to\underline{\Cat}$, where $\underline{\Cat}$ is considered as a 2-category.
We then get an equivalence of categories   
\[\textstyle\int_\mathscr{C}\colon \mathrm{Ps}[\mathscr{C}^\op,\underline{\Cat}]\to\mathrm{Fib}_{/\mathscr{C}}.\]
 Unfortunately, this comes at a cost, as the passage from strict to pseudo-functors increases the complexity: pseudo-functors only preserve identities and composites up to isomorphism, so one must now keep track of  this 2-dimensional coherence data, resulting in a significant increase in the required bookkeeping.

 Going up one dimension, we can fix a 2-category $\cC$  and get a notion of \emph{2-fibration} over $\cC$ 
making full use of the available morphisms and 2-cells 
\cite{grothcat1,Buckley2014,grothcat3}. 
Although there exists a corresponding Grothendieck construction for 2-functors, if one wishes to recover all 2-fibrations, the situation is much more subtle than the 1-dimensional case. Indeed, what we obtain is a (weak) triequivalence of tricategories
\begingroup\renewcommand{\theequation}{\thesection.\arabic{equation}}%
\begin{equation}\label{eq.intro} \textstyle\int_\cC\colon \mathrm{Ps}[\cC^\coop,\mathrm{Gray}]\to\mathrm{2Fib}_{/\cC}\end{equation} as proved in \cite[Theorem 3.3.12, Remark 3.3.13]{Buckley2014}.
\endgroup

This correspondence introduces several layers of complexity:
\begin{enumerate}[leftmargin=*]
\item\label{one} We no longer obtain an equivalence of categories, but rather a triequivalence of tricategories.
    \item\label{two} Instead of functors whose codomain is the 3-category $\underline{\twocat}$ of strict 2-categories,  2-functors, 2-natural transformations and modifications as one might initially expect, the codomain required for the correspondence is the \emph{semi-strict} 3-category $\mathrm{Gray}$ of 2-categories,  2-functors,  \emph{pseudo}-natural transformations and modifications \cite{graycats}.  Here, composition of cells is strictly associative and unital, but the interchange law holds only up to coherent isomorphism.
    \item\label{three} As expected, it is not enough to consider strict 3-functors $\cC^\coop\to\mathrm{Gray}$ and strict natural transformations, and one must instead use a certain class of pseudo 3-functors and pseudo-natural transformations.
\end{enumerate}

As category theory has evolved to encompass higher-dimensional structures, nowadays many of these constructions are homotopically flavored, with model-categorical techniques playing a central role. Indeed, the advancements from the past several decades have taught us that there is much to gain by using homotopical tools for categorical purposes. For instance, the cofibrant replacements in the canonical model structure on $\twocat$ provide a convenient method for the strictification of pseudo-functors \cite{Lack2004}. It is then natural to wonder: is it possible to use homotopical methods to ``strictify'' the weak data  involved in the correspondence described above? This paper provides a positive answer.

The first step in this direction is to construct a homotopical structure on the category of strict 2-functors over $\cC$ that models 2-fibrations. Upon inspection, this endeavor faces an immediate technical obstacle: we cannot encode the data of a 2-fibration $P$ as a lifting property of the 2-functor $P$ itself. A similar obstacle was encountered by the third author and Moser when they carried out an analogous program for the 1-dimensional Grothendieck construction \cite{mosersarazola2023model}. There, the solution was to consider \emph{marked categories}, which allow us to effectively ``mark'' the cartesian morphisms. In this 2-dimensional context, we use  \emph{marked 2-categories},  where one records markings on both morphisms and 2-cells.

The notion of 2-fibration $P\colon\cP\to\cC$ is then captured by that of a \emph{cart-marked 2-fibration}: a 2-functor between marked 2-categories $P\colon \markedscat{\cP}\to (\cC,\operatorname{mor}\cC,\operatorname{2cell}\cC)$ where the sets $E^1_\cP$ and $E^2_\cP$ of marked morphisms and 2-cells consist precisely of the $P$-cartesian morphisms and 2-cells. Letting $\twocat^+$ denote the category of marked 2-categories and 2-functors that preserve the markings, we prove the following result.

\begin{theoremA}[\cref{Thm:the ms}]\label{theoremA}
There is a combinatorial model structure on the category $\twocat^{+}_{/ \cC}$ of marked $2$-categories over $(\cC,\operatorname{mor}\cC,\operatorname{2cell}\cC)$, whose
	\begin{itemize}
    \item 
		fibrant objects are the cart-marked $2$-fibrations,
		\item 
		cofibrations are the underlying cofibrations in $\twocat$, 
		\item 
		weak equivalences between fibrant objects are the biequivalences, and
		\item fibrations between fibrant objects are the equifibrations, i.e.\ the fibrations in $\twocat$.
	\end{itemize}
\end{theoremA}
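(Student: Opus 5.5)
The plan is to build the model structure by transferring and then localizing, following the strategy used for marked $1$-categories. First, we put a model structure on $\twocat^+$ itself. We lift Lack's model structure on $\twocat$ along the adjunctions relating $\twocat^+$ and $\twocat$: forgetting the markings has a left adjoint (minimal marking) and a right adjoint (maximal marking). Cofibrations are the maps whose underlying $2$-functor is a cofibration in $\twocat$. We would rather present them as generated by Lack's generating cofibrations together with the two ``marking'' maps: the free morphism $[1]$ going to its marked version, and the free $2$-cell going to its marked version. Since $\twocat^+$ is locally presentable and these generating sets are small, the small object argument applies. Slicing over $(\cC,\operatorname{mor}\cC,\operatorname{2cell}\cC)$ then yields a combinatorial category $\twocat^+_{/\cC}$ with a set of generating cofibrations.

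Second, we obtain the model structure through Jeff Smith's recognition theorem, or equivalently as a left Bousfield localization in the style of Cisinski--Olschok. To do this we specify a set $J$ of generating anodyne extensions over $\cC$, chosen so that having the right lifting property against $J$ encodes the following conditions.
\begin{itemize}
\item The underlying map is an equifibration, via Lack's generating trivial cofibrations.
\item Cartesian lifts of morphisms exist and are marked.
\item Cartesian lifts of $2$-cells exist and are marked.
\item Marked morphisms and $2$-cells are cartesian. This comes from the ``horn filling'' maps, which encode the universal property of cartesian $1$- and $2$-cells against test $2$-categories over $\cC$.
\item Markings are closed under composition, contain the identities, and are stable under the relevant whiskerings.
\item Marked $2$-cells between $1$-cells are compatible with the fibration structure.
\end{itemize}
All of these maps live over arbitrary maps to $\cC$. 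We also include the maps that invert an equivalence and mark it, so that equivalences become marked. The weak equivalences are then defined as the maps $f$ such that $\mathrm{Map}(f,Z)$ is a biequivalence for every fibrant $Z$, that is, for every object having the right lifting property against $J$. Here $\mathrm{Map}$ is the internal hom of marked $2$-categories over $\cC$, which is available because the pseudo-Gray or cartesian tensoring behaves well with markings.

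Third, we identify the pieces of the resulting model structure.
\begin{itemize}
\item Fibrant objects are, by construction, the objects with the right lifting property against $J$ and $\cC$. The lifting conditions say precisely that $P$ is a $2$-fibration whose marked cells are exactly the cartesian ones, so the fibrant objects are the cart-marked $2$-fibrations.
\item Weak equivalences between fibrant objects are biequivalences. This follows by the Yoneda-style argument: a weak equivalence between fibrant objects is a homotopy equivalence, and homotopy equivalences of marked $2$-categories over $\cC$ are underlying biequivalences. Conversely, a biequivalence between cart-marked fibrations preserves and reflects cartesian cells.
\item Fibrations between fibrant objects are the maps with the right lifting property against $J$. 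This is the standard characterization in Cisinski-type localizations, and it reduces to the equifibration condition, because the marking conditions are automatic between cart-marked $2$-fibrations. For that last point we use that a map of $2$-fibrations over $\cC$ preserves the cartesian cells.
\end{itemize}

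The main obstacle is the compatibility of $J$ with the cylinder. We must check that pushout-products of generating cofibrations with $J$ are again anodyne, and that $J$ detects all fibrations between fibrant objects. This involves verifying that cartesian $2$-cells are stable under the tensoring with the interval, and it is where the $2$-dimensional markings interact nontrivially with whiskering. We would attack it by direct computation with the explicit pushouts of the marked generators.
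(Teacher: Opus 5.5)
Your proposal has a genuine gap. The whole construction rests on a cylinder, i.e.\ a tensoring of $\twocat^{+}_{/\cC^\sharp}$ by marked $2$-categories, and on the compatibility of your set $J$ with it. Neither is supplied.

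The cartesian option is not available, because Lack's model structure is not cartesian monoidal. The pushout-product of $\{0,1\}\hookrightarrow E$ with $\{0,1\}\hookrightarrow[1]$ under $\times$ identifies the two composites $(0,0)\to(1,0)\to(1,1)$ and $(0,0)\to(0,1)\to(1,1)$, which are distinct in its domain. It is therefore not faithful, hence not a cofibration by \cref{prop:cofibrationsLack}. You would need a marked Gray-type tensor over $\cC$, which you do not define.

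More importantly, the step you call the ``main obstacle'' is exactly what your third step needs. In the Cisinski--Olschok machinery, the fibrant objects are those injective with respect to the class $\Lambda(J)$ generated by $J$ together with the homotopy-extension maps and pushout-products coming from the cylinder, not with respect to $J$ alone. The same holds for fibrations between fibrant objects. So neither ``fibrant objects are the cart-marked $2$-fibrations'' nor ``fibrations between fibrant objects are the equifibrations'' is established; both are deferred to a computation you have not carried out.

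There is also a concrete error in your weak equivalences. The objects of $\mathrm{Map}(X,Z)$ are strict marked $2$-functors, and these are homotopically meaningless when $X$ is not cofibrant. Non-cofibrant objects are common here, e.g.\ $\int^+_\cC F$. Already for $\cC=[0]$ this fails:
\begin{itemize}
\item Let $\mathcal{S}=[1]\times[1]$ be the commutative square. Let $q\colon Q\mathcal{S}\to\mathcal{S}$ collapse the free square on $f\colon a\to b$, $g\colon b\to d$, $h\colon a\to c$, $k\colon c\to d$ with an invertible $2$-cell $gf\cong kh$. With both sides minimally marked, $q$ is a marked trivial fibration by \cref{prop:char_trivfib}.
\item Take the fibrant object $Z=Q\mathcal{S}$, marked by its equivalences and invertible $2$-cells. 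The identity of $Q\mathcal{S}$ is not equivalent in $\mathrm{Map}(Q\mathcal{S},Z)$ to any $2$-functor factoring through $q$.
\item Indeed, the components of such an equivalence would be equivalences in $Q\mathcal{S}$, hence identities. This forces the $2$-functor to be the identity on objects and on the four edges, which would give $gf=kh$ in $Q\mathcal{S}$, and that is false.
\end{itemize}
So $\mathrm{Map}(q,Z)$ is not a biequivalence, although $q$ is a trivial fibration; your classes cannot form a model structure. Repairing this requires cofibrant replacement. After that, ``weak equivalences between fibrant objects are biequivalences'' needs a strictification lemma: a biequivalence between cofibrant cart-marked $2$-fibrations admits a strict marked homotopy inverse over $\cC$. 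Your Yoneda argument presupposes this lemma rather than proving it.

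The paper avoids all of this by working on the right with \cite[Theorem 2.8]{guetta2023fibrantlyinduced}.
\begin{itemize}
\item Weak equivalences are defined through naive fibrant replacements along marked anodyne extensions. Between fibrant objects they are the biequivalences by definition.
\item No tensor or cylinder is needed.
\item The homotopical input is the explicit path object $\Path(P)$ of fibrewise adjoint equivalences (\cref{construction:path_object,prop:path-object}). This is combined with \cref{prop:trivial_fibration_is_w} and the comparisons with Lack's classes in \cref{prop:fibbtwfibobj,cor:trivfibbtwfibobj}. All of these are checked by direct strict $2$-categorical arguments.
\end{itemize}
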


The Grothendieck construction admits a marked version 
\[
  {\textstyle\int}_\cC^+\colon 2\Fun[\cC^{\coop},\ttwocat]
  \longrightarrow \twocat^{+}_{/ \cC},
\]
which sends a 2-functor to its Grothendieck construction, marked by the cartesian morphisms and $2$-cells. Interestingly, this is not a right Quillen functor, since it is not a right adjoint, as we prove in \cref{subsec:construction_weak_left_adjoint}. %Nevertheless, $\int_\cC^+$ preserves and reflects all weak equivalences, and gives the desired comparison at the level of homotopy categories. 
% \begin{theoremA}[\cref{mainthm}]
% The marked Grothendieck construction
% \[{\textstyle\int}_\cC^+\colon 2\Fun[\cC^{\coop},\ttwocat] \to\twocat^{+}_{/ \cC}\] 
% induces an equivalence between the homotopy categories obtained from the enriched projective model structure and the model structure of \cref{theoremA}.
% \end{theoremA}
Nevertheless, $\int_\cC^+$ preserves and reflects all weak equivalences, and gives the desired homotopical comparison. 
\begin{theoremA}[\cref{mainthm,eq.infty}]
The marked Grothendieck construction
\[{\textstyle\int}_\cC^+\colon 2\Fun[\cC^{\coop},\ttwocat] \to\twocat^{+}_{/ \cC}\] 
induces an equivalence between the homotopy categories obtained from the enriched projective model structure and the model structure of \cref{theoremA}, and moreover, an equivalence of underlying $(\infty,1)$-categories.
\end{theoremA}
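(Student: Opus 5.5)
Since $\int_\cC^+$ is not a right adjoint, I will not try to build a Quillen equivalence. Instead I will use the general principle that a functor between model categories which preserves and reflects weak equivalences and is essentially surjective up to weak equivalence on homotopy categories induces an equivalence there. For the $(\infty,1)$-statement I will pass to a relative-categories / Dwyer--Kan argument. The plan has four steps.

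\textbf{Step 1: weak equivalences.} First I show that $\int_\cC^+$ sends levelwise biequivalences to weak equivalences of $\twocat^+_{/\cC}$, and that it reflects them. In the enriched projective model structure, weak equivalences are the 2-natural transformations $F\Rightarrow G$ whose components $F(c)\to G(c)$ are biequivalences. The Grothendieck construction of any 2-functor $F$ is a cart-marked 2-fibration, hence fibrant by \cref{theoremA}. So it suffices to compare biequivalences. I check directly that a map $\int F\to\int G$ over $\cC$ is a biequivalence if and only if it is a fibrewise biequivalence. The fibre of $\int F$ over $c$ is $F(c)$, and hom-categories of $\int F$ decompose as $\mathrm{Hom}_{\int F}((c,x),(d,y))\simeq\coprod_{f\colon c\to d}\mathrm{Hom}_{F(c)}(x,f^*y)$, suitably as a Grothendieck construction over $\cC(c,d)$. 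This gives both directions.

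\textbf{Step 2: essential surjectivity.} Next I show every fibrant object $P\colon\cP\to\cC$ is weakly equivalent to some $\int_\cC^+F$. I will use the explicit weak left adjoint $L$ constructed in \cref{subsec:construction_weak_left_adjoint}, or failing that, Buckley's straightening \eqref{eq.intro}. That straightening yields a pseudo 3-functor into $\mathrm{Gray}$. I strictify it, using Lack's cofibrant-replacement techniques and the fact that every 2-category is biequivalent to one for which pseudo-naturality can be replaced by strictness, so as to obtain a strict 2-functor $F\colon\cC^\coop\to\ttwocat$ together with a zigzag of fibrewise biequivalences. Combining this with Step 1 and the triequivalence of \cite{Buckley2014}, I get $P\simeq\int_\cC^+F$ in $\twocat^+_{/\cC}$.

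\textbf{Step 3: full faithfulness.} I show the induced functor on homotopy categories is fully faithful. I will compare homotopy function complexes via the natural maps $\mathrm{Map}(F,G)\to\mathrm{Map}(\int F,\int G)$. The cleanest route is to produce a functor $\Phi$ in the opposite direction, for instance $L$ followed by cofibrant replacement, together with natural zigzags of weak equivalences $\Phi\int\simeq\mathrm{id}$ and $\int\Phi\simeq\mathrm{id}$. Then $\int$ and $\Phi$ are inverse homotopical functors of relative categories, so they induce inverse equivalences both of homotopy categories and of the Dwyer--Kan localizations. The underlying $(\infty,1)$-categories are exactly these localizations.

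\textbf{Main obstacle.} I expect the hardest part to be producing the natural zigzag $\int\Phi\simeq\mathrm{id}$ on fibrant objects. This requires a strict unit-type comparison $L(P)\to\cdots$ that is a fibrewise biequivalence. Concretely, I would try to show that the counit-like map $\int_\cC^+ L(P)\to P$, or a map out of a cofibrant replacement of $P$, restricts on each fibre $P^{-1}(c)$ to the biequivalence given by a lax-colimit / oplax slice computation. This is where the marking of 2-cells must be used to invert precisely the cartesian data, presumably via the localizations of 2-categories that invert selected morphisms and 2-cells advertised in the abstract.
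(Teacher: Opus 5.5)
Your Step 1 is fine: you could prove preservation fibrewise via \cref{bieq.on.fibers}, where the paper uses Ken Brown. Your relative-category endgame also has the right shape. The gap is that neither of the two zigzags that carry the real content is constructed.

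\textbf{The unit side.} The comparison you propose, $\int_\cC^+L(P)\to P$, cannot be defined as a strict 2-functor over $\cC$. It would have to send $(c,(x,f))$ to a chosen cartesian lift $f^*x$, and maps between chosen lifts compose only up to invertible 2-cells unless $P$ is split; it would also have to send the formal inverses $u^{-1}$ somewhere. The map that exists, naturally and without choices, goes the other way: $\eta_P\colon P\to\int_\cC^+L(P)$, with $x\mapsto (Px,(x,\id_{Px}))$. What must be proved is that $\eta_P$ is a biequivalence for fibrant $P$. The paper reduces this to fibres via \cref{bieq.on.fibers}. It then uses a cleavage only inside the proof to show that $\cP_c\to L(P)(c)$ is:
\begin{itemize}
\item essentially surjective,
\item full up to invertible 2-cells on both kinds of generating morphisms,
\item full on generating 2-cells.
\end{itemize}
You leave this open as the ``main obstacle''. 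The Buckley fallback in Step 2 does not replace it. It needs a strictification of a trihomomorphism into $\mathrm{Gray}$ to a strict 2-functor into $\ttwocat$ whose values on 2-cells of $\cC$ are \emph{2-natural}, not pseudo-natural. Lack's cofibrant replacement in $\twocat$ does not provide that. That strictification is essentially the content of the theorem, and it is exactly what the oplax slice in $L$ is built to produce.

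\textbf{The other side.} You treat ``$\Phi\int\simeq\id$'' as available, but there is no natural transformation $L\int_\cC^+F\Rightarrow F$. The 2-category $L(\int_\cC^+F)(d)$ contains a formal inverse for every marked $((k,\hat k),\id)$, where $\hat k$ is an arbitrary equivalence. Sending it into $Fd$ requires a chosen inverse of $Ff(\hat k)$, and no 2-natural $\theta\colon F\Rightarrow G$ respects such choices. The missing idea is the split marking, which works in three steps.
\begin{itemize}
\item In $(\int_\cC^+F,\mathrm{spl})$ only the morphisms $(k,\id)$ are marked. The formal inverses can therefore go to identities, giving a genuinely natural $\epsilon_F\colon L(\int_\cC^+F,\mathrm{spl})\Rightarrow F$.
\item The comparison $\mu_F=L(\iota_F)$ to $L\int_\cC^+F$ is a weak equivalence by \cref{lem.split.localization}. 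This uses that every cartesian $(f,\hat f)$ factors as the equivalence $(\id,\hat f)$ followed by $(f,\id)$.
\item $\epsilon_F$ is a weak equivalence by reflection along $\int_\cC^+$. This uses $\int_\cC^+\epsilon_F\circ\overline{\eta_\pi}=\id$, $\int_\cC^+\mu_F\circ\overline{\eta_\pi}=\eta_\pi$ and 2-out-of-3.
\end{itemize}

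\textbf{Homotopicality.} Your $\Phi$ must be homotopical for the relative-category argument. What one can actually prove is only that $L$ preserves weak equivalences between fibrant objects (\cref{L.pres.weak.equiv}, again via $\eta$, 2-out-of-3 and reflection). Cofibrant replacement after $L$ does not help. You need fibrant replacement before applying $L$, or, as the paper does, to restrict to fibrant objects and invoke Dwyer--Kan. In the latter case you must also check that $L(\int_\cC^+-,\mathrm{spl})$ is homotopical, which follows by 2-out-of-3 with $\epsilon$.
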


Introducing these homotopical tools significantly reduces the complexity of the categorical structures involved. Indeed, comparing to the subtleties outlined above for the categorical statement  \eqref{eq.intro}, we see that (\ref{one}) is now an equivalence of homotopy (1-)categories instead of a weak 3-dimensional equivalence; (\ref{two}) now has the expected codomain $\ttwocat$ as opposed to $\mathrm{Gray}$; and (\ref{three}) needs only consider strict 2-functors and strict 2-natural transformations instead of their pseudo counterparts. In other words, our techniques allow us to remain in a fully strict setting; the required higher-dimensional information is then encoded homotopically.

In the 1-dimensional case, the homotopy inverse functor uses a localization process which inverts the marked morphisms. A similar construction is required for the 2-dimensional case as well, where all marked morphisms and 2-cells are appropriately inverted. As the localizations for 2-categories that we could find in the literature only deal with inverted morphisms \cite{local1,pronk1996etendues,local3}, we establish in \cref{subsec.localization} a process that localizes a 2-category at sets of morphisms and 2-cells that may be of independent interest. 
\begin{theoremA}
      For any small 2-category $\cC$ and sets of morphisms and 2-cells $\cW_1,\cW_2$ in $\cC$, there is a localization 2-functor $\cC\to\cC[\cW_1^{-1},\cW_2^{-1}]$ universal among 2-functors sending all morphisms in $\cW_1$ to equivalences and all 2-cells in $\cW_2$ to isomorphisms.
\end{theoremA}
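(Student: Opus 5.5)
We construct $\cC[\cW_1^{-1},\cW_2^{-1}]$ in two stages, first inverting the 2-cells and then the morphisms. Each stage is a pushout in $\twocat$ along a small ``walking'' gadget, so existence is automatic because $\twocat$ is locally presentable; the real work is checking that the pushout has the intended universal property up to the right notion of uniqueness. The universal property should be read 2-categorically. Precomposition with the localization 2-functor $L$ should induce a biequivalence, or better an isomorphism for a strict model, from $2\Fun(\cC[\cW_1^{-1},\cW_2^{-1}],\mathcal{D})$ onto the full sub-2-category of $2\Fun(\cC,\mathcal{D})$ spanned by the 2-functors that send $\cW_1$ to equivalences and $\cW_2$ to isomorphisms.

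\emph{Step 1 (2-cells).} Let $\mathbf{2}_\Rightarrow$ be the free 2-category on a 2-cell $\alpha\colon f\Rightarrow g\colon 0\to 1$. Let $\mathbf{2}_\cong$ be the same underlying category with $\alpha$ made invertible. For each $\theta\in\cW_2$ we push out along $\mathbf{2}_\Rightarrow\to\mathbf{2}_\cong$, forming $\cC' = \cC\sqcup_{\coprod_{\cW_2}\mathbf{2}_\Rightarrow}\coprod_{\cW_2}\mathbf{2}_\cong$. This inverts 2-cells strictly. It is a strict 1-categorical pushout and adds no new objects or morphisms; it simply localizes each hom-category $\cC(a,b)$ compatibly with whiskering. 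The universal property among 2-functors sending $\cW_2$ to isomorphisms then holds on the nose. Modifications and 2-natural transformations extend uniquely because the hom-categories of $\cC'$ are generated by those of $\cC$ together with the formal inverses.

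\emph{Step 2 (morphisms).} Let $\mathbf{2}$ be the free arrow and $E$ the free adjoint equivalence, viewed as a 2-category. For each $w\in\cW_1$, now regarded in $\cC'$, we push out along $\mathbf{2}\to E$. This is the same cofibration used in Lack's model structure on $\twocat$, and we set $\cC[\cW_1^{-1},\cW_2^{-1}]$ to be the resulting 2-category. A 2-functor out of it is exactly a 2-functor $F$ out of $\cC'$ together with a chosen adjoint-equivalence structure on each $Fw$. For the universal property, we use that the 2-category of adjoint-equivalence data on a fixed morphism $F w$ is contractible: any equivalence can be promoted to an adjoint equivalence, and the space of such promotions is a contractible groupoid. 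This gives that restriction along $L$ is a biequivalence onto the sub-2-category of 2-functors inverting $\cW_1$ up to equivalence and $\cW_2$ to isomorphisms. The same argument works for 2-natural transformations and modifications: a 2-natural transformation between 2-functors out of $E$ is determined by its components on $\mathbf{2}$, since it must strictly commute with the chosen equivalence data, and we check this is automatic up to the relevant modifications.

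\emph{Main obstacle.} The delicate point is Step 2's 2-dimensional universality. Strict 2-natural transformations need not be compatible with arbitrarily chosen adjoint inverses, so on 2-natural transformations restriction along $L$ may only be essentially surjective and not bijective. I would first try to handle this by stating the universal property with respect to pseudo-natural transformations, where the mate correspondence makes adjoint data transport canonically. Alternatively, I would use the fact that $\mathbf{2}\to E$ is a trivial-looking cofibration in Lack's model structure, so that mapping into fibrant $\mathcal{D}$ (all 2-categories are fibrant) yields a trivial fibration of 2-categories of pseudo-maps. Either way, the uniqueness clause must be phrased up to unique invertible 2-cells, and verifying this is where most of the checking lies.
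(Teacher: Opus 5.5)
\textbf{Where the argument breaks.} Your construction is the paper's: the same two pushouts, along $\coprod_{\cW_1}[1]\to\coprod_{\cW_1}E_{\mathrm{adj}}$ and $\coprod_{\cW_2}\Sigma[1]\to\coprod_{\cW_2}\Sigma\mathbb{I}$. Inverting the 2-cells first is harmless, since either order computes the single pushout along the coproduct of the two maps.

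The gap is the 2-dimensional universality you assert in Step 2, which is false for strict 2-natural transformations.
\begin{itemize}
\item Restriction along $\mathrm{Loc}$ is not an isomorphism. Extensions depend on the chosen adjoint-equivalence data, so restriction is not injective on objects.
\item Restriction is not a biequivalence either, and compatibility with the chosen inverse data is not ``automatic up to the relevant modifications.''
\end{itemize}
Here is a counterexample. Take $\cC=[1]$, $\cW_1=\{w\}$, $\cW_2=\emptyset$, so that $\mathrm{Loc}$ is the inclusion $[1]\hookrightarrow E_{\mathrm{adj}}$. Let $\cD$ be the 2-category with one object, one morphism, and the group $\mathbb{Z}/2=\{1,h\}$ as its 2-cells. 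Let $\bar F,\bar G\colon E_{\mathrm{adj}}\to\cD$ agree on objects and morphisms, with $\bar F(\eta)=\bar F(\epsilon)=1$ and $\bar G(\eta)=\bar G(\epsilon)=h$; the triangle identities hold because $h^2=1$. Their restrictions to $[1]$ coincide, so the identity is a 2-natural transformation between the restrictions. However, any 2-natural transformation $\bar F\Rightarrow\bar G$ must have identity components, and 2-naturality at $\eta$ then forces $h=1$. So $2\Fun(E_{\mathrm{adj}},\cD)(\bar F,\bar G)$ is empty while its target hom-category is not. Restriction is therefore not even locally essentially surjective, contrary to your ``Main obstacle'' remark. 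This is exactly why the paper claims only that $\mathrm{Loc}^*$ is strictly surjective on objects and faithful, and remarks that it is not full.

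\textbf{What survives, and how to repair it.} The correct part of your argument is Step 1, together with two facts: a 2-functor out of the pushout is a 2-functor out of $\cC'$ plus chosen adjoint-equivalence data on each $Fw$, and every equivalence admits such data. This is precisely the weak universal property the paper establishes. If you want more, you must commit to one formulation and prove it.
\begin{itemize}
\item The model-structure repair fails as stated. The map $[1]\to E_{\mathrm{adj}}$ is a cofibration but not a biequivalence, since $E_{\mathrm{adj}}$ is biequivalent to $[0]$ and $[1]$ is not.
\item The pseudo-natural repair does work. The functor $\mathrm{Ps}(-,\cD)$, the internal hom of the Gray tensor product, sends your pushout to a strict pullback of 2-categories. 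Restriction along $[1]\to E_{\mathrm{adj}}$ is locally an isomorphism, by the mate correspondence, with image the equivalences. Restriction along $\Sigma[1]\to\Sigma\mathbb{I}$ is locally an isomorphism with image the invertible 2-cells. Both properties are stable under pullback.
\end{itemize}
Hence restriction $\mathrm{Ps}(\cC[\cW_1^{-1},\cW_2^{-1}],\cD)\to\mathrm{Ps}(\cC,\cD)$ is surjective on objects and locally an isomorphism onto the full sub-2-category of $\cW$-inverting 2-functors, so it is a biequivalence onto it. That is a genuinely stronger universal property than the one the paper records, but your proposal only gestures at it and does not carry it out.
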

We note that such a localization, along with its universal property, appears in \cite[Definition 3.1]{vazquez2021three}; however, we could not find a study of when the localization exists, or how to construct it, prior to our work.  

We have not addressed in this paper the expected interactions between our results and the $(\infty,2)$-categorical setting. We expect the equivalence above to be appropriately related to the $(\infty,2)$-categorical straightening-unstraightening equivalence, and that this would identify the homotopy theory of cartesian $2$-fibrations constructed here with a reflective localization of the
homotopy theory of $\infty$-bicategories fibred over $\infty$-bicategories \cite{garcia2CartesianFibrationsModel2021, abellan2CartesianFibrationsII2023, gagnaEquivalenceAllModels2022}. We leave the precise formulation and proof of such a comparison to subsequent work.

\subsection*{Acknowledgements} 
	This collaboration was made possible thanks to the Matem\'aticas en el Cono Sur II workshop in 2024, and the authors are grateful to the Universidad de la República and to the workshop organizers for the hospitality and excellent working environment provided during the workshop week in Montevideo. During the realization of this work, the third author was partially supported by an NSF grant DMS-2506116.  The fourth author was partially supported by the Simons Collaboration for Global Categorical Symmetries and by the Nova Scotia Graduate Scholarship program. The fifth author gratefully acknowledges the support of the Laboratoire Angevin de Recherche en Mathématiques (UMR 6093) for partially funding a visit to Universidad de la República, Uruguay. 

\subsection*{Declaration of generative AI and AI-assisted technologies} A final draft was reviewed by Claude Opus 5, and it suggested \cref{eq.infty} together with a partially incorrect proof. We include in this work this result and a modified version of the proof. This is the only result where AI was used.

\section{Preliminaries}\label{sec:preliminaries}
	In this section, we introduce our notation for several basic notions in ($2$-)category theory. We also review some aspects of cartesian fibrations in the setting of $1$- and $2$-categories, and recall some key features of the canonical model structure on the category of $2$-categories and 2-functors, as well as the corresponding projective model structure. 

    \subsection{Notation and terminology}
		We use $\cat$ to denote the $1$-category of small categories and functors. Similarly, we use $\twocat$ to denote the category of small $2$-categories and strict 2-functors, and $\ttwocat$ for the $2$-category of small $2$-categories, 2-functors, and $2$-natural transformations. 

		We denote the initial category by $\emptyset$, and use $[n]$ for the category determined by the poset $\{0<1<\dots<n\}$; in particular, $[0]$ is the terminal category. We use the same notation for the corresponding $2$-categories having only trivial $2$-cells.

		To help with readability, we sometimes use $f.g$ and $\alpha.\beta$ for the composition of morphisms and the vertical composition of $2$-cells. We write $f\cdot \alpha$ or $\alpha\cdot f$ to denote the whiskering of a morphism $f$ and a $2$-cell $\alpha$. We use the same notation for the horizontal composite of $2$-cells.

		Throughout this paper, we will use several standard 1- and $2$-categories encoding the notions of isomorphism, equivalence, and adjoint equivalence, which we now recall.

		\begin{defn}\label{walkingiso}
			The \emph{walking isomorphism}  is the category $\mathbb{I}$ generated by the data 
			\[\begin{tikzcd}
				0\rar[shift left,"i"] & 1\lar[shift left,"i^{-1}"]
			\end{tikzcd}\]
			subject to the relations $i^{-1}  i = \id_0$  and $i i^{-1} = \id_1$.
			\end{defn}

			\begin{defn}\label{defn:walkingequiv}
				The \emph{walking equivalence} is the $2$-category $E$ generated by the data 
			\[\begin{tikzcd}
				0\rar[shift left,"i"] & 1\lar[shift left,"i^{-1}"]
			\end{tikzcd}\]
				together with the data of invertible 2-cells $\eta\colon\id_0\xRightarrow{\cong} i^{-1} i$ and $\epsilon\colon i  i^{-1} \xRightarrow{\cong} \id_1$. 
			\end{defn}
		
		\begin{defn}\label{defn:walkingadjointequiv}
			The \emph{walking adjoint equivalence} $\walkingadjoint$ is the $2$-category obtained from $E$ by imposing the triangle relations, meaning
			\[ (\epsilon \cdot i) .  (i \cdot \eta) = \id_{i} \quad \text{and} \quad (i^{-1} \cdot \epsilon) . (\eta \cdot i^{-1}) = \id_{i^{-1}}. \]    \end{defn}

		Finally, it will also be convenient to recall the following.

		\begin{defn}\label{par:suspension}
			Given a category $\cC$, its \textit{suspension} $\Sigma\cC$ is the $2$-category with two objects $x,y$, and with 
			hom-categories given as follows:
			\[ \Sigma\cC(x,y)=\cC, \quad \Sigma \cC(x,x) = \Sigma \cC(y,y) = [0], \quad \text{ and } \quad \Sigma \cC(y,x)= \emptyset.\]
		\end{defn}

	\subsection{\texorpdfstring{$2$}{2}-cartesian fibrations} Although we assume the reader has a working familiarity with cartesian maps and fibrations in the 1-categorical context, we recall these notions here for their convenience.

	\begin{defn}
		Let $P\colon\cP \to \cC$ be a functor, and $h\colon Px\to Py$ a morphism in $\cC$. We say a morphism $\widehat{h}\colon x\to y$ in $\cP$ is a $P$-\emph{lift} of $h$ if $P\widehat{h}=h$.
	\end{defn}

	\begin{defn}\label{defn:cartesian1cat}
		Let $P\colon\cP \to \cC$ be a functor. A morphism $f\colon x \to y$ in $\cP$ is $P$-\emph{cartesian} if, for every morphism $g\colon z \to y$ in $\cP$ and every morphism $h\colon Pz \to Px$ in $\cC$ such that $ Pg = (Pf) h$, there exists a unique $P$-lift $\widehat{h}\colon z \to x$ of $h$ such that $g = \widehat{h} f$.
	\end{defn}

	\begin{defn}
		A functor $P\colon\cP \to \cC$ is a \emph{cartesian fibration} (or \emph{Grothendieck fibration}) if for every object $y \in \cP$ and every morphism $f\colon x' \to Py$ in $\cC$ there is a  $P$-cartesian lift $\widehat{f}\colon x \to y$ of $f$. 
	\end{defn}

    \begin{prop}[{\cite[\nopp Proposition 3.4]{vistoli2007notesgrothendiecktopologiesfibered}}]\label{prop:cartesian_isomorphism_1cats}
		Let $P\colon\cP \to \cC$ be a functor. A morphism $f$ in $\cP$ is an isomorphism if and only if $f$ is cartesian and $Pf$ is an isomorphism in $\cC$.
	\end{prop}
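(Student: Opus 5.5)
The proof is a direct check of both implications against \cref{defn:cartesian1cat}. Fix $f\colon x\to y$ in $\cP$.

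\emph{Forward direction.} Suppose $f$ is an isomorphism. Then $Pf$ is an isomorphism in $\cC$, with inverse $P(f^{-1})$, since functors preserve inverses. To see that $f$ is cartesian, take $g\colon z\to y$ and $h\colon Pz\to Px$ with $Pg=(Pf)h$, and set $\widehat h := g f^{-1}$.
\begin{itemize}
\item It is a lift: $P\widehat h = Pg\,(Pf)^{-1} = h$, using the relation $Pg=(Pf)h$.
\item It factors $g$: $\widehat h f = g$ by construction.
\item It is unique: any $k$ with $kf=g$ satisfies $k = g f^{-1}$, because $f$ is invertible. Uniqueness therefore holds even without the lift condition.
\end{itemize}
(Composition is written in the same order as in \cref{defn:cartesian1cat}, so "$g=\widehat h f$" means $\widehat h$ followed by $f$ in that convention. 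Only the formal manipulation with inverses matters here.)

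\emph{Reverse direction.} Suppose $f$ is cartesian and $Pf$ is invertible. The plan is to use the universal property twice: once to produce a candidate inverse, and once to force the remaining composite to be an identity.

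First, apply cartesianness to $g=\id_y\colon y\to y$ and $h=(Pf)^{-1}\colon Py\to Px$. The hypothesis holds, since $(Pf)h=\id_{Py}=P\id_y$. This yields a unique $s\colon y\to x$ with $Ps=(Pf)^{-1}$ and $sf=\id_y$. So $s$ is a one-sided inverse of $f$.

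Second, show the other composite $fs\colon x\to x$ is $\id_x$. Apply cartesianness to $g=f\colon x\to y$ and $h=\id_{Px}$, which trivially satisfy $Pf=(Pf)\id_{Px}$. Both $\id_x$ and $fs$ are candidate lifts:
\begin{itemize}
\item $P(fs)=(Pf)(Pf)^{-1}=\id_{Px}$, so $fs$ lies over $\id_{Px}$, as does $\id_x$;
\item $(fs)f = f(sf) = f = \id_x f$, so both factor $g=f$ through $f$.
\end{itemize}
By uniqueness, $fs=\id_x$. Hence $s$ is a two-sided inverse and $f$ is an isomorphism.

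The only delicate point is this second step. One must check that $fs$ genuinely projects to the identity before invoking uniqueness; this is exactly where the hypothesis that $Pf$ is invertible enters a second time, beyond constructing $s$. Everything else is routine.
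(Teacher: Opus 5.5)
Your proof is correct. In the forward direction an isomorphism is monic and gives the lift $f^{-1}\circ g$. In the reverse direction you take the section $s$ lying over $(Pf)^{-1}$, then use uniqueness of lifts of $\id_{Px}$ against $f$ to get $s\circ f=\id_x$. This is the standard argument; the paper gives no proof of its own and simply cites Vistoli.

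The only thing to tidy is notation. You read $(Pf)h$ in applicative order but $sf$, $fs$ and $P(fs)=(Pf)(Pf)^{-1}=\id_{Px}$ in diagrammatic order, which mirrors the mixed conventions in the paper's definition, where $g=\widehat{h}f$ must mean $g=f\circ\widehat{h}$. Writing $f\circ s=\id_y$ and $s\circ f=\id_x$ explicitly would remove the ambiguity.
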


    The notions of cartesian maps and fibrations have $2$-categorical analogues, which are the main protagonists of this paper. We recall some of the central definitions and results below, all of which appear in \cite{Buckley2014}.

		\begin{defn}\label{Def:cartesian_1cells}
			Let  $P\colon  \cP \to \cC$ be a 2-functor. A morphism $f\colon x\to y$ in $\cP$ is \emph{cartesian} (or \emph{$P$-cartesian}) if the following are satisfied:

			\begin{enumerate}[label=\stlabel{Def:cartesian_1cells},ref=\arabic*]
			\item \label{Def:cartesian_1cells.1}
            For all morphisms $g\colon z \to y$  in $\cP$ and $h\colon Pz \to Px$  in $\cC$ together with an invertible 2-cell $\alpha\colon (Pf).h \xRightarrow{\cong} Pg$
			\[\begin{tikzcd}[row sep = small]
				z &&&& Pz \\
				&& {} && {} \\
				x && y && Px && Py
				\arrow["g", from=1-1, to=3-3]
				\arrow["h"', from=1-5, to=3-5]
				\arrow[""{name=0, anchor=center, inner sep=0}, "Pg", from=1-5, to=3-7]
				\arrow["P"{pos=0.3}, shorten >=19pt, maps to, from=2-3, to=2-5]
				\arrow["f"', from=3-1, to=3-3]
				\arrow["Pf"', from=3-5, to=3-7]
				\arrow["\alpha","\cong"', shorten <=8pt, shorten >=8pt, Rightarrow, from=3-5, to=0]
			\end{tikzcd}\]
			 there exists a morphism $\widehat{h}\colon z \to x$ and invertible 2-cells $\widehat{\alpha}\colon f\widehat{h}\xRightarrow{\cong} g$ and $\widehat{\beta}\colon P\widehat{h}\xRightarrow{\cong} h$ such that $\alpha.(Pf\cdot\widehat{\beta}) = P\widehat{\alpha}$.

			\item \label{Def:cartesian_1cells.2}  
            Suppose we have 2-cells $\sigma\colon g \Rightarrow g'$ in $\cP$ and $h,h'\colon Pz \to Px$ in $\cC$ and isomorphisms $\alpha \colon (Pf).h \xRightarrow{\cong} Pg$ and $\alpha' \colon (Pf).h' \xRightarrow{\cong} Pg'$ such that $(h, \alpha)$ and $(h', \alpha')$ have lifts $(\widehat{h},\widehat{\alpha},\widehat{\beta})$ and $(\widehat{h'},\widehat{\alpha'},\widehat{\beta'})$ as in the item above. Then, for any $\delta \colon h \Rightarrow h'$ in $\cC$ with $\alpha'.(Pf \cdot \delta) = (P\sigma).\alpha$
			as depicted \[
			\begin{tikzcd}
				z &&&&& Pz \\
				{} && {} &&& {} && {} \\
				x &&& y && Px &&& Py
				\arrow["{\widehat{h}}"', curve={height=12pt}, from=1-1, to=3-1]
				\arrow["{\widehat{h'}}", curve={height=-12pt}, dashed, from=1-1, to=3-1]
				\arrow[""{name=0, anchor=center, inner sep=0}, "{g'}"{pos=0.6}, curve={height=-12pt}, from=1-1, to=3-4]
				\arrow[""{name=1, anchor=center, inner sep=0}, "g"'{pos=0.6}, curve={height=12pt}, from=1-1, to=3-4]
				\arrow[""{name=2, anchor=center, inner sep=0}, "h"', curve={height=12pt}, from=1-6, to=3-6]
				\arrow[""{name=3, anchor=center, inner sep=0}, "{h'}", curve={height=-12pt}, dashed, from=1-6, to=3-6]
				\arrow[""{name=4, anchor=center, inner sep=0}, "Pg"'{pos=0.6}, curve={height=12pt}, from=1-6, to=3-9]
				\arrow[""{name=5, anchor=center, inner sep=0}, "{Pg'}"{pos=0.6}, curve={height=-12pt}, from=1-6, to=3-9]
				\arrow["{\widehat{\alpha'}}"', shift left=5, shorten <=14pt, shorten >=27pt, Rightarrow, dashed, from=2-1, to=2-3]
				\arrow["{\alpha'}"', shift left=5, shorten <=14pt, shorten >=27pt, Rightarrow, dashed, from=2-6, to=2-8]
				\arrow[""{name=6, anchor=center, inner sep=0}, "f"', from=3-1, to=3-4]
				\arrow[""{name=7, anchor=center, inner sep=0}, "Pf"', from=3-6, to=3-9]
				\arrow["\sigma", shift right=5, shorten <=3pt, shorten >=3pt, Rightarrow, from=1, to=0]
				\arrow["\delta", shorten <=5pt, shorten >=5pt, Rightarrow,  from=2, to=3]
				\arrow["P\sigma", shift right=5, shorten <=3pt, shorten >=3pt, Rightarrow, from=4, to=5]
				\arrow["{\widehat{\alpha}}"{pos=0.3}, shift left=4, shorten <=4pt, shorten >=20pt, Rightarrow, from=3-1, to=6]
				\arrow["\alpha"{pos=0.3}, shift left=4, shorten <=4pt, shorten >=20pt, Rightarrow, from=3-6, to=7]
			\end{tikzcd}
			\]
			there exists a unique $\widehat{\delta} \colon \widehat{h} \Rightarrow \widehat{h'}$ such that $\widehat{\alpha'}.(f \cdot \widehat{\delta}) = \sigma.\widehat{\alpha}$ and $\delta.\widehat{\beta} = \widehat{\beta'}.P\widehat{\delta}$.
			\end{enumerate}

            In particular, as remarked by Buckley in \cite[Definition 3.1.1]{Buckley2014}, the uniqueness of lifted 2-cells implies that lifted morphisms are unique up to a coherent invertible 2-cell.
		\end{defn}

		\begin{defn}
		Let $P\colon \cP\to\cC$ be a 2-functor, and let $f,g\colon x\to y$ be morphisms in $\cP$. A $2$-cell $\alpha\colon f \Rightarrow g$  is \emph{cartesian} if it is cartesian as a morphism (in the 1-categorical sense of \cref{defn:cartesian1cat}) for the functor between hom-categories $P_{x,y}\colon \cP(x,y)\to \cC(Px,Py)$. 
		\end{defn}

		\begin{defn}\label{Def:2-fibration}
			A 2-functor $P\colon\cP\to\cC$ is a \emph{$2$-fibration} if it satisfies the following conditions:
			\begin{itemize}
				\item \label{Def:2-fibration.1}
				For every object $y\in\cP$ and every morphism $f\colon x'\to Py$ in $\cC$, there is a cartesian $P$-lift $\widehat{f}\colon x\to y$ of $f$.
				\item \label{Def:2-fibration.2} The 2-functor
				$P$ is locally a cartesian fibration.
				\item \label{Def:2-fibration.3}
				The horizontal composite of any two cartesian 2-cells is cartesian.
			\end{itemize}
		\end{defn}

        \begin{rmk}
            Some authors call these 2-functors \textit{cartesian} 2-fibrations, to distinguish them from \textit{discrete} 2-fibrations. Since we shall be concerned only with the former, we omit the adjective \textit{cartesian}, following the convention of \cite{Buckley2014}.
        \end{rmk}
        
		We now recall some elementary properties of cartesian $1$- and $2$-cells and $2$-fibrations.

		\begin{prop} \label{prop:propertiescartesian}
			For any 2-functor $P\colon\cP\to\cC$, the following hold:
            \begin{enumerate}[label=\stlabel{prop:propertiescartesian},ref=\arabic*]
            \item \label{prop:propertiescartesian.0} If there is an invertible 2-cell $f\cong g$, then $f$ is cartesian if and only if $g$ is cartesian {\normalfont \cite[Proposition 3.1.8]{Buckley2014}}.
				\item \label{prop:propertiescartesian.1}
				The composite of cartesian morphisms is cartesian {\normalfont \cite[Proposition 3.1.9]{Buckley2014}}.
                \item \label{prop:propertiescartesian.1.5} If $g$ and $gf$ are cartesian then
$f$ is cartesian {\normalfont \cite[Proposition 3.1.10]{Buckley2014}}.
				\item \label{prop:propertiescartesian.2}
				A morphism $f$ in $\cP$ is an equivalence if and only if $f$ is cartesian and $Pf$ is an equivalence {\normalfont \cite[Propositions 3.1.11 \& 3.1.12]{Buckley2014}}.
				\item \label{prop:propertiescartesian.3}
				If $g\colon x'\to x$ and $g'\colon x''\to x$ are two cartesian lifts of the same morphism,  then there is an equivalence $h\colon x''\to x'$ and an invertible $2$-cell $\tau\colon gh\Rightarrow g'$, both unique up to an invertible $2$-cell. Moreover, one can choose adjoint  equivalence data $(h,h^{-1},\eta,\epsilon)$ such that $Ph,Ph^{-1},P\eta,P\epsilon$ and $P\tau$ are all equal to the corresponding identity morphism and $2$-cells
				{\normalfont\cite[Proposition 3.1.15]{Buckley2014}}.
			\end{enumerate}
		\end{prop}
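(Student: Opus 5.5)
The five items of \cref{prop:propertiescartesian} are each cited from \cite{Buckley2014}. The plan is to check each one directly from \cref{Def:cartesian_1cells}, in the order stated, reusing earlier items as we go.

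\textbf{Item (\ref{prop:propertiescartesian.0}).} Fix an invertible $2$-cell $\theta\colon f\cong g$ and suppose $f$ is cartesian. Given data $(h,\alpha\colon Pg.h\cong Pk)$ for $g$, precompose with $P\theta\cdot h$ to obtain data $\alpha.(P\theta\cdot h)$ for $f$. Lift it along $f$, then postcompose the lifted $2$-cell $\widehat\alpha$ with $\theta^{-1}\cdot\widehat h$. The $2$-cell condition (\ref{Def:cartesian_1cells.2}) transports the same way, and uniqueness is preserved because conjugating by the invertible $\theta$ is a bijection.

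\textbf{Item (\ref{prop:propertiescartesian.1}).} For a composite $gf$, lift first along $g$ and then along $f$, pasting the resulting invertible $2$-cells. For $2$-cells, the existence and uniqueness of $\widehat\delta$ come from applying (\ref{Def:cartesian_1cells.2}) twice: first for $g$, producing an intermediate $2$-cell, and then for $f$.

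\textbf{Item (\ref{prop:propertiescartesian.1.5}).} Suppose $g$ and $gf$ are cartesian. Given data for $f$, whisker it by $g$ to get data for $gf$, lift along $gf$, and read off the lift for $f$. The identification of $f\widehat h$ with the original morphism is obtained from the uniqueness clause of (\ref{Def:cartesian_1cells.2}) for $g$: both $2$-cells in question lie over the same $2$-cell in $\cC$ and agree after whiskering with $g$.

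\textbf{Item (\ref{prop:propertiescartesian.2}).} For the forward direction, an equivalence $f$ has $Pf$ an equivalence, and any data can be lifted explicitly by $\widehat h=f^{-1}g$, where $f^{-1}$ is a pseudo-inverse; uniqueness of $2$-cells holds because whiskering with $f$ is an equivalence of hom-categories. For the converse, take a pseudo-inverse $u$ of $Pf$ with counit $Pf.u\cong\id$, and lift $(u,\ \text{that iso})$ along $f$ against $g=\id_y$. This gives $\widehat u$ with $f\widehat u\cong\id$. Then $f\widehat u f\cong f$ lies over an invertible $2$-cell, and (\ref{Def:cartesian_1cells.2}) produces an invertible $\widehat u f\cong\id$; uniqueness forces the lifted $2$-cell to be invertible.

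\textbf{Item (\ref{prop:propertiescartesian.3}).} Apply (\ref{Def:cartesian_1cells.1}) to $g$ with data $(\id,\id)$ against $g'$, which produces $h$ and $\tau$. By the $2$-cell version of item (\ref{prop:propertiescartesian.0}) we have $gh\cong g'$, so by item (\ref{prop:propertiescartesian.1.5}) $h$ is cartesian. Since $Ph\cong\id$, item (\ref{prop:propertiescartesian.2}) shows $h$ is an equivalence. Uniqueness up to invertible $2$-cell follows from (\ref{Def:cartesian_1cells.2}).

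The main obstacle is the refinement that $Ph$, $Ph^{-1}$, $P\eta$, $P\epsilon$ and $P\tau$ can be chosen to be identities. A general lift only gives $\widehat\beta\colon Ph\cong\id$, not an equality. The first step is to choose the lifts strictly, meaning $P\widehat h=h$ with $\widehat\beta=\id$. For this I would use that $P_{x,y}$ on hom-categories reflects isomorphisms along cartesian $2$-cells (\cref{prop:cartesian_isomorphism_1cats}), replacing $\widehat h$ by a strict lift obtained from the $2$-cell lifting. If a cartesian $2$-cell lift of $\widehat\beta$ is not directly available, I would instead carry out this step with (\ref{Def:cartesian_1cells.2}) applied to $\delta=\widehat\beta$. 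With $Ph=\id$ in hand, the uniqueness clause of (\ref{Def:cartesian_1cells.2}) then forces the lifted $\eta$ and $\epsilon$, which are constructed over identities, to map to identities. Finally the triangle identities are imposed by the standard adjustment of $\epsilon$, which again lies over identities.
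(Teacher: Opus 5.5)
Your arguments for items \enumref{prop:propertiescartesian}{0}--\enumref{prop:propertiescartesian}{2}, and for the first half of \enumref{prop:propertiescartesian}{3}, are sound in outline. The paper gives no argument here and only cites Buckley, so a direct verification from \cref{Def:cartesian_1cells} is a legitimate alternative route. The problem is the final strictification step.

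\textbf{The fallback does not work.} Applying \enumref{Def:cartesian_1cells}{2} with $\delta=\widehat\beta$ cannot give you what you need. That clause only produces $2$-cells between two lifts that are already given, together with a constraint on their $P$-image. It never produces a new $1$-cell whose image is prescribed on the nose.

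\textbf{No argument from \cref{Def:cartesian_1cells} alone can work.} The ``moreover'' clause is false for an arbitrary $2$-functor. Here is a counterexample.
\begin{itemize}
\item Let $\cC$ have objects $0,1$. Take $\cC(0,0)$ to be the indiscrete category with object set $\mathbb{N}$, with composition given by addition. Take $\cC(0,1)=\{m\}$, $\cC(1,1)=\{\id_1\}$ and $\cC(1,0)=\emptyset$.
\item Let $\cP$ have objects $x',x'',x$. For $a,b\in\{x',x''\}$, let $\cP(a,b)$ be the indiscrete category on $\mathbb{N}$ if $a=b$, and on $\mathbb{N}_{\geq 1}$ if $a\neq b$, again composing by addition. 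Take $\cP(x',x)=\{g\}$, $\cP(x'',x)=\{g'\}$, $\cP(x,x)=\{\id_x\}$ and $\cP(x,x')=\cP(x,x'')=\emptyset$.
\item Let $P$ be the evident $2$-functor, sending $n\mapsto n$ and $g,g'\mapsto m$.
\end{itemize}
Every hom-category entering \cref{Def:cartesian_1cells} is indiscrete, terminal or empty. Hence both $g$ and $g'$ are cartesian lifts of $m$. But every $h\colon x''\to x'$ has $Ph=n\geq 1$, which is not $\id_0=0$.

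\textbf{The missing hypothesis.} You need $P$ to be locally an isofibration, for instance locally a cartesian fibration as in \cref{prop:buckley3.2.1}. This holds everywhere the paper uses the strict clause: the proofs of \cref{pathnaivefib} and \cref{prop:markingsinGC}, where the relevant $2$-functor is a $2$-fibration. Under this hypothesis your first idea is the right one:
\begin{itemize}
\item Lift the invertible $\widehat\beta^{-1}\colon \id\Rightarrow Ph$ along $P_{x'',x'}$ to a cartesian $2$-cell $\chi\colon h'\Rightarrow h$ with $Ph'=\id$. This $\chi$ is invertible by \cref{prop:cartesian_isomorphism_1cats}.
\item Replace $\tau$ by $\tau.(g\cdot\chi)$. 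Its image under $P$ is $(Pg\cdot\widehat\beta).(Pg\cdot\widehat\beta^{-1})=\id$.
\item Do the same when constructing $h^{-1}$.
\end{itemize}
After that, the $2$-cells $\eta$ and $\epsilon$ obtained from \enumref{Def:cartesian_1cells}{2} with $\delta=\id$ satisfy $P\eta=P\epsilon=\id$. This is forced by the equation $\delta.\widehat\beta=\widehat{\beta'}.P\widehat\delta$ once all lifts are strict, not by uniqueness as you suggest. The triangle-identity adjustment stays over identities.

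\textbf{A smaller point in item \enumref{prop:propertiescartesian}{1.5}.} The invertible $2$-cell from $f\widehat h$ to the target morphism comes from the \emph{existence} half of \enumref{Def:cartesian_1cells}{2} for $g$. Apply it with $\sigma$ the $2$-cell lifted along $gf$, and $\delta$ the evident $2$-cell $\alpha.(Pf\cdot\widehat\beta)$. Uniqueness then gives only its invertibility. The ``agree after whiskering by $g$'' argument is what you need afterwards, for the $2$-cell condition on $f$.
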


		\begin{prop}[{\cite[Proposition 3.2.1]{Buckley2014}}]\label{prop:buckley3.2.1}
			Let $P\colon \cP\to\cC$ be a 2-functor which is locally a cartesian fibration. Then, every lift $(\widehat{h},\widehat{\alpha},\widehat{\beta})$ of $(h,\alpha)$ as in {\normalfont{\enumref{Def:cartesian_1cells}{1}}} along a cartesian morphism can be chosen so that $\widehat{\beta} = \id_h$. In other words, lifts along cartesian morphisms can be chosen so that $P\widehat{h}= h$.
		\end{prop}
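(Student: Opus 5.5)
Given a lift $(\widehat{h},\widehat{\alpha},\widehat{\beta})$ of $(h,\alpha)$ along the cartesian morphism $f\colon x\to y$, I will use the local fibration structure to replace $\widehat{h}$ by a morphism lying strictly over $h$. The invertible $2$-cell $\widehat{\beta}\colon P\widehat{h}\Rightarrow h$ lives in $\cC(Pz,Px)$, with codomain $h$; this is the wrong direction for a cartesian lift in the hom-category, whose target should be $\widehat{h}$. So I apply the local cartesian fibration $P_{z,x}\colon\cP(z,x)\to\cC(Pz,Px)$ to $\widehat{\beta}^{-1}\colon h\Rightarrow P\widehat{h}$, which ends at $P\widehat{h}$. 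This yields a morphism $k\colon z\to x$ with $Pk=h$ and a cartesian $2$-cell $\gamma\colon k\Rightarrow \widehat{h}$ with $P\gamma=\widehat{\beta}^{-1}$.

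Next I show that $\gamma$ is invertible. By \cref{prop:cartesian_isomorphism_1cats}, applied to the functor $P_{z,x}$, a morphism of $\cP(z,x)$ is an isomorphism exactly when it is cartesian and its image is an isomorphism. Here $\gamma$ is cartesian and $P\gamma=\widehat{\beta}^{-1}$ is invertible, so $\gamma$ is an invertible $2$-cell.

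Now I define the new lift as $\widehat{h}':=k$, $\widehat{\alpha}':=\widehat{\alpha}.(f\cdot\gamma)\colon fk\Rightarrow g$, and $\widehat{\beta}':=\id_h$. Both $2$-cells are invertible. The required compatibility is
\[
P\widehat{\alpha}'=P\widehat{\alpha}.(Pf\cdot P\gamma)=\alpha.(Pf\cdot\widehat{\beta}).(Pf\cdot\widehat{\beta}^{-1})=\alpha=\alpha.(Pf\cdot\id_h),
\]
where the second equality uses the original condition $P\widehat{\alpha}=\alpha.(Pf\cdot\widehat{\beta})$. Hence $(k,\widehat{\alpha}',\id_h)$ is a lift of $(h,\alpha)$ in the sense of \enumref{Def:cartesian_1cells}{1}, and $Pk=h$.

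The only delicate point is the first step: one must invert $\widehat{\beta}$ so that the hom-level lifting problem has its target at $P\widehat{h}$, and then invoke the isomorphism criterion to know that $\gamma$ is invertible. The rest is a direct check.
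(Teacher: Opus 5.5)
Your argument is correct and is essentially the standard proof of this result; the paper itself gives no proof and only cites Buckley's Proposition 3.2.1. As in that argument, you lift $\widehat{\beta}^{-1}\colon h\Rightarrow P\widehat{h}$ along the local fibration $P_{z,x}$ to a cartesian, hence invertible, $2$-cell $\gamma\colon k\Rightarrow\widehat{h}$ with $Pk=h$, absorb it as $\widehat{\alpha}.(f\cdot\gamma)$, and check compatibility — and you only really use that $P$ is locally an isofibration, since the $2$-cell being lifted is invertible.
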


	\subsection{The canonical model structure on \texorpdfstring{$\twocat$}{twocat}} In this final preliminary section we introduce some classes of 2-functors that play a central role in what follows, and recall the canonical model structure on $\twocat$, due to Lack \cite{Lack2002,Lack2004}, as well as the projective model structure it induces.

		\begin{defn}
			A 2-functor $P\colon\cP\to\cC$ is a \emph{biequivalence} if the induced functors \linebreak
            $P_{x,x'}\colon\cP(x,x')\to\cC(Px,Px')$ are equivalences of categories and, moreover, for every object $x$ in $\cC$ there is an object $y$ in $\cP$ and an equivalence $x\xrightarrow{\sim} Py$ in $\cC$.
		\end{defn}

		\begin{defn}\label{defn:lackfib}
			A 2-functor $P\colon\cP\to\cC$ is an \emph{equifibration} if the following hold:
			\begin{itemize}
				\item \label{defn:lackfib.1}
				For every object $y$ of $\cP$ and every equivalence $f\colon x\xrightarrow{\sim} Py$ in $\cC$, there is an equivalence $g\colon x'\xrightarrow{\sim} y$ in $\cP$ with $Px' = x$ and $Pg = f$.
				\item \label{defn:lackfib.2}
				For every morphism $g\colon x\to y$ in $\cP$ and every invertible 2-cell $\beta\colon f\Rightarrow Pg$ in $\cC$, there is an invertible 2-cell $\alpha\colon g'\Rightarrow g$ in $\cP$ with $Pg' = f$ and $P\alpha = \beta$.
			\end{itemize}
		\end{defn}

As expected, the biequivalences and equifibrations determine a model structure on $\twocat$.
        
		\begin{thm}[{\cite[Theorem 4]{Lack2004}}] \label{cofibrant-model:2cat}
			There exists a cofibrantly generated model structure on $\twocat$ whose weak equivalences are the biequivalences, and fibrations are the equifibrations.
		\end{thm}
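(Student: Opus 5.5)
The plan is to follow Lack's construction in \cite{Lack2002,Lack2004}: exhibit explicit generating sets and apply Kan's recognition theorem for cofibrantly generated model structures. $\twocat$ is locally finitely presentable, so the small object argument applies to any set of maps.

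\emph{Generating cofibrations.} Take $I$ to consist of:
\begin{itemize}
\item $\emptyset\to[0]$;
\item $[0]\sqcup[0]\to[1]$;
\item the inclusion of $\Sigma([0]\sqcup[0])$ (two parallel morphisms) into $\Sigma[1]$ (a 2-cell between them);
\item the map $\Sigma[1]_{\text{par}}\to\Sigma[1]$ collapsing two parallel 2-cells into one.
\end{itemize}
Here $\Sigma[1]_{\text{par}}$ is $\Sigma$ applied to the category with two objects and two parallel arrows. These maps generate the free extensions by an object, a morphism and a 2-cell, together with the identification of 2-cells. A 2-functor has the right lifting property against $I$ exactly when it is surjective on objects, full on morphisms, and fully faithful on 2-cells. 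I would then check directly that such maps are precisely the biequivalences that are also equifibrations.

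\emph{Generating trivial cofibrations.} Take $J$ to consist of $[0]\to E$ (or $[0]\to\walkingadjoint$) and $[1]\to\Sigma\mathbb{I}$ (an invertible 2-cell between its boundary morphisms). Unwinding the lifting conditions shows that the maps with the right lifting property against $J$ are exactly the equifibrations of \cref{defn:lackfib}. Using $\walkingadjoint$ rather than $E$ makes the lift easier to construct: an equivalence in $\cC$ can always be promoted to adjoint equivalence data, and lifting adjoint data is equivalent to lifting the equivalence.

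\emph{Applying the recognition theorem.} The conditions to verify are the following.
\begin{itemize}
\item Biequivalences satisfy 2-out-of-3 and are closed under retracts. This is routine from the definition via local equivalences together with essential surjectivity up to equivalence.
\item $I\text{-inj}=J\text{-inj}\cap W$. This was checked above.
\item $J\text{-cell}\subseteq W\cap I\text{-cof}$.
\end{itemize}
The maps in $J$ are clearly in $I$-cof, being relative cell complexes of the free maps in $I$.

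\emph{Main obstacle.} The hard step is showing that pushouts of maps in $J$, and transfinite composites of such pushouts, are biequivalences. I would attack it directly. A pushout $\cA\to\cA\sqcup_{[0]}\walkingadjoint$ freely adjoins an object equivalent to an existing one. Every new hom-category is then equivalent to an old one, because any composite passing through the new object can be rewritten, using the equivalence data, as a composite through the old object. Similarly, freely adjoining an invertible 2-cell between existing parallel morphisms does not change the homotopy type: one constructs a retraction 2-functor back to $\cA$ that sends the new 2-cell to the existing one. In each case the pushout admits a 2-functor back to $\cA$ which is a pseudo-inverse. Closure under transfinite composition then follows because hom-categories of a filtered colimit are computed as filtered colimits of the hom-categories, and equivalences are preserved in this setting.

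An alternative route for this step is the following. Every object is fibrant, since $!\colon\cA\to[0]$ is an equifibration, so all $J$-cell maps are trivial cofibrations with a retraction. The cleanest verification is therefore to exhibit a path object: the pseudo-functor 2-category $\mathrm{Ps}[\walkingadjoint,\cA]$, or the 2-category of equivalences in $\cA$. Quillen's path-object argument then proves that $J$-cell maps with the left lifting property against fibrations are weak equivalences, and I would use this if the direct computation becomes messy.
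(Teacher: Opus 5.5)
Your overall plan is the right one: Kan's recognition theorem with Lack's sets $I$ and $J$, where the only real work is showing $J$-cell $\subseteq W$. The paper itself gives no argument and only cites \cite{Lack2004}. However, two of your steps fail as written.

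\textbf{First gap: $E$ and $\walkingadjoint$ are not interchangeable.} The free walking equivalence $E$ is not biequivalent to $[0]$, so $[0]\to E$ cannot be a generating trivial cofibration.
\begin{itemize}
\item Consider the endomorphism $\eta^{-1}.(i^{-1}\cdot\epsilon\cdot i).(\eta\cdot i^{-1}i).\eta$ of $\id_0$ in $E(0,0)$.
\item Since $E$ is free, there is a 2-functor from $E$ to the 2-category with one object, one morphism, and 2-cells the group $\mathbb{Z}$ (both compositions being addition), sending $\eta\mapsto 1$ and $\epsilon\mapsto 0$.
\item This 2-functor sends the endomorphism above to $1\neq 0$, so $E(0,0)$ is not equivalent to $[0]$.
\item In $\walkingadjoint$ the same composite is the identity, by the triangle identity $(i^{-1}\cdot\epsilon).(\eta\cdot i^{-1})=\id_{i^{-1}}$.
\end{itemize}
So the triangle identities are what make the generator a weak equivalence; they are not a convenience for lifting. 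Your lifting argument also has no analogue for $E$. After lifting $f$, $f^{-1}$ and $\eta$, you would still need to lift the discrepancy between $\epsilon$ and the adjoint counit determined by $\eta$. That discrepancy is an automorphism of $\id_{Py}$, and nothing in the definition of an equifibration lifts it. Even for $\walkingadjoint$, ``lifting adjoint data is equivalent to lifting the equivalence'' uses the second equifibration condition: you need it to move a chosen pseudo-inverse of the lift onto $f^{-1}$ and then transport the unit and counit.

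\textbf{Second gap: the pushouts of $[1]\to\Sigma\mathbb{I}$ and the direct argument.} A pushout of $[1]\to\Sigma\mathbb{I}$ adjoins a new morphism $g'$ parallel to an existing $g$, together with a free invertible 2-cell $g'\cong g$. The retraction sends $g'\mapsto g$ and the new 2-cell to $\id_g$. What you describe, adjoining an invertible 2-cell between two existing morphisms, is a pushout of $\Sigma([0]\sqcup[0])\to\Sigma\mathbb{I}$. That map is in general neither a biequivalence nor retractible. More seriously, your direct argument never shows that the retraction is a pseudo-inverse. That would require a pseudonatural equivalence $\iota r\simeq\id$ on a strict pushout whose 2-cells you have not described. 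Rewriting composites through the new object says nothing about new 2-cells, or about new identifications among old ones.

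\textbf{Suggested fix.} Make your fallback the main argument.
\begin{itemize}
\item Every object is fibrant, so a $J$-cell map $j\colon\mathcal{A}\to\mathcal{B}$ has a retraction $r$.
\item Let $\Path(\mathcal{B})$ be the 2-category of equivalences in $\mathcal{B}$, the unmarked analogue of \cref{construction:path_object}.
\item Lift in the square with top map $\mathcal{A}\xrightarrow{j}\mathcal{B}\to\Path(\mathcal{B})$, right map $\Path(\mathcal{B})\to\mathcal{B}\times\mathcal{B}$, and bottom map $(jr,\id)$. This gives a right homotopy $jr\sim\id_{\mathcal{B}}$.
\item By 2-out-of-3, $jr$ is a biequivalence, and $j$ is one too because it is a retract of $jr$.
\end{itemize}
What you then actually have to check is that $\mathcal{B}\to\Path(\mathcal{B})$ is a biequivalence and that $\Path(\mathcal{B})\to\mathcal{B}\times\mathcal{B}$ is an equifibration, exactly as in \cref{prop:path-object}. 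This route also makes your transfinite-composition step unnecessary.
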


Given a 2-functor $P\colon\cP \to \cC$ we will write $UP\colon U\cP \to U\cC$ for the underlying functor between the underlying categories. Moving forward, we will adhere to the practice of denoting by $U$ a forgetful functor which, as the name suggests, forgets certain structure. It is usually clear from the context what the functor forgets. 

For our purposes, it will be convenient to describe the cofibrations of this model structure, as well as to identify a set of generating cofibrations. 

		\begin{prop}[{\cite[4.14]{Lack2002}}]\label{prop:cofibrationsLack}
			A 2-functor $P\colon\cP\to\cC$ is a cofibration if and only if its underlying functor $UP\colon U\cP\to U\cC$ is injective on objects and faithful, and there are functors $I\colon U\cC \to \widehat{\cC}$ and $R\colon\widehat{\cC}\to U\cC$ satisfying $RI = \id$ so that the $1$-category $\widehat{\cC}$ is obtained from the image of $P$ by freely adjoining objects and then arrows between specified objects.
		\end{prop}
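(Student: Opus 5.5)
The plan is to argue directly from a set of generating cofibrations for the model structure of \cref{cofibrant-model:2cat}. Recall that one can take as generators: $\emptyset\to[0]$; the inclusion $[0]\sqcup[0]\to[1]$; the inclusion $\Sigma([0]\sqcup[0])\to\Sigma[1]$, which adds a free $2$-cell between two given parallel morphisms; and the $2$-functor $\Sigma(\partial)\to\Sigma[1]$, where $\partial$ is the category with two parallel arrows $0\rightrightarrows 1$. This last generator identifies two parallel $2$-cells. Cofibrations are then exactly the retracts of relative cell complexes built from these generators. The proof therefore has two halves: (i) every cofibration has the stated form; (ii) every $2$-functor of the stated form is a retract of a relative cell complex.

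For (i), I would first analyse pushouts of each generator along an arbitrary $2$-functor and record their effect on underlying categories. Pushing out the first two generators freely adjoins a new object, or a new arrow between specified objects. Pushing out the last two leaves the underlying category unchanged and only modifies the hom-categories. Hence, for a relative cell complex $Q\colon\cP\to\cD$, the underlying functor $UQ$ is injective on objects and faithful, and $U\cD$ is obtained from the image of $Q$ by freely adjoining objects and then arrows. This uses that free extensions commute with the $2$-cell attachments, so the attachments can be reordered to do all objects first, then all arrows, then all $2$-cells. Now let $P\colon\cP\to\cC$ be a retract of such a $Q$, via $\cC\to\cD\to\cC$ under $\cP$. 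Apply $U$ to get $I\colon U\cC\to U\cD=:\widehat{\cC}$ and $R\colon\widehat{\cC}\to U\cC$ with $RI=\id$. Injectivity on objects and faithfulness of $UP$ follow from those of $UQ=U(I\circ P)$, since $UQ = I\circ UP$.

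For (ii), suppose $UP$ is injective on objects and faithful, and we are given $I$, $R$ and $\widehat{\cC}$ as in the statement. I would build an intermediate $2$-category $\cD$ in two stages.
\begin{enumerate}
\item Attach cells of the first two types to $\cP$ to produce a $2$-category $\cP'$ whose underlying category is $\widehat{\cC}$. This is possible precisely because $\widehat{\cC}$ is the image of $P$ with freely adjoined objects and arrows; injectivity and faithfulness are what let us identify $U\cP$ with that image, so the free extension makes sense.
\item Define $\cD$ to have underlying category $\widehat{\cC}$ and hom-categories $\cD(a,b)$ whose $2$-cells $u\Rightarrow v$ are the $2$-cells $Ru\Rightarrow Rv$ in $\cC$, with composition transported along $R$.
\end{enumerate}
One then checks three things. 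First, $R$ extends to a $2$-functor $\cD\to\cC$ and $I$ extends to a $2$-functor $\cC\to\cD$ with $RI=\id$, so $\cC$ is a retract of $\cD$. Second, the composite $\cP\to\cP'\to\cD$, with the second map the identity on underlying categories and given on $2$-cells by applying $P$, agrees with $IP$. Third, $\cP'\to\cD$ is a relative cell complex: attach a free $2$-cell for every $2$-cell of $\cD$ using $\Sigma([0]\sqcup[0])\to\Sigma[1]$, then impose all relations of $\cD$ by pushouts along $\Sigma(\partial)\to\Sigma[1]$. This realises any $2$-functor that is the identity on underlying categories as a (possibly transfinite) cell complex.

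The main obstacle is the second part: ensuring the compatibility that $\cP\to\cD\to\cC$ is $P$ and that $I$ really is a $2$-functor into $\cD$, not just a functor. This is where the definition of $\cD$'s $2$-cells via $R$ is essential, since it makes $I$ and $R$ automatically $2$-functorial. The bookkeeping of reordering cell attachments in part (i) is the other delicate point, and I would handle it by a transfinite induction showing that the underlying category is never changed by $2$-cell attachments.
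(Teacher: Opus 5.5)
Your proof is correct. Note that the paper proves nothing here: the proposition is quoted from Lack. So there is no in-paper argument to match, and the useful comparison is with the short lifting-property proof.

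\textbf{The lifting-property proof.} The forgetful functor $U\colon\twocat\to\cat$ has a right adjoint $C$, which sends a category to the 2-category with the same objects and morphisms and exactly one 2-cell between any parallel pair. Two facts drive the argument:
\begin{itemize}
\item $C$ sends surjective-on-objects full functors to trivial fibrations;
\item every trivial fibration is locally fully faithful, so the 2-cell part of any lift is forced.
\end{itemize}
Together these show that $P$ is a cofibration if and only if $UP$ has the left lifting property against surjective-on-objects full functors.

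To characterise such functors, freely adjoin to $U\cP$ one object for each object of $U\cC$, and then one arrow $a\to b$ for each pair of objects $a,b$ and each arrow $Ra\to Rb$ of $U\cC$. This gives a free extension $J\colon U\cP\to\widehat{\cC}$ with an evident surjective-on-objects full $R$ satisfying $RJ=UP$. A lift then produces $I$ with $RI=\id$ and $I\circ UP=J$. The converse holds because free extensions have the lifting property and it is stable under retracts.

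\textbf{Comparison.} That route needs no generating set, no analysis of pushouts in $\twocat$, and no quotients by congruences. Yours is more constructive: it exhibits $IP$ explicitly as a cell complex on Lack's generators. Along the way it shows that any 2-functor which is the identity on underlying categories is a relative cell complex of length two.

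The same adjunction is in fact hidden in your proof. Your $\cD$ is the pullback of the unit $\cC\to CU\cC$ along $C(R)$. Moreover, $U\dashv C$ shows that $U$ preserves colimits, so the reordering and transfinite induction in (i) can be replaced by the observation that $U$ sends the last two generators to identities.

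\textbf{Two points to make explicit.}

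First, in (i), the retraction $\cC\to\cD\to\cC$ can be taken \emph{under} $\cP$ because you factor $P$ by the small object argument as a cell complex followed by a trivial fibration, and then lift. An arbitrary retract in the arrow category would not give this.

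Second, in (ii), identifying $\cP\to\cP'\to\cD$ with $IP$ uses that $I\circ UP$ is the canonical inclusion of $\operatorname{im}(UP)\cong U\cP$ into $\widehat{\cC}$. The statement as printed leaves this compatibility implicit, but it cannot be dropped. Take the doubling 2-functor $n\mapsto 2n$ on the one-object locally discrete $\mathbb{N}$, with $\widehat{\cC}=2\mathbb{N}$, $I(n)=2n$ and $R(2n)=n$. This satisfies the literal conditions, yet it is not a cofibration. Indeed, it fails to lift against the trivial fibration $\cE\to\mathbb{N}$, where $U\cE$ is the free monoid on $\ell\mapsto 1$, $e\mapsto 2$ and $\cE$ has exactly one 2-cell between 1-cells with equal image: a lift of the square with top map $1\mapsto e$ would need a square root of $e$.

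Your part (i) does produce the compatible $I$, so once this hypothesis is stated, your argument proves the intended result.
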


		\begin{notation}\label{Def:Lackgeneratingcofibs}
			The following 2-functors provide a set of generating cofibrations for $\twocat$:
			\begin{itemize}
				\item \label{Def:Lackgeneratingcofibs.1}
				the canonical 2-functor $\emptyset \to [0]$, 
				
				\item \label{Def:Lackgeneratingcofibs.2}
				the suspension of $\emptyset \to [0]$; that is, the canonical 2-functor $[0]\sqcup[0]\to [1]$,
				
				\item \label{Def:Lackgeneratingcofibs.3} the
				suspension of $[0]\sqcup[0]\to [1]$,  
				\item \label{Def:Lackgeneratingcofibs.4}
				the suspension of $\{[0] \rightrightarrows [1]\} \to [1]$.
			\end{itemize}
		\end{notation}

	Studying the lifting properties with respect to each of the generating cofibrations above, one can directly obtain the following description of the trivial fibrations in $\twocat$.

        \begin{prop}\label{char:lacktrivfibs}
            A 2-functor in $\twocat$ is a trivial fibration if and only if it is surjective on objects, full on morphisms, and locally fully faithful on hom-categories.
        \end{prop}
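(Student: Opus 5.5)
Since Theorem~\ref{cofibrant-model:2cat} gives a cofibrantly generated model structure with generating cofibrations as in Notation~\ref{Def:Lackgeneratingcofibs}, the trivial fibrations are exactly the 2-functors with the right lifting property against these four maps. I will compute this lifting property one generator at a time and match each condition to one clause of the statement, proving both directions together.

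\begin{itemize}
\item \emph{The map $\emptyset\to[0]$.} A lifting problem is just an object $c$ of $\cC$, and a lift is an object $x$ with $Px=c$. So right lifting against this map is surjectivity on objects.
\item \emph{The map $[0]\sqcup[0]\to[1]$.} A lifting problem consists of objects $x,x'$ of $\cP$ and a morphism $f\colon Px\to Px'$. A lift is a morphism $g\colon x\to x'$ with $Pg=f$. So this is fullness on morphisms, in the strict sense that each $P_{x,x'}$ is surjective on objects of the hom-categories.
\item \emph{The suspension $\Sigma([0]\sqcup[0])\to\Sigma[1]$.} A 2-functor out of $\Sigma\cA$ amounts to two objects $x,x'$ and a functor $\cA\to\cP(x,x')$. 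So a lifting problem is a pair of parallel morphisms $g,g'\colon x\to x'$ and a 2-cell $\beta\colon Pg\Rightarrow Pg'$, and a lift is a 2-cell $\alpha\colon g\Rightarrow g'$ with $P\alpha=\beta$. This is fullness of each $P_{x,x'}$.
\item \emph{The suspension $\Sigma(\{[0]\rightrightarrows[1]\}\to[1])$.} Here I read the domain as the coequalizer-type category with two parallel arrows collapsed, i.e.\ the generator $[1]\sqcup_{\partial[1]}[1]\to[1]$. A lifting problem is then two 2-cells $\alpha,\alpha'\colon g\Rightarrow g'$ with $P\alpha=P\alpha'$, and a lift exists exactly when $\alpha=\alpha'$. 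This is faithfulness of each $P_{x,x'}$.
\end{itemize}

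Combining the conditions, each $P_{x,x'}$ is surjective on objects, full and faithful. In other words, $P$ is surjective on objects, full on morphisms, and locally fully faithful, which is the claimed characterization.

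The only delicate point is fixing the intended domain of the fourth generator. I read it as the 2-category with two parallel 2-cells that the generator collapses to one; with that reading, the correspondence with faithfulness is immediate.

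Alternatively, one can check the result against Lack's description: trivial fibrations are the equifibrations that are biequivalences. Under the three conditions above, lifting of equivalences and of invertible 2-cells holds strictly, which is consistent with this.
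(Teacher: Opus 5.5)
Your proposal is correct and is the argument the paper intends: the paper presents this characterization as the direct result of reading off the right lifting property against each of the four generating cofibrations of \cref{Def:Lackgeneratingcofibs}, which is what you do. One wording fix: the domain of the fourth generator is the free parallel pair $[1]\sqcup_{\partial[1]}[1]$, with two parallel arrows not yet identified, so your formula and computation are right, but the phrase ``with two parallel arrows collapsed'' describes the codomain rather than the domain.
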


	The following result relating the behaviour of trivial fibrations and cartesian morphisms will be of use later in the paper. 

        \begin{lem}\label{lem:trivialLackfibs_and_cartesians}
			Let $P\colon\cP \to \cC$ and $Q\colon\cQ \to \cC$ be 2-functors, and let $F\colon \cP \to \cQ$ be a trivial fibration in $\twocat$ such that $P=QF$. Then, a morphism $f$ in $\cP$ is $P$-cartesian if and only if $Ff$ is $Q$-cartesian.
		\end{lem}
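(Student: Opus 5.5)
The plan is to verify both conditions of \cref{Def:cartesian_1cells} directly, moving data back and forth along $F$. By \cref{char:lacktrivfibs}, $F$ is surjective on objects, full on morphisms, and locally fully faithful. In particular, every morphism and 2-cell of $\cQ$ between objects in the image of $F$ lifts along $F$: morphisms lift by fullness, and 2-cells between images of morphisms lift uniquely by local full faithfulness. Local full faithfulness also gives two further facts. A 2-cell in $\cP$ is invertible if and only if its image is. And two morphisms of $\cP$ with the same image under $F$ are isomorphic, via the lift of the identity 2-cell. Since $P=QF$, all the data living in $\cC$ is literally the same on both sides.

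\emph{($\Rightarrow$)} Suppose $f\colon x\to y$ is $P$-cartesian, and take $g\colon w\to Fy$ in $\cQ$, $h$ and $\alpha\colon Qf'.h\cong Qg$ as in \enumref{Def:cartesian_1cells}{1}, where $f'=Ff$.
\begin{itemize}
\item Choose $z$ with $Fz=w$, and lift $g$ to some $\bar g\colon z\to y$ with $F\bar g=g$. Then $P\bar g=Qg$, so $(h,\alpha)$ is valid data for $f$.
\item Cartesianness of $f$ gives $(\widehat h,\widehat\alpha,\widehat\beta)$ in $\cP$.
\item Apply $F$: $(F\widehat h,F\widehat\alpha,\widehat\beta)$ works, since $QF=P$.
\end{itemize}
For \enumref{Def:cartesian_1cells}{2}, we are given lifts in $\cQ$ whose morphisms $k,k'\colon w\to Fx$ need not lie in the image of $F$ on the nose. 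Fullness supplies $\bar k,\bar k'$ with $F\bar k=k$ and $F\bar k'=k'$, and local full faithfulness lets us pull back $\widehat\alpha$, $\widehat\alpha'$ and $\sigma$ to $\cP$. Existence and uniqueness of $\widehat\delta$ then transfer through the bijection $\cP(z,x)(\bar k,\bar k')\cong \cQ(w,Fx)(k,k')$. The equations $\widehat{\alpha'}.(f'\cdot\widehat\delta)=\sigma.\widehat\alpha$ and $\delta.\widehat\beta=\widehat{\beta'}.P\widehat\delta$ are reflected, because $F$ is faithful on 2-cells and $P=QF$.

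\emph{($\Leftarrow$)} Suppose $Ff$ is $Q$-cartesian, and take data $(g,h,\alpha)$ for $f$ in $\cP$. Applying $F$ gives data $(Fg,h,\alpha)$ for $Ff$, and hence a lift $(k,\widehat\alpha,\widehat\beta)$ in $\cQ$.
\begin{itemize}
\item Lift $k$ to $\bar k\colon z\to x$ by fullness.
\item Lift $\widehat\alpha\colon Ff.F\bar k\Rightarrow Fg$ to a 2-cell $f\bar k\Rightarrow g$; it is invertible, and the required equation holds since $P=QF$.
\end{itemize}
Condition (2) transfers by the same hom-category bijection as before. The lifts given in $\cP$ map under $F$ to lifts in $\cQ$, and the equations on $\widehat\delta$ match exactly under $F$.

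The main obstacle is bookkeeping in condition (2) of the forward direction. There, the given lifts in $\cQ$ must be replaced by lifts in the image of $F$. The point to check is that $\widehat\beta$, $\widehat{\beta'}$ are unchanged: $P\bar k=Qk$, so nothing in $\cC$ changes. Uniqueness then follows from faithfulness of $F$ on 2-cells.
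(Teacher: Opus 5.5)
Your proof is correct, but it takes a different route from the paper's. The paper never unwinds the two conditions of \cref{Def:cartesian_1cells}. Instead it invokes Buckley's characterization (his Proposition 3.1.2): $f\colon x\to y$ is $P$-cartesian if and only if, for every object $z$, the square formed by $f_*\colon\cP(z,x)\to\cP(z,y)$, $(Pf)_*$, $P_{z,x}$ and $P_{z,y}$ is a bipullback of categories. It then places to its left the square relating $\cP(z,-)$ to $\cQ(Fz,F-)$ via $F_{z,x}$ and $F_{z,y}$. That square is a bipullback simply because $F$ is locally an equivalence. By the pasting lemma for bipullbacks, the outer rectangle (the $P$-square for $f$) is a bipullback if and only if the right square (the $Q$-square for $Ff$ at $Fz$) is. Surjectivity on objects ensures that every object of $\cQ$ has the form $Fz$.

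What the paper's route buys is brevity: it yields both implications at once and uses only that $F$ is surjective on objects and locally an equivalence. The price is citing two external results. What your route buys is self-containment: it is the hands-on version of the same idea, using only the definition and \cref{char:lacktrivfibs}. Because you use the strict form of the hypotheses (fullness on morphisms and local full faithfulness), you transport all data on the nose. The $\cC$-components, in particular $\widehat\beta$ and $\widehat{\beta'}$, therefore never change, and the bookkeeping stays trivial. With only a local equivalence you would have to conjugate by comparison isomorphisms such as $F\bar k\cong k$; the bipullback formalism absorbs those automatically.
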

		\begin{proof}
			We prove both implications using the characterization of cartesian morphisms given in \cite[Proposition 3.1.2]{Buckley2014}. Let $f\colon x\to y$ be a morphism in $\cP$. Since $F$ is a trivial fibration, it is surjective on objects, and so for any object $z'$ in $\cQ$ there is an object $z$ in $\cP$ such that $Fz = z'$. Moreover, since $P=QF$, we have $Pz = QFz = Qz'$. We can now consider the following diagram of categories 
            \[\begin{tikzcd}
{\cP(z,x)}\rar["F_{z,x}","\simeq"']\dar["f_*"'] & {\cQ(z',Fx)}\rar["Q_{z',Fx}"]\dar["(Ff)_*"] & {\cC(Pz,Px)}\dar["(Pf)_*"]\\
			{\cP(z,y)}\rar["F_{z,y}","\simeq"'] & {\cQ(z', Fy)}\rar["Q_{z',Fy}"] & {\cC(Pz,Py)}
            \end{tikzcd}\]
			in which the square on the left is a bipullback, since $F$ is locally an equivalence of categories. The pasting lemma for bipullbacks \cite[Lemma 1.2.24]{maillard:tel-03475256} and Buckley's characterization of cartesian $1$-morphisms \cite[Proposition 3.1.2]{Buckley2014} then imply that $f$ is $P$-cartesian if and only if $Ff$ is $Q$-cartesian.
		\end{proof}

In analogy with the 1-categorical case (see \cite[\nopp Proposition 3.36]{vistoli2007notesgrothendiecktopologiesfibered}), we have the result below, which allows us to show that a 2-functor is a biequivalence by studying one fiber at a time. The proof follows standard methods, but we nevertheless include it as it is a result of independent interest in the theory of 2-fibrations.

\begin{prop}\label{bieq.on.fibers}
    Let $P\colon\cP\to \cC$ and $Q\colon \cQ\to\cC$ be 2-fibrations, and let $H\colon P\to Q$ be a cartesian 2-functor over $\cC$. If for each object $c\in\cC$, we have that the 2-functor induced on fibers $H_c\colon \cP_c\to \cQ_c$ is a biequivalence, then $H$ is a biequivalence. 
\end{prop}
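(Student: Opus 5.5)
We have to verify the two parts of the definition of biequivalence for $H$: essential surjectivity up to equivalence on objects, and local equivalence on hom-categories. Throughout, a cartesian 2-functor means one preserving cartesian morphisms and cartesian 2-cells. We write $\cP_c$ for the fiber over $c$: objects over $c$, morphisms mapped to $\id_c$, and 2-cells mapped to $\id_{\id_c}$. We use that $QH=P$.

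\textbf{Essential surjectivity.} Let $y\in\cQ$ and put $c=Qy$. Since $H_c$ is a biequivalence, there is $x\in\cP_c$ and an equivalence $y\xrightarrow{\sim}Hx$ in $\cQ_c$. An equivalence in the fiber is in particular an equivalence in $\cQ$, so this case is immediate.

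\textbf{Local equivalence.} Fix $x,x'\in\cP$ with $P x= c$ and $Px'=c'$. The functor $H_{x,x'}\colon\cP(x,x')\to\cQ(Hx,Hx')$ commutes with the projections to $\cC(c,c')$. We reduce to fibers over each $h\colon c\to c'$, in two steps.

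First, by \cref{prop:buckley3.2.1} and the fact that $P$ and $Q$ are locally cartesian fibrations, the hom-functors $P_{x,x'}$ and $Q_{Hx,Hx'}$ are cartesian fibrations of categories. The functor $H_{x,x'}$ preserves cartesian 2-cells because $H$ is cartesian. By the 1-categorical fiberwise criterion \cite[Proposition 3.36]{vistoli2007notesgrothendiecktopologiesfibered}, it then suffices to show that for each $h\in\cC(c,c')$ the induced functor on fibers $\cP(x,x')_h\to\cQ(Hx,Hx')_h$ is an equivalence of categories.

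Second, we identify these fibers with hom-categories of $\cP_c$ and $\cQ_c$. Choose a cartesian lift $\widehat h\colon h^*x'\to x'$ of $h$. By the cartesian property \enumref{Def:cartesian_1cells}{1}, \enumref{Def:cartesian_1cells}{2} together with \cref{prop:buckley3.2.1}, postcomposition with $\widehat h$ gives an equivalence $\cP_c(x,h^*x')\simeq\cP(x,x')_h$. Here we use that the lifted 2-cells can be chosen over identities, which comes from the uniqueness clause with $\widehat\beta=\id$. Since $H\widehat h$ is cartesian, the analogous equivalence holds on the $\cQ$ side, $\cQ_c(Hx,H(h^*x'))\simeq\cQ(Hx,Hx')_h$. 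These two equivalences are compatible with $H$ on the nose, because $H(\widehat h\circ -)=H\widehat h\circ H(-)$. Since $H_c$ is locally an equivalence, the fiber functor is therefore an equivalence.

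\textbf{Main obstacle.} The crux is the identification $\cP_c(x,h^*x')\simeq\cP(x,x')_h$. A morphism $g$ with $Pg=h$ only factors through $\widehat h$ up to an invertible 2-cell, and that 2-cell must lie over an identity. We must also check that 2-cells in the fiber of $P_{x,x'}$ correspond bijectively to vertical 2-cells in $\cP_c$. For this we apply \enumref{Def:cartesian_1cells}{2} with $\delta$ an identity, using the uniqueness to get full faithfulness, and \cref{prop:buckley3.2.1} to get essential surjectivity. The compatibility with $H$ also needs a check: the chosen lift in $\cQ$ is $H\widehat h$, and lifts are unique up to coherent isomorphism by \enumref{prop:propertiescartesian}{3}. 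Finally, the horizontal-composition axiom of a 2-fibration ensures that whiskering by $\widehat h$ sends cartesian 2-cells to cartesian ones, so the comparison respects the fibration structure used in the first step.
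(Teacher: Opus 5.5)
Your proof is correct and is essentially the paper's argument. You get essential surjectivity through the fibers, then apply Vistoli's fiberwise criterion to the hom-functors, identifying their fibers with $\cP_c(x,h^*x')$ and $\cQ_c(Hx,H(h^*x'))$ by postcomposition with $\widehat h$ and $H\widehat h$, in a square that commutes strictly. You also justify that identification, which the paper only asserts; note, though, that $\widehat\alpha$ lies over $\id_h$ because of the equation $P\widehat\alpha=\alpha.(P\widehat h\cdot\widehat\beta)$ with $\widehat\beta=\id$, not because of a uniqueness clause, and your closing remarks about uniqueness of lifts and horizontal composition are not needed.
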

\begin{proof}
Recall that the fiber 2-category $\cP_c$ is the sub-2-category of $\cP$ with objects $x\in\cP$ such that $Px=c$, morphisms $u\colon x\to x'$ in $\cP$ such that $Pu=\id_c$, and 2-cells $\alpha\colon u\Rightarrow v$ in $\cP$ such that $P\alpha=\id$. We begin by showing that $H$ is essentially surjective on objects. Let $y \in \cQ$ be an object and set $c' \coloneqq Qy$. Then $y$ lies in the fiber $\cQ_{c'}$. Since $H_{c'}$ is a biequivalence, there exists an object $x \in \cP_{c'}$ and an equivalence $H_{c'}(x)\simeq y$ in $\cQ_{c'}$. The inclusion $\cQ_{c'} \hookrightarrow \cQ$ carries the equivalence $H_{c'}(x) \simeq y$ to an equivalence in $\cQ$. This shows that $H$ is essentially surjective on objects. It remains to show that for any pair of objects $x,x' \in \cP$, the induced functor on hom-categories
\[
H_{x,x'}\colon \cP(x,x') \to \cQ(Hx,Hx')
\]
is an equivalence of categories. 
Since $P$ and $Q$ are $2$-fibrations, the local functors $P_{x,x'}\colon \cP(x,x') \to \cC(Px,Px')$ and $Q_{Hx,Hx'}\colon \cQ(Hx,Hx') \to \cC(Px,Px')$ are Grothendieck fibrations of categories. Moreover, the functor $H_{x,x'}$ preserves cartesian 2-cells so it is a cartesian functor over $\cC(Px,Px')$. By \cite[\nopp Proposition 3.36]{vistoli2007notesgrothendiecktopologiesfibered}, it suffices to show that for each morphism $f\colon Px\to Px'$ in $\cC$, the induced functor on fibers
\[
(H_{x,x'})_f\colon (P_{x,x'})_f \to (Q_{Hx,Hx'})_f
\] 
is an equivalence of categories. 

Since $P$ is a $2$-fibration, the morphism $f$ admits a cartesian $P$-lift $\widehat{f}\colon f^{\ast}x' \to x'$ in $\cP$. Moreover, $H$ preserves cartesian morphisms and $P=QH$ so that $H\widehat{f}\colon Hf^{\ast}x'\to Hx'$ is a cartesian $Q$-lift of $f$. Thus, both $\widehat{f}$ and $H\widehat{f}$ induce equivalences of categories, respectively
\[
    \widehat{f} \circ - \colon \cP_{Px}(x, f^{\ast}x') \to (P_{x,x'})_f \quad\text{ and }\quad
    H\widehat{f} \circ - \colon \cQ_{Px}(Hx,Hf^{\ast}x') \to (Q_{Hx,Hx'})_f.
\]
Notice that for any morphism $u$ in $P_{Px}(x, f^{\ast}x')$ we have $H(\widehat{f}.u) = H\widehat{f}. Hu$ so that the equivalences of categories above fit into a commutative diagram 
\[\begin{tikzcd}[cramped]
	{\cP_{Px}(x, f^{\ast}x')} && {(P_{x,x'})_f} \\
	\\
	{\cQ_{Px}(Hx,Hf^{\ast}x')} && {(Q_{Hx,Hx'})_f}
	\arrow["{\widehat{f}\circ -}","\simeq"', from=1-1, to=1-3]
	\arrow["{(H_{Px})_{x,f^{\ast}x'}}"',"\mathrel{\rotatebox[origin=c]{-90}{$\simeq$}}", from=1-1, to=3-1]
	\arrow["{(H_{x,x'})_f}", from=1-3, to=3-3]
	\arrow["{H\widehat{f}\circ - }","\simeq"', from=3-1, to=3-3]
\end{tikzcd}\]
where the left vertical arrow is also an equivalence of categories by hypothesis. We conclude that $(H_{x,x'})_f$ is an equivalence of categories.
\end{proof}

	We conclude this background section by recalling the projective model structure on the category $2\Fun[\cC^{\coop}, \ttwocat]$ of strict 2-functors $\cC^{\coop}\to\ttwocat$ and strict 2-natural transformations.

    \begin{thm}
    For any 2-category $\cC$, there exists a model structure on the category \linebreak $2\Fun[\cC^{\coop}, \ttwocat]$, called the projective model structure, whose weak equivalences and fibrations are defined levelwise. In particular, since all objects in $\twocat$ are fibrant, all objects in $2\Fun[\cC^{\coop}, \ttwocat]$ are fibrant. 
    \end{thm}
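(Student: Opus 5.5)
The plan is to obtain the projective model structure by transfer along the evaluation functors, treating $2\Fun[\cC^{\coop},\ttwocat]$ as $\twocat$-enriched diagrams. Write $\mathrm{ob}\,\cC$ for the set of objects and consider the forgetful functor
\[U\colon 2\Fun[\cC^{\coop},\ttwocat]\longrightarrow \prod_{c\in \mathrm{ob}\,\cC}\twocat,\qquad F\longmapsto (Fc)_c .\]
It has a left adjoint, namely the enriched left Kan extension. It sends a family $(X_c)_c$ to $\coprod_c \cC(-,c)\times X_c$. Here $\cC(-,c)$ is the representable 2-functor $\cC^{\coop}\to\ttwocat$, which exists because $\cC$ is small. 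I give the product category the product model structure coming from \cref{cofibrant-model:2cat}. That structure is cofibrantly generated, with generating sets indexed by $c$ and built from the generating cofibrations of \cref{Def:Lackgeneratingcofibs} and Lack's generating trivial cofibrations.

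The next step is to apply the standard transfer theorem (Kan's lifting/transfer lemma, e.g.\ Hirschhorn 11.3.2). We take as generating (trivial) cofibrations the maps $\cC(-,c)\times i$, with $i$ ranging over Lack's generators. The weak equivalences and fibrations are then created by $U$, so they are exactly the levelwise ones, as claimed. Two hypotheses need checking. The first is smallness. $2\Fun[\cC^{\coop},\ttwocat]$ is locally presentable, being a category of enriched functors between locally presentable settings, so all domains are small. Moreover $U$ preserves filtered colimits, since colimits of 2-functors are computed levelwise.

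The second hypothesis, which is the main point, is acyclicity: every relative cell complex built from the maps $\cC(-,c)\times j$, with $j$ a generating trivial cofibration, must be a levelwise biequivalence. I will get this by a fibrant-replacement/path-object argument. Every object of $\twocat$ is fibrant, and hence every object of the diagram category is levelwise fibrant. So by Quillen's path-object argument it suffices to give a functorial path object. For this I use the path object $\twocat$ has via the walking equivalence, namely the cotensor $X^{E}$ or $X^{\walkingadjoint}$ (pseudo-functors from $E$ into $X$, as in Lack). Applying it levelwise gives a path object for diagrams. The factorization $X\to X^{E}\to X\times X$ is built from the 2-functors $[0]\sqcup[0]\to E\to[0]$, and it is natural in $X$ for strict 2-functors. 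It is therefore compatible with the strict 2-natural structure, because the cotensor $\ttwocat$-enriches: a 2-cell $\alpha\colon f\Rightarrow g$ in $\cC$ induces a 2-natural transformation $X^{E}\to X^{E}$ levelwise.

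The obstacle I expect is this last step. The cotensor used must be a strict 2-functor $\ttwocat\to\ttwocat$, so that $F^{E}$ is again a strict 2-functor $\cC^{\coop}\to\ttwocat$. Lack's path object uses pseudo-functors out of $E$, and I would need to confirm that this construction is 2-functorial, not merely functorial, in $X$. If it is not, the fallback is the alternative hypothesis of the transfer theorem. That hypothesis requires $\cC(-,c)\times j$ to be a levelwise trivial cofibration: each $\cC(d,c)\times j$ must be a trivial cofibration in $\twocat$. This holds because Lack's model structure is monoidal for the cartesian product and every object is cofibrant up to the needed pushout-product argument. Trivial cofibrations are stable under pushouts and transfinite composites levelwise, since colimits are levelwise. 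Finally, the last sentence of the statement is immediate: fibrancy is levelwise and every 2-category is fibrant in $\twocat$.
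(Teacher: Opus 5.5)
\textbf{There is a genuine gap, at acyclicity.} Your set-up is fine. Transferring along evaluation $U\colon 2\Fun[\cC^{\coop},\ttwocat]\to\prod_c\twocat$ works, with one small correction: the left adjoint should use $\cC^{\coop}(c,-)$, i.e.\ $d\mapsto\cC(d,c)^{\op}$ regarded as locally discrete, rather than $\cC(-,c)$. Smallness is as you say. This is a more hands-on route than the paper's, which applies \cite[Theorem 5.4]{Moser_2019} with $\cV=\Cat$, $\cA=\ttwocat$, $D=\cC^{\coop}$ and checks its hypotheses. The gap is in acyclicity, the only substantive step.

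Your fallback is false. Lack's model structure is monoidal for the Gray tensor product, not for $\times$. For instance, the pushout-product of $[0]\sqcup[0]\to[1]$ with itself is the non-faithful 2-functor from the free boundary of a square onto $[1]\times[1]$. Moreover, a 2-category is cofibrant only if its underlying category is a retract of a free one (\cref{prop:cofibrationsLack}). The levelwise maps $A\times j$, with $A=\cC(d,c)^{\op}$ an arbitrary small category, are therefore not cofibrations in general. Take $\cC=\Sigma([1]\times[1])$, so that $A=[1]\times[1]$ with objects $00,01,10,11$, and take $j\colon[0]\to\walkingadjoint$. If $A\to A\times\walkingadjoint$ were a cofibration, the section would send the copy $A\times\{1\}$ of the commuting square onto new objects of a free extension of $A$. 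There, two factorizations of one arrow through distinct new objects force an arrow between those objects. Applying the retraction then yields an arrow between the middle vertices $(01,1)$ and $(10,1)$, which does not exist. So these maps are biequivalences but not trivial cofibrations, and you get no control over their pushouts.

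Your primary route does work, but not for the reason you give. The $\ttwocat$-enriched cotensor $[\walkingadjoint,X]$ (2-functors and \emph{2-natural} transformations) is 2-functorial in $X$, but it is not a path object. Take $X=\walkingadjoint$ and lift the equivalence $(i,\id_1)\colon(0,1)\to(1,1)$ along $[\walkingadjoint,X]\to X\times X$ to a morphism into the identity adjoint equivalence at $1$. Strict naturality forces $i\circ b(i^{-1})=\id_1$, a strict section of $i$, which does not exist.

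The object that works is $\mathrm{Ps}(\walkingadjoint,X)$, with \emph{pseudo}-natural transformations and modifications (compare \cref{construction:path_object} over $\cC=[0]$). It is not a cotensor, so ``the cotensor enriches'' says nothing about it. You must check the following by hand:
\begin{itemize}
\item post-composition with a 2-functor $G$ is strictly functorial;
\item for a 2-natural $\theta\colon G\Rightarrow G'$, whiskering $a\mapsto\theta a$ is a \emph{strict} 2-natural transformation $\mathrm{Ps}(\walkingadjoint,G)\Rightarrow\mathrm{Ps}(\walkingadjoint,G')$;
\item the constant and evaluation maps are 2-natural in $X$.
\end{itemize}
These do hold. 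The required equalities $(\theta b)(G\alpha)=(G'\alpha)(\theta a)$, and their analogues for modifications, reduce to $\theta_y\cdot G\sigma=G'\sigma\cdot\theta_x$ for 2-cells $\sigma$ of $X$.

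With this, $F\mapsto\mathrm{Ps}(\walkingadjoint,F(-))$ gives levelwise path objects in $2\Fun[\cC^{\coop},\ttwocat]$. Since every object is fibrant, Quillen's path-object argument yields acyclicity. Without this verification, or an appeal to Moser's theorem as in the paper, the proof is incomplete.
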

    \begin{proof}
        This is directly implied by \cite[Theorem 5.4]{Moser_2019}, where, in that statement's notation, $(\cV,\otimes,I)=(\Cat,\times, [0])$ which is a locally presentable base by \cite[Example 1.4]{MR1624638}; $\cA=\ttwocat$ which is locally $\Cat$-presentable\textemdash indeed, we know that $\twocat$ is locally presentable as a closed category, and therefore by \cite[Proposition 4.8]{KellyLack_loc_presentable} it is so as $\twocat$-enriched category, and thus also as a $\cat$-enriched category\textemdash as well as enriched over $\Cat$; and $D=\cC^\coop$. The canonical model structure on $\twocat$ is induced by the canonical model structure on $\Cat$ in the sense of \cite{muro2014dwyer} and hence is combinatorial, and in particular accessible. Condition (ii) in \cite[Theorem 5.4]{Moser_2019} is satisfied by \cite[Remark 5.5]{Moser_2019}, as all objects are cofibrant in the canonical model structure on $\Cat$. 
    \end{proof}

\section{Model structure for cartesian \texorpdfstring{$2$}{2}-fibrations}
\label{sec:model-structure}
	The goal of this section is to construct a model structure whose fibrant objects capture the $2$-fibrations from \cref{Def:2-fibration}. As mentioned in the introduction, the $2$-fibrations over a $2$-category $\cC$ cannot be encoded by a lifting condition on the category $\twocat_{/ \cC}$, due to technical obstructions. Indeed, while it is possible to use a lifting property to express what it means for a given morphism or 2-cell to be $P$-cartesian, we cannot encode the data of a 2-fibration as a lifting property of the 2-functor $P$ itself, as this would require us to have access to the set of all $P$-cartesian
morphisms and 2-cells. The solution we employ is to use marked $2$-categories instead, whose definition we now introduce.

	\begin{defn}\label{Def:marked-cat}
		A \emph{marked $2$-category} is a triple $\markedscat{\cP}$ where $\cP$ is a $2$-category and $E^1_{\cP}$ and $E^2_{\cP}$ are sets of morphisms and $2$-cells in $\cP$, respectively, such that $E^1_{\cP}$ is closed under composition and contains all identities, and $E^2_{\cP}$ is closed under vertical composition and contains all invertible $2$-cells.
		
		A \emph{marked 2-functor} $P\colon \markedscat{\cP}\to \markedscat{\cC}$ is a 2-functor $P\colon\cP \to \cC$ that preserves markings, in the sense that $P(E^1_{\cP})\subseteq E^1_{\cC}$ and $P(E^2_{\cP})\subseteq E^2_{\cC}$. 
        
        We denote by $\twocat^{+}$ the category of marked $2$-categories and marked 2-functors.
	\end{defn}

	\begin{rmk}\label{rmk:forgetful-marked}
		The canonical forgetful functor $U\colon \twocat^{+}\to \twocat$ admits both a left and a right adjoint. The left adjoint $(-)^{\flat}\colon \twocat \to \twocat^{+}$ equips a $2$-category with its minimal marking; in which only the identity morphisms and invertible $2$-cells are marked. The right adjoint $(-)^{\sharp}\colon \twocat \to \twocat^{+}$ equips a $2$-category with its maximal marking; in which every morphism and every $2$-cell is marked. The suspension functor $\Sigma\colon \cat \to \twocat$ from \cref{par:suspension} extends to the category of marked categories, giving $\Sigma\colon \cat^{+}\to \twocat^{+}$; only identity morphisms in $\Sigma\cC$ are marked, and a $2$-cell of $\Sigma\cC$ is marked precisely when the corresponding morphism of $\cC$ is.
	\end{rmk}

    With this in hand, we can introduce the following.

	\begin{defn}\label{Def:2Cat-over-C-sharp}
		Let $\cC$ be a $2$-category. Then $\twocat^{+}_{/\cC^\sharp}$ is the category whose
		\begin{itemize}
			\item \label{Def:2Cat-over-C-sharp.1} objects are marked 2-functors $\markedscat{\cP}\to \Csharp$, and 
			\item \label{Def:2Cat-over-C-sharp.2} morphisms $\left( P\colon\markedscat{\cP}\to \Csharp\right) \to \left( Q\colon \markedscat{\cQ}\to \Csharp\right)$ are the marked 2-functors $F\colon \markedscat{\cP}\to \markedscat{\cQ}$ such that $QF=P$. 
		\end{itemize}
	\end{defn}

	Throughout this section, we let $\cC$ be a fixed 2-category. The main objective of this section is then to construct a model structure on $\twocat^{+}_{/\cC^\sharp}$ whose fibrant objects detect the 2-fibrations. More precisely, the fibrant objects will be the marked 2-functors $P\colon \markedscat{\cP}\to\Csharp$ such that $P\colon\cP\to\cC$ is a $2$-fibration and the markings $E^1_\cP$ and $E^2_\cP$ consist of the $P$-cartesian morphisms and $2$-cells in $\cP$. Our strategy will be to use \cite[Theorem 2.8]{guetta2023fibrantlyinduced}, and the bulk of this section seeks to establish all the necessary requirements. In \cref{sec:anodyne} we find a set of generating anodyne extensions; these will be some of the trivial cofibrations, and their main purpose is to detect the desired fibrant objects. In \cref{sec:cofibandtrivfib} we study the cofibrations and trivial fibrations of this model structure, and we introduce the weak equivalences in \cref{sec:weakequivs}. Lastly, \cref{sec:pathobject} deals with the construction of path objects, and \cref{sec:theMS} brings all these results together to construct the desired model structure. 

	\subsection{Anodyne extensions and naive fibrations}\label{sec:anodyne}
		Let us start by describing the set of generating anodyne extensions. To simplify notation, we follow the convention that identities and composites are not explicitly depicted in the sets of markings, nor are invertible $2$-cells, as all these are always marked. For instance, the marked $2$-category $([2], \set{0\to 1, 1 \to 2})$ has $[2]$ as its underlying $2$-category; this only has identity $2$-cells and hence they are all marked and omitted from the notation. The marked morphisms are $0 \to 1$ and $1 \to 2$ as explicitly indicated, as well as the three identity morphisms and the composite $0 \to 2$.

		\begin{defn}\label{Def:An}
			The class $\an^{+}$ of \emph{marked anodyne extensions} is the smallest weakly saturated class of marked 2-functors over $\cC^{\sharp}$ containing the following: 
			\begin{enumerate}[label=\stlabel{Def:An}, ref=\arabic*]
				\item \label{Def:An.1} the inclusion  $j_1\colon [0]\xrightarrow{1} [1]^{\sharp}$;
				
				\item \label{Def:An.2} the inclusion $j_2\colon  \left(\Lambda^2[2],\set{1 \rightarrow 2}\right) \to  \left([2]^{\cong},\set{1\rightarrow 2}\right)$ given by
				\[
				\begin{tikzcd}
					& 1 &&& 1 \\
					0 & 2 && 0 & 2
					\arrow[from=2-1, to=2-2, "h"']
					\arrow[""{name=0, anchor=center, inner sep=0}, "{+}"',"g", from=1-2, to=2-2]
					\arrow[from=2-4, to=2-5, "h"']
					\arrow["{+}"',"g", from=1-5, to=2-5]
					\arrow[""{name=1, anchor=center, inner sep=0}, from=2-4, to=1-5,"f"]
					\arrow["\cong"{description}, draw=none, from=1, to=2-5]
					\arrow[shorten <=24pt, shorten >=24pt, from=0, to=1]
				\end{tikzcd}
				\]
				 where $[2]^{\cong}$ denotes the $2$-category with the same objects as $[2]$, with  morphisms $f\colon 0\to 1$, $g\colon 1\to 2$, $gf\colon 0\to 2$ and $h\colon 0\to 2$, and with an invertible $2$-cell $gf\xRightarrow{\cong} h$;
                 
				\item \label{Def:An.3} the inclusion $j_3\colon E^{\flat} \to E^{\sharp}$ where $E$ is the walking equivalence;
				
				\item \label{Def:An.4} 
                the inclusion $j_4$ depicted below
                \[\begin{tikzcd}
					0 & 1 && 0 & 1
					\arrow[""{name=0, anchor=center, inner sep=0}, "+", curve={height=-12pt}, from=1-1, to=1-2]
					\arrow[""{name=1, anchor=center, inner sep=0}, ""', curve={height=12pt}, from=1-1, to=1-2]
					\arrow[""{name=2, anchor=center, inner sep=0}, "+", curve={height=-12pt}, from=1-4, to=1-5]
					\arrow[""{name=3, anchor=center, inner sep=0}, "+"', curve={height=12pt}, from=1-4, to=1-5]
					\arrow[shorten <=10pt, shorten >=10pt, from=1-2, to=1-4]
					\arrow["\cong"'{xshift=-2pt}, shorten <=3pt, shorten >=3pt, Rightarrow, from=0, to=1]
					\arrow["\cong"'{xshift=-2pt}, shorten <={3pt}, shorten >=3pt, Rightarrow, from=2, to=3]
				\end{tikzcd};\] 
				
				\item \label{Def:An.5} the inclusion $j_5=\Sigma j_1\colon \Sigma [0] \to \Sigma [1]^{\sharp}$ given by
				\[
				\begin{tikzcd}
					x & y && x & y
					\arrow[""{name=0, anchor=center, inner sep=0}, "f", bend right={-32pt}, from=1-4, to=1-5]
					\arrow[""{name=1, anchor=center, inner sep=0}, "{g}"', bend right={32pt}, from=1-4, to=1-5]
					\arrow["g", from=1-1, to=1-2]
					\arrow[shorten <=10pt, shorten >=20pt,  from=1-2, to=1-4]
					\arrow["{+}", shorten <=3pt, shorten >=3pt, Rightarrow,from=0, to=1]
				\end{tikzcd};
				\]
				
				\item \label{Def:An.6} the inclusion $j_6$ displayed below 
                \[
				\begin{tikzcd}
					& g &&& g \\
					h & f && h & f
					\arrow[Rightarrow, from=2-1, to=2-2]
					\arrow[Rightarrow, ""{name=0, anchor=center, inner sep=0}, "{+}", from=1-2, to=2-2]
					\arrow[Rightarrow, from=2-4, to=2-5]
					\arrow[Rightarrow, "{+}", from=1-5, to=2-5]
					\arrow[Rightarrow, ""{name=1, anchor=center, inner sep=0}, from=2-4, to=1-5]
					\arrow["="{description}, draw=none, from=1, to=2-5]
					\arrow[shorten <=24pt, shorten >=24pt, from=0, to=1]
				\end{tikzcd};
				\]
				
				\item \label{Def:An.8}  the 2-functor $j_7$ depicted below with $\beta.\gamma_1 = \beta.\gamma_2 = \alpha$, which collapses $\gamma_1$ and $\gamma_2$ to a single 2-cell
                \[
				\begin{tikzcd}
					& g &&& g \\
					h & f && h & f
                    \arrow[Rightarrow, from=2-1, to=1-2, shift left,"\gamma_1"]
                    \arrow[Rightarrow, from=2-1, to=1-2, shift right, "\gamma_2"']
					\arrow[Rightarrow, from=2-1, to=2-2, "\alpha"']
					\arrow[Rightarrow, ""{name=0, anchor=center, inner sep=0}, "{+}"',"\beta", from=1-2, to=2-2]
					\arrow[Rightarrow, from=2-4, to=2-5,"\alpha"']
					\arrow[Rightarrow, "{+}"',"\beta", from=1-5, to=2-5]
					\arrow[Rightarrow, ""{name=1, anchor=center, inner sep=0}, from=2-4, to=1-5,"\gamma"]
					\arrow["="{description}, draw=none, from=1, to=2-5]
                    \arrow["="{description}, draw=none, from=1, to=2-5]
					\arrow[shorten <=24pt, shorten >=24pt, from=0, to=1]
				\end{tikzcd};
				\]

				\item \label{Def:An.10} the inclusion $j_{8}$ shown in the diagram below
                \[\begin{tikzcd}[ampersand replacement=\&, column sep=12pt, row sep=1.8ex]
                	0 \&\&\&\&\&\& 0 \\
                	{} \&\&\& {} \&\&\& {} \&\& {} \& {} \\
                	{} \&\&\& {} \&\& {} \& {} \&\&\& {} \\
                	1 \&\&\&\& 2 \&\& 1 \&\&\&\& 2
                	\arrow["{h'}", curve={height=-12pt}, from=1-1, to=4-1]
                	\arrow["h"', curve={height=12pt}, from=1-1, to=4-1]
                	\arrow[""{name=0, anchor=center, inner sep=0}, "g"'{pos=0.6}, curve={height=12pt}, from=1-1, to=4-5]
                	\arrow[""{name=1, anchor=center, inner sep=0}, "{g'}"{pos=0.6}, curve={height=-18pt}, from=1-1, to=4-5]
                	\arrow[""{name=2, anchor=center, inner sep=0}, "h"', curve={height=12pt}, from=1-7, to=4-7]
                	\arrow[""{name=3, anchor=center, inner sep=0}, "{h'}", curve={height=-12pt}, from=1-7, to=4-7]
                	\arrow[""{name=4, anchor=center, inner sep=0}, "{g'}"{pos=0.6}, curve={height=-18pt}, from=1-7, to=4-11]
                	\arrow[""{name=5, anchor=center, inner sep=0}, "g"'{pos=0.6}, curve={height=12pt}, from=1-7, to=4-11]
                	\arrow["{\alpha'}"{pos=0.42},"\cong"'{pos=0.42}, shift left=1, between={0.3}{0.6}, Rightarrow, from=2-1, to=2-4]
                	\arrow["{\alpha'}"{pos=0.42},"\cong"'{pos=0.42}, shift left=1, between={0.3}{0.6}, Rightarrow, from=2-7, to=2-10]
                	\arrow["\alpha"{pos=0.3},"\cong"'{pos=0.3}, shift right=5, between={0.1}{0.5}, Rightarrow, from=3-1, to=3-4]
                	\arrow[shift left=5, between={0.4}{1}, from=3-4, to=3-6]
                	\arrow["\alpha"{pos=0.3},"\cong"'{pos=0.3}, shift right=5, between={0.1}{0.5}, Rightarrow, from=3-7, to=3-10]
                	\arrow["f"', from=4-1, to=4-5]
                	\arrow["f"', from=4-7, to=4-11]
                	\arrow["\sigma"', shift right=5, between={0.2}{0.8}, Rightarrow, from=0, to=1]
                	\arrow["\delta", between={0.2}{0.8}, Rightarrow, from=2, to=3]
                	\arrow["\sigma"', shift right=5, between={0.2}{0.8}, Rightarrow, from=5, to=4]
                \end{tikzcd}\]
				where $\alpha\colon fh \xRightarrow{\cong} g$ and $\alpha'\colon fh'\xRightarrow{\cong} g'$ are invertible $2$-cells and $\alpha'. (f \cdot \delta) = \sigma. \alpha$;

				\item \label{Def:An.11} the 2-functor $j_{9}$ depicted below, collapsing $\delta$ and $\delta'$ to a single 2-cell $\delta$
                \[\begin{tikzcd}[ampersand replacement=\&, column sep=12pt, row sep=1.8ex]
                	0 \&\&\&\&\&\& 0 \\
                	{} \&\&\& {} \&\&\& {} \&\& {} \& {} \\
                	{} \&\&\& {} \&\& {} \& {} \&\&\& {} \\
                	1 \&\&\&\& 2 \&\& 1 \&\&\&\& 2
                	\arrow[""{name=0, anchor=center, inner sep=0}, "{h'}", curve={height=-12pt}, from=1-1, to=4-1]
                	\arrow[""{name=1, anchor=center, inner sep=0}, "h"', curve={height=12pt}, from=1-1, to=4-1]
                	\arrow[""{name=2, anchor=center, inner sep=0}, "g"'{pos=0.6}, curve={height=12pt}, from=1-1, to=4-5]
                	\arrow[""{name=3, anchor=center, inner sep=0}, "{g'}"{pos=0.6}, curve={height=-18pt}, from=1-1, to=4-5]
                	\arrow[""{name=4, anchor=center, inner sep=0}, "h"', curve={height=12pt}, from=1-7, to=4-7]
                	\arrow[""{name=5, anchor=center, inner sep=0}, "{h'}", curve={height=-12pt}, from=1-7, to=4-7]
                	\arrow[""{name=6, anchor=center, inner sep=0}, "{g'}"{pos=0.6}, curve={height=-18pt}, from=1-7, to=4-11]
                	\arrow[""{name=7, anchor=center, inner sep=0}, "g"'{pos=0.6}, curve={height=12pt}, from=1-7, to=4-11]
                	\arrow["{\alpha'}"{pos=0.42},"\cong"'{pos=0.42}, shift left=1, between={0.3}{0.6}, Rightarrow, from=2-1, to=2-4]
                	\arrow["{\alpha'}"{pos=0.42}, "\cong"'{pos=0.42}, shift left=1, between={0.3}{0.6}, Rightarrow, from=2-7, to=2-10]
                	\arrow["\alpha"{pos=0.3},"\cong"'{pos=0.3}, shift right=5, between={0.1}{0.5}, Rightarrow, from=3-1, to=3-4]
                	\arrow[shift left=5, between={0.4}{1}, from=3-4, to=3-6]
                	\arrow["\alpha"{pos=0.3},"\cong"'{pos=0.3}, shift right=5, between={0.1}{0.5}, Rightarrow, from=3-7, to=3-10]
                	\arrow["f"', from=4-1, to=4-5]
                	\arrow["f"', from=4-7, to=4-11]
                	\arrow["{\delta'}", shift left=4, between={0.2}{0.8}, Rightarrow, from=1, to=0]
                	\arrow["\delta", shift right=3, between={0.2}{0.8}, Rightarrow, from=1, to=0]
                	\arrow["\sigma"', shift right=3, between={0.2}{0.8}, Rightarrow, from=2, to=3]
                	\arrow["\delta", between={0.2}{0.8}, Rightarrow, from=4, to=5]
                	\arrow["\sigma"', shift right=5, between={0.2}{0.8}, Rightarrow, from=7, to=6]
                \end{tikzcd}\]
				where $\alpha\colon fh \xRightarrow{\cong} g$ and $\alpha'\colon fh'\xRightarrow{\cong} g'$ are invertible $2$-cells and $\alpha'.(f \cdot \delta) = \sigma.\alpha$ and;
				
				\item \label{Def:An.12} the 2-functor $j_{10}$ illustrated below
				\[
				\begin{tikzcd}[column sep=15pt,
    row sep=2ex]
					0 && 1 && 2 &&& 0 && 1 && 2
					\arrow[""{name=0, anchor=center, inner sep=0}, "gf", bend right={-70pt}, from=1-1, to=1-5]
					\arrow[""{name=1, anchor=center, inner sep=0}, "f", bend right={-30pt}, from=1-1, to=1-3]
					\arrow[""{name=2, anchor=center, inner sep=0}, "{f'}"', bend right={30pt}, from=1-1, to=1-3]
					\arrow[""{name=3, anchor=center, inner sep=0}, "g", bend right={-30pt}, from=1-3, to=1-5]
					\arrow[""{name=4, anchor=center, inner sep=0}, "{g'}"', bend right={30pt}, from=1-3, to=1-5]
					\arrow[""{name=5, anchor=center, inner sep=0}, "{g'f'}"', bend right={70pt}, from=1-1, to=1-5]
					\arrow[""{name=6, anchor=center, inner sep=0}, "f", bend right={-30pt}, from=1-8, to=1-10]
					\arrow[""{name=7, anchor=center, inner sep=0}, "g", bend right={-30pt}, from=1-10, to=1-12]
					\arrow[""{name=8, anchor=center, inner sep=0}, "gf", bend right={-70pt}, from=1-8, to=1-12]
					\arrow[""{name=9, anchor=center, inner sep=0}, "{f'}"', bend right={30pt}, from=1-8, to=1-10]
					\arrow[""{name=10, anchor=center, inner sep=0}, "{g'}"', bend right={30pt}, from=1-10, to=1-12]
					\arrow[""{name=11, anchor=center, inner sep=0}, "{g'f'}"', bend right={70pt}, from=1-8, to=1-12]
					\arrow[shorten <=16pt, shorten >=16pt, from=1-5, to=1-8]
					\arrow["\alpha"',"+", shorten <=3pt, shorten >=3pt, Rightarrow, from=6, to=9]
					\arrow["\beta"',"+", shorten <=3pt, shorten >=3pt, Rightarrow, from=7, to=10]
					\arrow["\gamma"{pos=0.8},"+"'{pos=0.8},bend right={-25pt}, shorten <=4pt, shorten >=4pt, Rightarrow, from=8, to=11,crossing over]
					\arrow["\gamma"{pos=0.8},bend right={-25pt}, shorten <=4pt, shorten >=4pt, Rightarrow, from=0, to=5,crossing over]
					\arrow["\alpha"',"+", shorten <=3pt, shorten >=3pt, Rightarrow, from=1, to=2]
					\arrow["\beta"',"+", shorten <=3pt, shorten >=3pt, Rightarrow, from=3, to=4]
				\end{tikzcd}
				\]
				where  $\gamma$ is the horizontal composite of $\alpha$ and $\beta$.
			\end{enumerate}
		\end{defn}

		\begin{rmk}
			Suppose $P\colon\cP \to \cC$ is a 2-functor. To give readers a glimpse of the intuition behind each of the maps $j_i$ in the definition above, one can observe that the right lifting property of $P$ with respect to the anodyne extension \enumref{Def:An}{1} will encode the existence of $P$-lifts of morphisms of the form $x \to Py$. The maps \enumref{Def:An}{2}, \enumref{Def:An}{10} and \enumref{Def:An}{11} will be used to ensure that these $P$-lifts are cartesian morphisms; see \cref{Def:cartesian_1cells}. Similarly, $P$ having the right lifting property with respect to the maps \enumref{Def:An}{5}, \enumref{Def:An}{6} and \enumref{Def:An}{8} will encode the property of $P$ being locally a cartesian fibration, and the right lifting property with respect to the map \enumref{Def:An}{12} will encode the third requirement for $P$ to be a $2$-fibration, namely that cartesian $2$-cells are closed under horizontal composition; see \cref{Def:2-fibration}. The remaining anodyne extensions \enumref{Def:An}{3} and \enumref{Def:An}{4} will be used to encode relevant properties of cartesian morphisms and $2$-cells. We will see all this in detail in \cref{Prop:naivefibrant_iff_cms}.
		\end{rmk}

		\begin{defn}\label{Def:Naivefibrations}
			The class $\nfib^{+}$ of \emph{marked naive fibrations} consists of the marked 2-functors in  $\twocat^{+}_{/\cC^\sharp}$ with the right lifting property with respect to the marked anodyne extensions.
            
            An object $P\colon \markedscat{\cP}\to\cC^\sharp$ in $\twocat^{+}_{/\cC^\sharp}$ is \emph{naive fibrant} if the unique marked 2-functor from $P$ to the terminal object $\id_{{\cC^{\sharp}}}\colon\cC^{\sharp}\to\cC^{\sharp}$ is a marked naive fibration.
		\end{defn}

        \begin{rmk}
            It is straightforward to verify that an object $P\colon \markedscat{\cP}\to\cC^\sharp$ is naive fibrant if and only if $P$ itself has the right lifting property with respect to the marked anodyne extensions.
        \end{rmk}

        The class of marked anodyne extensions of \cref{Def:An} was specifically designed so that the fibrant objects coincide with the marked $2$-fibrations over $\cC^\sharp$ whose marking is determined by the cartesian cells. 

		\begin{defn}\label{Def:Cart-marked-2-fibrations}
			An object $P\colon\markedscat{\cP} \to \cC^\sharp$ in $\twocat^{+}_{/\cC^\sharp}$ is a \emph{cart-marked $2$-fibration} if $P$ is a $2$-fibration whose markings $E^1_{\cP}$ and $E^2_{\cP}$ consist of the $P$-cartesian morphisms and $2$-cells, respectively.  
		\end{defn}

		\begin{thm}\label{Prop:naivefibrant_iff_cms}
			A marked 2-functor $P\colon \markedscat{\cP} \to \cC^{\sharp}$ is naive fibrant if and only if it is a cart-marked $2$-fibration.
		\end{thm}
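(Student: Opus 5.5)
We prove the two implications separately, translating each generating anodyne extension $j_1,\dots,j_{10}$ of \cref{Def:An} into a concrete lifting condition on $P$. Since naive fibrancy is the right lifting property against the weakly saturated class $\an^{+}$, it is equivalent to the right lifting property against the ten generators. So the whole argument amounts to a dictionary between these ten lifting conditions and the clauses of \cref{Def:cartesian_1cells}, \cref{Def:2-fibration}, and the requirement that the markings be exactly the cartesian cells.

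\emph{Cart-marked $2$-fibration $\Rightarrow$ naive fibrant.} Each generator is checked directly.
\begin{itemize}
\item Lifting against $j_1$ is the existence of a cartesian lift of a morphism $x'\to Py$.
\item Lifting against $j_5$ is the existence of a cartesian $2$-cell lifting a given $2$-cell, which holds because $P$ is locally a cartesian fibration.
\item Lifting against $j_6$ and $j_7$ are the existence and uniqueness parts of \cref{defn:cartesian1cat} for the hom-functors $P_{x,y}$.
\item Lifting against $j_{10}$ is closure of cartesian $2$-cells under horizontal composition.
\item Lifting against $j_2$ is \enumref{Def:cartesian_1cells}{1} with $P\widehat h=h$ on the nose. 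Here we use \cref{prop:buckley3.2.1}, since the lifting square forces $P\widehat h = h$.
\item Lifting against $j_8$ and $j_9$ are existence and uniqueness in \enumref{Def:cartesian_1cells}{2}. Again the strictness requirement ($P\widehat\delta=\delta$, $P\widehat\alpha = \alpha$) is arranged using \cref{prop:buckley3.2.1}.
\item Lifting against $j_3$ says that every equivalence is cartesian, which follows from \cref{prop:propertiescartesian}\eqref{prop:propertiescartesian.2}. It also says that the witnessing invertible $2$-cells are cartesian, which is automatic since invertible $2$-cells are cartesian by \cref{prop:cartesian_isomorphism_1cats}.
\item Lifting against $j_4$ says that a morphism isomorphic to a marked one is marked, which is \cref{prop:propertiescartesian}\eqref{prop:propertiescartesian.0}.
\end{itemize}

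\emph{Naive fibrant $\Rightarrow$ cart-marked $2$-fibration.} Here the main work is to show that the markings are exactly the cartesian cells. For $2$-cells:
\begin{itemize}
\item Lifting against $j_6$ and $j_7$ shows that every marked $2$-cell is cartesian for $P_{x,y}$.
\item Conversely, let $\theta$ be cartesian. Lifting against $j_5$ produces a marked lift $\theta'$ of $P\theta$ with the same codomain. Since $\theta'$ is marked, it is cartesian by the previous step, so $\theta$ and $\theta'$ differ by a vertical isomorphism $\iota$ with $\theta=\theta'.\iota$.
\item Invertible $2$-cells are marked by definition, and $E^2$ is closed under vertical composition, so $\theta$ is marked.
\end{itemize}
For morphisms:
\begin{itemize}
\item Lifting against $j_2$, $j_8$ and $j_9$ shows that marked morphisms are cartesian. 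To match the non-strict clause \enumref{Def:cartesian_1cells}{1}, first use lifting against $j_5$ for the invertible $2$-cell $\alpha$ to replace the data by data with a strictly commuting image. Then apply $j_2$, pasting the invertible correction $\widehat\beta$ afterwards.
\item Conversely, let $f$ be cartesian. Lifting against $j_1$ gives a marked, hence cartesian, lift $f'$ of $Pf$.
\item By \cref{prop:propertiescartesian}\eqref{prop:propertiescartesian.3} there is an equivalence $e$ with $f'e\cong f$, where $Pe$ is an identity.
\item Lifting against $j_3$ makes $e$ marked. Closure of $E^1$ under composition makes $f'e$ marked, and lifting against $j_4$ transfers the marking along the isomorphism $f'e\cong f$.
\end{itemize}
Once the markings are identified, the three conditions of \cref{Def:2-fibration} follow from lifting against $j_1$, $j_5$ and $j_{10}$ respectively.

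The main obstacle is the step matching the strict lifting problems $j_2$, $j_8$, $j_9$ against Buckley's weak definition, where the lift only satisfies $P\widehat h\cong h$ via $\widehat\beta$. The direction from naive fibrancy is the more delicate one. I would handle it by first lifting $\widehat\beta$ using $j_5$ (i.e.\ using that $P$ is a local fibration, so invertible $2$-cells lift to invertible marked ones). This reduces to lifting problems in which $P\widehat h=h$ holds, and afterwards one checks that the uniqueness of $\widehat\delta$ in the weak formulation follows from $j_9$ together with this normalisation.
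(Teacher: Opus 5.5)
Your proposal is correct and follows essentially the same route as the paper: the same generator-by-generator dictionary for the converse, the same $j_6$/$j_7$/$j_5$ argument for 2-cells, and the same $j_1$, \enumref{prop:propertiescartesian}{3}, $j_3$, $j_4$ argument showing that cartesian morphisms are marked. The differences are cosmetic. Where the paper simply invokes \cref{prop:buckley3.2.1} to take $\widehat\beta=\widehat{\beta'}=\id$, you re-derive that normalisation by lifting $\widehat\beta$ with $j_5$, which is the same idea. Your preliminary $j_5$-step before applying $j_2$ is unnecessary, since $[2]^{\cong}$ already carries the invertible 2-cell $\alpha$ and $j_2$ directly yields a lift with $\widehat\beta=\id$.
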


		\begin{proof}
			We first show that if $P$ satisfies the lifting condition with respect to the marked anodyne extensions, then it is a $2$-fibration. To prove that $P$ is locally a cartesian fibration, let $x,x'$ be objects in $\cP$ and consider the functor $P_{xx'}\colon \cP(x,x') \to \cC(Px,Px')$. Notice that if $f\colon x\to x'$ is any morphism of $\cP$ and $\alpha\colon g \Rightarrow Pf$ is a $2$-cell in $\cC$, then the right lifting property of $P$ with respect to the map \enumref{Def:An}{5} guarantees the existence of a marked $P$-lift $\widehat{\alpha}\colon \widehat{g} \Rightarrow f$. We claim this marked $P$-lift is cartesian as a morphism in $\cP(x,x')$. For this, let $\beta\colon h \Rightarrow f$ and $\gamma\colon Ph \Rightarrow g$ be $2$-cells in $\cC$ and $\cP$, respectively, with $\alpha.\gamma = P\beta$. Then, since $\widehat{\alpha}$ is marked, the lifting property of $P$ with respect to the map \enumref{Def:An}{6} ensures the existence of a $P$-lift $\widehat{\gamma}\colon h\Rightarrow\widehat{g}$ of $\gamma$ such that $\beta = \widehat{\alpha}.\widehat{\gamma}$. Moreover, the lifting property with respect to \enumref{Def:An}{8} guarantees the uniqueness of the $2$-cell $\widehat{\gamma}$. We conclude that $P$ is locally a cartesian fibration. 
					
			Next, we prove that any morphism $f\colon y \to Px$ in $\cC$ admits a cartesian $P$-lift. The lifting condition with respect to the map \enumref{Def:An}{1} guarantees the existence of a marked $P$-lift $\widehat{f}\colon y' \to x$ of $f$; we must show that  $\widehat{f}$ is cartesian. Note that the lifting property of $P$ with respect to the map \enumref{Def:An}{2} implies that  $\widehat{f}$ lifts morphisms up to isomorphism (see Definition \enumrefalt{Def:cartesian_1cells}{1}), with the required invertible $2$-cell $\widehat{\beta}$ being the identity. It remains to show that $\widehat{f}$ also lifts $2$-cells coherently, as described in Definition \enumrefalt{Def:cartesian_1cells}{2}. For this,  suppose we are given all the starting data of Definition \enumrefalt{Def:cartesian_1cells}{2}; recall that as we have already proved that $P$ is locally a cartesian fibration, by \cref{prop:buckley3.2.1} we can assume that the invertible 2-cells $\widehat{\beta}$ and $\widehat{\beta'}$ participating in the given lifts are identities. Then, the lifting property of $P$ with respect to the map \enumref{Def:An}{10} ensures the existence of the lift $\widehat{\delta}$ required in Definition \enumrefalt{Def:cartesian_1cells}{2}, which is moreover unique due to the lifting property with respect to the map \enumref{Def:An}{11}.

        To conclude that the underlying 2-functor of $P$ is a 2-fibration, it remains to show that the horizontal composite of cartesian $2$-cells is cartesian. This will follow from the lifting property of $P$ with respect to the map \enumref{Def:An}{12}, once we establish that the set $E^2_\cP$ of marked 2-cells consists of the cartesian 2-cells. 
		
        We thus turn our attention to the markings $E_{\cP}^1$ and $E_{\cP}^2$, and show that these coincide with the $P$-cartesian morphisms and 2-cells, respectively. For this, it is enough to show that $P$-cartesian morphisms and 2-cells are in $E_{\cP}^1$ and $E_{\cP}^2$, for we have already proved in the preceding paragraphs that every marked morphism and $2$-cell in $\cP$ is cartesian. Let $f\colon x' \to x$ be a cartesian morphism in $\cP$, and consider the morphism $Pf$ in $\cC$. The lifting property of $P$ with respect to the map \enumref{Def:An}{1} implies that $Pf$ admits a marked $P$-lift $f'\colon x'' \to x$, which, like all marked morphisms, is then cartesian. Then $f$ and $f'$ are two cartesian lifts of the same morphism $Pf$, and so by Proposition \enumrefalt{prop:propertiescartesian}{3} there exists an equivalence $h\colon x' \to x''$ and an invertible 2-cell $\tau\colon f'h \Rightarrow f$. 
            This data can be represented in the following diagram
            \[\begin{tikzcd}[row sep=1in]
					x' & x && Px' & Px
					\arrow[""{name=0, anchor=center, inner sep=0},  "f'h \ +", curve={height=-12pt}, from=1-1, to=1-2]
					\arrow[""{name=1, anchor=center, inner sep=0}, "f"', curve={height=12pt}, from=1-1, to=1-2]
					\arrow[""{name=2, anchor=center, inner sep=0}, "P(f'h) \ +", curve={height=-12pt}, from=1-4, to=1-5]
					\arrow[""{name=3, anchor=center, inner sep=0}, "Pf \ +"', curve={height=12pt}, from=1-4, to=1-5]
					\arrow[shorten <=10pt, shorten >=10pt, from=1-2, to=1-4, mapsto, "P"]
					\arrow["\cong"'{xshift=-2pt},"\tau", shorten <=3pt, shorten >=3pt, Rightarrow, from=0, to=1]
					\arrow["\cong"'{xshift=-2pt},"P\tau", shorten <={3pt}, shorten >=3pt, Rightarrow, from=2, to=3]
				\end{tikzcd}\]
                where we have used the fact that the equivalence $h$ is marked by the lifting property of $P$ with respect to \enumref{Def:An}{3}; that marked morphisms are always closed under composition to mark $f'h$; and that $P$ preserves markings to mark $P(f'h)$ and $Pf=Pf'$. Then, the lifting property with respect to \enumref{Def:An}{4} implies that the cartesian morphism $f$ must be marked in $\cP$.
                
                The argument for $2$-cells is similar: starting from a cartesian 2-cell $\alpha\colon f\Rightarrow g$, the lifting property with respect to the map \enumref{Def:An}{5} gives a marked $P$-lift $\beta\colon f'\xRightarrow{+} g$ of $P\alpha$, which like all marked 2-cells, must be cartesian. Then both $\alpha$ and $\beta$ are cartesian $P$-lifts of $P\alpha$, and so by the 1-categorical analogue of Proposition \enumrefalt{prop:propertiescartesian}{3}, there is an invertible 2-cell $\gamma\colon f\xRightarrow{\cong} f'$ such that $\alpha=\beta.\gamma$. Since invertible 2-cells are all marked by definition, and markings are closed under vertical composition, we have that $\beta.\gamma$ is marked; hence so is $\alpha$.

			For the converse, one can verify directly that if $P\colon \markedscat{\cP} \to \cC^{\sharp}$ is a cart-marked 2-fibration, the fact that the markings agree with the cartesian morphisms and 2-cells implies that $P$ has the right lifting property with respect to the maps in \enumref{Def:An}{2}, \enumref{Def:An}{3} (as equivalences are always cartesian \cite[Proposition 3.1.11]{Buckley2014}), \enumref{Def:An}{4} (as morphisms isomorphic to cartesians are also cartesian \cite[Proposition 3.1.8]{Buckley2014}), \enumref{Def:An}{6}, \enumref{Def:An}{8}, \enumref{Def:An}{10}, and \enumref{Def:An}{11}. The remaining lifting properties with respect to the maps \enumref{Def:An}{1}, \enumref{Def:An}{5}, and \enumref{Def:An}{12} are ensured by the first, second, and third parts of \cref{Def:2-fibration}, respectively. 
		\end{proof}

	We conclude this subsection with a characterization of the naive fibrations between naive fibrant objects, showing that their underlying 2-functors agree with the fibrations in the canonical model structure on $\twocat$. 

		\begin{prop}\label{prop:fibbtwfibobj}
			A marked 2-functor $F\colon \markedscat{\cP} \to \markedscat{\cQ}$ between cart-marked 2-fibrations $P\colon\markedscat{\cP}\to \cC^{\sharp}$ and $Q\colon\markedscat{\cQ}\to \cC^{\sharp}$ is a naive fibration if and only if its underlying 2-functor $UF\colon \cP \to \cQ$ is an equifibration.
		\end{prop}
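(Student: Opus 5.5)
We prove both directions by comparing the lifting problems against the generators $j_1,\dots,j_{10}$ of \cref{Def:An} with the two defining conditions of an equifibration in \cref{defn:lackfib}. Since $P=QF$ and both $P$ and $Q$ are cart-marked $2$-fibrations, the markings on $\cP$ and $\cQ$ are exactly the $P$- and $Q$-cartesian cells. The key preliminary observation is that $F$ preserves markings, so it sends $P$-cartesian morphisms and $2$-cells to $Q$-cartesian ones. Moreover, equivalences and invertible $2$-cells are cartesian, by Proposition~\enumrefalt{prop:propertiescartesian}{2} and its $1$-categorical analogue \cref{prop:cartesian_isomorphism_1cats}. In addition, a morphism $u$ of $\cP$ is $P$-cartesian if and only if $Fu$ is $Q$-cartesian: one direction is preservation, and for the converse we use the factorization through $Q$ together with Proposition~\enumrefalt{prop:propertiescartesian}{1.5}-type cancellation arguments. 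If needed, we only use this in the cases where it is forced by the lifting data.

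\emph{($\Rightarrow$)} Assume $F$ has the right lifting property against $\an^+$. For the first condition of an equifibration, take $y\in\cP$ and an equivalence $e\colon x\to Fy$ in $\cQ$. Since $e$ is an equivalence it is $Q$-cartesian, hence marked. Lifting against $j_1\colon [0]\to[1]^\sharp$ therefore gives a marked, hence $P$-cartesian, morphism $g\colon x'\to y$ with $Fg=e$. We then need $g$ to be an equivalence. By Proposition~\enumrefalt{prop:propertiescartesian}{2} it suffices that $Pg=Q e$ is an equivalence in $\cC$, which holds because $Q$ preserves equivalences. For the second condition, given an invertible $\beta\colon f\Rightarrow Fg$, the cell $\beta$ is marked, and lifting against $j_5=\Sigma j_1$ produces a marked $\alpha\colon g'\Rightarrow g$ with $F\alpha=\beta$. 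This $\alpha$ is $P$-cartesian and $P\alpha=Q\beta$ is invertible, so $\alpha$ is invertible by \cref{prop:cartesian_isomorphism_1cats} applied to $P_{x,y}$.

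\emph{($\Leftarrow$)} Assume $UF$ is an equifibration. We check the lifting property against each generator; the generators fall into two groups.

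\textbf{Existence generators} ($j_1$, $j_2$, $j_5$, $j_6$, $j_8$). Here the plan is to produce a lift in $\cP$ of the image data in $\cC$ using that $P$ is a cart-marked $2$-fibration (\cref{Prop:naivefibrant_iff_cms}). This lift maps under $F$ only to something equivalent or isomorphic to the prescribed data in $\cQ$, because $Q$-cartesian lifts are unique up to equivalence (Proposition~\enumrefalt{prop:propertiescartesian}{3}). We then correct it using the equifibration property: transport along an equivalence for objects and morphisms, and along an invertible $2$-cell for $2$-cells. For example, for $j_1$ with data $y\in\cP$ and a marked $e\colon x\to Fy$, first take a $P$-cartesian lift $\tilde e$ of $Qe$. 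Then $F\tilde e$ and $e$ are $Q$-cartesian lifts of the same morphism, so they are related by an equivalence $h$ over an identity. Lift $h$ by the equifibration property and compose with $\tilde e$ to get a morphism mapping exactly to $e$ up to an invertible $2$-cell. Correct that $2$-cell with the second equifibration condition, and finally use Proposition~\enumrefalt{prop:propertiescartesian}{0} to keep the lift cartesian.

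\textbf{Uniqueness and marking generators} ($j_3$, $j_4$, $j_7$, $j_9$, $j_{10}$). These are either epimorphisms collapsing parallel cells or only add markings. Lifting against them reduces to properties already holding in $\cP$: uniqueness of cartesian factorizations, and the facts that equivalences, cells isomorphic to cartesian ones, and horizontal composites of cartesian $2$-cells are cartesian. This group is essentially formal.

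The main obstacle is the existence group, especially $j_2$ and $j_8$. There the lift in $\cP$ must map \emph{strictly} onto the given data in $\cQ$, including prescribed $2$-cells, while also satisfying strict equations over $\cP$. The correction step has to be done so that it does not break the equation $\alpha'.(f\cdot\delta)=\sigma.\alpha$. For these cases we would try to exploit that $F$ is locally a fibration of categories, since the second equifibration condition gives isofibrations on homs. We would combine this with the local cartesian fibration structure of $P$, choosing the lifts via \cref{prop:buckley3.2.1} so that the relevant $\widehat\beta$ are identities.
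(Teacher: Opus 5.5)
Your $(\Rightarrow)$ direction is correct and is the paper's argument; you are even a bit more explicit than the paper in noting that $Pg=Qe$ is an equivalence. Your treatment of $j_1$ and $j_5$, and of the formal generators $j_3,j_4,j_7,j_9,j_{10}$, is also fine.

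\textbf{The gap.} In $(\Leftarrow)$, the cases $j_2$, $j_6$ and $j_8$ are left at ``we would try'', and your ``lift, then correct by the equifibration property'' recipe cannot work for $j_6$ and $j_8$. There the endpoints of the sought $2$-cell are already fixed morphisms of $\cP$ ($h,g$ for $j_6$; $h,h'$ for $j_8$), and you must hit a prescribed $2$-cell of $\cQ$ exactly. A $2$-category has no higher cells along which to correct a $2$-cell. The isofibration property of the hom-functors of $F$ does not help either: it produces new domains, and $\delta$ need not be invertible.

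The missing idea is to use the universal property of $Ff$ \emph{in $\cQ$}. This is available because $F$ preserves markings, so $Ff$ is $Q$-cartesian. For $j_8$:
\begin{itemize}
\item $P$-cartesianness of $f$ gives $\widehat\delta\colon h\Rightarrow h'$ with $\alpha'.(f\cdot\widehat\delta)=\sigma.\alpha$ and $P\widehat\delta=Q\delta$.
\item Both $F\widehat\delta$ and $\delta$ then solve the lifting problem of \enumref{Def:cartesian_1cells}{2} along $Ff$, for the lifts $(Fh,F\alpha)$ and $(Fh',F\alpha')$, the $2$-cell $F\sigma$, and the base $2$-cell $Q\delta$.
\item The uniqueness clause there forces $F\widehat\delta=\delta$, so no correction is needed.
\end{itemize}
The same argument in the hom-category, using that the image of the marked $2$-cell of $j_6$ is $Q$-locally cartesian, settles $j_6$.

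For $j_2$ your general recipe is close, but you cite the wrong uniqueness statement. \enumrefalt{prop:propertiescartesian}{3} compares two cartesian lifts of a morphism of $\cC$. What you need is to compare two factorizations through the fixed cartesian morphism $Ff$, and crucially the comparison must be \emph{coherent}. Concretely:
\begin{itemize}
\item Solve the problem along the $P$-cartesian $f$ for $(Qh,Q\alpha)$. This gives $k$ and $\gamma\colon fk\cong g$ with $Pk=Qh$ and $P\gamma=Q\alpha$, using \cref{prop:buckley3.2.1}.
\item Now $(Fk,F\gamma)$ and $(h,\alpha)$ are two lifts along $Ff$. Applying \enumref{Def:cartesian_1cells}{2} with identity $2$-cells, in both directions, yields an invertible $\sigma\colon h\Rightarrow Fk$ with $Q\sigma=\id$ and $\alpha=F\gamma.(Ff\cdot\sigma)$.
\item The second equifibration condition lifts $\sigma$ to $\widehat\sigma\colon\widehat h\Rightarrow k$ with $F\widehat h=h$.
\item Set $\widehat\alpha\coloneqq\gamma.(f\cdot\widehat\sigma)$. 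Then $F\widehat\alpha=\alpha$ on the nose, precisely because of the coherence equation.
\end{itemize}

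\textbf{A false auxiliary claim.} Drop your ``preliminary observation'' that $u$ is $P$-cartesian if and only if $Fu$ is $Q$-cartesian. The converse is false. Take $\cC=[0]$ and $F\colon[1]\to[0]$: this is an equifibration between cart-marked $2$-fibrations, and it sends the non-invertible arrow to an identity. Naive fibrations, unlike trivial fibrations (\cref{prop:char_trivfib}), need not reflect markings. Fortunately you never actually use this claim.
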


		\begin{proof}
			Suppose first that $F$ is a naive fibration; we will verify the conditions in \cref{defn:lackfib}. For the first condition, let $f\colon x \xrightarrow{\sim} Fy$ be an equivalence in $\cQ$. Since $F$ has the right lifting property with respect to the generating anodyne extension \enumref{Def:An}{1}, there exists some marked morphism $g\colon y'\to y$ such that $Fg=f$. As $P\colon(\cP,E^1_{\cP},E^2_{\cP})\to \cC^{\sharp}$ is cart-marked, we have that $g$ is cartesian, and so by Proposition \enumrefalt{prop:propertiescartesian}{2} the morphism $g$ must be an equivalence, as desired. For the second condition, suppose we have a morphism $g\colon x\to y$ in $\cP$ and an invertible $2$-cell $\beta\colon f\xRightarrow{\cong} Fg$ in $\cQ$. Then, the right lifting property of $F$ with respect to the generating anodyne extension \enumref{Def:An}{5} yields a marked (and hence, cartesian) 2-cell $\alpha\colon h\Rightarrow g$ in $\cP$ such that $F\alpha=\beta$. Using \cref{prop:cartesian_isomorphism_1cats} we conclude that $\alpha$ must be invertible. 

			Conversely, suppose that the underlying 2-functor $UF$ is an equifibration; we must check that $F$ has the right lifting property with respect to each of the generating anodyne extensions of \cref{Def:An}. Some of these are almost immediate:
			
			\begin{itemize}[leftmargin=*]
				\item For the map \enumref{Def:An}{3}, we use the fact that $\cP$ and $\cQ$ are cart-marked and all equivalences are cartesian (see \cite[Proposition 3.1.11]{Buckley2014}). 
                \item Similarly, for the map \enumref{Def:An}{4}, we use the fact that any morphism related by an invertible 2-cell to a cartesian morphism is itself cartesian (see \cite[Proposition 3.1.8]{Buckley2014}). 
				\item For the map \enumref{Def:An}{12}, we use the fact that $P$ is a cart-marked $2$-fibration and as such the horizontal composite of $P$-cartesian $2$-cells is cartesian. 
				\item The right lifting properties  of $F$ with respect to the maps \enumref{Def:An}{5},  \enumref{Def:An}{6} and \enumref{Def:An}{8} hold by \cite[Proposition 4.14]{mosersarazola2023model}, since the cart-marked 2-fibrations $P$ and $Q$ are locally cart-marked Grothendieck fibrations (see \cite[Definition 4.9]{mosersarazola2023model}), the equifibration $F$ is locally an isofibration, and these anodyne extensions on hom-categories correspond to the generating maps (i), (ii) and (iii) of \cite[Definition 4.11]{mosersarazola2023model}.
			\end{itemize}

			To show that $F$ has the right lifting property with respect to the map \enumref{Def:An}{1}, suppose that $f\colon x\to Fy$  is a cartesian morphism in $\cQ$; we need to find a marked (i.e.\ cartesian) morphism $\widehat{g}\colon\widehat{x}\to y$ in $\cP$ such that $F\widehat{g}= f$. For this, let us consider the corresponding morphism $Qf\colon Qx \to QFy = Py$ in $\cC$. Since $P$ is a $2$-fibration, there exists a cartesian lift $g\colon x' \to y$ with $Pg = Qf$. Then $Fg$ and $f$ are two cartesian $Q$-lifts of $Qf$, and so by Proposition \enumrefalt{prop:propertiescartesian}{3} there exists an equivalence $h\colon x \to F(x')$ and an invertible $2$-cell $\tau\colon f \Rightarrow (Fg)h$. Since $F$ is an equifibration, there exists an equivalence ${g'}\colon \widehat{x} \to x'$ such that $Fg'=h$. We then have an invertible $2$-cell $\tau\colon f \Rightarrow F(gg')$, and so using the fact that $F$ is an equifibration once again, we can find a morphism $\widehat{g}\colon \widehat{x} \to y$ in $\cP$ and an invertible $2$-cell $\alpha\colon \widehat{g}  \Rightarrow gg'$ such that $F\widehat{g} = f$ and $F\alpha = \tau$. Thus \cite[Proposition 3.1.8]{Buckley2014} ensures that $\widehat{g}$ is the desired cartesian morphism, as it is related by the invertible 2-cell $\alpha$ to the composite of $g'$ (which is an equivalence, hence cartesian) and $g$ (which is cartesian by construction).

			The lifting problem   with respect to the map \enumref{Def:An}{2} is represented by the following data:
			\[\begin{tikzcd}[ampersand replacement=\&,row sep=scriptsize]
				{x} \&\&\&\& {Fx} \&\&\&\& {Px} \\
				\\
				{y} \&\& {z} \&\& {Fy} \&\& {Fz} \&\& {Py} \&\& {Pz}
				\arrow[""{name=0, anchor=center, inner sep=0}, "g", from=1-1, to=3-3]
				\arrow[""{name=1, anchor=center, inner sep=0}, "h"', from=1-5, to=3-5]
				\arrow[""{name=2, anchor=center, inner sep=0}, "Fg", from=1-5, to=3-7]
				\arrow[""{name=3, anchor=center, inner sep=0}, "Qh"', from=1-9, to=3-9]
				\arrow[""{name=4, anchor=center, inner sep=0}, "Pg", from=1-9, to=3-11]
				\arrow["f"', from=3-1, to=3-3]
				\arrow["Ff"', from=3-5, to=3-7]
				\arrow["Pf"', from=3-9, to=3-11]
				\arrow["F", mapsto, between={0.3}{0.7}, from=0, to=1]
				\arrow["Q", mapsto, between={0.3}{0.7}, from=2, to=3]
				\arrow["\alpha","\cong"', between={0}{0.8}, Rightarrow, from=3-5, to=2]
				\arrow["{Q\alpha}", "\cong"', between={0}{0.8}, Rightarrow, from=3-9, to=4]
			\end{tikzcd}\]
			where $f$ is $P$-cartesian and $\alpha$ is an invertible $2$-cell. The lifting problem is solved provided we can find a morphism $\widehat{h}\colon x \to y$ in $\cP$ together with an invertible $2$-cell $\widehat{\alpha}\colon f\widehat{h}\Rightarrow g$ such that $F \widehat{\alpha}=\alpha$. 
            To achieve this, we can first use the fact that $f$ is $P$-cartesian to get a solution for the composite lifting problem above: that is, a morphism $k\colon x\to y$ and an invertible 2-cell $\gamma\colon fk\Rightarrow g$ in $\cP$ such that $P\gamma=Q\alpha$. We then have that $(Fk,F\gamma)$ and $(h,\alpha)$ are two solutions to the lifting problem below.
            \[\begin{tikzcd}[ampersand replacement=\&,row sep=scriptsize]
            	\& Fx \&\&\&\& Px \\
            	\\
            	{} \& Fy \&\& Fz \&\& Py \&\& Pz
            	\arrow["Fk"'{pos=0.6}, curve={height=12pt}, from=1-2, to=3-2]
            	\arrow["h"{pos=0.6}, curve={height=-12pt}, from=1-2, to=3-2]
            	\arrow[""{name=0, anchor=center, inner sep=0}, "Fg", from=1-2, to=3-4]
            	\arrow[""{name=1, anchor=center, inner sep=0}, "Qh"', from=1-6, to=3-6]
            	\arrow[""{name=2, anchor=center, inner sep=0}, "Pg", from=1-6, to=3-8]
            	\arrow["Ff"', from=3-2, to=3-4]
            	\arrow["Pf"', from=3-6, to=3-8]
            	\arrow["Q", maps to, between={0.3}{0.7}, from=0, to=1]
            	\arrow["\alpha"{pos=0.7}, shift right=4, between={0.4}{1}, Rightarrow, from=3-2, to=0]
            	\arrow["{Q\alpha}","\cong"', between={0}{0.8}, Rightarrow, from=3-6, to=2]
                \arrow["{F\gamma}"{pos=0.65},shift left=3, between={0.5}{0.8}, Rightarrow, from=3-1, to=0, crossing over]
            \end{tikzcd}\]
                   
            Since lifted morphisms along the $Q$-cartesian morphism $Ff$ must be unique up to a coherent invertible 2-cell (see \cref{Def:cartesian_1cells}), there is some invertible 2-cell $\sigma\colon h\Rightarrow Fk$ in $\cQ$ with $\alpha=F\gamma.(Ff \cdot \sigma)$. In turn, as $F$ is an equifibration, there exists some morphism $\widehat{h}\colon x\to y$ and an invertible 2-cell $\widehat{\sigma}\colon \widehat{h}\Rightarrow k$ in $\cP$ such that $F\widehat{h}=h$ and $F\widehat{\sigma}=\sigma$. The morphism $\widehat{h}$ is the one we seek, and we can define the desired 2-cell as $\widehat{\alpha}=\gamma.(f\cdot \widehat{\sigma})$, which by construction is invertible and satisfies $F\widehat{\alpha}=F\gamma.(Ff\cdot \sigma)=\alpha.$

It remains to check the right lifting property of $F$ with respect to the maps \enumref{Def:An}{10} and \enumref{Def:An}{11}. The lifting problem with respect to the first map can be represented as follows:
    \[\begin{tikzcd}[ampersand replacement=\&, column sep=14pt,
    row sep=2.6ex]
    	z \&\&\&\&\&\& Fz \&\&\&\&\&\& Pz \\
    	{} \&\&\& {} \&\&\& {} \&\&\& {} \&\&\& {} \&\&\& {} \\
    	{} \&\&\& {} \&\& {} \& {} \&\&\& {} \&\& {} \& {} \&\&\& {} \\
    	x \&\&\&\& y \&\& Fx \&\&\&\& Fy \&\& Px \&\&\&\& Py
    	\arrow["{h'}", curve={height=-12pt}, from=1-1, to=4-1, dashed]
    	\arrow["h"', curve={height=12pt}, from=1-1, to=4-1]
    	\arrow[""{name=0, anchor=center, inner sep=0}, "g"'{pos=0.6}, curve={height=12pt}, from=1-1, to=4-5]
    	\arrow[""{name=1, anchor=center, inner sep=0}, "{g'}"{pos=0.6}, curve={height=-18pt}, from=1-1, to=4-5]
    	\arrow[""{name=2, anchor=center, inner sep=0}, "Fh"', curve={height=12pt}, from=1-7, to=4-7]
    	\arrow[""{name=3, anchor=center, inner sep=0}, "{Fh'}", curve={height=-12pt}, from=1-7, to=4-7,dashed]
    	\arrow[""{name=4, anchor=center, inner sep=0}, "{Fg'}"{pos=0.6}, curve={height=-18pt}, from=1-7, to=4-11]
    	\arrow[""{name=5, anchor=center, inner sep=0}, "Fg"'{pos=0.6}, curve={height=12pt}, from=1-7, to=4-11]
    	\arrow[""{name=6, anchor=center, inner sep=0}, "Ph"', curve={height=12pt}, from=1-13, to=4-13]
    	\arrow[""{name=7, anchor=center, inner sep=0}, "{Ph'}", curve={height=-12pt}, from=1-13, to=4-13,dashed]
    	\arrow[""{name=8, anchor=center, inner sep=0}, "{Pg'}"{pos=0.6}, curve={height=-18pt}, from=1-13, to=4-17]
    	\arrow[""{name=9, anchor=center, inner sep=0}, "Pg"'{pos=0.6}, curve={height=12pt}, from=1-13, to=4-17]
    	\arrow["{\alpha'}"{pos=0.42},"\cong"'{pos=0.42}, shift left=1, between={0.3}{0.6}, Rightarrow, from=2-1, to=2-4,dashed]
    	\arrow["{F\alpha'}"{pos=0.42}, "\cong"'{pos=0.42}, shift left=1, between={0.3}{0.6}, Rightarrow, from=2-7, to=2-10,dashed]
    	\arrow["{P\alpha'}"{pos=0.42},"\cong"'{pos=0.42}, shift left=1, between={0.3}{0.6}, Rightarrow, from=2-13, to=2-16,dashed]
    	\arrow["\alpha"{pos=0.3}, "\cong"'{pos=0.3}, shift right=5, between={0.1}{0.5}, Rightarrow, from=3-1, to=3-4]
    	\arrow["F"{pos=0.7}, shift left=5, between={0.4}{1}, maps to, from=3-4, to=3-6]
    	\arrow["{F\alpha}"{pos=0.3}, "\cong"'{pos=0.3}, shift right=5, between={0.1}{0.5}, Rightarrow, from=3-7, to=3-10]
    	\arrow["Q"{pos=0.7}, shift left=5, between={0.4}{1}, maps to, from=3-10, to=3-12]
    	\arrow["{P\alpha}"{pos=0.3},"\cong"'{pos=0.3}, shift right=5, between={0.1}{0.5}, Rightarrow, from=3-13, to=3-16]
    	\arrow["f"', from=4-1, to=4-5]
    	\arrow["Ff"', from=4-7, to=4-11]
    	\arrow["Pf"', from=4-13, to=4-17]
    	\arrow["\sigma"', shift right=5, between={0.2}{0.8}, Rightarrow, from=0, to=1]
    	\arrow["\delta", between={0.2}{0.8}, Rightarrow, from=2, to=3]
    	\arrow["{F\sigma}"', shift right=5, between={0.2}{0.8}, Rightarrow, from=5, to=4]
    	\arrow["{Q\delta}", between={0.2}{0.8}, Rightarrow, from=6, to=7]
    	\arrow["{P\sigma}"', shift right=5, between={0.2}{0.8}, Rightarrow, from=9, to=8]
    \end{tikzcd}\]
        where $\alpha\colon fh \xRightarrow{\cong} g$ and $\alpha'\colon fh'\xRightarrow{\cong} g'$ are invertible $2$-cells and $F\alpha'. (Ff \cdot \delta) = F\sigma. F\alpha$. We wish to show that there exists some 2-cell $\widehat{\delta}\colon h\Rightarrow h'$ in $\cP$ such that $\alpha'. (f \cdot \widehat{\delta}) = \sigma. \alpha$ and $F\widehat{\delta}=\delta$. Similarly to the previous paragraph, we can first consider the composite lifting problem; since $f$ is $P$-cartesian, there exists some $\widehat{\delta}\colon h\Rightarrow h'$ such that $\alpha'. (f \cdot \widehat{\delta}) = \sigma. \alpha$ and $P\widehat{\delta}=Q\delta$. Hence, we only need to show that $F\widehat{\delta}=\delta$. This is due to the fact that $F\widehat{\delta}$ and $\delta$ are two $Q$-lifts of the lifting problem  
        \[\begin{tikzcd}[ampersand replacement=\&,
      column sep=14pt,
    row sep=2.6ex]
        	Fz \&\&\&\&\&\& Pz \\
        	{} \&\&\& {} \&\&\& {} \&\&\& {} \\
        	{} \&\&\& {} \&\& {} \& {} \&\&\& {} \\
        	Fx \&\&\&\& Fy \&\& Px \&\&\&\& Py
        	\arrow[""{name=0, anchor=center, inner sep=0}, "Fh"', curve={height=12pt}, from=1-1, to=4-1]
        	\arrow[""{name=1, anchor=center, inner sep=0}, "{Fh'}", curve={height=-12pt}, from=1-1, to=4-1,dashed]
        	\arrow[""{name=2, anchor=center, inner sep=0}, "{Fg'}"{pos=0.6}, curve={height=-18pt}, from=1-1, to=4-5]
        	\arrow[""{name=3, anchor=center, inner sep=0}, "Fg"'{pos=0.6}, curve={height=12pt}, from=1-1, to=4-5]
        	\arrow[""{name=4, anchor=center, inner sep=0}, "Ph"', curve={height=12pt}, from=1-7, to=4-7]
        	\arrow[""{name=5, anchor=center, inner sep=0}, "{Ph'}", curve={height=-12pt}, from=1-7, to=4-7,dashed]
        	\arrow[""{name=6, anchor=center, inner sep=0}, "{Pg'}"{pos=0.6}, curve={height=-18pt}, from=1-7, to=4-11]
        	\arrow[""{name=7, anchor=center, inner sep=0}, "Pg"'{pos=0.6}, curve={height=12pt}, from=1-7, to=4-11]
        	\arrow["{F\alpha'}"{pos=0.42}, "\cong"'{pos=0.42}, shift left=1, between={0.3}{0.6}, Rightarrow, from=2-1, to=2-4,dashed]
        	\arrow["{P\alpha'}"{pos=0.42},"\cong"'{pos=0.42}, shift left=1, between={0.3}{0.6}, Rightarrow, from=2-7, to=2-10,dashed]
        	\arrow["{F\alpha}"{pos=0.3},"\cong"'{pos=0.3}, shift right=5, between={0.1}{0.5}, Rightarrow, from=3-1, to=3-4]
        	\arrow["Q"{pos=0.7}, shift left=5, between={0.4}{1}, maps to, from=3-4, to=3-6]
        	\arrow["{P\alpha}"{pos=0.3},"\cong"'{pos=0.3}, shift right=5, between={0.1}{0.5}, Rightarrow, from=3-7, to=3-10]
        	\arrow["Ff"', from=4-1, to=4-5]
        	\arrow["Pf"', from=4-7, to=4-11]
        	\arrow["\delta", shift left=5, between={0.2}{0.8}, Rightarrow,  from=0, to=1]
        	\arrow["{F\widehat{\delta}}", shift right=5, between={0.2}{0.8}, Rightarrow,  from=0, to=1]
        	\arrow["{F\sigma}"', shift right=5, between={0.2}{0.8}, Rightarrow, from=3, to=2]
        	\arrow["{Q\delta}", between={0.2}{0.8}, Rightarrow, from=4, to=5]
        	\arrow["{P\sigma}"', shift right=5, between={0.2}{0.8}, Rightarrow, from=7, to=6]
        \end{tikzcd}\]
        along the $Q$-cartesian morphism $Ff$, which must be unique by definition. The reasoning for the map \enumref{Def:An}{11} is more direct: if $\delta,\delta'\colon h\Rightarrow h'$ are two 2-cells in $\cP$ fitting into the corresponding diagram along the $P$-cartesian morphism $f$, and such that $F\delta=F\delta'$, then we will have that $P\delta=P\delta'$, and so the uniqueness of the 2-cells in the definition of the $P$-cartesian morphism $f$ ensures that $\delta=\delta'$. 
	\end{proof}

    \subsection{Cofibrations and trivial fibrations}\label{sec:cofibandtrivfib}

		This subsection focuses on the classes of marked cofibrations and marked trivial fibrations, introducing them and studying some useful properties. We start by defining the cofibrations,  and showing they are cofibrantly generated by a set.

		\begin{defn}\label{Def:cofibrations}
			A marked 2-functor in $\twocat^{+}_{/\cC^\sharp}$  is a \emph{marked cofibration} if its underlying 2-functor is a cofibration in the canonical model structure on $\twocat$. 
		\end{defn}

		The canonical model structure on $\twocat$ is cofibrantly generated, and a generating set of cofibrations was listed in \cref{Def:Lackgeneratingcofibs}. If we apply the minimal marking functor $(-)^{\flat}\colon\twocat \to \twocat^{+}$ to this set, we obtain marked cofibrations which provide a generating set for $\twocat^{+}_{/\cC^\sharp}$, as we now show.

		\begin{prop}\label{prop:genmarkedcofibs}
			The set $I$ consisting of the marked 2-functors over $\cC^\sharp$
                \begin{itemize}
                \item $i_1\colon \emptyset\to [0]$,
                \item $i_2\colon [0]\sqcup  [0]\to [1]^\flat$,
                \item $i_3\colon \Sigma([0]\sqcup  [0])^\flat\to \Sigma [1]^\flat$,
                \item $i_4\colon \Sigma(\{0 \rightrightarrows 1\})^\flat \to \Sigma[1]^\flat$,
                \item  $i_5\colon [1]^{\flat}\to [1]^{\sharp}$, and \item $i_6 \colon \Sigma{[1]^{\flat}} \to \Sigma{[1]^{\sharp}}$
            \end{itemize}
           is a set of generating marked cofibrations.
		\end{prop}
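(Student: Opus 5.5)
The goal is to show that the class $\mathrm{cof}(I)$ of retracts of transfinite composites of pushouts of maps in $I$, formed in $\twocat^{+}_{/\cC^\sharp}$, is exactly the class of marked cofibrations of \cref{Def:cofibrations}. Equivalently, via the lifting characterization, it suffices to show that $I^{\pitchfork}$, the maps with the right lifting property against $I$, consists of the marked 2-functors whose underlying 2-functor is a trivial fibration in $\twocat$ and which reflect markings. Once $I^{\pitchfork}$ is identified, the two descriptions of cofibrations can be compared by an adjunction argument.

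First, note that the forgetful functor $\twocat^{+}_{/\cC^\sharp}\to \twocat_{/\cC}$ has left adjoint $(-)^\flat$; this works since $\cC^\sharp$ is maximally marked, so any minimal marking over $\cC^\sharp$ is a valid object. By adjunction, lifting against $i_1,\dots,i_4$ is equivalent to lifting of the underlying 2-functor against the generating cofibrations of \cref{Def:Lackgeneratingcofibs}, so by \cref{char:lacktrivfibs} the underlying 2-functor is surjective on objects, full on morphisms, and locally fully faithful. Lifting against $i_5$ says that a morphism of the domain is marked whenever its image is marked. Lifting against $i_6$ says the same for 2-cells, because the hom-category data of $\Sigma[1]^\sharp$ is a single marked 2-cell. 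Hence $I^{\pitchfork}$ consists of the marked 2-functors $F$ with $UF$ a trivial fibration and with $F$ reflecting both markings.

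Next, show that every map with the left lifting property against this class has underlying cofibration. Let $i\colon A\to B$ have the left lifting property against $I^\pitchfork$, and let $p$ be an underlying trivial fibration $X\to UB$ in $\twocat$. Equip $X$ with the marking pulled back from $B$, so that $p$ lies in $I^\pitchfork$ over $\cC^\sharp$. Given a square in $\twocat$ from $Ui$ to $p$, its top map $UA\to X$ is automatically marked, because markings in $X$ are detected by $p$ and $A\to B$ preserves markings. A lift exists in $\twocat^{+}$, hence in $\twocat$, and so $Ui$ is a cofibration.

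Conversely, suppose $Ui$ is a cofibration and let $p\colon X\to Y$ be in $I^\pitchfork$. The underlying lifting problem has a solution $\ell\colon UB\to UX$. This $\ell$ preserves markings: if $m$ is marked in $B$, then $p\ell(m)$ is the image of $m$ in $Y$, which is marked, and since $p$ reflects markings, $\ell(m)$ is marked. The lift also lies over $\cC^\sharp$, since it commutes with $p$. Thus the marked cofibrations are exactly ${}^{\pitchfork}(I^{\pitchfork})$.

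Finally, since $\twocat^{+}_{/\cC^\sharp}$ is locally presentable (it is a slice of a locally presentable category) and $I$ is a set, the small object argument applies. This gives that ${}^{\pitchfork}(I^{\pitchfork})$ is the saturation of $I$, which completes the proof. The only delicate point is the pullback-marking step: one must check that the marking pulled back from $B$ satisfies \cref{Def:marked-cat}, that is, closure under composition, containment of identities, and containment of all invertible 2-cells. The first two hold because $p$ is a 2-functor. The last holds because $p$ sends invertible 2-cells to invertible 2-cells, which are marked in $B$.
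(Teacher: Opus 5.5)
Your proof is correct, but it is organized in the opposite order from the paper's.

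The paper argues directly with cells. Since $Ui_5$ and $Ui_6$ are identities, $\mathrm{cof}(UI)$ is exactly Lack's class of cofibrations. So the flat maps $i_1$--$i_4$ build any underlying cofibration with the minimal marking, and pushouts along $i_5$ and $i_6$ then attach whatever markings are required.

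You instead compute $I^{\pitchfork}$ first, through the adjunction $(-)^\flat\dashv U$ on slices. That computation, together with your argument that underlying cofibrations lift against $I^{\pitchfork}$, is essentially the paper's later proof of \cref{prop:char_trivfib}, moved forward. You then identify ${}^{\pitchfork}(I^{\pitchfork})$ with the marked cofibrations using the pulled-back marking trick, and finish with the small object argument in $\twocat^{+}_{/\cC^\sharp}$.

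What your route buys is that the lifting formalism handles retracts and transfinite composites automatically. It also makes explicit the one genuine device needed: marking an intermediate 2-category by pulling back the markings of $B$. The paper's sketch leaves the passage to retracts implicit, and that passage needs the same device. To exhibit a marked cofibration $A\to B$ as a retract of an $I$-cell complex $A\to Z$, one must mark $Z$ by pulling back along $Z\to UB$, so that the section $B\to Z$ preserves markings. The paper's route, in exchange, shows concretely how marked cofibrations are built. It relies only on the cofibrant generation of Lack's model structure on $\twocat$, not on local presentability of $\twocat^{+}$.

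Two points you should spell out.
\begin{itemize}
\item In your third step, the square must have identity bottom edge and $p$ must have codomain $UB$; otherwise the top map need not preserve markings. To conclude that $Ui$ is a cofibration, factor $Ui$ in $\twocat$ as a cofibration followed by a trivial fibration $X\to UB$ and apply the retract argument.
\item Local presentability of $\twocat^{+}$ itself deserves a line of justification. For instance, marked 2-categories are the models of an essentially algebraic theory.
\end{itemize}
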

		\begin{proof}
            Marked cofibrations are defined simply in terms of their underlying 2-functors, and so it is easy to see that all the maps in $I$ are marked cofibrations. Moreover, we have that $\mathrm{cof}(UI)=\mathrm{cof}(UI\setminus \{Ui_5,U i_6\})$ (since $Ui_5$ and $Ui_6$ are identities), and the latter is precisely the class of all cofibrations in $\twocat$. Hence, the maps $i_1$-$i_4$ suffice to construct all the possible cofibrations with the minimal marking, and then pushouts with respect to $i_5$ and $i_6$ yield all possible choices of markings on these at the level of morphisms and 2-cells.
		\end{proof} 

		\begin{rmk}\label{AnisinCof}
			It is not hard to explicitly check that the underlying 2-functor of each marked anodyne extension $j_1,j_2,\dots,j_{10}$ described in \cref{Def:An} is a  cofibration in $\twocat$. Hence, by \cref{prop:genmarkedcofibs} we have that all these are also marked cofibrations.
		\end{rmk}

		We can now use the set of generating cofibrations to provide an explicit description of the class of trivial fibrations, by inspecting the right lifting properties with respect to each of the generating maps.

        \begin{defn}
           A marked 2-functor in $\twocat^{+}_{/\cC^\sharp}$ is a \emph{marked trivial fibration} if it has the right lifting property with respect to all marked cofibrations. 
        \end{defn}

		\begin{prop} \label{prop:char_trivfib}
			A marked 2-functor $F\colon \markedscat{\cP} \to \markedscat{\cQ}$ from $P$ to $Q$ is a marked trivial fibration if and only if the following hold:
			\begin{enumerate}[label=\stlabel{prop:char_trivfib}, ref= \arabic*]
				\item \label{prop:char_trivfib.1} The underlying 2-functor $UF$ is a trivial fibration in $\twocat$.
				\item \label{prop:char_trivfib.2} A morphism $f$ in $\cP$ is marked if and only if $Ff$ is marked in $\cQ$.
				\item \label{prop:char_trivfib.3} A $2$-cell $\alpha$ in $\cP$ is marked if and only if $F\alpha$ marked in $\cQ$.
			\end{enumerate}
		\end{prop}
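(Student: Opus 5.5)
Since the marked trivial fibrations are defined by the right lifting property against all marked cofibrations, and \cref{prop:genmarkedcofibs} shows that the set $I=\{i_1,\dots,i_6\}$ generates them, a marked 2-functor is a marked trivial fibration if and only if it has the right lifting property against each $i_k$. One subtlety: lifting problems live in $\twocat^{+}_{/\cC^\sharp}$, so the square includes the maps to $\cC^\sharp$. But the lift is forced to lie over $\cC^\sharp$ because $P=QF$, and every cell of $\cC^\sharp$ is marked. Hence lifting in the slice is the same as lifting in $\twocat^+$. The plan is therefore to analyse the lifting properties against $i_1$--$i_4$ and against $i_5,i_6$ separately, and match them to conditions \enumref{prop:char_trivfib}{1}--\enumref{prop:char_trivfib}{3}.

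For $i_1$--$i_4$, each domain and codomain carries the minimal marking. A marked 2-functor out of a minimally marked 2-category is the same as a plain 2-functor, by the adjunction $(-)^\flat\dashv U$ of \cref{rmk:forgetful-marked}. Moreover, the lift in a square whose codomain is minimally marked automatically preserves markings: the only marked cells are identities and invertible 2-cells, which any 2-functor preserves. So right lifting against $i_1$--$i_4$ is exactly right lifting of $UF$ against Lack's generating cofibrations of \cref{Def:Lackgeneratingcofibs}. By \cref{char:lacktrivfibs} this says $UF$ is a trivial fibration in $\twocat$, which gives condition \enumref{prop:char_trivfib}{1}. Here I will check that $\Sigma(\{0\rightrightarrows 1\})$ in $I$ really means the suspension of $[0]\sqcup[0]\rightrightarrows$-type generator $\{[0]\rightrightarrows[1]\}\to[1]$ as in \cref{Def:Lackgeneratingcofibs}.

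For $i_5\colon[1]^\flat\to[1]^\sharp$, a square consists of a morphism $f$ in $\cP$ with $Ff$ marked in $\cQ$. The only possible lift is $f$ itself, now required to be marked. So the lifting property says exactly that $Ff$ marked implies $f$ marked; the converse direction holds automatically because $F$ preserves markings. This is condition \enumref{prop:char_trivfib}{2}. Similarly, a square against $i_6\colon\Sigma[1]^\flat\to\Sigma[1]^\sharp$ is a 2-cell $\alpha$ in $\cP$ with $F\alpha$ marked, and the lift exists precisely when $\alpha$ is marked. This gives condition \enumref{prop:char_trivfib}{3}.

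Combining the two analyses proves both directions at once. I expect no real obstacle; the only point needing care is the first step. There I must justify that the marked lifting problems against $i_1$--$i_4$ correspond bijectively to unmarked lifting problems against Lack's generators, including the compatibility with the structure maps to $\cC^\sharp$.
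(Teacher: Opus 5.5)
Your proposal is correct and follows the paper's proof: the forward direction is the same reading of the lifting properties against $i_1$--$i_4$, $i_5$ and $i_6$ as conditions \enumref{prop:char_trivfib}{1}, \enumref{prop:char_trivfib}{2} and \enumref{prop:char_trivfib}{3}, respectively. The one difference is the converse: the paper does not reduce to the generating set there, but lifts directly against an arbitrary marked cofibration $G$ (the lift exists because $UG$ is a cofibration and $UF$ a trivial fibration in $\twocat$, and conditions 2 and 3 show it preserves markings), so that direction does not rely on the generation statement of \cref{prop:genmarkedcofibs}, whereas yours does.
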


		\begin{proof}
			If $F$ is a marked trivial fibration, then it has the right lifting property with respect to the generating marked cofibrations of \cref{prop:genmarkedcofibs}
			\[
				i_j\colon\mathcal A^\flat\to\mathcal B^\flat
				,\quad
				i_5\colon [1]^\flat\to[1]^\sharp
				,\quad\text{and}\quad
				i_6\colon \Sigma[1]^\flat\to\Sigma[1]^\sharp,
			\]
			where $Ui_j\colon \mathcal A\hookrightarrow \mathcal B$ ranges over the generating cofibrations of $\twocat$ (see \cref{Def:Lackgeneratingcofibs}).
			These three lifting properties correspond, respectively, to conditions \enumref{prop:char_trivfib}{1}, \enumref{prop:char_trivfib}{2} and \enumref{prop:char_trivfib}{3}. 
            
            For the converse, we must show that any commutative diagram
			\[
			\begin{tikzcd}
			\mathcal A \arrow[r, "S"] \arrow[d, "G"', hookrightarrow]     & \mathcal P \arrow[d, "F"] \\
			\mathcal B \arrow[r, "T"'] \arrow[ru, dashed,"\widehat T" description] & \cQ            
			\end{tikzcd},
			\]
			with $G$ a marked cofibration, admits a lift $\widehat{T}$.
			The 2-functor $\widehat T$ exists since $UF$ is a trivial fibration in $\twocat$ by assumption, and $UG$ is a cofibration in $\twocat$ by definition, so it remains to check that it preserves the markings.
			Indeed, if $f\in\mathcal B$ is marked then so is $Tf = F(\widehat Tf)$, which by hypothesis implies that $\widehat Tf$ is marked.
			A similar argument applies for marked 2-cells.
		\end{proof}

        In the case when $P\colon \markedscat{\cP}\to\cC^\sharp$ and $Q\colon\markedscat{\cQ}\to\cC^\sharp$ are cart-marked 2-fibrations, the trivial fibrations between them admit a more streamlined description.

		\begin{prop}\label{cor:trivfibbtwfibobj}
			A marked 2-functor $F\colon\markedscat{\cP}\to \markedscat{\cQ}$ between naive fibrant objects $P$ and $Q$ is a marked trivial fibration if and only if $UF$ is a  trivial fibration in $\twocat$.
		\end{prop}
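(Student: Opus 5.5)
Using \cref{prop:char_trivfib}, one implication is immediate: a marked trivial fibration has trivial underlying fibration by \enumref{prop:char_trivfib}{1}. For the converse, assume $UF$ is a trivial fibration in $\twocat$. By \cref{prop:char_trivfib} it remains to check conditions \enumref{prop:char_trivfib}{2} and \enumref{prop:char_trivfib}{3}, namely that $F$ both preserves and reflects markings. Since $P$ and $Q$ are naive fibrant, \cref{Prop:naivefibrant_iff_cms} identifies their markings with the $P$-cartesian and $Q$-cartesian cells. The task therefore reduces to two statements: a morphism $f$ of $\cP$ is $P$-cartesian if and only if $Ff$ is $Q$-cartesian, and a $2$-cell $\alpha$ of $\cP$ is $P$-cartesian if and only if $F\alpha$ is $Q$-cartesian. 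Note that $F$ is a morphism over $\cC^\sharp$, so $P = QF$.

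The statement for morphisms is exactly \cref{lem:trivialLackfibs_and_cartesians}, applied to the trivial fibration $UF$ with $P = QF$.

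For $2$-cells, I would fix objects $x, x'$ of $\cP$ and work with the hom-functors. These give a commuting triangle $P_{x,x'} = Q_{Fx,Fx'} \circ F_{x,x'}$ of functors into $\cC(Px,Px')$. By \cref{char:lacktrivfibs}, $F_{x,x'}\colon \cP(x,x') \to \cQ(Fx,Fx')$ is surjective on objects (since $F$ is full on morphisms) and fully faithful. Hence $F_{x,x'}$ is a surjective equivalence, in particular a trivial fibration of categories. The required statement is then the $1$-categorical analogue of \cref{lem:trivialLackfibs_and_cartesians}: for a surjective-on-objects fully faithful functor $F_{x,x'}$ over a base, an arrow is cartesian if and only if its image is. I would verify this directly from \cref{defn:cartesian1cat}.

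Suppose first that $F\alpha$ is cartesian. Given a test $2$-cell $\beta$ into the target of $\alpha$ together with a factorization $h$ in the base, I take the unique lift for $F\alpha$ and pull it back along full faithfulness. Uniqueness also transfers, because $F_{x,x'}$ is faithful.

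Conversely, suppose $\alpha$ is cartesian and we are given a test $2$-cell $\beta'$ in $\cQ$. Since $F_{x,x'}$ is surjective on objects, its source is $F$ of some $1$-cell, and fullness gives $\beta'=F\beta$ for some $2$-cell $\beta$. The lift for $\alpha$ then maps to a lift for $F\alpha$. Any other lift is $F$ of something by fullness, and faithfulness reduces its uniqueness to uniqueness for $\alpha$.

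There is no serious obstacle here. The only point requiring care is to note that all hom-categories of $\cQ$ between objects of the form $Fx$ are covered: since $F$ is surjective on objects, every $2$-cell of $\cQ$ lies in some $\cQ(Fx,Fx')$. Consequently the local argument handles every $2$-cell of $\cQ$, and condition \enumref{prop:char_trivfib}{3} follows in full.
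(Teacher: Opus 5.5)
Your proof is correct. The reduction via \cref{prop:char_trivfib} and \cref{Prop:naivefibrant_iff_cms} matches the paper, and so does your treatment of $2$-cells: the paper simply cites \cite[Lemma 4.3]{mosersarazola2023model} for the $1$-categorical fact, which you verify by hand from the fact that $F_{x,x'}$ is surjective on objects and fully faithful.

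You differ on condition \enumref{prop:char_trivfib}{2}. The paper does not invoke \cref{lem:trivialLackfibs_and_cartesians} there. It handles only the reflection direction, the other being automatic since $F$ preserves markings. It checks directly that $f$ satisfies both clauses of \cref{Def:cartesian_1cells} whenever $Ff$ is cartesian, lifting the relevant morphisms and $2$-cells through $F$ via fullness, the equifibration property and local full faithfulness. It then treats non-identity $\widehat{\beta},\widehat{\beta'}$ by reducing to the strict case.

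Your citation of the lemma is shorter, gives both directions at once, and avoids that bookkeeping. In particular, the paper poses its lifting problems relative to $F$, so they must still be combined with the $Q$-cartesian lifts along $Ff$; you need no such step. The price is that your argument rests on the bipullback-pasting proof of the lemma and on Buckley's characterization \cite[Proposition 3.1.2]{Buckley2014}. The paper's argument works straight from the definition.

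Your closing remark about covering all $2$-cells of $\cQ$ is unnecessary. Condition \enumref{prop:char_trivfib}{3} only concerns $2$-cells $\alpha$ of $\cP$, and each such $\alpha$ lies in a single $\cP(x,x')$, where your local argument already applies.
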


		\begin{proof}
			The fact that a marked trivial fibration (between any  objects) has an underlying 2-functor that is a trivial fibration in $\twocat$ is directly implied by \cref{prop:char_trivfib}. For the converse, we must verify conditions \enumref{prop:char_trivfib}{2} and \enumref{prop:char_trivfib}{3} in \cref{prop:char_trivfib}. Condition \enumref{prop:char_trivfib}{3} holds due to the analogous relation between trivial fibrations and cartesian maps in the 1-categorical case; see \cite[Lemma 4.3]{mosersarazola2023model}. For condition \enumref{prop:char_trivfib}{2}, first note that if $f$ is a marked morphism then $Ff$ is also marked, since $F$ is a marked 2-functor. It remains to show that if $Ff$ is a marked morphism, then $f$ must be marked.

    		Suppose that $Ff\colon Fx\to Fy$ is marked in $\cQ$, i.e.\ it is cartesian, since $\markedscat{\cQ}$ is cart-marked. We need to show that $f\colon x\to y$ is cartesian, that is, it verifies both conditions in \cref{Def:cartesian_1cells}. For the first condition \enumref{Def:cartesian_1cells}{1}, suppose we have the following data
			\[\begin{tikzcd}[ampersand replacement=\&,row sep=scriptsize]
				z \&\&\&\& Fz \\
				\&\& {} \&\& {} \\
				x \&\& y \&\& Fx \&\& Fy
				\arrow["g", from=1-1, to=3-3]
				\arrow["h"', from=1-5, to=3-5]
				\arrow[""{name=0, anchor=center, inner sep=0}, "Fg", from=1-5, to=3-7]
				\arrow["F"{pos=0.3}, between={0}{0.6}, maps to, from=2-3, to=2-5]
				\arrow["f"', from=3-1, to=3-3]
				\arrow["Ff"', from=3-5, to=3-7]
				\arrow["\alpha","\cong"', between={0.3}{0.7}, Rightarrow, from=3-5, to=0]
			\end{tikzcd}\] 
			Since $UF$ is a trivial fibration, it is in particular surjective on morphisms up to an invertible 2-cell, so there exists $\widehat{h}\colon z\to x$ in $\cP$ together with an invertible $2$-cell $\widehat{\beta}\colon F\widehat{h} \Rightarrow h$ in $\cQ$. Additionally, $UF$ is an equifibration, and so there exists an invertible $2$-cell $\alpha'\colon\widehat{h}\Rightarrow h'$ in $\cP$ with $F(h')=h$ and $F\alpha' = \widehat{\beta}$. Hence $\alpha\in\cQ(F(fh'),Fg)$, and as trivial fibrations in $\twocat$ are fully faithful on hom-categories, there exists an invertible $2$-cell $\widehat{\alpha}\colon fh'\Rightarrow g$ such that $F\widehat{\alpha}=\alpha$, as required. 

			For the second condition \enumref{Def:cartesian_1cells}{2}, suppose we have prescribed lifts $(\widehat{h},\widehat{\alpha},\widehat{\beta})$ and $(\widehat{h'},\widehat{\alpha'},\widehat{\beta'})$ of $(h,\alpha)$ and $(h',\alpha')$ respectively. Let us first assume that $\widehat{\beta}$ and $\widehat{\beta'}$ are identities and thus $F\widehat{h}=h$, $F\widehat{h'}=h'$, $F\widehat{\alpha}=\alpha$ and $F\widehat{\alpha'}=\alpha'$. Given a 2-cell $\delta \colon h \Rightarrow h'$ with $\alpha'.(Ff \cdot \delta) = F\sigma.\alpha$ as depicted below
			 \[\begin{tikzcd}[ampersand replacement=\&,
      column sep=14pt,
    row sep=2.6ex]
    	z \&\&\&\&\&\& Fz \\
    	{} \&\&\& {} \&\&\& {} \&\&\& {} \&\\
    	{} \&\&\& {} \&\& {} \& {} \&\&\& {} \& \\
    	x \&\&\&\& y \&\& Fx \&\&\&\& Fy 
    	\arrow["{\widehat{h'}}", curve={height=-12pt}, from=1-1, to=4-1, dashed]
    	\arrow["\widehat{h}"', curve={height=12pt}, from=1-1, to=4-1]
    	\arrow[""{name=0, anchor=center, inner sep=0}, "g"'{pos=0.6}, curve={height=12pt}, from=1-1, to=4-5]
    	\arrow[""{name=1, anchor=center, inner sep=0}, "{g'}"{pos=0.6}, curve={height=-18pt}, from=1-1, to=4-5]
    	\arrow[""{name=2, anchor=center, inner sep=0}, "h"', curve={height=12pt}, from=1-7, to=4-7]
    	\arrow[""{name=3, anchor=center, inner sep=0}, "{h'}", curve={height=-12pt}, from=1-7, to=4-7,dashed]
    	\arrow[""{name=4, anchor=center, inner sep=0}, "{Fg'}"{pos=0.6}, curve={height=-18pt}, from=1-7, to=4-11]
    	\arrow[""{name=5, anchor=center, inner sep=0}, "Fg"'{pos=0.6}, curve={height=12pt}, from=1-7, to=4-11]
    	\arrow["{\widehat{\alpha'}}"{pos=0.42},"\cong"'{pos=0.42}, shift left=1, between={0.3}{0.6}, Rightarrow, from=2-1, to=2-4,dashed]
    	\arrow["{\alpha'}"{pos=0.42}, "\cong"'{pos=0.42}, shift left=1, between={0.3}{0.6}, Rightarrow, from=2-7, to=2-10,dashed]
    	\arrow["\widehat{\alpha}"{pos=0.3}, "\cong"'{pos=0.3}, shift right=5, between={0.1}{0.5}, Rightarrow, from=3-1, to=3-4]
    	\arrow["F"{pos=0.7}, shift left=5, between={0.4}{1}, maps to, from=3-4, to=3-6]
    	\arrow["{\alpha}"{pos=0.3}, "\cong"'{pos=0.3}, shift right=5, between={0.1}{0.5}, Rightarrow, from=3-7, to=3-10]
    	\arrow["f"', from=4-1, to=4-5]
    	\arrow["Ff"', from=4-7, to=4-11]
    	\arrow["\sigma"', shift right=5, between={0.2}{0.8}, Rightarrow, from=0, to=1]
    	\arrow["\delta", between={0.2}{0.8}, Rightarrow, from=2, to=3]
    	\arrow["{F\sigma}"', shift right=5, between={0.2}{0.8}, Rightarrow, from=5, to=4]
    \end{tikzcd}\]
			we can use the fact that $F$ is fully faithful on hom-categories to get a unique $\widehat{\delta}\colon\widehat{h}\Rightarrow \widehat{h'}$ such that $F\widehat{\delta}=\delta$. Hence, we can see that
			\[F(\widehat{\alpha'}.(f \cdot \widehat{\delta})) = F\widehat{\alpha'}.(Ff \cdot F\widehat{\delta})=\alpha'.(Ff\cdot \delta) = F\sigma.\alpha=F(\sigma.\widehat{\alpha}),\] and by faithfulness we must have $\widehat{\alpha'}.(f \cdot \widehat{\delta}) = \sigma.\widehat{\alpha}$ as desired. In the case where $\beta$ and $\beta'$ may be arbitrary invertible 2-cells, the argument proceeds similarly, first considering instead the lifting problem where $h$ and $h'$ are replaced by $P\widehat{h}$ and $P\widehat{h'}$ and $\delta$ is replaced by $\beta'^{-1}.\delta.\beta$. This is a strict lifting problem and hence falls within the first case we considered, so there is some $\widehat{\delta}\colon \widehat{h}\Rightarrow \widehat{h'}$ which solves it; a straightforward verification then shows that the same $\widehat{\delta}$ also solves the original lifting problem.
		\end{proof}

        In any model structure, one automatically has that if a map $X\to Y$ is a (trivial) fibration and $Y$ is fibrant, then so is $X$. As an interesting consequence of \cref{prop:char_trivfib}, we obtain the following result, which will be useful in the next subsection.

        \begin{prop}\label{prop:mtf_sends_cart_to_cart}
			Let $F\colon \markedscat{\cP} \to \markedscat{\cQ}$ be a marked trivial fibration, where $P\colon \markedscat{\cP}\to \cC^{\sharp}$ is a cart-marked 2-fibration. Then $Q\colon \markedscat{\cQ}\to \cC^{\sharp}$ is also a cart-marked 2-fibration.
		\end{prop}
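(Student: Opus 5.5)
Since $P$ is cart-marked, Theorem~\ref{Prop:naivefibrant_iff_cms} lets us treat $P$ and $Q$ through their markings and check the defining properties of a cart-marked $2$-fibration for $Q$ directly. By Proposition~\ref{prop:char_trivfib}, $UF$ is a trivial fibration in $\twocat$, so it is surjective on objects, full on morphisms and locally fully faithful (Proposition~\ref{char:lacktrivfibs}). Moreover, $F$ reflects and preserves markings on morphisms and $2$-cells, and $P=QF$.

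\textbf{Markings.} First I identify the markings of $\cQ$. Take a morphism $g$ of $\cQ$. By fullness, $g=Ff$ for some $f$ in $\cP$. Then
\begin{itemize}
\item $g$ is marked if and only if $f$ is marked, by Proposition~\ref{prop:char_trivfib};
\item $f$ is marked if and only if $f$ is $P$-cartesian, since $P$ is cart-marked;
\item $f$ is $P$-cartesian if and only if $Ff=g$ is $Q$-cartesian, by Lemma~\ref{lem:trivialLackfibs_and_cartesians}.
\end{itemize}
Hence $E^1_\cQ$ is exactly the set of $Q$-cartesian morphisms.

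For $2$-cells I argue locally. Fix objects $a,b$ of $\cQ$ and lift them to $x,y$ in $\cP$. The functor $F_{x,y}\colon\cP(x,y)\to\cQ(a,b)$ is surjective on objects and fully faithful, so it is a surjective-on-objects equivalence over $\cC(Px,Py)$. A $2$-cell of $\cQ(a,b)$ is $F\alpha$ for a unique $\alpha$. By the $1$-categorical analogue \cite[Lemma 4.3]{mosersarazola2023model}, $\alpha$ is $P_{x,y}$-cartesian if and only if $F\alpha$ is $Q_{a,b}$-cartesian. Combined with reflection of markings, this shows $E^2_\cQ$ consists exactly of the $Q$-cartesian $2$-cells.

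\textbf{$2$-fibration.} Next I show $Q$ is a $2$-fibration.
\begin{itemize}
\item \emph{Cartesian lifts of morphisms.} Given $b\in\cQ$ and $f\colon c\to Qb$, lift $b=Fy$ and take a $P$-cartesian lift $\widehat f$ of $f$ at $y$. Then $F\widehat f$ is $Q$-cartesian by Lemma~\ref{lem:trivialLackfibs_and_cartesians}, and $QF\widehat f=f$.
\item \emph{Locally a cartesian fibration.} Given a morphism $g=Ff\colon a\to b$ and a $2$-cell $\beta\colon k\Rightarrow Qg$ in $\cC$, take a $P$-cartesian lift $\widehat\beta$ of $\beta$ at $f$. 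Its image $F\widehat\beta$ is $Q$-cartesian by the local statement above.
\item \emph{Horizontal composites.} Given $Q$-cartesian $2$-cells $\gamma,\gamma'$ that are horizontally composable, lift them to $\alpha,\alpha'$. Composability lifts because $F$ is bijective on $2$-cells between chosen lifts, and the lifted morphisms agree on objects once the middle object is chosen. The lifts $\alpha,\alpha'$ are $P$-cartesian, so their horizontal composite is $P$-cartesian. It maps to $\gamma\ast\gamma'$, which is therefore $Q$-cartesian.
\end{itemize}

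\textbf{Main obstacle.} The delicate point is the horizontal-composite step. There, $\gamma\colon u\Rightarrow u'$ with $u,u'\colon a\to b$, and $\gamma'\colon v\Rightarrow v'$ with $v,v'\colon b\to c$. I must fix lifts $x,y,z$ of $a,b,c$ first. Then I lift $u,u',v,v'$ to morphisms between exactly these objects, which fullness of $UF$ on morphisms with fixed endpoints permits. Only after that do I lift the $2$-cells, using local full faithfulness. Everything else reduces to Lemma~\ref{lem:trivialLackfibs_and_cartesians} and its $1$-categorical analogue.
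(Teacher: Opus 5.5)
Your proof is correct. Its skeleton matches the paper's: lift all the data along $F$ using surjectivity on objects, fullness on morphisms, local full faithfulness and reflection of markings, then use that $P$ is a cart-marked $2$-fibration.

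The difference lies in how you identify the markings.

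\textbf{Morphisms.} You apply Lemma~\ref{lem:trivialLackfibs_and_cartesians} in both directions, so $E^1_\cQ$ coincides with the $Q$-cartesian morphisms in one line. The paper uses that lemma only to see that $F\widehat f$ is cartesian. It then re-proves both inclusions by transporting explicit lifting problems along $F$, and handles condition \enumref{Def:cartesian_1cells}{2} only by a ``similar argument''.

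\textbf{$2$-cells and the local fibration condition.} You apply the $1$-categorical lemma \cite[Lemma 4.3]{mosersarazola2023model} to each hom-functor $F_{x,y}$, which is surjective on objects and fully faithful over $\cC(Px,Py)$. You then check the local lifting condition directly. The paper instead identifies $F_{x,y}$ as a marked trivial fibration in $\cat^{+}_{/\cC(Px,Py)^\sharp}$ and invokes the $1$-dimensional version of the whole proposition, \cite[Proposition 4.20]{mosersarazola2023model}.

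\textbf{Comparison.} Your route is shorter and more uniform, and it avoids the sketched steps. You are also explicit, in the horizontal-composite step, about choosing lifts of $a,b,c$ first so that the lifted $2$-cells are composable; the paper glosses over this. The paper's explicit lifting arguments are more elementary, since they bypass the bipullback characterization underlying Lemma~\ref{lem:trivialLackfibs_and_cartesians}. But the paper already relies on that lemma for the cartesian lifts, so your version loses nothing.

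\textbf{Two small remarks.} Your opening appeal to Theorem~\ref{Prop:naivefibrant_iff_cms} plays no role in the argument. Also, the lift $\alpha$ of a $2$-cell of $\cQ(a,b)$ is unique only once lifts of its source and target $1$-cells are fixed. That is all you actually use, so this is harmless.
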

		\begin{proof}
			Let us first prove that $Q$ is a $2$-fibration, by checking the conditions in \cref{Def:2-fibration}. For the first condition, let $f\colon x'\to Qy$ be a morphism in $\cC$; we wish to find a $Q$-cartesian lift of $f$. Since the underlying 2-functor of $F$ is a trivial fibration in $\twocat$ by \cref{prop:char_trivfib}, we know that $F$ is essentially surjective and so there exists an object $x \in \cP$ and an equivalence $q\colon y \to Fx$ in $\cQ$. Moreover, as $F$ is also an equifibration,  there exists an equivalence $p\colon\widehat{y} \to x$ in $\cP$ such that $Fp =q$. In particular, $F\widehat{y}=y$ and therefore $P\widehat{y}=QF\widehat{y} = Qy$. This means that $f$ is a morphism from $x'$ to $Qy = P\widehat{y}$, and as $P$ is a $2$-fibration, $f$ admits a $P$-cartesian lift $\widehat{f}\colon \widehat{x} \to \widehat{y}$. Notice that $QF\widehat{f} = P\widehat{f} = f$; thus $F\widehat{f}$ is a $Q$-lift of $f$, which is moreover $Q$-cartesian by \cref{lem:trivialLackfibs_and_cartesians}. 

			Next, we show that $Q$ is locally a cartesian fibration. For this, first note that given a marked trivial fibration $F\colon \markedscat{\cP} \to \markedscat{\cQ}$ in $\twocat^{+}_{/\cC^\sharp}$, the hom-functors $F_{a,a'}\colon \cP(x,x')\to\cQ(Fx,Fx')$ are marked trivial fibrations in $\cat^{+}_{/\cC(Px,Px')^\sharp}$ in the sense of \cite[Definition 4.15]{mosersarazola2023model}, since the underlying 2-functor of $F$ is a trivial fibration in $\twocat$ and thus locally an isofibration (see \cite[Corollary 4.18]{mosersarazola2023model}). Since $F_{x,x'}$ is a marked trivial fibration in $\cat^{+}_{/\cC(Px,Px')^\sharp}$ by \cref{prop:char_trivfib} and \cite[Proposition 4.16]{mosersarazola2023model}, and $P_{x,x'}\colon\cP(x,x')\to\cC(Px,Px')$ is a cart-marked fibration, we can use  \cite[Proposition 4.20]{mosersarazola2023model} to conclude that the functor $Q_{Fx,Fx'}\colon\cQ(Fx,Fx')\to\cC(Px,Px')$ is also a cart-marked fibration. Hence, $Q$ is locally a cartesian fibration and the marked $2$-cells in $E^2_{\cQ}$ are precisely the $Q$-cartesian $2$-cells. 
			
			 To show that any marked morphism $q\colon x\to y$ in $E^1_{\cQ}$ is  $Q$-cartesian, 
			 suppose we have some $g\colon z\to y$ in $\cQ$, together with a morphism $h\colon Qz \to Qx$ and an invertible $2$-cell $\alpha\colon (Qq)h \Rightarrow Qg$ in $\cC$. Since $F$ is surjective on objects and full on morphisms by \cref{char:lacktrivfibs}, there exist morphisms $\widehat{q}\colon x'\to y'$ and $\widehat{g}\colon z' \to y'$ in $\cP$ with $F\widehat{q} = q$ and $F\widehat{g}=g$, and \cref{prop:char_trivfib} then implies that $\widehat{q}$ is marked in $\cP$, hence $P$-cartesian. Then, there exists a morphism $\widehat{h}\colon z'\to x'$ and an invertible $2$-cell $\widehat{\alpha}\colon\widehat{q} \widehat{h} \Rightarrow \widehat{g}$ in $\cP$ with $\alpha=P\widehat{\alpha}$. One can then use the fact that $P=QF$ to see that  $(F\widehat{h},F\widehat{\alpha})$ is a solution to the initial lifting problem in $\cQ$. A similar argument applies for the second condition in \cref{Def:cartesian_1cells}, using the surjectivity on objects and fullness on morphisms and 2-cells of $F$ to carry the lifting problem back to $\cP$, and the faithfulness of $F$ on 2-cells to ensure that the required pasting equations among the 2-cells in $\cP$ also hold.
			
			Conversely, we must also show that any $Q$-cartesian morphism $q\colon x\to y$ in $\cQ$ is marked. As we know that there is some $\widehat{q}\colon x'\to y'$ in $\cP$ with $F\widehat{q}=q$, by \cref{prop:char_trivfib} it suffices to show that $\widehat{q}$ is marked, i.e.\ $P$-cartesian. Given a lifting problem as below,
        \[\begin{tikzcd}[ampersand replacement=\&,sep=scriptsize]
        	{z'} \&\&\&\& {Fz'} \&\&\&\& {QFz'} \\
        	\&\& {} \&\& {} \&\& {} \&\& {} \\
        	{x'} \&\& {y'} \&\& x \&\& y \&\& Qx \&\& Qy
        	\arrow["{g'}", from=1-1, to=3-3]
        	\arrow["{Fg'}", from=1-5, to=3-7]
        	\arrow["h"', from=1-9, to=3-9]
        	\arrow[""{name=0, anchor=center, inner sep=0}, "{Pg'}", from=1-9, to=3-11]
        	\arrow["F", between={0.2}{0.8}, maps to, from=2-3, to=2-5]
        	\arrow["Q", between={0.2}{0.8}, maps to, from=2-7, to=2-9]
        	\arrow["{\widehat{q}}"', from=3-1, to=3-3]
        	\arrow["q"', from=3-5, to=3-7]
        	\arrow["{Qq = P\widehat{q}}"', from=3-9, to=3-11]
        	\arrow["\alpha"',"\cong", between={0.2}{0.8}, Rightarrow, from=3-9, to=0]
        \end{tikzcd}\]
        since $q$ is $Q$-cartesian there is a lift $(\widehat{h}\colon Fz'\to x,\widehat{\alpha}\colon q\widehat{h} \xRightarrow{\cong} Fg')$ in $\cQ$ with $\alpha=Q\widehat{\alpha}$. Using the fact that $F$ is a trivial fibration, we can lift $(\widehat{h},\widehat{\alpha})$ to $(h'\colon z'\to x', \alpha'\colon \widehat{q}h'\xRightarrow{\cong} g')$ in $\cP$. Hence $Ph'= QFh' = Q\widehat{h}=h$ and $P\alpha'=\alpha$, producing the desired $P$-lifting along $\widehat{q}$. The second condition in \cref{Def:cartesian_1cells} follows in a similar way.

			Finally, it remains to show that the horizontal composite of $Q$-cartesian $2$-cells is $Q$-cartesian. Given composable $Q$-cartesian $2$-cells $\alpha$ and $\beta$, we can once more use the fact that $F$ is a trivial fibration to lift them via $F$ to $\widehat\alpha$ and $\widehat\beta$ in $\cP$. These are cartesian by \cref{prop:char_trivfib} as we now know that both $\markedscat{\cP}$ and $\markedscat{\cQ}$ are cart-marked, and since $P$ is a $2$-fibration we have that $\widehat\alpha.\widehat\beta$ is cartesian; hence, so is $F(\widehat\alpha.\widehat\beta) = \alpha.\beta$.
		\end{proof}

        We conclude this subsection with a technical result regarding the stability of trivial fibrations under certain pushouts.

		\begin{prop}\label{Prop:pushout_tf_along_cofib} 
			The pushout of a marked trivial fibration in $\twocat^{+}_{/\cC^\sharp}$ along a marked cofibration is a marked trivial fibration.
		\end{prop}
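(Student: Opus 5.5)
Consider a pushout square in $\twocat^{+}_{/\cC^\sharp}$
\[\begin{tikzcd}
\markedscat{\cA} \rar["G"] \dar["F"'] & \markedscat{\cB} \dar["F'"] \\
\markedscat{\cQ} \rar["G'"'] & \markedscat{\cR}
\end{tikzcd}\]
with $F$ a marked trivial fibration and $G$ a marked cofibration. The plan is to verify the three conditions of \cref{prop:char_trivfib} for $F'$. First we note that pushouts in $\twocat^{+}_{/\cC^\sharp}$ are computed as pushouts in $\twocat^+$, since the forgetful functor from a slice creates colimits. In turn, pushouts in $\twocat^+$ are computed by taking the pushout $\cR$ of underlying 2-categories in $\twocat$. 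We then equip $\cR$ with the smallest marking generated by the images of the markings of $\cQ$ and $\cB$, closed under composition (respectively vertical composition) and containing identities (respectively invertible 2-cells).

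\textbf{Underlying trivial fibration.} Condition \enumref{prop:char_trivfib}{1} is a statement purely about $\twocat$: the underlying square is a pushout in $\twocat$ of the trivial fibration $UF$ along the cofibration $UG$. We will show that trivial fibrations in the canonical model structure are stable under pushout along cofibrations. The approach is to use the description of cofibrations in \cref{prop:cofibrationsLack}. Up to retracts, $UG$ is obtained by freely adjoining objects and then freely adjoining morphisms between specified objects to the underlying category, followed by arbitrary 2-cells. We would like to reduce to cell attachments along the generating maps of \cref{Def:Lackgeneratingcofibs}, since the relevant class of maps is closed under transfinite composition and retracts. That reduction needs care, because it uses pushouts of $F$, not of $G$. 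Instead, we will argue directly with \cref{char:lacktrivfibs}.

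The pushout $\cR$ is obtained from $\cB$ by gluing $\cQ$ along $\cA$. Because $\cA\to\cB$ freely adjoins objects and arrows at the 1-categorical level, we expect the following explicit description. Objects and morphisms of $\cR$ are those of $\cQ$ together with the free additions from $\cB$. The hom-categories of $\cR$ are computed as quotients of those of $\cB$ along the local identifications made by $F$. Surjectivity on objects and fullness on morphisms of $F'$ then follow from those of $F$, since the new generators are hit by themselves. Local full faithfulness should follow because $F$ is locally fully faithful, so the 2-cell quotient only identifies and adds 2-cells between arrows already present. \textbf{This is the main obstacle}: controlling the hom-categories of a pushout in $\twocat$. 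Our first attempt will be to reduce to the case where $UG$ is a single generating cofibration, by first factoring $G$ as a transfinite composite of pushouts of generators (a retract argument then handles the general case). For each generator, we can check the pushout by hand, using that $F$ is surjective on objects and full, and locally fully faithful.

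\textbf{Markings.} For conditions \enumref{prop:char_trivfib}{2} and \enumref{prop:char_trivfib}{3}, one direction holds because $F'$ is a marked 2-functor. For the other direction, suppose $F'f$ is marked in $\cR$. By construction $F'f$ is then a composite of images of marked morphisms from $\cB$ and $\cQ$. Marked morphisms of $\cQ$ lift to marked morphisms of $\cA$ by \cref{prop:char_trivfib} applied to $F$, and their images in $\cB$ are marked. Fullness and faithfulness of $F'$ on the relevant cells then let us rewrite $f$ itself as a composite of marked cells in $\cB$, possibly up to an invertible 2-cell, which is harmless for 2-cells since invertible 2-cells are marked. The same argument with vertical composition handles the 2-cells.
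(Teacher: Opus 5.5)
The gap is in your marking step, and it cannot be closed: conditions (2) and (3) of \cref{prop:char_trivfib} can fail for such pushouts, so the statement is false as stated.

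From $F'f\in E^1_{\cR}$ your argument yields a marked $b$ in $\cB$ with $F'b=F'f$. After adjusting endpoints by equivalences lying over identities, you get an invertible 2-cell between $f$ and a morphism built from $b$ and those equivalences. That does not make $f$ marked: $E^1_{\cB}$ need not contain equivalences, nor be closed under invertible 2-cells. This is why $j_3$ and $j_4$ appear as anodyne extensions rather than as axioms on markings.

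Here is a counterexample, with everything lying constantly over an object of $\cC$.
\begin{itemize}
\item Let $\cA=\Sigma\mathbb{I}$, that is, parallel morphisms $n,n'\colon 0\to 1$ with inverse 2-cells between them. Let $F\colon \cA^\flat\to[1]^\flat$ collapse $n$ and $n'$ to the non-identity morphism $w$.
\item Then $UF$ is surjective on objects, full and locally fully faithful, and $F$ reflects markings. So $F$ is a marked trivial fibration.
\item Let $G\colon\cA^\flat\to(\cA,\{n\})$ be the identity on underlying 2-categories; it is a pushout of $i_5$.
\item The pushout of $F$ along $G$ is $F\colon(\cA,\{n\})\to[1]^\sharp$, because the pushout marking must contain $F(n)=w$.
\item But $F(n')=w$ is marked while $n'$ is not. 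So the lifting problem against $i_5$ that selects $n'$ in $(\cA,\{n\})$ and the identity of $[1]^\sharp$ has no solution.
\end{itemize}

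Your claim that the 2-cell case is harmless also fails once $F'$ identifies objects. A marked 2-cell of $\cR$ may only lift to a marked 2-cell in a different hom-category of $\cB$, and marked 2-cells need not be stable under whiskering by equivalences. For example:
\begin{itemize}
\item Take objects $0,0',1$ and $\cA(s,t)=\Sigma[1](ps,pt)$, where $p(0)=p(0')=0$ and $p(1)=1$. Mark the isomorphisms between $0$ and $0'$.
\item Map $\cA$ to $\Sigma[1]^\flat$, and let $G$ mark only the copy $\sigma_0\in\cA(0,1)$ of the generating 2-cell.
\item The copy $\sigma_{0'}\in\cA(0',1)$ stays unmarked, yet its image in the pushout $\Sigma[1]^\sharp$ is marked.
\end{itemize}

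The paper's one-line justification, that the markings of the pushout are the pushouts of the sets of markings, has the same defect. In the first example that description of the marking is correct, yet $n'$ is identified with $n$ by the underlying pushout.

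What survives is condition (1), which is Step 2 of the proof of \cite[Theorem 6.3]{Lack2002}. You should cite it, since your own treatment stops at what you call the main obstacle. Reducing to attaching a single generating cell to the domain of $F$ is fine. However, the check for a freely adjoined 2-cell, where local faithfulness is the real content, is not carried out.

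Given (1), your lifting argument does prove (2) and (3) whenever $\cB$ is a cart-marked 2-fibration. In that case equivalences are marked, marked morphisms are closed under invertible 2-cells, and marked 2-cells are closed under whiskering. These are exactly the properties needed to move lifts into the correct hom-category and across the comparison 2-cells. This is also the only case the paper uses: the pushout along the naive fibrant replacement $\iota_{\cP}$ in \cref{prop:trivial_fibration_is_w}.
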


		\begin{proof}
		We use the characterization of marked trivial fibrations from \cref{prop:char_trivfib}. Consider a pushout square in $\twocat^{+}_{/\cC^\sharp}$
		\[\begin{tikzcd}
			\cP & {\widehat{\cP}} \\
			\cQ & \mathcal{Z}
			\arrow["I", from=1-1, to=1-2]
			\arrow["F"', from=1-1, to=2-1]
			\arrow["{\widehat{F}}", from=1-2, to=2-2]
			\arrow["{\widehat{I}}"', from=2-1, to=2-2]
			\arrow["\lrcorner"{anchor=center, pos=0.125, rotate=180}, draw=none, from=2-2, to=1-1]
		\end{tikzcd}\]
		where $F$ is a marked trivial fibration and $I$ is a marked cofibration; we omit the sets of markings in order to lighten the notation. By \cref{Def:cofibrations} and \cref{prop:char_trivfib}, the underlying 2-functors  $UI$ and $UF$ are respectively a cofibration and a trivial fibration in $\twocat$, and so by Step 2 in the proof of \cite[Theorem 6.3]{Lack2002} we obtain that $U\widehat{F}$ is a trivial fibration in $\twocat$. It remains to check the second and third conditions of \cref{prop:char_trivfib}, which follow from the fact that the sets of marked morphisms and $2$-cells of the pushout $\mathcal{Z}$ are given by the corresponding pushouts of the sets of marked morphisms and $2$-cells. 
		\end{proof}

\subsection{Weak equivalences}\label{sec:weakequivs}
		To introduce the class of weak equivalences, we follow the strategy of \cite{guetta2023fibrantlyinduced}: we first define the weak equivalences between naive fibrant objects, and then use them together with the class of marked anodyne extensions of \cref{Def:An} to define weak equivalences between arbitrary objects as in \cite[Definition 2.4]{guetta2023fibrantlyinduced}.

        \begin{defn}
            A marked 2-functor $F\colon\markedscat{\cP}\to \markedscat{\cQ}$ between naive fibrant objects in $\twocat^{+}_{/\cC^\sharp}$ is a \emph{marked weak equivalence} if its underlying 2-functor $UF\colon\cP\to\cQ$ is a biequivalence. 
        \end{defn}

		\begin{defn}\label{Def:marked_weak_equiv}
			A marked 2-functor $F \colon (\cP,E^1_{\cP},E^2_{\cP}) \to (\cQ,E^1_{\cQ},E^2_{\cQ})$ in $\twocat^{+}_{/\cC^\sharp}$ between objects $P$ and $Q$ is a \emph{marked weak
			equivalence} if there exists a marked naive fibrant replacement of $F$ whose underlying 2-functor is a biequivalence.
			More explicitly, if there is a commutative diagram of marked 2-functors over $\cC^\sharp$
			\[
				\begin{tikzcd}
				\markedscat{\cP} \arrow[r,"F"] \arrow[d,"\iota_{\mathcal P}"'] & \markedscat{\cQ} \arrow[d,"\iota_{\cQ}"]
				\\
				\markedscat{\widehat{\cP}}\arrow[r,"\widehat F"'] & \markedscat{\widehat{\cQ}} 
				\end{tikzcd}
			\]
			where $\widehat{P}\colon \markedscat{\widehat{\cP}}\to\cC^\sharp$ and  $\widehat{Q}\colon \markedscat{\widehat{\cQ}}\to\cC^\sharp$ are cart-marked 2-fibrations, $U\widehat F$ is a biequivalence, and $\iota_{\mathcal P}$ and $\iota_{\cQ}$ are marked anodyne extensions.
		\end{defn}

        As we have now introduced classes of marked trivial fibrations and marked weak equivalences in $\twocat^{+}_{/\cC^\sharp}$, we must check that they satisfy the required compatibility; this is the content of the next result.

	\begin{prop}
	\label{prop:trivial_fibration_is_w}
	Every marked trivial fibration is a marked weak equivalence.
	\end{prop}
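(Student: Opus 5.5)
Let $F\colon \markedscat{\cP}\to\markedscat{\cQ}$ be a marked trivial fibration from $P$ to $Q$. The plan is to produce the commutative square of \cref{Def:marked_weak_equiv} by taking a fibrant replacement of $Q$ and then a pushout of $F$ along it.

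\textbf{Step 1: replace $Q$.} Apply the small object argument to the set of generating marked anodyne extensions $j_1,\dots,j_{10}$ of \cref{Def:An}, for the map $Q\to\id_{\cC^\sharp}$. This is legitimate since $\twocat^+_{/\cC^\sharp}$ is locally presentable. It gives a marked anodyne extension $\iota_\cQ\colon \markedscat{\cQ}\to\markedscat{\widehat\cQ}$ with $\widehat Q$ naive fibrant. By \cref{Prop:naivefibrant_iff_cms}, $\widehat Q$ is a cart-marked $2$-fibration.

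\textbf{Step 2: push out $F$.} Form the pushout
\[\widehat\cP \coloneqq \cP\sqcup_{\cQ}\widehat\cQ\]
of $\iota_\cQ$ along $F$ in $\twocat^+_{/\cC^\sharp}$. Let $\iota_\cP\colon\cP\to\widehat\cP$ and $\widehat F\colon\widehat\cP\to\widehat\cQ$ be the induced maps. Then:
\begin{itemize}
\item $\iota_\cP$ is a pushout of the marked anodyne extension $\iota_\cQ$, so it is marked anodyne, because $\an^+$ is weakly saturated.
\item $\widehat F$ is the pushout of the marked trivial fibration $F$ along $\iota_\cQ$. By \cref{AnisinCof}, $\iota_\cQ$ lies in the saturation of marked cofibrations, hence is a marked cofibration. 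So \cref{Prop:pushout_tf_along_cofib} makes $\widehat F$ a marked trivial fibration.
\end{itemize}
Commutativity of the square holds by construction.

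\textbf{Step 3: $\widehat\cP$ is fibrant.} Here $\widehat F\colon \widehat\cP\to\widehat\cQ$ is a marked trivial fibration into a cart-marked $2$-fibration. It has the right lifting property against all marked cofibrations, and hence against all marked anodyne extensions. A map with this lifting property, composed with the naive fibration $\widehat Q$, again has the right lifting property against $\an^+$. So $\widehat P=\widehat Q\widehat F$ is naive fibrant, and therefore a cart-marked $2$-fibration by \cref{Prop:naivefibrant_iff_cms}.

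\textbf{Step 4: biequivalence.} By \cref{prop:char_trivfib}, $U\widehat F$ is a trivial fibration in $\twocat$. Such a map is a weak equivalence in Lack's model structure (\cref{cofibrant-model:2cat}), i.e.\ a biequivalence. One can also see this directly from \cref{char:lacktrivfibs}: $U\widehat F$ is surjective on objects and locally surjective-on-objects and fully faithful, hence locally an equivalence.

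\textbf{Main obstacle.} The one delicate point is the orientation in Step 3. \cref{prop:mtf_sends_cart_to_cart} transfers fibrancy from the source to the target of a marked trivial fibration, which is the wrong direction here. So we rely instead on the composition of lifting properties: $\widehat F$ lifts against anodyne maps since they are cofibrations, and $\widehat Q$ lifts by fibrancy.
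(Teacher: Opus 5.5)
\textbf{There is a genuine gap in Step 2: the pushout you describe does not exist.} You have $F\colon\cP\to\cQ$ and $\iota_\cQ\colon\cQ\to\widehat{\cQ}$. These are composable maps, not a span with a common domain, so there is no pushout $\cP\sqcup_\cQ\widehat{\cQ}$; forming it would require a map $\cQ\to\cP$. Two further problems follow:
\begin{itemize}
\item The structure maps of a pushout point \emph{into} it, so no map $\widehat F\colon\widehat{\cP}\to\widehat{\cQ}$ is ``induced''.
\item $\iota_\cP$ cannot be a pushout of $\iota_\cQ$.
\end{itemize}
More basically, \cref{Prop:pushout_tf_along_cofib} only applies when you push out a trivial fibration along a cofibration out of its \emph{source}. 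Replacing the target $Q$ first leaves you nothing to push out along. Factoring $\iota_\cQ F$ does not obviously rescue the target-first approach either. A cofibration--trivial fibration factorization gives a map $\cP\to\widehat{\cP}$ that need not be anodyne. An anodyne--naive fibration factorization gives a map $\widehat{\cP}\to\widehat{\cQ}$ that is an equifibration by \cref{prop:fibbtwfibobj}, but is not known to be a biequivalence.

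\textbf{The fix is to replace the source instead, as the paper does.}
\begin{itemize}
\item Take a naive fibrant replacement $\iota_\cP\colon\cP\to\widehat{\cP}$. It is marked anodyne, hence a marked cofibration by \cref{AnisinCof} and weak saturation.
\item Push $F$ out along $\iota_\cP$. This gives $\widehat F\colon\widehat{\cP}\to\cZ$ and $\iota'\colon\cQ\to\cZ$, where $\cZ=\widehat{\cP}\sqcup_\cP\cQ$.
\item $\iota'$ is anodyne, being a pushout of $\iota_\cP$.
\item $\widehat F$ is a marked trivial fibration by \cref{Prop:pushout_tf_along_cofib}, so $U\widehat F$ is a biequivalence exactly as in your Step 4.
\end{itemize}
What remains is to show that $\cZ$ is fibrant. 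Here fibrancy must travel from the source $\widehat{\cP}$ of a trivial fibration to its target $\cZ$, which is precisely what \cref{prop:mtf_sends_cart_to_cart} provides.

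So the ``main obstacle'' you flagged is not the wrong direction; it is the direction the argument needs. Your composition-of-lifting-properties argument in Step 3 is correct, but it transfers fibrancy the other way, from target to source. That only helps if a trivial fibration into an already fibrant object is available, and in this problem none is.
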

	\begin{proof}
	Let $F\colon \markedscat{\cP} \to \markedscat{\cQ}$ be a  marked trivial fibration in $\twocat^{+}_{{/\Csharp}}$ from $P$ to $Q$; we need to find a marked naive fibrant replacement of $F$ whose underlying functor is a biequivalence. First, let $\iota_{\cP}\colon\markedscat{\cP}\to \markedscat{\widehat{\cP}}$ be a naive fibrant replacement of  $P\colon\markedscat{\cP}\to \cC^{\sharp}$ in $\twocat^{+}_{/\Csharp}$. If we consider the pushout of $F$ along $\iota_{\cP}$: 

	\[\begin{tikzcd}
		{\markedscat{\cP}} & {\markedscat{\cQ}} \\
		{\markedscat{\widehat{\cP}}} & {\markedscat{\mathcal{Z}}}
		\arrow["F", from=1-1, to=1-2]
		\arrow["\iota_{\cP}"', from=1-1, to=2-1]
		\arrow["{\iota'}", from=1-2, to=2-2]
		\arrow["{\widehat{F}}"', from=2-1, to=2-2]
		\arrow["\lrcorner"{anchor=center, pos=0.125, rotate=180}, draw=none, from=2-2, to=1-1]
	\end{tikzcd}\]
	we know by \cref{Prop:pushout_tf_along_cofib} that the marked 2-functor $\widehat{F}$ is a marked trivial fibration, so that we have a commutative triangle 
	\[\begin{tikzcd}
		{\markedscat{\widehat{\cP}}} && {\markedscat{\mathcal{Z}}} \\
		& {\cC^{\sharp}}
		\arrow["{\widehat{F}}","\sim"', from=1-1, to=1-3, twoheadrightarrow]
		\arrow[from=1-1, to=2-2,"\widehat{P}"']
		\arrow[from=1-3, to=2-2,"Z"]
	\end{tikzcd}\]
	where $\widehat{P}$ is a cart-marked 2-fibration; hence, so is $Z$ by \cref{prop:mtf_sends_cart_to_cart}. Note that the marked 2-functor $\iota'$ is a marked anodyne extension, as it is a pushout of $\iota_\cP$. Therefore $Z\colon\markedscat{\mathcal{Z}}\to\cC^\sharp$ is a marked naive fibrant replacement of $Q\colon\markedscat{\cQ}\to\Csharp$, and so $\widehat{F}$ is a marked naive fibrant replacement of $F$. Lastly, the fact that the 2-functor $U\widehat{F}$ is a biequivalence follows from \cref{cor:trivfibbtwfibobj}. 
	\end{proof}

    \subsection{Path objects}\label{sec:pathobject}

	The next step towards the construction of the desired model structure is to establish the existence of path objects for naive fibrant objects. We start by constructing a candidate, and then show it has all the required properties.

    Given an object $X$ in a model category $\cM$, by \emph{path object} we mean some object $\Path(X)$ in $\cM$, together with a factorization of the diagonal map $X\to X\times X$ as below
    \[\begin{tikzcd}
        X\ar[rr]\ar[dr,"W"']  & & X\times X\\
        & \Path(X)\ar[ur,"R"']
    \end{tikzcd}\]
    where $W$ is a weak equivalence and $R$ is a fibration.

    \begin{rmk}
        Recall that the cartesian product of two objects $P\colon\cP\to\cC$, $Q\colon\cQ\to\cC$ in the category $\twocat_{/\cC}$ is given by the (strict) pullback $\cP\times_{\cC}\cQ$ over $\cC$; i.e.\ the 2-category whose objects (resp.\ morphisms, 2-cells) are the pairs of objects (resp.\ morphisms, 2-cells) in $\cP$ whose images under $P$ are \emph{equal} in $\cC$. If we add markings, we get that the cartesian product of $P\colon\markedscat{\cP}\to\Csharp$, $Q\colon\markedscat{\cQ}\to\Csharp$ in the category $\twocat^{+}_{/\Csharp}$ is given by the pullback 2-category $\cP\times_{\cC}\cQ$, whose set of marked morphisms (resp.\ marked 2-cells) is the pullback in $\Set$ below left (resp.\ below right),
        \[\begin{tikzcd}
		E^1_\cP\times_{\operatorname{mor}\cC} E^1_\cQ\dar\rar \drar["\lrcorner"{anchor=center, pos=0.125}, draw=none] & E^1_\cQ\dar["Q"]\\
        E^1_\cP\rar["P"] & \operatorname{mor}\cC
	\end{tikzcd}\qquad \qquad
    \begin{tikzcd}
		E^2_\cP\times_{\operatorname{2cell}\cC} E^2_\cQ\dar\rar \drar["\lrcorner"{anchor=center, pos=0.125}, draw=none] & E^2_\cQ\dar["Q"]\\
        E^2_\cP\rar["P"] & \operatorname{2cell}\cC
	\end{tikzcd}\]
    which sits over $\Csharp$ in a canonical way.
    \end{rmk}

	\begin{constr}[Path object]\label{construction:path_object}
	Let $P\colon\markedscat{\cP}\to\cC^\sharp$ be a cart-marked 2-fibration.
	We construct a diagram over $\cC^\sharp$
	\[
	\markedscat{\cP}\xrightarrow{W} (\Path(P),\widehat{E^1},\widehat{E^2})\xrightarrow{R} (\cP\times_{\cC}\cP,E^1_{\cP\times_{\cC}\cP}, E^2_{\cP\times_{\cC}\cP}).
	\]
	First, the object $(\Path(P),\widehat{E^1},\widehat{E^2})$ is the marked 2-category whose
\begin{itemize}[leftmargin=*]
	\item
    objects are the equivalences $(\varphi, \varphi^{-1},\eta,\epsilon)$ in $\cP$  with $\varphi\colon x\to x'$, for which $P\varphi=\id_{Px}=P\varphi^{-1}$, and  $P\eta=\id=P\epsilon$. We refer to them as $\varphi$ for simplicity;
	\item
	 morphisms $(\varphi\colon x\to x')\to (\psi\colon y\to y')$ are triples $(f,f',\gamma)$ consisting of morphisms $f\colon x\to y$ and $f'\colon x'\to y'$ in $\cP$ such that $Pf = Pf'$, and an invertible 2-cell $\gamma\colon f'\varphi \Rightarrow \psi f$ with $P\gamma=\id_{Pf}$;

	\item
	2-cells $(f,f',\gamma)\Rightarrow (g,g',\delta)$ between morphisms as above consist of 2-cells $\sigma\colon f\Rightarrow g$ and $\sigma' \colon f'\Rightarrow g'$ in $\cP$ such that $  (\psi \cdot \sigma). \gamma = \delta.(\sigma' \cdot \varphi)$ and $P\sigma=P\sigma'$;

	\item
	marked morphisms are those where both $f$ and $f'$ are marked in $\cP$, and
	\item
	marked 2-cells are those where both $\sigma$ and $\sigma'$ are marked in $\cP$.
	\end{itemize}

	It is a tedious but straightforward verification that this truly forms a marked 2-category, and that the correspondence sending $(\varphi\colon x\to x')\mapsto Px$, $(f,f',\gamma)\mapsto Pf$, and $(\sigma,\sigma')\mapsto P\sigma$ gives a marked 2-functor $\pi\colon\Path(P)\to\Csharp$.

	Next, the marked 2-functor $W\colon \cP\to\Path(P)$ is given by the action $a\mapsto \id_a$ on objects, $\varphi \mapsto (\varphi,\varphi,\id_{\varphi})$ on morphisms, and $\sigma \mapsto (\sigma,\sigma)$ on 2-cells.

	Lastly, the marked 2-functor $R\colon \Path(P)\to\cP\times_{\cC}\cP$ maps an object $\varphi\colon x\xrightarrow{\sim} x'$ to $(x,x')$, a morphism $(f,f',\gamma)$ to $(f,f')$, and a 2-cell $(\sigma,\sigma')$ to itself.
	\end{constr}
	
    \begin{rmk}\label{Path2cells}
        The fact that the 2-cells $(\sigma,\sigma')\colon (f,f',\gamma)\Rightarrow (g,g',\delta)$ in $\Path(P)$ must satisfy $(\psi \cdot \sigma). \gamma = \delta.(\sigma' \cdot \varphi)$, where $\gamma$ and $\delta$ are invertible 2-cells and $(\varphi,\varphi^{-1},\eta,\epsilon)$ and $(\psi, \psi^{-1},\eta',\epsilon')$ are equivalences, implies that $\sigma'$ is determined by the rest of the data; indeed, one can check that
        \[\sigma'=(g'\cdot\epsilon).(\delta^{-1}\cdot \varphi^{-1}).(\psi\cdot\sigma\cdot \varphi^{-1}).(\gamma\cdot \varphi^{-1}).(f\cdot\epsilon^{-1}).\]
        Moreover, the identity requirements on objects and morphisms guarantee that $P\sigma=P\sigma'$.
    \end{rmk}

	Our goal is to show that $\Path(P)$ gives a path object for a given naive fibrant object $P$, for which we must prove that the 2-functors $W$ and $R$ of \cref{construction:path_object} are a weak equivalence and a fibration, respectively. Since our most explicit descriptions of these classes of maps take place when we work between fibrant objects, we start by showing that the path object of a naive fibrant object is itself naive fibrant, for which it will be crucial to identify the cartesian morphisms and 2-cells in $\Path(P)$.

\begin{lem}\label{lemma:markings}
	A morphism $(f,f',\gamma)$ (resp.\ a 2-cell $(\sigma,\sigma')$) in $\Path(P)$ is $\pi$-cartesian if and only if the morphisms $f,f'$ (resp.\ the 2-cells $\sigma,\sigma'$) are $P$-cartesian.
\end{lem}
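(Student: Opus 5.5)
I would treat the 2-cell case first, since it is one-dimensional, and then the morphism case. The tool throughout is the strict structure of $\Path(P)$. By \cref{Path2cells}, a 2-cell $(\sigma,\sigma')$ is determined by $\sigma$. Dually, given $\sigma$ and a target morphism, the formula of \cref{Path2cells} produces a compatible $\sigma'$. The equivalences $\varphi$ are $P$-vertical, with $P\varphi=\id$, so whiskering by $\varphi$ and $\varphi^{-1}$ transports lifting problems between the two components.

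\textbf{2-cells.} Fix objects $\varphi\colon x\to x'$ and $\psi\colon y\to y'$. The hom-functor $\Path(P)(\varphi,\psi)\to\cC(Px,Py)$ factors through the first-component projection to $\cP(x,y)$. Assume first that $\sigma,\sigma'$ are $P$-cartesian, and suppose given a 2-cell $(\tau,\tau')\colon (h,h',\theta)\Rightarrow(g,g',\delta)$ together with a factorization of $\pi(\tau,\tau')=P\tau$ through $P\sigma$. Cartesianness of $\sigma$ gives a unique first component $\widehat\sigma$ lifting the factor. Cartesianness of $\sigma'$ gives a unique second component $\widehat{\sigma}'$. It remains to check the compatibility equation $(\psi\cdot\widehat\sigma).\theta=\gamma.(\widehat\sigma'\cdot\varphi)$. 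Both sides become equal after postcomposing with the cartesian $\psi\cdot\sigma$ up to invertibles, so by uniqueness of cartesian lifts along $\sigma'\cdot\varphi$ they agree. Here $\sigma'\cdot\varphi$ is cartesian because $\varphi$ is an equivalence and hence cartesian, and horizontal composites of cartesian 2-cells are cartesian.

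For the converse, I would use the right lifting properties: $\pi$ is assumed cartesian and $P$ is a 2-fibration. Take a $P$-cartesian lift $\beta$ of $P\sigma$ into $g$. Form its partner $\beta'$ via the formula of \cref{Path2cells}; it is cartesian, being an invertible conjugate of a horizontal composite of cartesian cells. The pair $(\beta,\beta')$ is $\pi$-cartesian by the forward direction. Two $\pi$-cartesian lifts differ by an invertible 2-cell in $\Path(P)$, whose components are invertible. Hence $\sigma$ and $\sigma'$ are isomorphic to cartesian 2-cells and are themselves cartesian.

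\textbf{Morphisms.} For the forward direction, assume $f$ and $f'$ are $P$-cartesian. I would verify \enumref{Def:cartesian_1cells}{1} and \enumref{Def:cartesian_1cells}{2} directly, using \cref{prop:buckley3.2.1} to make the lifts strict. Given a morphism $(g,g',\delta)$ into $\psi$ and data $(h,\alpha)$ downstairs, lift along $f$ to get $(\widehat h,\widehat\alpha)$ and along $f'$ to get $(\widehat h',\widehat\alpha')$. Then build the required invertible vertical 2-cell $\widehat h'\varphi_z\Rightarrow\varphi\widehat h$ by the 2-cell part of cartesianness of $f'$ applied to a suitable pasting of $\gamma,\delta,\widehat\alpha,\widehat\alpha'$. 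Its invertibility and verticality come from uniqueness. The 2-cell condition is handled the same way: lift componentwise, then use \cref{Path2cells} to see the second component is forced.

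For the converse, mimic the 2-cell argument. Choose cartesian lifts in $\cP$ of $Pf$ into $y$ and into $y'$, and transport them to a morphism of $\Path(P)$. This uses that cartesian lifts of the same morphism are related by vertical equivalences, \enumrefalt{prop:propertiescartesian}{3}. The result is $\pi$-cartesian by the forward direction. By the argument of \enumrefalt{prop:propertiescartesian}{3} applied to $\pi$, it differs from $(f,f',\gamma)$ by an equivalence in $\Path(P)$ and an invertible 2-cell. Projecting, $f$ and $f'$ are each an equivalence composed with a cartesian morphism, up to isomorphism. They are therefore cartesian by \enumrefalt{prop:propertiescartesian}{1} and \enumrefalt{prop:propertiescartesian}{0}.

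The main obstacle is the construction of the coherence 2-cell $\widehat\gamma$ in the forward morphism case and verifying that it is vertical and invertible. I would obtain it as a unique 2-cell lift along the cartesian morphism $f'$, with an inverse constructed symmetrically.
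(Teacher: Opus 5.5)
Your argument is correct, but it is organized differently from the paper's.

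The paper handles the morphism case asymmetrically and by direct verification. For $\pi$-cartesian $\Rightarrow$ $P$-cartesian, it tests $f$ itself: it embeds a $P$-lifting problem into $\Path(P)$ through the identity object $\id_z$ and the morphism $(g,\psi g,\id)\colon\id_z\to\psi$, and then deduces that $f'$ is cartesian from $f'\cong\psi f\varphi^{-1}$. For the converse, it lifts only along $f$, takes the second component to be $\varphi\widehat h\rho^{-1}$ with the explicit coherence $\varphi\widehat h\cdot\eta_\rho^{-1}$, and lets \cref{Path2cells} produce the second component of the lifted 2-cell. The 2-cell case is left as analogous. So in the paper the implication from componentwise cartesianness only ever uses that $f$ is cartesian, and the implication $\pi$-cartesian $\Rightarrow$ $P$-cartesian needs no fibration hypothesis on $P$.

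You instead work symmetrically in the two components.
\begin{itemize}
\item When showing componentwise-cartesian cells are $\pi$-cartesian, you lift along $f$ and $f'$ separately and glue. You get the coherence $\widehat h'\rho\Rightarrow\varphi\widehat h$ from 2-dimensional cartesianness of $f'$; this works, since the pasting of $\gamma,\delta,\widehat\alpha,\widehat\alpha'$ lies over $\alpha^{-1}.\alpha=\id$.
\item For the reverse implications, you compare with a componentwise-cartesian lift (essentially the lifts built later in \cref{pathnaivefib}) and invoke essential uniqueness of cartesian lifts for $\pi$.
\end{itemize}
This is more formal and reuses constructions that are needed anyway. However, it depends on the full 2-fibration structure of $P$: existence of cartesian lifts, and closure of cartesian 2-cells under horizontal composition. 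It also requires a more laborious construction of the coherence cell, which the paper's choice $\widehat h'=\varphi\widehat h\rho^{-1}$ avoids. Likewise, when showing a componentwise-cartesian 2-cell is $\pi$-cartesian, you could skip the compatibility check: define the second component through \cref{Path2cells} and use its uniqueness. This needs only that $\sigma$ is cartesian.

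Two small repairs.
\begin{itemize}
\item $\sigma'\cdot\varphi$ is cartesian because $\id_\varphi$ is an invertible, hence cartesian, 2-cell. Cartesianness of the 1-cell $\varphi$ plays no role there.
\item In the 2-cell converse, you must first complete the source $\widehat k$ of $\beta$ to a morphism of $\Path(P)$ before \cref{Path2cells} can produce $\beta'$. For example, take $(\widehat k,\psi\widehat k\varphi^{-1},\psi\widehat k\cdot\eta_\varphi^{-1})$.
\end{itemize}
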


\begin{proof}
	We prove the claim about morphisms, as the one for 2-cells is simpler and follows very similar ideas. Let $(f,f',\gamma)$ be a morphism in $\Path(P)$ from $\varphi\colon x\xrightarrow{\sim} x'$ to $\psi \colon y\xrightarrow{\sim} y'$, and first suppose it is $\pi$-cartesian. To show that $f$ is $P$-cartesian, we consider the lifting problem below left.
	\[\begin{tikzcd}[row sep=scriptsize, column sep=scriptsize]
				z &&&& Pz \\
				&& {} && {} \\
				x && y && Px && Py
				\arrow["g", from=1-1, to=3-3]
				\arrow["h"', from=1-5, to=3-5]
				\arrow[""{name=0, anchor=center, inner sep=0}, "Pg", from=1-5, to=3-7]
				\arrow["P"{pos=0.3}, between={0}{0.6}, maps to, from=2-3, to=2-5]
				\arrow["f"', from=3-1, to=3-3]
				\arrow["Pf"', from=3-5, to=3-7]
				\arrow["\alpha","\cong"', between={0.3}{0.7}, Rightarrow, from=3-5, to=0]
	\end{tikzcd}\qquad \qquad
    \begin{tikzcd}[row sep=scriptsize, column sep=scriptsize]
				\id_z &&&& Pz \\
				&& {} && {} \\
				\varphi && \psi && Px && Py
				\arrow["{(g,\psi g,\id)}"{pos=0.35}, from=1-1, to=3-3]
				\arrow["h"', from=1-5, to=3-5]
				\arrow[""{name=0, anchor=center, inner sep=0}, "Pg", from=1-5, to=3-7]
				\arrow["\pi"{pos=0.3}, between={0}{0.6}, maps to, from=2-3, to=2-5]
				\arrow["{(f,f',\gamma)}"', from=3-1, to=3-3]
				\arrow["Pf"', from=3-5, to=3-7]
				\arrow["\alpha","\cong"', between={0.3}{0.7}, Rightarrow, from=3-5, to=0]
	\end{tikzcd}\] 
	This data gives rise to a lifting problem from $\Path(P)$ as above right, which since $(f,f',\gamma)$ is $\pi$-cartesian, admits a lift, given by a morphism $(\widehat{h},\widehat{h'},\beta)\colon \id_z\to \varphi$ and an invertible 2-cell $(\widehat{\alpha},\widehat{\alpha'})\colon (f\widehat{h},f'\widehat{h'}, (\gamma\cdot\widehat{h})(f'\cdot\beta))\Rightarrow (g,\psi g,\id)$. In particular, we have that $P\widehat{h}=h$ and $P\widehat{\alpha}=\alpha$, and so $\widehat{\alpha}\colon f\widehat{h}\xRightarrow{\cong} g$ gives the desired $P$-lift.

 	The second condition in \cref{Def:cartesian_1cells} proceeds similarly, since a lifting problem from $\cP$ as in \enumref{Def:cartesian_1cells}{2} involving some 2-cell $\sigma\colon h\Rightarrow h'$ gives rise to a corresponding lifting problem from $\Path(P)$, which admits a unique lift $(\widehat{\sigma},\widehat{\sigma'})$; then $\widehat{\sigma}$ is the $P$-lift we seek. Moreover, this is unique, as a different $P$-lift $\widehat{\sigma_2}$ would give rise to a different $\pi$-lift $(\widehat{\sigma_2},\widehat{\sigma'_2})$ following \cref{Path2cells}. This concludes the proof that $f$ is $P$-cartesian; hence so is $f'$ by \cite[Proposition 3.1.8]{Buckley2014}, as we can construct an invertible 2-cell $f' \xRightarrow{\cong} \varphi'f \varphi^{-1}$ using $\alpha$ and the equivalence data of $f$ and $f^{-1}$. 

	For the converse, we want to show that a morphism $(f,f',\gamma)$ in $\Path(P)$ from $\varphi\colon x\xrightarrow{\sim} x'$ to $\psi\colon y\xrightarrow{\sim} y'$ is $\pi$-cartesian if $f,f'$ are $P$-cartesian. Starting with a lifting problem in $\Path(P)$ as below left, for some objects $\varphi\colon x\to x'$, $\psi\colon y\to y'$ and $\rho\colon z\to z'$
\[\begin{tikzcd}[row sep=scriptsize, column sep=scriptsize]
				\rho &&&& Pz \\
				&& {} && {} \\
				\varphi && \psi && Px && Py
				\arrow["{(g,g',\delta)}"{pos=0.35}, from=1-1, to=3-3]
				\arrow["h"', from=1-5, to=3-5]
				\arrow[""{name=0, anchor=center, inner sep=0}, "Pg", from=1-5, to=3-7]
				\arrow["\pi"{pos=0.3}, between={0}{0.6}, maps to, from=2-3, to=2-5]
				\arrow["{(f,f',\gamma)}"', from=3-1, to=3-3]
				\arrow["Pf"', from=3-5, to=3-7]
				\arrow["\alpha","\cong"', between={0.3}{0.7}, Rightarrow, from=3-5, to=0]
			\end{tikzcd}
            \qquad \qquad
            \begin{tikzcd}[row sep=scriptsize, column sep=scriptsize]
				z &&&& Pz \\
				&& {} && {} \\
				x && y && Px && Py
				\arrow["g", from=1-1, to=3-3]
				\arrow["h"', from=1-5, to=3-5]
				\arrow[""{name=0, anchor=center, inner sep=0}, "Pg", from=1-5, to=3-7]
				\arrow["P"{pos=0.3}, between={0}{0.6}, maps to, from=2-3, to=2-5]
				\arrow["f"', from=3-1, to=3-3]
				\arrow["Pf"', from=3-5, to=3-7]
				\arrow["\alpha","\cong"', between={0.3}{0.7}, Rightarrow, from=3-5, to=0]
			\end{tikzcd}\]     
	we get a lifting problem in $\cP$ as above right, which admits a $P$-lift $\widehat{h}\colon z\to x$, $\widehat{\alpha}\colon f \widehat{h}\xRightarrow{\cong} g$ since $f$ is $P$-cartesian. We claim that this can be used to define a $\pi$-lift of the original problem, given by the morphism 
	\[(\widehat{h}\colon z\to x, \varphi\widehat{h}\rho^{-1}\colon z'\to x',\varphi\widehat{h}\cdot\eta_{\rho}^{-1}\colon \varphi\widehat{h}\rho^{-1}\rho\xRightarrow{\cong} \varphi\widehat{h})\colon \rho\to \varphi\] 
	and the invertible 2-cell
	\[(\widehat{\alpha},\widehat{\alpha'})\colon (f \widehat{h}, f' \varphi \widehat{h} \rho^{-1}, (f' \varphi\widehat{h}\cdot\eta_{\rho}^{-1})(\gamma\cdot \widehat{h}))\Rightarrow (g,g',\delta),\] 
	where $\widehat{\alpha'}$ is determined from $\widehat{\alpha}$ and the rest of the data as in \cref{Path2cells}. To prove this claim, it suffices to show that the above defines a morphism in $\Path(P)$, as $(\widehat{\alpha},\widehat{\alpha'})$ is a 2-cell in $\Path(P)$ by construction, and the fact that they give a $\pi$-lift is also immediate from construction. The morphism requirements can be deduced directly from the fact that $P\varphi,P\rho^{-1}$ and $P\eta_{\rho}^{-1}$ equal the corresponding identities.

	To check that $(f,f',\gamma)$ verifies the second condition in \cref{Def:cartesian_1cells}, suppose we have a lifting problem as in \enumref{Def:cartesian_1cells}{2} which includes a 2-cell $\sigma\colon h\Rightarrow k$ in $\cC$. We wish to construct a $\pi$-lift $(\widehat{\sigma},\widehat{\sigma'})\colon (\widehat{h},\widehat{h'}, \tau)\to (\widehat{k},\widehat{k'}, \nu)$, where we can assume that $P\widehat{h}=h$ by \cref{prop:buckley3.2.1}, since $P$ is a 2-fibration. For this, we first note that this data gives rise to a lifting problem in $\cP$ along the $P$-cartesian morphism $f$, and hence admits a unique solution $\widehat{\sigma}\colon\widehat{h}\Rightarrow\widehat{k}$ satisfying the required conditions. As explained in \cref{Path2cells}, this determines a unique $\widehat{\sigma'}$ giving rise to a morphism $(\widehat{\sigma},\widehat{\sigma'})$ in $\Path(P)$; it is then tedious but straightforward to verify that $\widehat{\sigma'}$ also satisfies the required composites for $(\widehat{\sigma},\widehat{\sigma'})$ to form a lift as desired.
\end{proof}

\begin{prop}\label{pathnaivefib}
     If the marked 2-functor $P\colon \markedscat{\cP} \to \cC^{\sharp}$ is a cart-marked 2-fibration, then so is $\pi\colon (\Path(P),\widehat{E^1},\widehat{E^2})\to\cC^{\sharp}$.
\end{prop}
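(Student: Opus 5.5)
By \cref{lemma:markings}, the markings $\widehat{E^1}$ and $\widehat{E^2}$ of $\Path(P)$ (pairs of marked, i.e.\ $P$-cartesian, components) are exactly the $\pi$-cartesian morphisms and 2-cells. So the plan is to check the three conditions of \cref{Def:2-fibration} for $\pi$. In each case I would build the lift componentwise in $\cP$ and then read off cartesianness from \cref{lemma:markings}.

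\emph{Cartesian lifts of morphisms.} Let $\psi\colon y\to y'$ be an object of $\Path(P)$ and $u\colon c\to Py$ a morphism of $\cC$. Since $P$ is a 2-fibration, $u$ has a cartesian lift $f\colon x\to y$. I would take $f'\colon x'\to y'$ to be a cartesian lift of $u$ at $y'$, with $Pf'=u$. The composites $\psi f$ and $f'$ both lie over $u$. Moreover $\psi f$ is cartesian, because $\psi$ is an equivalence (hence cartesian) and composites of cartesian morphisms are cartesian. By Proposition \enumrefalt{prop:propertiescartesian}{3} there is then an equivalence $\varphi\colon x\to x'$ and an invertible 2-cell $\gamma\colon f'\varphi\Rightarrow\psi f$. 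This $\varphi$ can be chosen with adjoint equivalence data lying over identities and with $P\gamma=\id$. Hence $\varphi$ is an object of $\Path(P)$, and $(f,f',\gamma)\colon\varphi\to\psi$ is a morphism over $u$. It is $\pi$-cartesian by \cref{lemma:markings}, since $f$ and $f'$ are cartesian. The main point to get right here is the fiberwise normalization of $\varphi$ and $\gamma$, which is exactly what Proposition \enumrefalt{prop:propertiescartesian}{3} provides.

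\emph{Locally a cartesian fibration.} Let $(g,g',\delta)\colon\varphi\to\psi$ be a morphism and $\theta\colon u\Rightarrow Pg$ a 2-cell in $\cC$. Take a cartesian 2-cell $\sigma\colon f\Rightarrow g$ over $\theta$ in $\cP(x,y)$.
\begin{itemize}
\item I need $f'$ and an invertible $\gamma\colon f'\varphi\Rightarrow\psi f$ over the identity, together with $\sigma'\colon f'\Rightarrow g'$ satisfying $(\psi\cdot\sigma).\gamma=\delta.(\sigma'\cdot\varphi)$.
\item I would set $f'\coloneqq \psi f\varphi^{-1}$ and take $\gamma$ to be $\psi f\cdot\eta^{-1}$, adjusted by $\epsilon$. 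All of these lie over identities, so $Pf'=Pf$ and $P\gamma=\id$.
\item I would define $\sigma'$ by the formula of \cref{Path2cells}, which forces the compatibility equation.
\item $\sigma'$ is obtained from $\sigma$ by whiskering with equivalences and composing with invertible 2-cells. Whiskering a cartesian 2-cell by equivalences keeps it cartesian, as a special case of closure under horizontal composition with invertible (hence cartesian) 2-cells. So $\sigma'$ is cartesian, and $(\sigma,\sigma')$ is $\pi$-cartesian by \cref{lemma:markings}.
\end{itemize}

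\emph{Horizontal composition.} The horizontal composite of $(\sigma,\sigma')$ and $(\tau,\tau')$ is computed componentwise, giving $(\tau\sigma,\tau'\sigma')$. Each component is cartesian because $P$ satisfies the third condition of \cref{Def:2-fibration}, so the composite is $\pi$-cartesian by \cref{lemma:markings}.

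I expect the main obstacle to be the local fibration step. Having built the lift $(\sigma,\sigma')$ of $\theta$, one must verify the compatibility equation with the adjusted $\gamma$ and confirm that $\pi(\sigma,\sigma')=\theta$. These are routine pasting computations using the triangle identities and the fact that all the equivalence data lie over identities.
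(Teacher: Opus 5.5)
Your proof is correct. The markings, the cartesian lifts of morphisms, and the horizontal composites are handled as in the paper; the local-fibration step follows a different route.

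In your notation, the paper chooses two independent cartesian lifts $\sigma\colon f\Rightarrow g$ and $\sigma'\colon f'\Rightarrow g'$ of $\theta$. It then notes, using closure under horizontal composition, that $\psi\cdot\sigma$ and $\delta.(\sigma'\cdot\varphi)$ are both cartesian lifts of $\theta$ with codomain $\psi g$. The connecting cell $\gamma\colon f'\varphi\Rightarrow\psi f$, invertible and over the identity, comes from uniqueness of cartesian lifts in the fibration $\cP(x,y')\to\cC(Px,Py')$. Both components are then cartesian by construction, and the compatibility equation is the defining property of $\gamma$.

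You instead fix $f'=\psi f\varphi^{-1}$ and $\gamma=\psi f\cdot\eta^{-1}$ explicitly (no adjustment by $\epsilon$ is needed), and transport the single lift $\sigma$ across the equivalences. This makes the lift explicit and avoids the uniqueness argument. The cost is two extra checks. The first is that $\sigma'$ is cartesian, which you do correctly via horizontal composition with identity 2-cells and vertical composition with isomorphisms. The second is that the formula of \cref{Path2cells} actually satisfies $\delta.(\sigma'\cdot\varphi)=(\psi\cdot\sigma).\gamma$.

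For that second check, you cannot appeal to triangle identities: objects of $\Path(P)$ are only equivalences, not necessarily adjoint ones. Use instead that $-\cdot\varphi\colon\cP(x',y')\to\cP(x,y')$ is an equivalence of categories, hence full. So $\delta^{-1}.(\psi\cdot\sigma).\gamma=\sigma''\cdot\varphi$ for some $\sigma''\colon f'\Rightarrow g'$, and interchange alone shows the formula returns exactly $\sigma''$.
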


\begin{proof}
    The claim regarding the markings is implied by \cref{lemma:markings}, since $\markedscat{\cP}$ is cart-marked. To show that $\pi$ is a 2-fibration, first note that the horizontal composite of $\pi$-cartesian 2-cells is cartesian, due to the characterization from \cref{lemma:markings} and the fact that this holds for $P$-cartesian 2-cells as $P$ is a 2-fibration.  
    
    Next, we show that $\pi$ has cartesian lifts of morphisms. For this, consider an object $\varphi \colon x\to x'$ in $\Path(P)$ and a morphism $g\colon b\to \pi \varphi=Px=Px'$ in $\cC$.	Since $P$ is a 2-fibration, we can find cartesian $P$-lifts $\widehat{g_1}\colon\widehat{b_1}\to x$ and $\widehat{g_2}\colon\widehat{b_2}\to x'$ with $P\widehat{g_1}=P\widehat{g_2}=g$. Then $\varphi\widehat{g_1}$ and $\widehat{g_2}$ are both cartesian $P$-lifts of $g$ (since $P\varphi=\id_{Px}$), thus by \cref{prop:propertiescartesian} there is an equivalence $\psi\colon \widehat{b_1}\to \widehat{b_2}$ and an invertible 2-cell $\gamma \colon\widehat{g_2} \psi\Rightarrow \varphi\widehat{g_1}$ in $\cP$. We claim that $(\widehat{g_1},\widehat{g_2},\gamma)\colon \psi \to \varphi$ is the cartesian $\pi$-lift of $g$ that we seek. Indeed, by construction we know that this is cartesian, and mapped to $g$ by $\pi$. Moreover, $\psi$ and $(\widehat{g_1},\widehat{g_2},\gamma)$ satisfy the requirements of objects and morphisms in $\Path(P)$, since $P\psi,P\psi^{-1},P\eta,P\epsilon,$ and $P\gamma$ are all identities by \cref{prop:propertiescartesian}. 

    Lastly, to prove that $\pi$ is locally a fibration, consider a morphism $(f,f',\gamma)\colon (\varphi\colon x'\to x)\to (\psi\colon y'\to y)$ in $\Path(P)$ and a 2-cell $\sigma\colon k\Rightarrow Pf=Pf'$ in $\cC$. Since $P$ is locally a fibration, we can find cartesian $P$-lifts $\widehat\sigma\colon\widehat k\Rightarrow f$ and $\widehat\sigma'\colon\widehat{k'}\Rightarrow f'$ with $P\widehat{\sigma}=P\widehat{\sigma'}=\sigma$. Note that both $\psi\cdot\widehat{\sigma}\colon \psi\widehat{k}\Rightarrow \psi f$ and $\gamma(\widehat{\sigma'}\cdot \varphi)\colon \widehat{k'}\varphi\Rightarrow \psi f$ are  cartesian $P$-lifts of $\sigma$, since $P\varphi, P\psi$ and $P\gamma$ are identities. Hence, by the 1-categorical analogue of \cref{prop:propertiescartesian}, there is a unique invertible 2-cell $\tau\colon\widehat{k'}\varphi \Rightarrow \psi\widehat k$ such that $P\tau=\id$ and $(\psi\cdot\widehat{\sigma})\tau = \gamma(\widehat{\sigma'}\cdot \varphi)$.
	We conclude that $(\widehat{\sigma},\widehat{\sigma'})\colon(\widehat{k},\widehat{k'},\tau)\Rightarrow (f,f',\gamma)$ is a cartesian $\pi$-lift of $\sigma$ as desired.
\end{proof}

    We can now use the above result to prove that $\Path(P)$ really is a path object for $P$.

	\begin{prop}
	\label{prop:path-object}
		Let $P\colon \markedscat{\cP} \to \cC^{\sharp}$ be a cart-marked 2-fibration. Then, the factorization of the diagonal
        	\[
	\markedscat{\cP}\xrightarrow{W} (\Path(P),\widehat{E^1},\widehat{E^2})\xrightarrow{R} (\cP\times_{\cC}\cP,E^1_{\cP\times_{\cC}\cP}, E^2_{\cP\times_{\cC}\cP})
	\] of \cref{construction:path_object} gives a path object for $P$  over $\cC^{\sharp}$.
	\end{prop}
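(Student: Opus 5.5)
We need to show three things: $RW$ is the diagonal, $W$ is a marked weak equivalence, and $R$ is a marked naive fibration (the fibrations between naive fibrant objects being the naive fibrations, in the framework of \cite{guetta2023fibrantlyinduced}).

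The first is immediate from \cref{construction:path_object}: $RW$ sends $a\mapsto(a,a)$, $\varphi\mapsto(\varphi,\varphi)$ and $\sigma\mapsto(\sigma,\sigma)$.

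By \cref{pathnaivefib}, $\Path(P)$ is a cart-marked $2$-fibration. The product $\cP\times_\cC\cP$ is also cart-marked: a morphism or $2$-cell in the pullback is cartesian exactly when both components are, and lifts are constructed componentwise. One small point needs care here. In \enumref{Def:cartesian_1cells}{1} the two componentwise lifts must map to the same morphism of $\cC$, and this is arranged by \cref{prop:buckley3.2.1} (taking $\widehat\beta=\id$). Hence all three objects are naive fibrant. By the definition of weak equivalences between naive fibrant objects, it therefore suffices to show that $UW$ is a biequivalence.

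\textbf{$W$ is a biequivalence.} I would avoid checking this directly and instead use $2$-out-of-$3$ for biequivalences in $\twocat$. The evaluation $\mathrm{ev}_0=\mathrm{pr}_1\circ R\colon\Path(P)\to\cP$ satisfies $\mathrm{ev}_0W=\id$, so it is enough to show that $\mathrm{ev}_0$ is a biequivalence.
\begin{itemize}
\item \emph{Essentially surjective on objects}: this is clear, since every $x$ is $\mathrm{ev}_0(\id_x)$.
\item \emph{Locally essentially surjective}: given $f\colon x\to y$ with objects $\varphi,\psi$, take $f'=\psi f\varphi^{-1}$ with $\gamma$ built from $\eta$ and $\epsilon$. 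All the $P$-images are identities, so this is a morphism of $\Path(P)$.
\item \emph{Locally fully faithful}: this follows from \cref{Path2cells}, since $\sigma'$ is uniquely determined by $\sigma$. One still has to check that the formula there does define a valid $2$-cell, i.e.\ that it satisfies the compatibility equation. This is where invertibility of $\gamma$ and $\delta$ and the equivalence data are used.
\end{itemize}
An alternative route is to compare fibers and apply \cref{bieq.on.fibers}, but the direct check above seems simpler.

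\textbf{$R$ is a naive fibration.} By \cref{prop:fibbtwfibobj}, this reduces to showing that $UR$ is an equifibration.
\begin{itemize}
\item \emph{Lifting equivalences}: given an object $\psi\colon y\to y'$ of $\Path(P)$ and an equivalence $(e,e')\colon(x,x')\to(y,y')$ in $\cP\times_\cC\cP$, we need $\varphi\colon x\to x'$ together with $(e,e',\gamma)\colon\varphi\to\psi$. Set $\varphi=e'^{-1}\psi e$ and take $\gamma$ from the counit of $e'$. To get all $P$-images to be identities, including those of $\varphi$, its inverse, $\eta$ and $\epsilon$, I must choose the adjoint equivalence data for $(e,e')$ carefully. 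Since $Pe=Pe'$ need not be an identity, I would use that $e$ and $e'$ are cartesian (\enumref{prop:propertiescartesian}{2}). Then \enumref{prop:propertiescartesian}{3}-style uniqueness lets me transport the equivalence data into the fiber over $Px$, adjusting by invertible $2$-cells.
\item \emph{Lifting invertible $2$-cells}: given $(f,f',\gamma)$ and an invertible $(\beta,\beta')\colon(g,g')\Rightarrow(f,f')$ with $P\beta=P\beta'$, define $\gamma_0=(\psi\cdot\beta)^{-1}.\gamma.(\beta'\cdot\varphi)$. Its $P$-image is $\id$, so $(\beta,\beta')$ is a $2$-cell $(g,g',\gamma_0)\Rightarrow(f,f',\gamma)$ by construction.
\end{itemize}

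I expect the main obstacle to be the first equifibration condition: producing equivalence data whose $P$-images are strict identities when $Pe$ is not an identity.
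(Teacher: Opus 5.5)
Your overall reduction matches the paper's, and your second equifibration condition is exactly the paper's argument. For $W$ you take a genuinely different route. The paper checks directly that $W$ is a biequivalence: essential surjectivity uses the equivalence $(\varphi^{-1},\id_{x'},\epsilon_\varphi^{-1})\colon\id_{x'}\to\varphi$, and the local equivalence is only checked between objects $\id_x,\id_{x'}$, where the $2$-cell condition forces $\sigma=\sigma'$. You instead show that $\mathrm{ev}_0$ is surjective on objects, full on morphisms and locally fully faithful, hence a trivial fibration in $\twocat$ by \cref{char:lacktrivfibs}, and conclude by $2$-out-of-$3$. This spares you from verifying that some morphism of $\Path(P)$ is an equivalence, at the price of a fullness check between arbitrary objects $\varphi,\psi$.

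That fullness check is cleanest without the explicit formula of \cref{Path2cells}. Since $\varphi$ is an equivalence, $-\circ\varphi\colon\cP(x',y')\to\cP(x,y')$ is fully faithful, so there is a unique $\sigma'$ with $\sigma'\cdot\varphi=\delta^{-1}.(\psi\cdot\sigma).\gamma$. Applying $P$, with $P\varphi,P\psi,P\gamma,P\delta$ all identities, gives $P\sigma'=P\sigma$. Your check that $\cP\times_\cC\cP$ is cart-marked supplies something the paper uses tacitly when invoking \cref{prop:fibbtwfibobj}. It also follows formally: lifting problems against a product split into lifting problems against the factors, and then \cref{Prop:naivefibrant_iff_cms} applies.

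The gap is in the first equifibration condition for $R$. The formula $\varphi=e'^{-1}\psi e$ cannot be repaired by choosing the equivalence data for $(e,e')$ carefully. Indeed $P\varphi=P(e'^{-1})\,Pe$, and $P(e'^{-1})$ is merely a pseudo-inverse of $Pe$ in $\cC$, so in general no choice makes this strictly $\id_{Px}$. For a concrete failure, take $P=\id_E$ for the walking equivalence $E$ (everything marked), $\psi=\id_1$ and $e=e'=i$. The only object of $\Path(P)$ over $(0,0)$ is $\id_0$ with identity data, but the underlying category of $E$ is free, so no composite $k\,i$ equals $\id_0$.

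Your closing appeal to \enumref{prop:propertiescartesian}{3} is the right tool, but it must be used to replace $\varphi$, not to adjust the data of $(e,e')$. The morphisms $e'$ and $\psi e$ are both cartesian, since equivalences are cartesian and so are composites of cartesian morphisms. They are also lifts, with codomain $y'$, of the same morphism $Pe'=Pe=P(\psi e)$. Hence there is an equivalence $\varphi\colon x\to x'$ with adjoint equivalence data and an invertible $\tau\colon e'\varphi\Rightarrow\psi e$, all with identity $P$-images. That is, $\varphi$ is an object of $\Path(P)$ and $(e,e',\tau)\colon\varphi\to\psi$ is a morphism over $(e,e')$.

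The equifibration condition also requires this morphism to be an equivalence in $\Path(P)$, which you do not address. This holds because $(e,e',\tau)$ is $\pi$-cartesian by \cref{lemma:markings} and $\pi(e,e',\tau)=Pe$ is an equivalence, so \enumref{prop:propertiescartesian}{2} applies. The paper reaches the same kind of $\varphi$ differently. It lifts $\id_{Px}$ along the cartesian morphisms $e'$ and $e$ separately, uses \cref{prop:buckley3.2.1} to make the $P$-images strict identities, and assembles the unit and counit by hand. Citing \enumref{prop:propertiescartesian}{3} packages all of this at once.
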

	\begin{proof}
	Since we have that $\Path(P)$ is naive fibrant by \cref{pathnaivefib}, in order to prove that $W$ is a weak equivalence it suffices to show its underlying 2-functor is a biequivalence. Similarly, to show that $R$ is a fibration, by \cref{prop:fibbtwfibobj} it suffices to show $UR$ is an equifibration.

    We first focus on $W$. To show it is essentially surjective, let $\varphi\colon x\to x'$ be an object in $\Path(P)$; we wish to find an object $y\in\cP$ and an equivalence $Wb=\id_y\to \varphi$ in $\Path(P)$. This can be achieved by letting $y=x'$ and considering the morphism $(\varphi^{-1},\id_{x'},\epsilon_\varphi^{-1})\colon\id_y\to \varphi$. This is a morphism in $\Path(P)$ since $P\varphi,P\varphi^{-1}$ and $P\epsilon_\varphi^{-1}$ are all identities, and moreover, it is an equivalence.

    Next, we show that $W$ is locally an equivalence, i.e.\ $W_{x,x'}\colon\cP(x,x')\to\Path(P)(\id_x,\id_{x'})$ is an equivalence for any $x,x'\in\cP$. Given a morphism $(f,f',\gamma)\colon\id_{x}\to\id_{x'}$ in $\Path(P)$, we have an invertible 2-cell $(f,f',\gamma)\to (f,f,\id_f)=Wf$ in $\Path(P)$ given by $(\id_f,\gamma)$; this shows that $W_{x,x'}$ is essentially surjective on objects. Lastly, note that given two morphisms $Wf=(f,f,\id_f)$ and $Wg=(g,g,\id_g)$ from $Wx=\id_x$ to $Wy=\id_y$, any 2-cell $(\sigma,\sigma')\colon Wf\to Wg$ in $\Path(P)$ must satisfy $(\id_y\cdot \sigma)\id = \id(\sigma'\cdot \id_x)$; hence $\sigma=\sigma'$ and $(\sigma,\sigma')=W\sigma$, from which we conclude that $W_{x,x'}$ is fully faithful.

    We now focus on $R$, and show its underlying 2-functor is an equifibration. Given an object $\varphi\colon x\to x'$ in $\Path(P)$ and an equivalence $(f,f')\colon(y,y')\to(x,x')$ in $\cP\times_{\cC}\cP$, we need to find an object $\widehat{\varphi}\colon y\to y'$ and an equivalence $(f,f',\alpha)\colon\widehat{\varphi}\to \varphi$ in $\Path(P)$. Since $f,f'$ are equivalences, they are $P$-cartesian and we can find solutions for the following lifting problems
	\[
	\begin{tikzcd}[ampersand replacement=\&,sep=scriptsize]
		y \&\&\&\& Py \\
		\&\& {} \&\& {} \\
		{y'} \&\& {x'} \&\& Py \&\& Px
		\arrow["{\widehat{\varphi}}"', dashed, from=1-1, to=3-1]
		\arrow[""{name=0, anchor=center, inner sep=0}, "{\varphi f}", from=1-1, to=3-3]
		\arrow["\id"', from=1-5, to=3-5]
		\arrow[""{name=1, anchor=center, inner sep=0}, "{Pf}", from=1-5, to=3-7]
		\arrow["P"{pos=0.3}, between={0}{0.6}, maps to, from=2-3, to=2-5]
		\arrow["{f'}"', from=3-1, to=3-3]
		\arrow["{Pf}"', from=3-5, to=3-7]
		\arrow["\gamma","\cong"', between={0.3}{0.7}, Rightarrow, dashed, from=3-1, to=0]
		\arrow["\id","\cong"', between={0.3}{0.7}, Rightarrow, from=3-5, to=1]
	\end{tikzcd} \qquad \quad
	\begin{tikzcd}[ampersand replacement=\&,sep=scriptsize]
		{y'} \&\&\&\& Py \\
		\&\& {} \&\& {} \\
		y \&\& {x'} \&\& Py \&\& Px
		\arrow["{\widehat{\varphi}^{-1}}"', dashed, from=1-1, to=3-1]
		\arrow[""{name=0, anchor=center, inner sep=0}, "{\varphi^{-1}f'}", from=1-1, to=3-3]
		\arrow["\id"', from=1-5, to=3-5]
		\arrow[""{name=1, anchor=center, inner sep=0}, "{Pf}", from=1-5, to=3-7]
		\arrow["P"{pos=0.3}, between={0}{0.6}, maps to, from=2-3, to=2-5]
		\arrow["f"', from=3-1, to=3-3]
		\arrow["{Pf}"', from=3-5, to=3-7]
		\arrow["\sigma","\cong"', between={0.3}{0.7}, Rightarrow, dashed, from=3-1, to=0]
		\arrow["\id","\cong"', between={0.3}{0.7}, Rightarrow, from=3-5, to=1]
	\end{tikzcd}
	\]
    where we used the fact that $Px=Px', Py=Py'$,  $Pf=Pf'$, and that $P\varphi=\id, P\varphi^{-1}=\id$.
	Since $P$ is locally cartesian, by \cref{prop:buckley3.2.1} we can assume that $P\widehat{\varphi},P\widehat{\varphi}^{-1}, P\gamma$ and $P\sigma$ are all identities, and so all that remains to be proved is that $\widehat{\varphi}$ is an object of $\Path(P)$. We can construct a unit $\eta\colon \id\Rightarrow\widehat{\varphi}^{-1}\widehat{\varphi}$ by considering the invertible 2-cell

	\[\begin{tikzcd}[ampersand replacement=\&,cramped,column sep=22pt]
		\id \& {f^{-1}f} \&\& {f^{-1}\varphi^{-1}\varphi f} \&\& {f^{-1}\varphi^{-1}f'\widehat{\varphi}} \&\& {f^{-1}f\widehat{\varphi}^{-1}\widehat{\varphi}} \&\& {\widehat{\varphi}^{-1}\widehat{\varphi}}
		\arrow["{\eta_{f}}", Rightarrow, from=1-1, to=1-2]
		\arrow["{f^{-1}\cdot \eta_{\varphi} \cdot f}", Rightarrow, from=1-2, to=1-4]
		\arrow["{f^{-1}\varphi^{-1}\cdot \gamma^{-1}}", Rightarrow, from=1-4, to=1-6]
		\arrow["{f^{-1}\cdot \sigma^{-1}\cdot\widehat{\varphi}}", Rightarrow, from=1-6, to=1-8]
		\arrow["{\eta_{f}^{-1}\cdot \widehat{\varphi}^{-1}\widehat{\varphi}}", Rightarrow, from=1-8, to=1-10]
	\end{tikzcd}\]
which is mapped to the identity by $P$. The construction of the counit is analogous.

Finally, to prove the second condition of \cref{defn:lackfib}, consider a morphism $(f,f',\gamma)$ from $\varphi\colon a\to a'$ to $\psi\colon y\to y'$ in $\Path(P)$, and an invertible 2-cell $(\delta, \delta')\colon (g,g')\Rightarrow (f,f')$ in $\cP\times_\cC \cP$. We wish to find a morphism $(g,g',\sigma)$ in $\Path(P)$ (that is, some invertible 2-cell $\sigma\colon g' \varphi\Rightarrow \psi g$ with $P\sigma=\id$) in such a way that $(\delta,\delta')\colon (g,g',\sigma)\to (f,f',\gamma)$ is a 2-cell in $\Path(P)$ (that is, such that $(\psi\cdot \delta)\sigma=\gamma(\delta'\cdot \varphi)$). This can be achieved by defining $\sigma$ as the composite
\[g' \varphi\xRightarrow{\delta' \cdot \varphi} f' \varphi\xRightarrow{\gamma} \psi f\xRightarrow{\psi\cdot \delta^{-1}} \psi g\]
and using the fact that $P\varphi,P\psi$ and $P\gamma$ are identities, and that $P\delta'=P\delta$.
\end{proof}

    \subsection{The model structure on \texorpdfstring{$\mslicetwocat$}{the marked slice category}}\label{sec:theMS}

    Having established all of the necessary ingredients, we can finally construct the desired model structure, whose fibrant objects capture the cart-marked 2-fibrations.

	\begin{thm}\label{Thm:the ms}
	There is a combinatorial model structure on the category $\twocat^{+}_{/ \Csharp}$ of marked $2$-categories over $\cC^{\sharp}$, whose
	\begin{itemize}
    \item 
		fibrant objects are the cart-marked $2$-fibrations of \cref{Def:Cart-marked-2-fibrations},
		\item 
		cofibrations are the marked cofibrations  of \cref{Def:cofibrations}, 
		\item 
		weak equivalences between fibrant objects are the marked 2-functors whose underlying 2-functor is a biequivalence, and
		\item fibrations between fibrant objects are the marked 2-functors whose underlying 2-functor is an equifibration.
	\end{itemize}
	\end{thm}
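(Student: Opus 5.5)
We will apply \cite[Theorem 2.8]{guetta2023fibrantlyinduced}. That result takes as input a locally presentable category, a set $I$ of generating cofibrations, a set of generating anodyne extensions, and a class of weak equivalences between naive fibrant objects. It then produces a combinatorial model structure in which the cofibrations are $\mathrm{cof}(I)$ and the fibrant objects are the naive fibrant objects. Its weak equivalences are those of \cite[Definition 2.4]{guetta2023fibrantlyinduced}, and between fibrant objects they are the given ones. Between fibrant objects, its fibrations are the naive fibrations.

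The first step is local presentability of $\twocat^{+}_{/\Csharp}$. We know $\twocat^{+}$ is locally presentable, since it is the category of models of a limit sketch extending that of $\twocat$ by two subset predicates. Slices of a locally presentable category are again locally presentable. The remaining inputs come from earlier in the paper:
\begin{itemize}
\item the set $I$ of \cref{prop:genmarkedcofibs} generates exactly the marked cofibrations of \cref{Def:cofibrations};
\item the set $\{j_1,\dots,j_{10}\}$ of \cref{Def:An} consists of marked cofibrations by \cref{AnisinCof};
\item the naive fibrant objects are exactly the cart-marked $2$-fibrations by \cref{Prop:naivefibrant_iff_cms};
\item between such objects, we take as weak equivalences the marked $2$-functors whose underlying $2$-functor is a biequivalence.
\end{itemize}

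Next we verify the hypotheses of \cite[Theorem 2.8]{guetta2023fibrantlyinduced} in turn.
\begin{enumerate}
\item The weak equivalences between naive fibrant objects must satisfy 2-out-of-6, or at least 2-out-of-3 together with closure under retracts. This is inherited from biequivalences in Lack's model structure (\cref{cofibrant-model:2cat}).
\item Every marked trivial fibration must be a weak equivalence. This is \cref{prop:trivial_fibration_is_w}, and between fibrant objects it is also \cref{cor:trivfibbtwfibobj} combined with the fact that trivial fibrations in $\twocat$ are biequivalences.
\item Every naive fibrant object must admit a path object, that is, a factorization of the diagonal into a weak equivalence followed by a naive fibration. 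This is \cref{prop:path-object}: $W$ is a biequivalence, and $R$ is a naive fibration by \cref{prop:fibbtwfibobj}, because $UR$ is an equifibration.
\item A naive fibration between naive fibrant objects that is also a weak equivalence must be a trivial fibration.
\end{enumerate}

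We expect condition (4) to be the main obstacle. By \cref{prop:fibbtwfibobj}, such a map $F$ has $UF$ an equifibration, and by hypothesis $UF$ is a biequivalence. Hence $UF$ is a trivial fibration in Lack's model structure on $\twocat$. Then \cref{cor:trivfibbtwfibobj} shows that $F$ is a marked trivial fibration. We would double-check that the precise formulation in \cite{guetta2023fibrantlyinduced} asks for exactly this condition and not for something stronger, such as a condition phrased via the path object.

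With these checks done, the theorem produces the model structure. It is combinatorial because $\twocat^{+}_{/\Csharp}$ is locally presentable and both $I$ and the set of anodyne extensions are sets. The four bullet points of \cref{Thm:the ms} are then read off as follows:
\begin{itemize}
\item the fibrant objects are the naive fibrant objects, identified by \cref{Prop:naivefibrant_iff_cms};
\item the cofibrations are $\mathrm{cof}(I)$, identified by \cref{prop:genmarkedcofibs};
\item the weak equivalences between fibrant objects are the biequivalences, by construction;
\item the fibrations between fibrant objects are the naive fibrations, hence the equifibrations by \cref{prop:fibbtwfibobj}.
\end{itemize}
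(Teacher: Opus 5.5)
Your route is the paper's: you apply \cite[Theorem 2.8]{guetta2023fibrantlyinduced} to the generating cofibrations of \cref{prop:genmarkedcofibs}, the anodyne extensions of \cref{Def:An}, and the biequivalences between naive fibrant objects, and you check the hypotheses with \cref{prop:trivial_fibration_is_w}, \cref{prop:path-object}, \cref{prop:fibbtwfibobj} and \cref{cor:trivfibbtwfibobj}. Your step (4) is handled exactly as in the paper, and your limit-sketch argument for local presentability fills in a point the paper leaves implicit. However, your list of hypotheses leaves out one that the cited theorem needs and that the paper explicitly checks: the class $\mathcal{W}_f$ of weak equivalences between naive fibrant objects must be accessible.

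This omission is not cosmetic, because your reason for combinatoriality does not work. You argue that the structure is combinatorial since $I$ and $\{j_1,\dots,j_{10}\}$ are sets. But the anodyne extensions are not claimed to generate the trivial cofibrations of the resulting model structure. If they did, every fibration would be a naive fibration, whereas the theorem only identifies the two between fibrant objects. A generating set of trivial cofibrations is produced inside \cite{guetta2023fibrantlyinduced} by a Smith-type argument, and that is exactly where accessibility of $\mathcal{W}_f$ is used.

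The fix is short, and the paper does it in one phrase: $\mathcal{W}_f$ inherits accessibility from the biequivalences.
\begin{itemize}
\item The biequivalences are the weak equivalences of Lack's combinatorial model structure on $\twocat$. Hence they form an accessible, accessibly embedded subcategory of the arrow category.
\item $\mathcal{W}_f$ is the intersection of two full subcategories. The first is the arrows between naive fibrant objects, which is accessible and accessibly embedded because it is cut out by right lifting against a set of maps.
\item The second is the preimage of the biequivalences under the forgetful functor $\twocat^{+}_{/\Csharp}\to\twocat$, which preserves filtered colimits.
\item Both operations preserve accessibility.
\end{itemize}
With this hypothesis verified, your proof coincides with the paper's.
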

	\begin{proof}
    	We apply Theorem 2.8 from \cite{guetta2023fibrantlyinduced}.
    	First, we have that our desired class of cofibrations is generated by a set, as we showed in \cref{prop:genmarkedcofibs}, and moreover, that it contains the class of anodyne extensions as outlined in \cref{AnisinCof}. Furthermore, we have that all trivial fibrations are weak equivalences by \cref{prop:trivial_fibration_is_w}; that the weak equivalences between naive fibrant objects (i.e.\ the biequivalences) have 2-out-of-6 as they are the weak equivalences in the canonical model structure in $\twocat$; they are accessible for the same reason; we have established the existence of path objects for fibrant objects in \cref{prop:path-object}; and lastly, the fact that naive fibrations that are weak equivalences between fibrant objects are trivial fibrations follows from \cref{prop:fibbtwfibobj,cor:trivfibbtwfibobj}.
	\end{proof}

\section{The marked Grothendieck construction}\label{sec:mGC}

We now consider the Grothendieck construction of a 2-functor $F\colon \cC^{\coop}\to \ttwocat$ which gives a functor 
        $ {\textstyle \int}_{\cC}\colon 2\Fun[\cC^{\coop},\ttwocat]\to \twocat_{/\cC}$.
        Since we are using marked $2$-categories, we present an adapted construction in \cref{subsec:marked_GC}
        \[{\textstyle \int}_{\cC}^+\colon 2\Fun[\cC^{\coop},\ttwocat]\to \mslicetwocat \]
        which endows each $2$-category $\int_{\cC}F$ with the marking described in \cref{Const:marked-scaled-GC}. 
        
        One might expect that the marked Grothendieck construction  would be a right Quillen equivalence, but unfortunately this cannot be the case: the functor $\int_\cC^+$ is not even a right adjoint, as we show in \cref{rmk:example_marking_equivs_not_radjoint}. Regardless of this, in \cref{section5} we prove that this functor induces an equivalence on homotopy categories. Towards that goal, \cref{subsec:construction_weak_left_adjoint} studies the compatibility between the functor $\int_\cC^+$ and the relevant model structures.

    \subsection{The marked Grothendieck construction}
    \label{subsec:marked_GC}
         We start by upgrading the 2-categorical Grothendieck construction (see \cite{Buckley2014}) to the marked setting.

        \begin{constr}\label{Const:marked-scaled-GC}
            Let $F\colon\cC^{\coop}\to \ttwocat$ be a 2-functor. The \emph{marked Grothendieck construction} of $F$ is the  marked $2$-category $\int_{\cC}^{+}F$ whose 
            \begin{itemize}[leftmargin=*]
                    \item \label{Const:marked-scaled-GC.1}
                    objects are pairs $(c,x)$ consisting of objects $c \in \cC$ and $x\in Fc$,

                    \item \label{Const:marked-scaled-GC.2} 
                    morphisms are pairs $(f,\widehat{f})\colon (c,x)\to (c',x')$ consisting of morphisms $f\colon c\to c'$ in $\cC$ and  $\widehat{f}\colon x \to Ff(x')$ in $Fc$,

                    \item \label{Const:marked-scaled-GC.3}
                    $2$-cells are pairs $(\alpha,\widehat{\alpha})\colon  (f,\widehat{f})\Rightarrow (g, \widehat{g})$ between morphisms as above, consisting of 2-cells $\alpha\colon f \Rightarrow g$ in $\cC$ and $\widehat{\alpha}\colon  \widehat{f} \Rightarrow (F\alpha)_{x'}.\widehat{g}$  in $Fc$,
                    
            \item \label{Const:marked-scaled-GC.4}
            identity morphisms are $\id_{(c,x)}= (\id_c, \id_{x})$, and
        identity $2$-cells are $(\id_f,\id_{\widehat{f}})\colon (f,\widehat{f}) \Rightarrow (f,\widehat{f})$,

            \item \label{Const:marked-scaled-GC.6}
            composite of morphisms 
            $(c,x)\xrightarrow{(f,\widehat f)} (c',x')\xrightarrow{(g,\widehat{g})} (c'',x'')$ has its first component given by $gf$ and its second component given by the composite $Ff(\widehat{g}).\widehat{f}$ in $Fc$:
    
            \[
                \begin{tikzcd}[column sep=1cm]
            x & {Ff(x')} & {FfFg(x'')=F(gf)(x''),}
            \arrow["{\widehat{f}}", from=1-1, to=1-2]
            \arrow["{Ff(\widehat{g})}", from=1-2, to=1-3]
                \end{tikzcd}
            \]
            \item \label{Const:marked-scaled-GC.7}
             horizontal composite of $2$-cells 
            
        \[
            \begin{tikzcd}
            {(c,x)} && {(c',x')} && {(c'',x'')}
            \arrow[""{name=0, anchor=center, inner sep=0}, "{(f,\widehat{f})}", curve={height=-18pt}, from=1-1, to=1-3]
            \arrow[""{name=1, anchor=center, inner sep=0}, "{(g,\widehat{g})}"', curve={height=18pt}, from=1-1, to=1-3]
            \arrow[""{name=2, anchor=center, inner sep=0}, "{(h,\widehat{h})}", curve={height=-18pt}, from=1-3, to=1-5]
            \arrow[""{name=3, anchor=center, inner sep=0}, "{(k,\widehat{k})}"', curve={height=18pt}, from=1-3, to=1-5]
            \arrow["{(\alpha,\widehat{\alpha})}", shift right=2, shorten <=7pt, shorten >=7pt, Rightarrow, from=0, to=1]
            \arrow["{(\beta,\widehat{\beta})}", shift right=2, shorten <=7pt, shorten >=7pt, Rightarrow, from=2, to=3]
            \end{tikzcd}
        \]
            has its first component given by the horizontal composite $\beta  \alpha$ in $\cC$ and its second component given by the pasting diagram:
        
        \[\begin{tikzcd}[ampersand replacement=\&,
        column sep=16pt,
    row sep=3ex]
        	x \&\& {Ff(x')} \&\& {Ff.Fh(x'')}  \\
        	\\
        	\&\& {Fg(x')} \&\& {Ff.Fk(x'')} \\
        	\\
        	\&\&\&\& {Fg.Fk(x'')} 
        	\arrow[""{name=0, anchor=center, inner sep=0}, "{\widehat{f}}", from=1-1, to=1-3]
        	\arrow[""{name=0p, anchor=center, inner sep=0}, phantom, from=1-1, to=1-3, start anchor=center, end anchor=center]
        	\arrow[""{name=1, anchor=center, inner sep=0}, "{\widehat{g}}"', from=1-1, to=3-3]
        	\arrow[""{name=1p, anchor=center, inner sep=0}, phantom, from=1-1, to=3-3, start anchor=center, end anchor=center]
        	\arrow[""{name=2, anchor=center, inner sep=0}, "{Ff(\widehat{h})}", from=1-3, to=1-5]
        	\arrow[""{name=2p, anchor=center, inner sep=0}, phantom, from=1-3, to=1-5, start anchor=center, end anchor=center]
        	\arrow[""{name=3, anchor=center, inner sep=0}, "{Ff(\widehat{k})}"', from=1-3, to=3-5]
        	\arrow[""{name=3p, anchor=center, inner sep=0}, phantom, from=1-3, to=3-5, start anchor=center, end anchor=center]
        	\arrow[""{name=3p, anchor=center, inner sep=0}, phantom, from=1-3, to=3-5, start anchor=center, end anchor=center]
        	\arrow["{(F\alpha)_{x'}}"{description}, from=3-3, to=1-3]
        	\arrow[""{name=4, anchor=center, inner sep=0}, "{Fg(\widehat{k})}"', from=3-3, to=5-5]
        	\arrow[""{name=4p, anchor=center, inner sep=0}, phantom, from=3-3, to=5-5, start anchor=center, end anchor=center]
        	\arrow["{Ff.(F\beta)_{x''}}"{description}, from=3-5, to=1-5]
        	\arrow["{(F\alpha)_{Fk(x'')}}"{description}, from=5-5, to=3-5]
        	\arrow["{\widehat{\alpha}}", between={0.3}{0.7}, Rightarrow, from=0p, to=1p]
        	\arrow["{Ff(\widehat{\beta})}", shift right=3, between={0.3}{0.7}, Rightarrow, from=2p, to=3p]
        	\arrow["{=}", shift right=3, draw=none, from=3p, to=4p]
        \end{tikzcd}\]

            \item \label{Const:marked-scaled-GC.8}
            vertical composite of $2$-cells
            $(f,\widehat{f}) \xRightarrow{(\alpha,\widehat{\alpha})} (g,\widehat{g})\xRightarrow{(\beta,\widehat{\beta})} (h,\widehat{h})$ has first component given by the vertical composite $\beta.\alpha$ and second component given by the pasting diagram: 
            
        \[
            \begin{tikzcd}[column sep=16pt, row sep=3ex]
                x &&&&&& {Ff(x')} \\
                \\
                &&& {Fg(x')} \\
                \\
                &&& {Fh(x')}
                \arrow[""{name=0, anchor=center, inner sep=0}, "{\widehat{f}}", from=1-1, to=1-7]
                \arrow[""{name=1, anchor=center, inner sep=0}, "{\widehat{g}}", from=1-1, to=3-4]
                \arrow[""{name=1p, anchor=center, inner sep=0}, phantom, from=1-1, to=3-4, start anchor=center, end anchor=center]
                \arrow[""{name=2, anchor=center, inner sep=0}, "{\widehat{h}}"', from=1-1, to=5-4]
                \arrow[""{name=2p, anchor=center, inner sep=0}, phantom, from=1-1, to=5-4, start anchor=center, end anchor=center]
                \arrow[""{name=3, anchor=center, inner sep=0}, "{(F\alpha)_{x'}}", from=3-4, to=1-7]
                \arrow[""{name=3p, anchor=center, inner sep=0}, phantom, from=3-4, to=1-7, start anchor=center, end anchor=center]
                \arrow[""{name=4, anchor=center, inner sep=0}, "{(F(\beta\alpha))_{x'}}"', from=5-4, to=1-7]
                \arrow[""{name=4p, anchor=center, inner sep=0}, phantom, from=5-4, to=1-7, start anchor=center, end anchor=center]
                \arrow["{(F\beta)_{x'}}" description, from=5-4, to=3-4]
                \arrow["{\widehat{\alpha}}"'{pos=0.4}, shorten <=11pt, shorten >=15pt, Rightarrow, from=0, to=3-4]
                \arrow[""{name=5p, anchor=center, inner sep=0}, phantom, from=3-4, to=5-4, start anchor=center, end anchor=center]
                \arrow["{\Downarrow \scriptstyle{\widehat{\beta}}}"{pos=0.6}, shorten <=6pt, shorten >=6pt, phantom, from=1-1, to=5p]
                \arrow["{=}"{description}, draw=none, from=5p, to=1-7]
            \end{tikzcd}.
            \]
            \item \label{Const:marked-scaled-GC.9}
            marked morphisms $(f,\widehat{f})$ are the ones where $\widehat{f}$ is an equivalence.
            \item \label{Const:marked-scaled-GC.10}
            marked 2-cells $(\alpha,\widehat{\alpha})$ are the ones where $\widehat{\alpha}$ is invertible.
        \end{itemize}
        \end{constr}

        \begin{rmk}
        The underlying 2-category of \cref{Const:marked-scaled-GC} is precisely Buckley's Grothendieck construction (\cite[Construction 2.2.1]{Buckley2014}); hence the fact that $\int^+_{\cC}F$ is a $2$-category is justified by \cite[Proposition 2.2.2]{Buckley2014}. Furthermore it is not hard to see that this construction also corresponds to \cite[Construction 3.3.3]{Buckley2014} in the case where the functor $F$ is a 2-functor valued in $\twocat$ instead of a trihomomorphism valued in $\bicat$.
        \end{rmk}

        \begin{rmk}\label{Def:marked_GC}
        The marked Grothendieck construction gives a functor
        \[{\textstyle \int}_{\cC}^+\colon 2\Fun[\cC^{\coop},\ttwocat]\to \mslicetwocat. \]
        Indeed, the marked $2$-category $\int_{\cC}^+ F$ defined above comes with a  canonical marked projection $\pi_F\colon \int_{\cC}^+ F \to \cC^{\sharp}$ onto the first coordinate. Moreover, this construction is functorial in the sense that if $F,G\colon \cC^{\coop} \to \ttwocat$ are 2-functors, then a $2$-natural transformation $\theta\colon F \Rightarrow G$ induces a marked 2-functor $\int_{\cC}^+ \theta\colon \int_{\cC}^+ F \to \int_{\cC}^+ G$ over $\cC^{\sharp}$ which sends objects, morphisms and $2$-cells as in the following diagram:
    
        \[\begin{tikzcd}[cramped]
        	{(c,x)} && {(c',x')} &&& {(c,\theta_c x)} && {(c',\theta_{c'}x')}
        	\arrow[""{name=0, anchor=center, inner sep=0}, "{(f,\widehat{f})}", curve={height=-18pt}, from=1-1, to=1-3]
        	\arrow[""{name=1, anchor=center, inner sep=0}, "{(g,\widehat{g})}"', curve={height=18pt}, from=1-1, to=1-3]
        	\arrow["{\int_{\cC}^+ \theta}", between={0.2}{0.8}, maps to, from=1-3, to=1-6]
        	\arrow[""{name=2, anchor=center, inner sep=0}, "{(f,\theta_c\widehat{f})}", curve={height=-18pt}, from=1-6, to=1-8]
        	\arrow[""{name=3, anchor=center, inner sep=0}, "{(g, \theta_c \widehat{g})}"', curve={height=18pt}, from=1-6, to=1-8]
        	\arrow["{(\alpha,\widehat{\alpha})}", shift right=3, between={0.3}{0.7}, Rightarrow, from=0, to=1]
        	\arrow["{(\alpha,\theta_c\widehat{\alpha})}", shift right=3, between={0.3}{0.7}, Rightarrow, from=2, to=3]
        \end{tikzcd}\]
        \end{rmk}

        Our choice of markings in the marked Grothendieck construction is so that the projection to $\Csharp$ is cart-marked. This is justified by the next proposition, which is claimed without proof in \cite[Proposition 4.3]{vazquez2021three}; we include a proof here to keep the paper self-contained.

        \begin{prop}\label{prop:markingsinGC}
        Every 2-functor $F\colon \cC^{\coop} \to \ttwocat$ satisfies the following conditions:
        \begin{itemize}
        \item A morphism $(f,\widehat{f})$ in $\int_{\cC}^{+} F$ is cartesian if and only if $\widehat{f}$ is an equivalence.
        \item A 2-cell $(\alpha,\widehat{\alpha})$ in $\int_{\cC}^{+} F$ is cartesian if and only if $\widehat{\alpha}$ is an isomorphism.
        \end{itemize}
        \end{prop}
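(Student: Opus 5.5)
I would prove the two bullets separately, starting with the 2-cell statement because it is essentially 1-categorical and is then used for the morphism statement.

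\textbf{2-cells.} Fix objects $(c,x),(c',x')$. The hom-functor
\[
\pi_{(c,x),(c',x')}\colon {\textstyle\int}_\cC^+F((c,x),(c',x'))\to\cC(c,c')
\]
is isomorphic to the ordinary (1-categorical) Grothendieck construction of the functor $\cC(c,c')^{\op}\to\Cat$. This functor sends $f\mapsto Fc(x,Ff(x'))$, and sends $\alpha\colon f\Rightarrow g$ to postcomposition with $(F\alpha)_{x'}$. Indeed, a 2-cell $(\alpha,\widehat\alpha)$ consists of $\alpha$ together with $\widehat\alpha\colon\widehat f\Rightarrow(F\alpha)_{x'}.\widehat g$, and vertical composition matches the 1-dimensional formula. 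In the 1-categorical Grothendieck construction of a strict functor, the cartesian morphisms are exactly those whose fibre component is invertible. I would check this directly.

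\begin{itemize}
\item \emph{Invertible implies cartesian.} Suppose $\widehat\alpha$ is invertible, and let $(\beta,\widehat\beta)\colon(h,\widehat h)\Rightarrow(g,\widehat g)$ and $\gamma$ satisfy $\alpha\gamma=\beta$. The unique lift is $(\gamma,\,\widehat\gamma)$ with $\widehat\gamma=(F\gamma)_{x'}\widehat\alpha^{-1}$ whiskered appropriately, so that $\widehat\beta=(F\gamma)\cdot\widehat\alpha.\widehat\gamma$.
\item \emph{Cartesian implies invertible.} Build the canonical lift $(\alpha,\id)\colon((F\alpha)_{x'}.\widehat g)\Rightarrow(g,\widehat g)$, which is cartesian by the previous point. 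Two cartesian lifts of the same 2-cell differ by a vertical isomorphism over $\id_f$, and such an isomorphism has invertible second component. Hence $\widehat\alpha$ is invertible.
\end{itemize}

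\textbf{Morphisms, invertible implies cartesian.} Suppose $\widehat f$ is an equivalence. I first show that $(f,\id)\colon(c,Ff(x'))\to(c',x')$ is cartesian. Given $(g,\widehat g)\colon(d,z)\to(c',x')$, $h\colon d\to c$ and an invertible $\alpha\colon fh\Rightarrow g$:

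\begin{itemize}
\item \emph{Condition \enumref{Def:cartesian_1cells}{1}.} Take $\widehat h=(h,\,(F\alpha)_{x'}.\widehat g)$, noting that $F(fh)=FhFf$ strictly. Take $\widehat\alpha=(\alpha,\id)$, which is invertible because its second component is invertible and $\alpha$ is invertible. The condition $\widehat\beta=\id$ then holds.
\item \emph{Condition \enumref{Def:cartesian_1cells}{2}.} For 2-cells, the equation $\alpha'.(f\cdot\delta)=\sigma.\alpha$ in the second components determines $\widehat\delta$ uniquely. This is a direct unwinding using the composition formulas of \cref{Const:marked-scaled-GC}.
\end{itemize}

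Next, a general $(f,\widehat f)$ factors as $(f,\id)\circ(\id_c,\widehat f)$. The morphism $(\id_c,\widehat f)$ is an equivalence in $\int_\cC^+F$, with inverse $(\id_c,\widehat f^{-1})$ and with the invertible 2-cells $(\id,\eta)$ and $(\id,\epsilon)$. Equivalences are cartesian by \enumref{prop:propertiescartesian}{2}, and composites of cartesian morphisms are cartesian by \enumref{prop:propertiescartesian}{1}. Therefore $(f,\widehat f)$ is cartesian.

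\textbf{Morphisms, cartesian implies invertible.} Suppose $(f,\widehat f)$ is cartesian. Both $(f,\widehat f)$ and $(f,\id)$ are cartesian lifts of $f$ with the same target. By \enumref{prop:propertiescartesian}{3} there is an equivalence $(k,\widehat k)\colon(c,x)\to(c,Ff(x'))$, together with an invertible 2-cell $\tau\colon(f,\id)(k,\widehat k)\Rightarrow(f,\widehat f)$, chosen so that the projection sends the equivalence data and $\tau$ to identities. Then $k=\id_c$, and $\widehat k$ is an equivalence in $Fc$: the equivalence data project to identities, so their second components are invertible 2-cells in $Fc$ exhibiting $\widehat k$ as an equivalence. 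Likewise $\tau=(\id_f,\widehat\tau)$ with $\widehat\tau\colon\widehat k\Rightarrow\widehat f$ invertible, because a 2-cell over an identity whose first component is invertible must have invertible second component, and $\tau$ is invertible. Hence $\widehat f\cong\widehat k$ is an equivalence.

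\textbf{Main obstacle.} The step I expect to be hardest is verifying \enumref{Def:cartesian_1cells}{2} for $(f,\id)$. This requires unwinding the horizontal and whiskering formulas of \cref{Const:marked-scaled-GC} carefully enough to see both that $\widehat\delta$ exists and that it is unique. I would handle it by reducing to the hom-level statement already proved for 2-cells.
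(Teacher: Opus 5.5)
Your proposal is correct and follows the paper's proof. It uses the factorization $(f,\widehat f)=(f,\id)\circ(\id_c,\widehat f)$ for one direction, comparison with the canonical cartesian lift $(f,\id)$ via uniqueness of cartesian lifts up to an equivalence lying over identities for the other, and reduction of the 2-cell statement to the 1-categorical Grothendieck construction of $f\mapsto Fc(x,Ff(x'))$ on hom-categories. The only difference is that you verify directly what the paper cites (cartesianness of $(f,\id)$ from Buckley's proof of his Proposition 3.3.4, and the 1-dimensional fact from Moser--Sarazola); the deferred check of condition (2) for $(f,\id)$ is routine, and in your 2-cell lift the formula should read $\widehat\gamma=((F\gamma)_{x'}\cdot\widehat\alpha^{-1}).\widehat\beta$.
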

        \begin{proof}
        We begin by showing that if $(f,\widehat{f})\colon (c,x)\to (c',x')$ is cartesian then $\widehat{f}\colon x \to Ff(x')$ is an equivalence. The morphism $(f,\id_{Ff(x')})\colon (c,Ff(x')) \to (c',x')$ is also cartesian as shown in the proof of \cite[Proposition 3.3.4]{Buckley2014}. That is, $(f,\widehat{f})$ and $(f,\id_{Ff(x')})$ are both cartesian $\pi_F$-lifts of the same morphism. Hence, by \enumref{prop:propertiescartesian}{3} there exists an equivalence $(h,\widehat{h})\colon (c,Ff(x')) \to (c,x)$ together with an invertible $2$-cell $(\alpha,\widehat{\alpha})\colon (f,\widehat{f})(h,\widehat{h}) \Rightarrow (f,\id_{Ff(x')})$. The equivalence data 
		\begin{equation*}
            \left\lbrace\,
            \begin{gathered}
            (h^{-1},\widehat{h}^{-1})\colon (c,x) \to (c,Ff(x')) \\ 
			(\epsilon,\widehat{\epsilon})\colon (h^{-1},\widehat{h}^{-1}).(h,\widehat{h}) \Rightarrow \id_{(c,Ff(x'))} \\
			(\eta,\widehat{\eta})\colon \id_{(c,x)} \Rightarrow (h,\widehat{h}).(h^{-1},\widehat{h}^{-1})
            \end{gathered}
            \right.
        \end{equation*}
		and $(\alpha,\widehat{\alpha})$ can be chosen so that $h=\id_c$, $h^{-1}=\id_c$, $\epsilon=\id_{\id_c}$, $\eta=\id_{\id_c}$, and $\alpha = \id_f$. With these choices, $\widehat{\epsilon}$, $\widehat{\eta}$, and $\widehat{\alpha}$ are invertible $2$-cells. Moreover, $Fh(x)= x$ and $(F\alpha)_{x'} = \id_{Ff(x')}$, so $\widehat{h}\colon Ff(x')\to x$ and we have an invertible $2$-cell $\widehat{\alpha}\colon \widehat{f}.\widehat{h}\Rightarrow \id_{Ff(x')}$ together with the composite of invertible $2$-cells
       
        \[\begin{tikzcd}[cramped,column sep=18pt]
        	{\widehat{h}.\widehat{f}} && {\widehat{h}.\widehat{f}.\widehat{h}.\widehat{h}^{-1}} && {\widehat{h}\widehat{h}^{-1}} && {\id_x}
        	\arrow["{(\widehat{h}.\widehat{f})\cdot \widehat{\eta}}", Rightarrow, from=1-1, to=1-3]
        	\arrow["{\widehat{h}\cdot \widehat{\alpha}\cdot \widehat{h}^{-1}}", Rightarrow, from=1-3, to=1-5]
        	\arrow["{\widehat{\eta}^{-1}}", Rightarrow, from=1-5, to=1-7]
        \end{tikzcd}\]
        We conclude that $\widehat{h}$ and $\widehat{f}$ are inverse equivalences. 
	
        Conversely, let $(f,\widehat{f})\colon (c,x)\to(c',x')$ be a morphism such that $\widehat{f}$ is an equivalence. Note that we can always write $(f,\widehat{f})$ as the composite $(f,\id_{Ff(x')}).(\id_c, \widehat{f})$, where $(f,\id_{Ff(x')})$ is cartesian as established in the proof of \cite[Proposition 3.3.4]{Buckley2014}, and $(\id_c, \widehat{f})$ is an equivalence and hence cartesian. We conclude that $(f,\widehat{f})$ must be cartesian as well.
        
        Lastly, for the claim regarding 2-cells, note that for each pair of objects $(c,x),(c',x')$, the projection $\pi_F\colon\int_{\cC}^{+}F\to\cC^\sharp$ is locally given by the 1-categorical  Grothendieck construction of the functor $\cC(c,c')^{\mathrm{op}}\to\cat$ that sends $f\colon c\to c'$ to the comma category $x\downarrow UFf$. Here $UFf\colon UFc'\to UFc$ denotes the underlying functor between underlying 1-categories, and $x\colon [0]\to UFc$ denotes the constant functor at $x$.
            The statement regarding 2-cells then follows from the 1-dimensional analogue, observed in \cite[Remark 5.2]{mosersarazola2023model}.
        \end{proof}

        As in the classical case, the projection $\int^{+}_{\cC} F\to\cC^{\sharp}$ is expected to be a ``prototypical fibration''. The following result shows that it has the expected behaviour. 
        
        \begin{cor}\label{cor:projectionisfibrant}
            For every 2-functor $F\colon\cC^{\coop}\to\ttwocat$, the projection $\pi_F\colon \int^{+}_{\cC} F\to\cC^{\sharp}$ is a cart-marked 2-fibration.
        \end{cor}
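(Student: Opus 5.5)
By \cref{Def:Cart-marked-2-fibrations}, I need to check two things: that $\pi_F$ is a $2$-fibration in the sense of \cref{Def:2-fibration}, and that the markings of \cref{Const:marked-scaled-GC} are exactly the $\pi_F$-cartesian morphisms and $2$-cells. The second point is precisely \cref{prop:markingsinGC}: marked morphisms are those $(f,\widehat f)$ with $\widehat f$ an equivalence, and marked $2$-cells are those $(\alpha,\widehat\alpha)$ with $\widehat\alpha$ invertible. So it remains to verify the three conditions of \cref{Def:2-fibration}, using \cref{prop:markingsinGC} to recognise cartesian cells.

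\textbf{Cartesian lifts of morphisms.} Let $(c',x')$ be an object and $f\colon c\to c'$ a morphism in $\cC$. I take the lift $(f,\id_{Ff(x')})\colon (c,Ff(x'))\to(c',x')$. Its second component is an identity, hence an equivalence, so it is cartesian by \cref{prop:markingsinGC}; this was also shown in \cite[Proposition 3.3.4]{Buckley2014}.

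\textbf{Local cartesian fibration.} Fix $(f,\widehat f)\colon(c,x)\to(c',x')$ and a $2$-cell $\alpha\colon g\Rightarrow f$ in $\cC$. I take the lift
\[
(\alpha,\id)\colon \bigl(g,(F\alpha)_{x'}.\widehat f\bigr)\Rightarrow (f,\widehat f),
\]
where the source and target components are arranged so that the second component of the $2$-cell is an identity, matching the variance convention of \cref{Const:marked-scaled-GC}. Since that component is invertible, this $2$-cell is cartesian by \cref{prop:markingsinGC}. Alternatively, one can cite the local description in the proof of \cref{prop:markingsinGC}: locally, $\pi_F$ is a $1$-categorical Grothendieck construction, which is a fibration by \cite{mosersarazola2023model}.

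\textbf{Horizontal composites (main step).} Given cartesian $(\alpha,\widehat\alpha)$ and $(\beta,\widehat\beta)$, \cref{prop:markingsinGC} says $\widehat\alpha$ and $\widehat\beta$ are invertible. By \cref{Const:marked-scaled-GC}, the second component of the horizontal composite is a pasting of $\widehat\alpha$, of $Ff(\widehat\beta)$ (invertible because $Ff$ is a $2$-functor), and of identity or naturality squares coming from the $2$-naturality of $F\alpha$. A pasting of invertible $2$-cells is invertible, so the composite is cartesian by \cref{prop:markingsinGC}. The only real obstacle is confirming that every $2$-cell in that pasting diagram is either one of these invertible cells or an identity. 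Since $F$ is a strict $2$-functor, the middle square commutes on the nose, so this check should go through.
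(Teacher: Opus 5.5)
Your proof is correct. For the marking half it is the paper's argument, an appeal to \cref{prop:markingsinGC}. The difference is in the $2$-fibration half: the paper simply cites \cite[Proposition 3.3.4]{Buckley2014}, whereas you verify the three conditions of \cref{Def:2-fibration} by hand, using \cref{prop:markingsinGC} to recognise cartesian cells.

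Your lifts are correctly typed. The morphism lift is $(f,\id_{Ff(x')})$. For the $2$-cell lift $(\alpha,\id)\colon (g,(F\alpha)_{x'}.\widehat f)\Rightarrow (f,\widehat f)$, note that $\alpha\colon g\Rightarrow f$ becomes $F\alpha\colon Ff\Rightarrow Fg$ under $\cC^{\coop}$, so $(F\alpha)_{x'}.\widehat f\colon x\to Fg(x')$ as required.

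The obstacle you flag for horizontal composites is harmless. In \cref{Const:marked-scaled-GC}, the second component of a horizontal composite is a pasting of three pieces: a whiskering of $\widehat\alpha$, a whiskering of $Ff(\widehat\beta)$, and the naturality square of $F\alpha$ at $\widehat k$. That square commutes strictly because $F\alpha$ is $2$-natural.

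The paper's route buys brevity. Yours buys transparency: it shows that, in the strict setting, the axioms reduce to two facts, namely that identities are equivalences and that invertibility survives whiskering and pasting. It is not fully independent of Buckley, though, since \cref{prop:markingsinGC} itself imports the cartesianness of $(f,\id_{Ff(x')})$ from the proof of \cite[Proposition 3.3.4]{Buckley2014}.
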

        \begin{proof}
            The map $\pi_F$ is a 2-fibration by \cite[Proposition 3.3.4]{Buckley2014}. Moreover, it is cart-marked  due to the definition of marked morphisms and 2-cells in $\int^{+}_{\cC} F$ from \cref{Const:marked-scaled-GC}, together with the characterization of cartesian morphisms and 2-cells from \cref{prop:markingsinGC}. 
        \end{proof}

    Given this result, we will often make an abuse of notation and employ $\int^{+}_{\cC} F$ when referring to the cart-marked 2-fibration $\pi_F$. This is simply because the former conveys more information and context for some results.

    \subsection{Compatibility with the model structure}
    \label{subsec:construction_weak_left_adjoint} 
    We would like to be able to say that the marked Grothendieck construction 
    \[{\textstyle \int}_{\cC}^+\colon 2\Fun[\cC^{\coop},\ttwocat] \to \mslicetwocat \] 
    is a right Quillen functor, but unfortunately, that cannot be: as the following example shows, this functor does not preserve limits, and hence cannot be a right adjoint.   

    \begin{ex}  \label{rmk:example_marking_equivs_not_radjoint}
    When $\cC=[0]$ is the terminal $2$-category, the marked Grothendieck construction can more simply be described as the functor $\int_{[0]}^+ \colon \twocat \to  \twocat^+ $ which sends a $2$-category $\cD$ to the marked $2$-category $\cD^{\natural}$ where we have marked all equivalence morphisms and all invertible $2$-cells. Note that it is not the same as the trivial marking functor $(-)^\flat$ which marks only the identity arrows. 

    We will show that $\int_{[0]}^+$ does not preserve limits. For this, let $E$ be the walking equivalence (see \cref{defn:walkingequiv}) and consider the 2-functor $R\colon E \to E$ which is the identity on objects, and with $R(i)=i$, $R(i^{-1})=i^{-1}  i  i^{-1}$. The equalizer in $\twocat$ of $R$ and the identity is then:
            \[\begin{tikzcd}[ampersand replacement=\&]
                {(0\xrightarrow{i} 1)} \& E \& E
                \arrow[from=1-1, to=1-2]
                \arrow["R", shift left=2, from=1-2, to=1-3]
                \arrow["{\id}"', shift right=2, from=1-2, to=1-3]
            \end{tikzcd}\]
            Note that $i$ is an equivalence in $E$ but not in the equalizer. Then, applying the functor $\int_{[0]}^+$ to this equalizer, we obtain the  diagram in $\twocat^+$ below left
            
             \[\begin{tikzcd}
                 {(0\xrightarrow{i} 1)^\flat} & {E^{\natural}} & {E^{\natural}}
                 \arrow[from=1-1, to=1-2]
                 \arrow["{R}", shift left=2, from=1-2, to=1-3]
                 \arrow["{\id}"', shift right=2, from=1-2, to=1-3]
             \end{tikzcd}\qquad \qquad \qquad
            \begin{tikzcd}
                {(0\xrightarrow{i} 1)^\sharp} \\
                {(0\xrightarrow{i} 1)^\flat} & {E^{\natural}} & {E^{\natural}}
                \arrow["{\textrm{not marked}}"', dashed, from=1-1, to=2-1]
                \arrow[from=1-1, to=2-2]
                \arrow[from=2-1, to=2-2]
                \arrow["{R}", shift left=2, from=2-2, to=2-3]
                \arrow["{\id}"', shift right=2, from=2-2, to=2-3]
            \end{tikzcd}\]
             which does not satisfy the universal property of the equalizer in $\twocat^+$, as exhibited by the diagram drawn above to the right. Indeed, the only 2-functor $(0\xrightarrow{i} 1)^\sharp\to (0\xrightarrow{i} 1)^\flat$ that makes the required triangle commute is the identity, but this does not preserve markings, as $i$ is marked in the source but not in the target.
        \end{ex}

In spite of this failure, we will eventually show that the marked Grothendieck construction gives an equivalence on homotopy categories. Towards this end, we now collect several results that show how this functor still exhibits the expected behaviour with respect to the model categorical data.

    \begin{prop}\label{lemma:Grothendieck_constr_preserves_fibrations}
        The functor $\int^+_{\cC}$ preserves fibrations and trivial fibrations. 
    \end{prop}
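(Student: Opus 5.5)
Since every object of $2\Fun[\cC^{\coop},\ttwocat]$ is fibrant in the projective model structure, and $\int^+_\cC \theta$ always lands between cart-marked $2$-fibrations by \cref{cor:projectionisfibrant}, I will reduce everything to statements about fibrations between fibrant objects. Let $\theta\colon F\Rightarrow G$ be a projective fibration, i.e.\ each component $\theta_c\colon Fc\to Gc$ is an equifibration. The target of $\int^+_\cC\theta$ is fibrant. So by \cref{Thm:the ms} (concretely via \cref{prop:fibbtwfibobj}) it suffices to show that the underlying 2-functor $U\int^+_\cC\theta$ is an equifibration. Similarly, for a trivial fibration (each $\theta_c$ a trivial fibration in $\twocat$), \cref{cor:trivfibbtwfibobj} reduces the claim to showing that $U\int^+_\cC\theta$ is a trivial fibration in $\twocat$. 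For that I will use the characterization of \cref{char:lacktrivfibs}.

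\emph{Equifibration, objects.} Given an object $(c,x)$ of $\int F$ and an equivalence $(f,\widehat f)\colon (c',y)\to (c,\theta_c x)$ in $\int G$, I first note that $f$ is an equivalence in $\cC$. I then want to lift $\widehat f\colon y\to Gf(\theta_c x)=\theta_{c'}(Ff(x))$, where the equality uses 2-naturality. The key step is to show that $\widehat f$ is itself an equivalence in $Gc'$. I expect this to follow since equivalences in the Grothendieck construction are cartesian, hence $\widehat f$ is an equivalence by \cref{prop:markingsinGC}. Because $\theta_{c'}$ is an equifibration, I can lift $\widehat f$ to an equivalence $\widehat g\colon y'\to Ff(x)$ with $\theta_{c'}y'=y$. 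Then $(f,\widehat g)$ has $\widehat g$ an equivalence, so it is cartesian. A cartesian morphism over an equivalence is an equivalence by \enumref{prop:propertiescartesian}{2}, so $(f,\widehat g)$ is the required lift.

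\emph{Equifibration, 2-cells.} Given $(g,\widehat g)\colon (c,x)\to(c',x')$ in $\int F$ and an invertible 2-cell $(\alpha,\widehat\alpha)\colon (f,\widehat f)\Rightarrow (g,\theta_c\widehat g)$ in $\int G$, I note that $\alpha$ is invertible, and hence so is $\widehat\alpha\colon \widehat f\Rightarrow (G\alpha)_{\theta x'}.\theta_c\widehat g=\theta_c((F\alpha)_{x'}.\widehat g)$. The step I expect to be slightly delicate is checking that invertibility of $(\alpha,\widehat\alpha)$ forces $\widehat\alpha$ invertible. Using the vertical composition formula in \cref{Const:marked-scaled-GC}, the inverse's second component gives an inverse to $\widehat\alpha$ up to whiskering by the isomorphism $(F\alpha)$. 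Once this is settled, the local equifibration condition for $\theta_c$ produces an invertible $\widehat\beta\colon \widehat k\Rightarrow (F\alpha)_{x'}.\widehat g$ with $\theta_c\widehat\beta=\widehat\alpha$. Then $(\alpha,\widehat\beta)\colon (f,\widehat k)\Rightarrow(g,\widehat g)$ is the desired lift.

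\emph{Trivial fibrations.} Let $\theta$ be a levelwise trivial fibration. Surjectivity on objects of $\int\theta$ is immediate from surjectivity of each $\theta_c$. For fullness on morphisms, given $(f,\widehat f)\colon (c,\theta x)\to(c',\theta x')$, the map $\widehat f$ goes from $\theta_c x$ to $\theta_c Ff(x')$, so it lifts by fullness of $\theta_c$. For local full faithfulness, a 2-cell $(\alpha,\widehat\alpha)$ between images has $\widehat\alpha\colon\theta_c\widehat f\Rightarrow\theta_c((F\alpha).\widehat g)$. This lifts uniquely because $\theta_c$ is locally fully faithful, and the first component $\alpha$ is fixed. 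Applying \cref{cor:trivfibbtwfibobj} then finishes the proof. Throughout, the only bookkeeping is the repeated use of the 2-naturality equations $\theta_{c'}\circ Ff=Gf\circ\theta_c$ and $\theta\cdot F\alpha=G\alpha\cdot\theta$.
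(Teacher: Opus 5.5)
Your proof is correct and follows the same route as the paper. Both $\int^+_\cC F$ and $\int^+_\cC G$ are cart-marked $2$-fibrations, so \cref{prop:fibbtwfibobj} and \cref{cor:trivfibbtwfibobj} reduce the claim to showing that the underlying 2-functor of $\int^+_\cC\theta$ is an equifibration (resp.\ a trivial fibration) in $\twocat$; the paper settles this by noting that $\int^+_\cC\theta$ acts through the components $\theta_c$, while you check the lifting conditions explicitly, including that equivalences and invertible 2-cells in the Grothendieck construction have invertible second components.
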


    \begin{proof}
        Let $\alpha\colon F\Rightarrow G$ be a (trivial) fibration in $2\Fun[\cC^{\coop},\ttwocat]$; i.e., for every $c\in \cC$, the 2-functor $\alpha_c\colon Fc\to Gc$ is a (trivial) fibration in $\twocat$. Since the marked 2-functor $\int_{\cC}^+ \alpha\colon \int_{\cC}^+ F\to \int_{\cC}^+ G$ acts as $\alpha_c$ on objects, morphisms, and 2-cells, we get that its underlying 2-functor is a (trivial) fibration in $\twocat$. Now, by \cref{cor:projectionisfibrant}, we know that $\int_{\cC}^+ F$ and $\int_{\cC}^+ G$ are  fibrant objects; hence, we deduce from  \cref{prop:fibbtwfibobj,cor:trivfibbtwfibobj} that $\int_{\cC}^+ \alpha$ is a (trivial) fibration in $\twocat^+_{/\cC^\sharp}$.
    \end{proof}

The following result will be helpful in understanding the behaviour of $\int_\cC^+$ with respect to weak equivalences. 

    \begin{lem}\label{lem:equiv_then_coordinatewise_equiv} If the pair $(f,\widehat{f})\colon(c,x)\to (c',x')$ is an equivalence in $\int^+_{\cC} F$, then $f$ is an equivalence in $\cC$ and $\widehat{f}$ is an equivalence in $Fc$.
    \end{lem}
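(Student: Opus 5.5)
The plan is to get both claims almost directly from results already established, without manipulating the equivalence data by hand.

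\emph{First component.} The projection $\pi_F\colon \int^+_{\cC}F\to\cC$ is a strict 2-functor, and 2-functors preserve equivalences. If $(g,\widehat{g})$ is a pseudo-inverse of $(f,\widehat{f})$ with invertible 2-cells $(\eta,\widehat\eta)$ and $(\epsilon,\widehat\epsilon)$, their images $g$, $\eta$, $\epsilon$ are invertible 2-cells exhibiting $g$ as a pseudo-inverse of $f=\pi_F(f,\widehat f)$. Hence $f$ is an equivalence in $\cC$.

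\emph{Second component.} By Proposition \enumrefalt{prop:propertiescartesian}{2}, applied to the 2-functor $\pi_F$, every equivalence in $\int^+_{\cC}F$ is $\pi_F$-cartesian; this is the implication ``equivalence $\Rightarrow$ cartesian'', i.e.\ \cite[Proposition 3.1.11]{Buckley2014}. So $(f,\widehat f)$ is cartesian. \Cref{prop:markingsinGC} states that a morphism $(f,\widehat f)$ of $\int^+_{\cC}F$ is cartesian if and only if $\widehat f\colon x\to Ff(x')$ is an equivalence in $Fc$. Therefore $\widehat f$ is an equivalence.

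The only point needing care is that the cited results apply in this generality. Proposition \enumrefalt{prop:propertiescartesian}{2} holds for an arbitrary 2-functor, and \cref{prop:markingsinGC} holds for every 2-functor $F\colon\cC^{\coop}\to\ttwocat$, so neither requires any extra hypothesis. I expect no real obstacle.

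If one preferred a direct argument, one could instead extract an inverse for $\widehat f$ from the second components of the equivalence data. For instance, $F f(\widehat g)$ combined with $\widehat\eta$ and with $\widehat\epsilon$ transported along $(F\epsilon)$ would give the inverse. The bookkeeping there, with the components $(F\eta)_x$ and $(F\epsilon)_{x'}$ appearing in the 2-cells of the Grothendieck construction, is exactly what the cartesian-morphism route avoids, so I would use that route.
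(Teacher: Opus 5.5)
Your proposal is correct and takes the same approach as the paper's proof. The paper also gets $f$ from $\pi_F$ preserving equivalences, and gets $\widehat f$ by noting that equivalences are cartesian and then applying \cref{prop:markingsinGC}.
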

    \begin{proof}
Suppose that $(f,\widehat{f})$ is an equivalence in $\int^+_{\cC} F$; then, as the 2-functor $\pi_F\colon \int^+_{\cC} F\to \cC$ preserves equivalences, we get that $\pi_F(f,\widehat{f})=f$ is an equivalence in $\cC$. Moreover, as all equivalences in a 2-category are cartesian, we see that $(f,\widehat{f})$ is cartesian, hence $\widehat{f}$ is an equivalence by \cref{prop:markingsinGC}.
    \end{proof}

    \begin{thm}\label{GCpreservesandreflectsweakequivs}
    $\int^+_{\cC}$ preserves and reflects weak equivalences.
    \end{thm}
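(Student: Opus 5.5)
Since every $\int^+_\cC F$ is fibrant by \cref{cor:projectionisfibrant}, a map $\int^+_\cC\theta$ is a weak equivalence in $\twocat^+_{/\cC^\sharp}$ exactly when its underlying 2-functor is a biequivalence (take identities as the naive fibrant replacements in \cref{Def:marked_weak_equiv}). Weak equivalences in the projective structure are the levelwise biequivalences. So the statement reduces to the following claim: for a 2-natural $\theta\colon F\Rightarrow G$, the 2-functor $U\int_\cC\theta$ is a biequivalence if and only if every $\theta_c\colon Fc\to Gc$ is a biequivalence.

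For \emph{preservation} I will use \cref{bieq.on.fibers}. The map $\int_\cC\theta$ is a 2-functor over $\cC$ between 2-fibrations. It is cartesian, because it sends $(f,\widehat f)$ to $(f,\theta_c\widehat f)$ and $(\alpha,\widehat\alpha)$ to $(\alpha,\theta_c\widehat\alpha)$, and 2-functors preserve equivalences and isomorphisms. By \cref{prop:markingsinGC}, this means cartesian cells go to cartesian cells.

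Next I identify the fiber of $\int^+_\cC F$ over $c$. Its objects are $(c,x)$ and its morphisms are $(\id_c,\widehat f)$ with $\widehat f\colon x\to x'$ in $Fc$. Its 2-cells are $(\id,\widehat\alpha)$ with $\widehat\alpha\colon\widehat f\Rightarrow\widehat g$, using $F(\id)=\id$. Hence the fiber is isomorphic to $Fc$, and under this isomorphism the induced map on fibers is $\theta_c$. So if each $\theta_c$ is a biequivalence, \cref{bieq.on.fibers} shows that $\int\theta$ is a biequivalence.

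For \emph{reflection}, suppose $H=\int\theta$ is a biequivalence and fix $c$. I must show that $\theta_c$ is locally an equivalence and essentially surjective.

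Local part. For $x,x'\in Fc$, the hom-category $\int F((c,x),(c,x'))$ fibres over $\cC(c,c)$ via $\pi_F$, and $H$ is a cartesian functor over $\cC(c,c)$ that is an equivalence. The fiber over $\id_c$ is $Fc(x,x')$, and the induced functor is $(\theta_c)_{x,x'}$. A fibred functor between Grothendieck fibrations that is an equivalence of total categories is a fibred equivalence, hence a fiberwise equivalence; this is the converse direction of \cite[Proposition 3.36]{vistoli2007notesgrothendiecktopologiesfibered}. So $(\theta_c)_{x,x'}$ is an equivalence. Alternatively, one can check fully faithfulness and essential surjectivity directly: a 2-cell $(\alpha,\widehat\alpha)$ between fiber morphisms lying over $\id_{\id_c}$ is forced, and essential surjectivity uses the cartesian lift of an invertible 2-cell in $\cC(c,c)$.

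Essential surjectivity. Given $y\in Gc$, there are an object $(c',x)$ and an equivalence $(f,\widehat f)\colon(c,y)\to(c',\theta_{c'}x)$ in $\int G$. By \cref{lem:equiv_then_coordinatewise_equiv}, $f\colon c\to c'$ is an equivalence in $\cC$ and $\widehat f\colon y\to Gf(\theta_{c'}x)=\theta_c(Ff(x))$ is an equivalence in $Gc$, using 2-naturality of $\theta$. So $y$ is equivalent to $\theta_c$ of the object $Ff(x)\in Fc$, as required.

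I expect the local part of reflection to be the main obstacle. Everything else is bookkeeping with the explicit description of $\int$. If citing the converse of Vistoli's result is not acceptable, the fallback is to prove directly that a cartesian functor over a base which is an equivalence restricts to equivalences on fibers. One does this by choosing a quasi-inverse and using cartesian lifts to correct it so that it lies over the identity.
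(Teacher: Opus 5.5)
Your proof is correct. The reflection half follows the paper's route, but the preservation half is genuinely different.

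\textbf{Reflection.} As in the paper, essential surjectivity comes from \cref{lem:equiv_then_coordinatewise_equiv} together with the $2$-naturality of $\theta$. You orient the equivalence as in the definition of biequivalence, which avoids the paper's detour through an inverse equivalence. The local statement comes, as in the paper, from restricting the hom-equivalence to the fibre over $\id_c$, which is $Fc(x,x')$.

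Here you supply what the paper leaves as ``easy to check''. Full faithfulness restricts because $\int\theta$ preserves the $\cC$-component of $2$-cells. Essential surjectivity uses a cartesian lift of an invertible $2$-cell of $\cC(c,c)$, which is itself invertible.

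One attribution is off. The implication ``equivalence of total categories over the base $\Rightarrow$ fibred equivalence'' is not part of Vistoli's Proposition 3.36; its converse direction is only the trivial one starting from a fibred equivalence. Your direct check makes that citation unnecessary, and it only uses that the local projections are isofibrations.

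\textbf{Preservation.} The paper argues formally: $\int^+_\cC$ preserves trivial fibrations (\cref{lemma:Grothendieck_constr_preserves_fibrations}), then Ken Brown's lemma applies, and all objects of the projective structure are fibrant. You instead identify the fibre of $\int_\cC F$ over $c$ with $Fc$ and the induced map with $\theta_c$. You then check that $\int\theta$ is cartesian via \cref{prop:markingsinGC} and apply \cref{bieq.on.fibers}.

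The paper's route is shorter and purely model-categorical, with no analysis of fibres. Yours is more hands-on: it exhibits $U\!\int\theta$ directly as a biequivalence and is independent of Ken Brown's lemma and of the trivial-fibration proposition. The cost is that it depends on the fibrewise criterion \cref{bieq.on.fibers}.

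\textbf{A small point on your reduction.} Taking identities as replacements only gives ``biequivalence $\Rightarrow$ weak equivalence''. The converse, which you need for reflection, is the part of \cref{Thm:the ms} saying that weak equivalences between fibrant objects are biequivalences; the paper uses it in the same way.
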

    \begin{proof}  
    \cref{lemma:Grothendieck_constr_preserves_fibrations} shows that $\int_\cC^+$ preserves trivial fibrations. Then, by Ken Brown's lemma \cite[Lemma 1.1.12]{hovey} we get that $\int_\cC^+$ preserves all weak equivalences between fibrant objects, hence all weak equivalences as all objects in $2\Fun[\cC^\coop,\ttwocat]$ are fibrant.

    The proof that it reflects weak equivalences is more involved. We want to show that if a natural transformation $\alpha\colon F\Rightarrow G$ in $2\Fun[\cC^{\mathrm{coop}},\ttwocat]$ is such that $\int_{\cC}^+\alpha$ is a weak equivalence, then $\alpha$ is a weak equivalence itself \textemdash that is, $\alpha_c\colon Fc\to Gc$ is a biequivalence for all $c$ in $\cC$. Let $\alpha$ be such a natural transformation. Since both $\int_{\cC}^+ F$ and $\int_{\cC}^+ G$ are cart-marked $2$-fibrations, we know that the underlying 2-functor of $\int_{\cC}^+\alpha\colon \int_{\cC}^+ F\to \int_{\cC}^+ G$ is a biequivalence.

    Consider $c\in\cC$ and the component $\alpha_c\colon Fc\to Gc$. To show that it is essentially surjective on objects, let $y\in Gc$; we want to show that there exists $x\in Fc$ and an equivalence $\alpha_c(x)\to y$. Since $\int_{\cC}^+ \alpha$ is a biequivalence, for $(c,y)\in \int_{\cC}^+ G$ there exists an element $(c',x)\in\int_{\cC}^+ F$ and an equivalence in $\int_{\cC}^+ G$ as below 
    $$(c',\alpha_{c'}(x))\xrightarrow{(f,\widehat{f})}(c,y),$$
	with $f\colon c'\to c$ in $\cC$, and $\widehat{f}\colon\alpha_{c'}(x)\to Gf(y)$ in $Gc'$. In particular, there is an  inverse equivalence
    $$(c,y)\xrightarrow{(g,\widehat{g})}(c',\alpha_{c'}(x))$$
	where $g\colon c\to c'$ is in $\cC$, and $\widehat{g}\colon y \to Gg(\alpha_{c'}(x))$ is in $Gc$. Thus by \cref{lem:equiv_then_coordinatewise_equiv}, we know that both $\widehat{f}$ and $\widehat{g}$ are equivalences. The required equivalence $\alpha_c(x) \to y$ is then defined to be the inverse of $\widehat{g}\colon y\to Gg(\alpha_{c'}(x))=\alpha_c(Fg(x))$. 
    
    It remains to show that $\alpha_c$ induces an equivalence on hom-categories
    $$Fc (x,x')\to Gc(\alpha_c(x),\alpha_c(x'))$$
    for all $x,x'$ in $Fc$ and all $c\in\cC$. Since $\int_{\cC}^+\alpha$ is a weak equivalence, we know that for any two objects $(c,x),(c,x')$ in $\textstyle\int_{\cC}^+F$, we have an equivalence of hom-categories
    $$\textstyle\int_{\cC}^+ F ((c,x),(c,x'))\to \int_{\cC} ^+ G ((c,\alpha_c(x)), (c,\alpha_c(x'))).$$ 
    Explicitly, on objects it sends 
    $$\left(f\colon c\to c,\widehat{f}\colon x\to Ffx'\right)\longmapsto  \left(f\colon c\to c,\alpha_c(\widehat{f})\colon \alpha_c(x)\to \alpha_c(Ffx')\right) $$
    and on morphisms
    $$(\mu,\widehat{\mu})\longmapsto (\mu,\alpha_c(\widehat{\mu}))$$
    where the involved $2$-cells are given as displayed below
	\[\begin{tikzcd}[ampersand replacement=\&,cramped,column sep=18pt]
c \&\& c
	\arrow[""{name=0, anchor=center, inner sep=0}, "f", curve={height=-12pt}, from=1-1, to=1-3]
		\arrow[""{name=1, anchor=center, inner sep=0}, "g"', curve={height=12pt}, from=1-1, to=1-3]
        		\arrow["\mu", between={0.2}{0.8}, Rightarrow, from=0, to=1]
    \end{tikzcd} \qquad
    \begin{tikzcd}[ampersand replacement=\&,cramped,column sep=18pt]
		  x \& {} \& {Ffx'} \& {\alpha_c(x)} \& {} \& {\alpha_c(Ffx')} \\
		\& {Fgx'} \&\&\& {\alpha_c(Fgx')}
		\arrow["{\widehat{f}}", from=1-1, to=1-3]
		\arrow["{\widehat{g}}"', from=1-1, to=2-2]
		\arrow["{\widehat{\mu}}"{pos=0.4}, between={0}{0.8}, Rightarrow, from=1-2, to=2-2]
		\arrow["{\alpha_c(\widehat{f})}", from=1-4, to=1-6]
		\arrow["{\alpha_c(\widehat{g})}"', from=1-4, to=2-5]
		\arrow["{\alpha_c(\widehat{\mu})}"{pos=0.4}, between={0}{0.8}, Rightarrow, from=1-5, to=2-5]
		\arrow["{\alpha_c((F\mu)_{x'})}"', from=2-5, to=1-6]
		\arrow["{(F\mu)_{x'}}"', from=2-2, to=1-3]
	\end{tikzcd}\]

    From the description above, one can observe that when we restrict this equivalence to the subcategory of $\int^+_{\cC} F((c,x),(c,x'))$ whose objects are of the form $(\id, \widehat{f})$ and whose morphisms are of the form $(\id,\widehat{\mu})$ we obtain a functor 
    $$Fc (x,x')\xrightarrow{\alpha_c} Gc(\alpha_c(x),\alpha_c(x')).$$
    Moreover, via the definition of $\int_{\cC}^+\alpha$, it is easy to check that this restriction is also an equivalence.
    \end{proof}

 \section{Equivalence on homotopy categories}\label{section5}
In this final section, we show that the marked Grothendieck construction 
   \[{\textstyle \int}_{\cC}^+ \colon 2\Fun[\cC^{\coop},\ttwocat]\to \mslicetwocat \]
   induces an equivalence between homotopy categories. We already know by \cref{GCpreservesandreflectsweakequivs} that this functor preserves all weak equivalences; hence, it descends to the homotopy categories, or in other words, it admits a total  right derived functor 
   \[{\textstyle \int}_{\cC}^+ \colon \Ho 2\Fun[\cC^{\coop},\ttwocat] \to \Ho\mslicetwocat. \]
Our strategy will be to construct  a functor in the opposite direction
\[L \colon \mslicetwocat\to  2\Fun[\cC^{\coop},\ttwocat] \]  that will give rise to an inverse equivalence when we pass to the homotopy categories.

Since an object $P\colon (\cP,E^1_\cP, E^2_\cP)\to\cC^\sharp$ in the domain carries the data of markings, and we wish to produce a 2-functor $\cC^\coop\to\ttwocat$ where no markings are involved, part of the process of defining $L(P)$ consists of finding a way to remove the markings while still taking that data into account. This will be achieved by localizing the marked 2-category $\cP$, sending marked morphisms to equivalences and marked 2-cells to invertible 2-cells.

\subsection{Localization of 2-categories}\label{subsec.localization}
A treatment of bicategorical localizations at a collection of morphisms was given in \cite{pronk1996etendues}, which requires  the localizing class to satisfy a bicategorical calculus of fractions. Unfortunately, for our goals, such an assumption would be too restrictive. In addition, we need to be able to  localize not only at a collection of morphisms, but also at 2-cells.  We now construct such a localization. 

\begin{constr}\label{constr:pushout_localization}
    Let $\cP$ be a $2$-category, $\cW_1$ a set of morphisms, and $\cW_2$ a set of $2$-cells in $\cP$. We start by considering the localization of $\cP$ at $\cW_1$ as the following pushout in $\twocat$:
	\[\begin{tikzcd}[ampersand replacement=\&,cramped]
		{\displaystyle\coprod_{f \in \cW_1} [1]} \& \cP \\
		{\displaystyle\coprod_{f \in \cW_1} E_{\mathrm{adj}}} \& {\cP[\cW_1^{-1}]}
		\arrow[from=1-1, to=1-2]
		\arrow[hook, from=1-1, to=2-1]
		\arrow["{L_1}", dashed, hook, from=1-2, to=2-2]
		\arrow[dashed, from=2-1, to=2-2]
		\arrow["\lrcorner"{anchor=center, pos=0.125, rotate=180}, draw=none, from=2-2, to=1-1]
	\end{tikzcd}\]
	The left vertical map is induced by the inclusions $[1] \hookrightarrow E_{\mathrm{adj}}$ for each $f \in \cW_1$ and the top horizontal map sends the copy of $[1]$ indexed by $f$ to the morphism $f$ in $\cP$. This pushout exists since $\twocat$ is cocomplete and all the data involved is small. 

	The $2$-category $\cP[\cW_1^{-1}]$ has the same objects as $\cP$. Each morphism $f \in \cW_1$ is now part of a formally added adjoint equivalence data $(f, f^{-1}, \eta_f, \epsilon_f)$ in $\cP[\cW_1^{-1}]$, where the data of $f^{-1},\eta_f$ and $\epsilon_f$ is added to $\cP[\cW_1^{-1}]$ regardless of whether $f$ already was an equivalence in $\cP$. The functor $L_1$ in the pushout square is the identity on objects and the inclusion on morphisms and $2$-cells. 
    
    Next, we localize $\cP[\cW_1^{-1}]$ at $\cW_2$ by considering the following pushout in $\twocat$:
	\[\begin{tikzcd}[ampersand replacement=\&,cramped, sep=small]
		\& \cP \& \\
		{\displaystyle\coprod_{\alpha \in \cW_2} \Sigma[1]} \&\& {\cP[\cW_1^{-1}]} \\
		\\
		{\displaystyle\coprod_{\alpha \in \cW_2} \Sigma\interval} \&\& {{\cP}[{\cW}^{-1}]}
		\arrow["{L_1}", hook, from=1-2, to=2-3]
		\arrow[hook, from=2-1, to=1-2]
		\arrow[""{name=0, anchor=center, inner sep=0}, from=2-1, to=2-3]
		\arrow[hook,from=2-1, to=4-1]
		\arrow["{L_2}", dashed, hook, from=2-3, to=4-3]
		\arrow[dashed, from=4-1, to=4-3]
		\arrow["\lrcorner"{anchor=center, pos=0.125, rotate=180}, draw=none, from=4-3, to=2-1]
		\arrow["{=}"{description}, draw=none, from=0, to=1-2]
	\end{tikzcd}\]
	The left vertical map is induced by the inclusions $\Sigma[1] \hookrightarrow \Sigma\interval$, and the top horizontal map sends the copy of $\Sigma[1]$ indexed by some $\alpha \in \cW_2$ to  $L_1(\alpha)$ in $\cP[\cW_1^{-1}]$. Again, this pushout exists since $\twocat$ is cocomplete and all the data involved is small. We denote the resulting $2$-category by $\cP[\cW^{-1}]$ with $\cW = \{ \cW_1, \cW_2 \}$, and use $L_2\colon \cP[\cW_1^{-1}] \to \cP[\cW^{-1}]$ to denote the right vertical 2-functor in the pushout.
    The composite
	\[ \Loc \colon \cP \xrightarrow{L_1} \cP[\cW_1^{-1}] \xrightarrow{L_2} \cP[\cW^{-1}] \]
	is the identity on objects and the inclusion on morphisms and $2$-cells.
\end{constr}

\begin{rmk}\label{rmk:localization_properties}
	By construction, the 2-functor $\Loc\colon \cP \to \cP[\cW^{-1}]$ above satisfies the following properties:
	\begin{itemize}
		\item For every morphism $f \in \cW_1$, $\Loc(f)$ is an adjoint equivalence in $\cP[\cW^{-1}]$. Indeed, $L_1(f)$ is an adjoint equivalence in $\cP[\cW_1^{-1}]$ and $L_2$ preserves adjoint equivalences since it is a 2-functor.
		\item Similarly, for every 2-cell $\alpha \in \cW_2$, $\Loc(\alpha)$ is an isomorphism in $\cP[\cW^{-1}]$ since $L_1$ is the inclusion on $2$-cells and $L_2$ makes $L_1(\alpha)$ into an isomorphism, by construction.
	\end{itemize}
	Given any $2$-category $\cD$, the 2-functor $\Loc$ induces, by precomposition, a functor
	\begin{equation}\label{eq:universal_property_localization}
		\Loc^*\colon \twocat(\cP[\cW^{-1}], \cD) \to \twocat^{\cW}(\cP, \cD)
	\end{equation}
	where $\twocat^{\cW}(\cP, \cD)$ denotes the $2$-category of 2-functors $F\colon \cP \to \cD$ sending $\cW_1$ to equivalences and $\cW_2$ to isomorphisms. 
	The functor $\Loc^*$ is strictly surjective on objects and faithful, but in general, it is \emph{not} full.  
\end{rmk}

\begin{notation}
    Given a marked 2-category $(\cP,E^1_\cP,E^2_\cP)$, we will denote by $\cP[E_\cP^{-1}]$ the localization of the 2-category $\cP$ at the set $E^1_\cP$ of morphisms and $E^2_\cP$ of 2-cells in the sense of \cref{constr:pushout_localization}.
\end{notation}

\begin{rmk}\label{remark:Phat_preserves_normal_forms}
	Any object $P\colon (\cP, E^1_\cP, E^2_\cP) \to \cC^\sharp$ in $\mslicetwocat$ induces a 2-functor $\widehat{P}\colon \cP[E_{\cP}^{-1}] \to \cC[\cC^{-1}]$, where the target denotes the localization of $\cC$ at all its morphisms and 2-cells. Indeed, the functor $P$ induces a morphism of spans
    
	\[\begin{tikzcd}[ampersand replacement=\&,cramped]
		{\displaystyle \coprod_{u \in {E^1_{\cP}}} E_{\mathrm{adj}}} \& {\displaystyle\coprod_{u \in {E^1_{\cP}}}[1]} \& \cP \\
		{\displaystyle\coprod_{f \in \operatorname{mor}\cC}E_{\mathrm{adj}}} \& {\displaystyle\coprod_{f \in \operatorname{mor}\cC}[1]} \& \cC
		\arrow[from=1-1, to=2-1]
		\arrow[hook', from=1-2, to=1-1]
		\arrow[hook, from=1-2, to=1-3]
		\arrow[from=1-2, to=2-2]
		\arrow["P", from=1-3, to=2-3]
		\arrow[hook', from=2-2, to=2-1]
		\arrow[hook, from=2-2, to=2-3]
	\end{tikzcd}\]
	in which the left and middle vertical functors are the unique 2-functors reindexing
	the coproducts along $u \mapsto Pu$ (identically on each component). This gives a 2-functor between pushouts $$P_1 \colon \cP[\smash{{E^1_{\cP}}}^{-1}] \to \cC[\operatorname{mor}\cC^{-1}].$$ 
	Similarly, $P$ also induces a canonical morphism of spans
	\[\begin{tikzcd}[ampersand replacement=\&,cramped]
		{\displaystyle \coprod_{\alpha \in {E^2_{\cP}}} \Sigma \interval} \& {\displaystyle\coprod_{\alpha \in {E^2_{\cP}}}\Sigma[1]} \& {\cP[{E^1_{\cP}}^{-1}]} \\
		{\displaystyle\coprod_{\beta \in \operatorname{2cell}\cC} \Sigma \interval} \& {\displaystyle\coprod_{\beta \in \operatorname{2cell}\cC}\Sigma[1]} \& {\cC[\operatorname{mor}\cC^{-1}]}
		\arrow[from=1-1, to=2-1]
		\arrow[hook', from=1-2, to=1-1]
		\arrow[hook, from=1-2, to=1-3]
		\arrow[from=1-2, to=2-2]
		\arrow["{P_1}", from=1-3, to=2-3]
		\arrow[hook', from=2-2, to=2-1]
		\arrow[hook, from=2-2, to=2-3]
	\end{tikzcd}\]
    and the universal property of pushouts yields the desired 2-functor $$\widehat{P} \colon \cP[E_{\cP}^{-1}] \to \cC[\cC^{-1}].$$

    Moreover, one can similarly check that any marked 2-functor $H\colon P\to Q$ over $\cC^\sharp$ induces a 2-functor $\widehat{H}\colon\widehat{P}\to \widehat{Q}$ over $\cC[\cC^{-1}]$, induced by the universal property of the pushout.
\end{rmk}

\subsection{Slice 2-categories}

A key second ingredient in the construction of the desired functor $L\colon\mslicetwocat\to 2\Fun[\cC^\coop,\ttwocat]$ is the notion of an oplax slice 2-category, and its behaviour with respect to 2-functors and 2-natural transformations. For the reader's convenience, we devote this brief subsection to recalling these definitions.

\begin{defn}\label{defn:marked-slices}
    Let $P\colon\cP\to\cC$ be a 2-functor, and let $c$ be an object in $\cC$. The oplax slice 2-category of $\cP$ under $c$, denoted $c\downarrow P$, is defined as follows:
    \begin{itemize}[leftmargin=*]
        \item \label{defn:marked-slices.1} Objects are pairs $(x,f)$ of an object $x\in\cP$ and a morphism $f\colon c\to Px$ in $\cC$.

        \item \label{defn:marked-slices.2} Morphisms $(x,f)\to(x',f')$ are pairs $(u,\varphi)$ of a morphism $u\colon x\to x'$ in $\cP$ and a 2-cell $\varphi\colon f'\Rightarrow Pu.f$ in $\cC$. 
        
        \item \label{defn:marked-slices.3} 2-cells $(u,\varphi)\Rightarrow(v,\psi)$ between morphisms as above consist of a 2-cell $\sigma\colon u\Rightarrow v$ in $\cP$ satisfying $(P\sigma \cdot f).\varphi=\psi$.

        \item \label{defn:marked-slices.5} The identity morphism at an object $(x,f)$ is given by $(\id_x,\id_f)$, and the identity 2-cell at a morphism $(u,\varphi)$ is given by $\id_u$.
        
        \item \label{defn:marked-slices.4} The composite of morphisms \[(x,f)\xrightarrow{(u,\varphi)}(x',f')\xrightarrow{(u',\varphi')}(x'',f'')\] is given by $(u'u,(Pu'\cdot \varphi).\varphi')$.

        \item \label{defn:marked-slices.6} The vertical and horizontal composition of 2-cells are given by the respective compositions of 2-cells in $\cP$. 
    \end{itemize}
    It is tedious but straightforward to verify that the composition of 2-cells is well-defined, and that this gives a 2-category (see for instance \cite[Proposition 7.1.2]{Niles2021}, where a lax version is established for the more general setting of lax functors and bicategories).
    \end{defn}

We have so far described the action of a 2-functor $-\downarrow P\colon\cC^\coop\to\ttwocat$ at the level of objects. We now detail how this 2-functor acts on morphisms. 

\begin{defn}\label{defn:induced-2functor-comma}
     Let $P\colon\cP\to\cC$ be a 2-functor and $r\colon c\to d$ be a morphism in $\cC$. Then whiskering with $r$ induces a 2-functor $r\downarrow P\colon d\downarrow P\to c\downarrow P$ which sends
\begin{itemize}[leftmargin=*]
    \item an object $(x,f)$ in $d\downarrow P$ to the object $(x,f r)$ in $c\downarrow P$,
    \item a morphism $(u,\varphi)$ in $d\downarrow P$ to the morphism $(u,\varphi\cdot r)$ in $c\downarrow P$,
    \item a 2-cell $\sigma\colon(u,\varphi)\Rightarrow(v,\psi)$ in $d\downarrow P$ to itself regarded as a 2-cell $\sigma\colon (u,\varphi\cdot r)\Rightarrow(v,\psi\cdot r)$ in $c\downarrow P$.
\end{itemize}
The fact that this is a 2-functor is established in \cite[Proposition 7.1.9]{Niles2021}. 
\end{defn}

Lastly, we describe the action of the 2-functor $-\downarrow P\colon\cC^\coop\to\ttwocat$ on 2-cells.

\begin{defn} \label{defn:induced-2-natural-comma}
    Let $P\colon\cP\to\cC$ be a 2-functor and $\mu\colon r\Rightarrow s$ be a 2-cell in $\cC$ between morphisms $r,s\colon c\to d$. This induces a 2-natural transformation $\mu\downarrow P\colon s\downarrow P\Rightarrow r\downarrow P$ whose component at an object $(x,f)$ in $d\downarrow P$ is given by the morphism $(\mu\downarrow P)_{(x,f)}=(\id_x,f\cdot\mu)\colon (x,f s)\to(x,f r)$ in $c\downarrow P$.
\end{defn}

\subsection{The homotopy inverse}\label{subsec.defn.L}

We now construct the functor $L\colon\mslicetwocat\to 2\Fun[\cC^\coop,\ttwocat]$. The action of this functor on objects is quite involved, so it is worth providing some motivation ahead of its description. 

Ideally, we would like to take a marked 2-functor $P\colon\markedscat{\cP}\to\Csharp$ to a map $\cC^\coop\to\ttwocat$ that takes an object $c\in\cC$ to its fiber $\cP_c$ of $P$ over $c$. This is a well-understood construction, and is in fact what Buckley uses as the pseudo-inverse to the Grothendieck construction in \cite[Construction 3.3.5]{Buckley2014}. However, this map is not a strict 2-functor, but merely a pseudo-functor. A priori, we could attempt to overcome this obstruction by considering some cofibrant replacements of $\cP$ and $\cC$, which would allow us to work with free underlying 1-categories. Unfortunately, even if we solve this issue of strictness on composites of morphisms, we find a further obstruction: if $\alpha$ is a 2-cell in $\cC$ as below left, then the corresponding 2-cell below right
\[
    \begin{tikzcd}
    c & d && \cP_c & \cP_d 
    \arrow[""{name=0, anchor=center, inner sep=0}, "f", curve={height=-12pt}, from=1-1, to=1-2]
    \arrow[""{name=1, anchor=center, inner sep=0}, "g"', curve={height=12pt}, from=1-1, to=1-2]
    \arrow[""{name=2, anchor=center, inner sep=0}, "", curve={height=-12pt}, from=1-5, to=1-4]
    \arrow[""{name=3, anchor=center, inner sep=0}, "", curve={height=12pt}, from=1-5, to=1-4]
    \arrow["\alpha"'{xshift=-2pt}, shorten <=3pt, shorten >=3pt, Rightarrow, from=0, to=1]
    \arrow[""'{xshift=-2pt}, shorten <={3pt}, shorten >=3pt, Rightarrow, from=2, to=3]
    \end{tikzcd}
\]
is a pseudo-2-natural transformation, instead of a strict one. 

Our solution is to use an oplax slice 2-category construction, which will produce strict 2-functors and 2-natural transformations as needed. To deal with the markings appropriately, we consider the oplax slice $c\downarrow \widehat{P}$ of the localization of $P$ at its markings, as opposed to simply $c\downarrow P$. Finally, the full oplax comma would be too large, and so in order to capture the correct ``homotopy type'', we must restrict to a smaller sub-2-category which will be biequivalent to the fiber $\cP_c$. This is the construction that we now present.

\begin{constr}\label{constr:leftadjoint}
	Let $P\colon (\cP, E^1_\cP, E^2_\cP) \to \cC^\sharp$ be an object in $\mslicetwocat$, and denote by $\widehat{P} \colon \cP[E_{\cP}^{-1}] \to \cC[\cC^{-1}]$ the corresponding 2-functor on localizations from \cref{remark:Phat_preserves_normal_forms}. The 2-functor $L(P)\colon \cC^{\coop}\to\ttwocat$ is defined as follows. 
    
    For an object $c\in\cC$, the $2$-category $L(P)(c)$ is the smallest sub-$2$-category of the oplax comma  $c\downarrow\widehat{P}$ containing the following data:
	\begin{itemize}[leftmargin=*]
		\item Objects $(x, f)$ consisting of an object $x\in\cP$ and a morphism $f\colon c \to Px$ in $\cC$. 
		\item  Morphisms of the form
        \begin{itemize}[leftmargin=*]
            \item $(u, \varphi)\colon (x, f) \to (x', f')$ given by a morphism $u\colon x \to x'$ in $\cP$ and a $2$-cell $\varphi\colon f' \Rightarrow Pu.f$ in $\cC$;

		\[
        \begin{tikzcd}[ampersand replacement=\&,cramped,sep=small]
				\&\& c \&\& \\
				\& {} \&  \& {} \\
				Px \&\&\&\& Px'
				\arrow["{f'}", from=1-3, to=3-5]
				\arrow["{f}"', from=1-3, to=3-1]
				\arrow["{\varphi}"', shift left=4, between={0.3}{0.7}, Rightarrow, from=2-4, to=2-2]
				\arrow["{Pu}"', from=3-1, to=3-5]
		\end{tikzcd}
        \]
            \item $(u^{-1},\widehat{P}\eta_u.f)\colon (x',Pu.f)\to (x,f)$ where $u^{-1}$ and $\eta_u$ are part of the formal equivalence data added to some marked morphism $u\colon x\to x'$ in $\cP$.
        \[
        \begin{tikzcd}[ampersand replacement=\&,cramped,sep=scriptsize]
        	\&\& c \&\& \\
        	 {} \& Px \& {} \& {} \\
        	 Px' \& {} \& {} \&\& Px
        	\arrow["{f}"', from=1-3, to=2-2]
        	\arrow["{f}", from=1-3, to=3-5]
        	\arrow["{Pu}"', from=2-2, to=3-1]
        	\arrow["{=}"{description}, draw=none, from=2-2, to=2-4]
        	\arrow["{\widehat{P}u^{-1}}", tail reversed, no head, from=3-5, to=3-1]
        	\arrow["{\widehat{P}\eta_u}"', shift right=2, Rightarrow, from=3-3, to=3-2]
        	\arrow[equals, from=3-5, to=2-2]
        \end{tikzcd}
        \]
                \end{itemize}

		\item   $2$-cells of the form
        \begin{itemize}[leftmargin=*]
            \item $\sigma\colon (u,\varphi)\Rightarrow (v,\psi)$ for any $u,v\colon x\to x'$ in $\cP$ and $\sigma\colon u\Rightarrow v$ in $\cP$,
            \[\begin{tikzcd}[cramped,sep=small]
            	&& c &&&&&& c && \\
            	& {} & {} & {} & {} && {} & {} && {} \\
            	Px &&&& {Px'} && Px &&&& {Px'} \\
            	&& {}
            	\arrow["f"', from=1-3, to=3-1]
            	\arrow["{f'}", from=1-3, to=3-5]
            	\arrow["f"', from=1-9, to=3-7]
            	\arrow["{f'}", from=1-9, to=3-11]
            	\arrow["{P\sigma}", between={0.4}{0.6}, Rightarrow, from=2-3, to=4-3]
            	\arrow["\varphi"', between={0.3}{0.7}, Rightarrow, from=2-4, to=2-2]
            	\arrow["{=}"{description}, draw=none, from=2-5, to=2-7]
            	\arrow["\psi"', between={0.3}{0.7}, Rightarrow, from=2-10, to=2-8,, shift left=2]
            	\arrow["Pu", curve={height=-12pt}, from=3-1, to=3-5]
            	\arrow["Pv"', curve={height=12pt}, from=3-1, to=3-5]
            	\arrow["Pv"', from=3-7, to=3-11]
            \end{tikzcd}\]
			where $\varphi\colon f'\Rightarrow Pu.f$ and $\psi\colon f'\Rightarrow Pv.f$ in $\cC$; 
            \item  $\beta^{-1}\colon (v, P\beta\cdot f)\Rightarrow (u, \id)$ where $\beta^{-1}$ is the inverse of a marked 2-cell $\beta\colon u\Rightarrow v$ in $\cP$
            \[\begin{tikzcd}[cramped, row sep = 10pt, column sep=8pt]
            	&&&& c &&&&&&&&&& c &&&& \\
            	\\
            	&&& {} && {} & Px && {} && {} &&& {} && {} & Px \\
            	&&&& {} & {} && {} \\
            	Px &&&&&&&& {Px'} && Px &&&&&&&& {Px'} \\
            	&&&& {}
            	\arrow["f", from=1-5, to=3-7]
            	\arrow["f"', from=1-5, to=5-1]
            	\arrow["f", from=1-15, to=3-17]
            	\arrow["f"', from=1-15, to=5-11]
            	\arrow["{=}"{description}, draw=none, from=3-4, to=3-6]
            	\arrow["Pu", from=3-7, to=5-9]
            	\arrow["{=}"{description}, draw=none, from=3-9, to=3-11]
            	\arrow["{=}"{description}, shift left=5, draw=none, from=3-16, to=3-14]
            	\arrow["Pu", from=3-17, to=5-19]
            	\arrow["{\widehat{P}\beta^{-1}}",shift left=3, between={0.3}{0.7}, Rightarrow, from=4-5, to=6-5]
            	\arrow["\beta"', shift right = -1, between={0.3}{0.7}, Rightarrow, from=4-8, to=4-6]
            	\arrow[equals, from=5-1, to=3-7]
            	\arrow["Pv"{description}, curve={height=-18pt}, from=5-1, to=5-9]
            	\arrow["Pu"', curve={height=18pt}, from=5-1, to=5-9]
            	\arrow["Pu"'{description}, from=5-11, to=5-19]
            \end{tikzcd}\]

			\item $\eta_u\colon (\id_{Px},\id)\Rightarrow (u^{-1}, \widehat{P}\eta_u\cdot f).(u,\id)$ depicted below, and its inverse $\eta_u^{-1}$
            \[\begin{tikzcd}[cramped, sep=small]
            	&&& c &&&&&&&& c &&& \\
            	\\
            	&& {} && {} && {} && {} &&& Pa && {} \\
            	&&&&&&&&& {} & {} & {} && {} \\
            	Px &&&&&& Px && Px &&& {Px'} &&& {Px'} \\
            	&&&&&&&&&&& {}
            	\arrow["f"', from=1-4, to=5-1]
            	\arrow["f", from=1-4, to=5-7]
            	\arrow["f"', from=1-12, to=3-12]
            	\arrow["f"', from=1-12, to=5-9]
            	\arrow["{f'}", from=1-12, to=5-15]
            	\arrow["{=}"{description}, shift left=5, draw=none, from=3-5, to=3-3]
            	\arrow["{=}"{description}, draw=none, from=3-9, to=3-7]
            	\arrow["Pu"', from=3-12, to=5-12]
            	\arrow[equals, from=3-12, to=5-15]
            	\arrow["{=}"{description, pos=0.6}, draw=none, from=3-14, to=3-12]
            	\arrow["{=}"{description}, draw=none, from=4-12, to=4-10]
            	\arrow["{\widehat{P}\eta_{u}^{-1}}"'{pos=0.8},"\cong"{pos=0.8}, between={0.7}{1}, Rightarrow, from=4-12, to=6-12]
            	\arrow["{\widehat{P}\eta_u}"', shift left=5, between={0.3}{0.7}, Rightarrow, from=4-14, to=4-12]
            	\arrow[Rightarrow, from=5-1, to=5-7]
            	\arrow["Pu"{description}, from=5-9, to=5-12]
            	\arrow["{\id_{Pa}}"', curve={height=30pt}, equals, from=5-9, to=5-15]
            	\arrow["{\widehat{P}u^{-1}}"{description}, from=5-12, to=5-15]
            \end{tikzcd}\]
            as well as $\epsilon_u\colon (u,\id). (u^{-1},\widehat{P}\eta_u\cdot f)\Rightarrow (\id_{Px'},\id)$ depicted below, and its inverse $\epsilon_u^{-1}$ 
            \[\begin{tikzcd}[cramped,row sep=10pt, column sep=8pt]
            	&&&& c &&&&&&&&&& c &&&& \\
            	\\
            	&& Px && {} && Px && {} && {} && Px & {} && {} & Px \\
            	& {} && {} \\
            	{Px'} &&&& Px &&&& {Px'} && {Px'} &&&&&&&& {Px'} \\
            	\\
            	&&&& {}
            	\arrow["f"', from=1-5, to=3-3]
            	\arrow["f", from=1-5, to=3-7]
            	\arrow["f"{description}, from=1-5, to=5-5]
            	\arrow["f"', from=1-15, to=3-13]
            	\arrow["f", from=1-15, to=3-17]
            	\arrow["{=}"{marking, allow upside down}, draw=none, from=3-3, to=3-5]
            	\arrow["Pu"', from=3-3, to=5-1]
            	\arrow[equals, from=3-3, to=5-5]
            	\arrow["{=}"{marking, allow upside down}, draw=none, from=3-5, to=3-7]
            	\arrow["Pu", from=3-7, to=5-9]
            	\arrow["{=}"{description}, draw=none, from=3-9, to=3-11]
            	\arrow["Pu"', from=3-13, to=5-11]
            	\arrow["{=}"{description}, shift right=5, draw=none, from=3-14, to=3-16]
            	\arrow["Pu", from=3-17, to=5-19]
            	\arrow["{\widehat{P}\eta_u}"', shift left=3, between={0.3}{0.7}, Rightarrow, from=4-4, to=4-2]
            	\arrow["{\widehat{P}u^{-1}}"{description}, from=5-1, to=5-5]
            	\arrow[curve={height=30pt}, Rightarrow, from=5-1, to=5-9]
            	\arrow["Pu"{description}, from=5-5, to=5-9]
            	\arrow["{\widehat{P}\epsilon_u}"'{pos=0.2},"\cong"{pos=0.2}, between={0.1}{0.6}, Rightarrow, from=5-5, to=7-5]
            	\arrow[equals, from=5-11, to=5-19]
            \end{tikzcd}\]
            for any marked morphism $u\colon x\to x'$ in $\cP$, 
			where $\eta_u$ and $\epsilon_u$ denote the formal unit and counit of the adjoint equivalence $u \dashv u^{-1}$. 
			\end{itemize}
		\end{itemize}

        Given a morphism $r\colon c\to c'$ in $\cC$, we get a 2-functor $r\downarrow \widehat{P}\colon c'\downarrow \widehat{P}\to c\downarrow\widehat{P}$ by \cref{defn:induced-2functor-comma}. This 2-functor clearly sends the generating morphisms and 2-cells in $L(P)(c')$ to generating morphisms and 2-cells in $L(P)(c)$. Hence, it restricts to the sub-2-categories, providing a 2-functor $L(P)(r)$. 

        Similarly, given a 2-cell $\beta\colon r\Rightarrow s$ in $\cC$, we get a 2-natural transformation $\beta\downarrow\widehat{P}\colon s\downarrow\widehat{P}\Rightarrow r\downarrow\widehat{P}$ by  \cref{defn:induced-2-natural-comma}, whose component on an object $(x,f)\in L(P)(c')$ is the morphism $(\id_x, f\cdot \beta)$. This is a morphism in $L(P)(c)$, and thus, $\beta\downarrow\widehat{P}$ restricts to a 2-natural transformation $L(P)(\beta)$.  
	\end{constr}

    \begin{rmk}\label{marked.become.invertible}
        The 2-category $L(P)(c)$ is designed so that every marked morphism $u\colon x\to x'$ in $\cP$ gives rise to an adjoint equivalence $(u,\id)\colon (x, f)\to (x', Pu.f)$ in $L(P)(c)$. Indeed, an inverse equivalence is given by $(u^{-1},\widehat{P}\eta_u\cdot f)$ where $u^{-1}$ denotes the formal inverse adjoint equivalence in $\cP[E_\cP^{-1}]$, with unit and counit given by the 2-cells $\eta_u$ and $\epsilon_u$. This is the reason why we include this morphism and these 2-cells (and their inverses) in the definition of $L(P)(c)$.

        Moreover, every marked 2-cell $\beta\colon u\Rightarrow v$ in $\cP$ gives rise to an invertible 2-cell $\beta\colon (u,\id)\Rightarrow (v,P\beta\cdot f)$ in $L(P)(c)$, whose inverse is given by $\beta^{-1}$. This explains the inclusion of the 2-cell $\beta^{-1}$ in the definition of $L(P)(c)$. 
      \end{rmk}

      \begin{rmk}\label{gamma.u.exists}
      Let $u\colon x\to x'$ be a marked morphism in $\cP$ which is also an equivalence, and let $(u,u^{-1},\eta_u,\epsilon_u)$ be the formally added adjoint equivalence data in $\cP[E_\cP^{-1}]$, and   $(u,\underline{u}^{-1},\eta_{\underline{u}},\epsilon_{\underline{u}})$ be any adjoint equivalence data present in $\cP$. By \cref{marked.become.invertible}, we have that both $ (u^{-1}, \widehat{P}\eta_u\cdot f)$ and $(\underline{u}^{-1}, P\eta_{\underline{u}}\cdot f)$ are inverse equivalences to $(u,\id)$ in $L(P)(c)$, and so there exists a unique invertible 2-cell  $\gamma_u\colon (u^{-1}, \widehat{P}\eta_u\cdot f)\Rightarrow (\underline{u}^{-1}, P\eta_{\underline{u}}\cdot f)$ in $L(P)(c)$ such that $\eta_{\underline{u}}=(\gamma_u\cdot u)\eta_u$. Indeed, this arises from the unique invertible 2-cell  $\gamma_u\colon u^{-1}\Rightarrow \underline{u}^{-1}$  in $\cP[E_\cP^{-1}]$ with the same property.
      \end{rmk}

\begin{constr}
    The functor $L\colon\mslicetwocat\to 2\Fun[\cC^\coop,\ttwocat]$ is defined as follows. On objects, it sends a marked 2-functor $P\colon\markedscat{\cP}\to\cC^\sharp$ to the 2-functor $L(P)$ of \cref{constr:leftadjoint}. 
    
    On morphisms, given a marked 2-functor $H\colon P\to Q$ over $\cC^\sharp$, we get an induced 2-functor $\widehat{H}\colon\widehat{P}\to\widehat{Q}$ by \cref{remark:Phat_preserves_normal_forms}, which then induces a 2-natural transformation $-\downarrow\widehat{H}\colon -\downarrow\widehat{P}\Rightarrow -\downarrow\widehat{Q}$. For each object $c\in\cC$, the component of this 2-natural transformation is the 2-functor $c\downarrow\widehat{P}\to c\downarrow\widehat{Q}$ given by $(x,f)\mapsto (Hx,f)$, $(u,\varphi)\mapsto (\widehat{H}u,\varphi)$, and $\sigma\mapsto \widehat{H}\sigma$. This restricts to a 2-functor $L(P)(c)\to L(Q)(c)$ since it respects all the generators: by construction of $\widehat{H}$, we have that $(Hu,\widehat{H}u^{-1}, \widehat{H}\eta_u,\widehat{H}\epsilon_u)=(Hu,(Hu)^{-1},\eta_{Hu},\epsilon_{Hu})$ for any marked $u$ in $\cP$, and that $\widehat{H}\beta^{-1}=(H\beta)^{-1}$ for any marked 2-cell $\beta$ in $\cP$. We can thus define $L(H)\colon L(P)\Rightarrow L(Q)$ as the restriction of the 2-natural transformation $-\downarrow\widehat{H}$, and we see that this construction is functorial.
\end{constr}

\subsection{The unit}

Our next goal is to define a natural transformation $\eta\colon \id\Rightarrow \int_\cC^+ L$ which is a pointwise weak equivalence; this will descend to a natural isomorphism once we pass to the homotopy categories. 

\begin{constr}\label{constr.unit}
    Given a marked 2-functor $P\colon\markedscat{\cP}\to\Csharp$, we define a marked 2-functor $\eta_P\colon P \to\int_\cC^+ L(P)$ over $\Csharp$ by: 
    \[
		\begin{aligned}
		x &\mapsto (Px, (x,\id_{Px})) && \text{on objects,}\\
		u &\mapsto (Pu, (u,\id_{Pu})) && \text{on morphisms,}\\
		\beta &\mapsto (P\beta, \beta) && \text{on $2$-cells.}
		\end{aligned}
	\] The fact that this is a 2-functor over $\Csharp$ is immediate. To see that $\eta_P$ preserves the markings,  let $u\colon x\to x'$ be in $E^1_\cP$. Then $(u,\id_{Pu})$ is an equivalence in $L(P)(Px)$ as explained in \cref{marked.become.invertible}, and since the marked morphisms in $\int_\cC^+ L(P)$ are precisely those morphisms whose second component is an equivalence (cf. \cref{prop:markingsinGC}), it follows that $\eta_P(u)$ is a marked morphism in $\int_\cC^+ L(P)$. The argument for $2$-cells is analogous.
\end{constr}

\begin{lem}\label{eta.is.natural}
    The 2-functors $\eta_P$ of \cref{constr.unit} assemble into a 2-natural transformation $\eta\colon \id\Rightarrow\int_\cC^+ L$.
\end{lem}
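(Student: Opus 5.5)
We need to check naturality: for every marked 2-functor $H\colon P\to Q$ over $\Csharp$, the square $\left(\int_\cC^+ L(H)\right)\circ\eta_P=\eta_Q\circ H$ commutes strictly as marked 2-functors $\markedscat{\cP}\to\int_\cC^+L(Q)$. The word ``2-natural'' here only means naturality in the 1-category $\mslicetwocat$. Since both composites are 2-functors over $\Csharp$ with the same underlying data format, I will compare them on objects, morphisms and 2-cells separately. Preservation of markings is automatic, because each composite is a composite of marked 2-functors.

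First I unwind $\int_\cC^+L(H)$ using \cref{Def:marked_GC}. It sends $(c,(x,f))$ to $(c,L(H)_c(x,f))=(c,(Hx,f))$, a morphism $(r,\widehat r)$ to $(r,L(H)_c\widehat r)$, and a 2-cell $(\mu,\widehat\mu)$ to $(\mu,L(H)_c\widehat\mu)$. Here $L(H)_c$ acts as $(x,f)\mapsto(Hx,f)$, $(u,\varphi)\mapsto(\widehat Hu,\varphi)$ and $\sigma\mapsto\widehat H\sigma$.

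Then I evaluate both composites.
\begin{itemize}
\item On an object $x$, the first composite gives $(Px,(Hx,\id_{Px}))$. The second gives $\eta_Q(Hx)=(QHx,(Hx,\id_{QHx}))$. These agree because $QH=P$.
\item On a morphism $u$, the first composite gives $(Pu,(\widehat H u,\id_{Pu}))$. The second gives $(QHu,(Hu,\id_{QHu}))$. These agree because $\widehat H$ restricts to $H$ along the localization maps $\Loc$, which are inclusions on morphisms and 2-cells (\cref{remark:Phat_preserves_normal_forms}, \cref{constr:pushout_localization}).
\item On a 2-cell $\beta$, the first composite gives $(P\beta,\widehat H\beta)=(P\beta,H\beta)$, which equals $\eta_Q(H\beta)=(QH\beta,H\beta)$.
\end{itemize}

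The only point needing care is bookkeeping: the second component of a morphism of $\int_\cC^+L(P)$ at $(Px,(x,\id))$ lives in $L(P)(Px)$. Its target is $L(P)(Pu)(x',\id)=(x',Pu)$, and $L(H)$ is applied in the component at $Px=QHx$. Both targets agree for the same reason $QH=P$.

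Finally, strict functoriality of $L$ and $\int_\cC^+$ has already been recorded, so nothing further is needed beyond this componentwise equality. I expect no real obstacle. The mildest subtlety is making sure that $\widehat H\circ\Loc_{\cP}=\Loc_{\cQ}\circ H$ holds strictly on the nose, which follows from the construction of $\widehat H$ via the morphism of pushout spans.
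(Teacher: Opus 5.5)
Your proposal is correct and follows the same route as the paper, which also verifies $\eta_Q H = (\int_\cC^+ L(H))\eta_P$ by comparing the two composites on objects, morphisms and 2-cells, using $QH=P$. Your explicit remark that $\widehat{H}$ restricts strictly to $H$ along the localization inclusions is a point the paper uses without comment when it writes $(Hu,\id_{Pu})$ for the image of a morphism $u$.
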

\begin{proof}
    Given a marked 2-functor $H\colon P\to Q$ over $\Csharp$, we must show that $\eta_Q H=\int_\cC^+ H \eta_P$. This is immediate from the definitions, as both composites have actions given by
      \[
		\begin{aligned}
		x &\mapsto (Px=QHx, (Hx,\id_{Px})) && \text{on objects,}\\
		u &\mapsto (Pu=QHu, (Hu,\id_{Pu})) && \text{on morphisms,}\\
		\beta &\mapsto (P\beta=QH\beta, H\beta) && \text{on $2$-cells.}
		\end{aligned}
	\]
\end{proof}

We now show the desired result regarding the behaviour of the unit on fibrant objects. In order to show that each component $\eta_P\colon P\to \int_\cC^+ L(P)$ is a biequivalence, thanks to \cref{bieq.on.fibers}, it will be enough to inspect each fiber at a time.

    \begin{prop}\label{Lem:unitiswequiv}
        Let $P\colon \markedscat{\cP} \to \cC^{\sharp}$ be a fibrant object in $\mslicetwocat$. Then the unit $\eta_P\colon P \to \int^+_{\cC} L(P)$ is a weak equivalence in $\mslicetwocat$.
    \end{prop}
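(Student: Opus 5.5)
Since both $P$ and $\int^+_\cC L(P)$ are fibrant (the latter by \cref{cor:projectionisfibrant}), it suffices by the definition of weak equivalences between naive fibrant objects to show that the underlying 2-functor of $\eta_P$ is a biequivalence. The plan is to apply \cref{bieq.on.fibers}. This requires two things: that $\eta_P$ is a cartesian 2-functor over $\cC$, and that for each $c\in\cC$ the induced 2-functor on fibers $(\eta_P)_c\colon \cP_c\to (\int^+_\cC L(P))_c$ is a biequivalence.

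Cartesianness is immediate from the results already established. Since $P$ is cart-marked, its cartesian morphisms and 2-cells are exactly the marked ones. The map $\eta_P$ preserves markings by \cref{constr.unit}, and the markings on $\int^+_\cC L(P)$ are exactly the cartesian cells by \cref{prop:markingsinGC}. Hence $\eta_P$ sends cartesian morphisms and 2-cells to cartesian ones.

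For the fibers, first identify the fiber of $\int^+_\cC L(P)$ over $c$ with $L(P)(c)$. Its objects are $(c,(x,f))$ with $f\colon c\to Px$, its morphisms are $(\id_c,\widehat u)$, and its 2-cells are $(\id,\widehat\sigma)$. Under this identification, $(\eta_P)_c$ is the inclusion $J\colon\cP_c\to L(P)(c)$ given by $x\mapsto(x,\id_c)$, $u\mapsto(u,\id)$, $\sigma\mapsto\sigma$. We then show $J$ is a biequivalence in two steps.
\begin{enumerate}
\item \emph{Essential surjectivity.} Take an object $(x,f)$ with $f\colon c\to Px$. Choose a cartesian (hence marked) lift $\widehat f\colon f^*x\to x$ with $P\widehat f=f$, so $f^*x\in\cP_c$. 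By \cref{marked.become.invertible}, the morphism $(\widehat f,\id)\colon(f^*x,\id_c)\to(x,f)$ is an equivalence in $L(P)(c)$.
\item \emph{Local equivalence.} For $x,x'\in\cP_c$, we must show $\cP_c(x,x')\to L(P)(c)((x,\id),(x',\id))$ is an equivalence of categories. Faithfulness is clear, since 2-cells are 2-cells of $\cP[E_\cP^{-1}]$ and $\Loc$ is injective on 2-cells.
\end{enumerate}

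The main obstacle is fullness and essential surjectivity on hom-categories. A general morphism of $L(P)(c)$ is a composite of generators $(u,\varphi)$ and formal inverses $(u^{-1},\widehat P\eta_u\cdot f)$, where the intermediate objects need not lie in the fiber and the 2-cells $\varphi$ need not be identities. The plan is an induction on the length of a word of generators, showing that every such composite $(x,\id)\to(x',\id)$ is isomorphic to one of the form $(w,\id)$ with $w\in\cP_c(x,x')$. Three ingredients drive the induction.
\begin{enumerate}
\item Cartesian factorization through the lift $\widehat f$, via \enumref{Def:cartesian_1cells}{1} and \cref{prop:buckley3.2.1}, moves intermediate objects into the fiber.
\item The local cartesian structure, through the marked 2-cells $\beta$ and their inverses, absorbs the 2-cells $\varphi$.
\item Each formal inverse $u^{-1}$ of a marked $u$ in $\cP_c$ is replaced by a genuine inverse equivalence in $\cP$, using \enumref{prop:propertiescartesian}{2} together with the comparison cell $\gamma_u$ of \cref{gamma.u.exists}.
\end{enumerate}
A parallel induction on generating 2-cells shows that every 2-cell between such morphisms comes from a 2-cell of $\cP_c$, with uniqueness from \enumref{Def:cartesian_1cells}{2}. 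I expect this word-reduction bookkeeping, which amounts to a normal form for $L(P)(c)$ relative to the fiber, to be the delicate part. Once it is done, \cref{bieq.on.fibers} gives that $\eta_P$ is a biequivalence.
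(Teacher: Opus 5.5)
\paragraph{The faithfulness step fails.} $\Loc$ is not injective on 2-cells, not even on 2-cells of a fiber, because inverting a non-vertical marked morphism can identify distinct vertical 2-cells.

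Here is a counterexample. Take $\cC=[1]$ with morphism $k\colon 0\to 1$, and let $F\colon\cC^{\coop}\to\ttwocat$ have $F(0)=[0]$ and $F(1)=\Sigma(\{0\rightrightarrows 1\})$. Thus $F(1)$ has morphisms $a,b\colon s\to t$ and two distinct 2-cells $\beta,\beta'\colon a\Rightarrow b$. Let $P=\pi_F\colon\int^+_\cC F\to\cC^{\sharp}$, which is fibrant by \cref{cor:projectionisfibrant}, so $\cP=\int^+_\cC F$.
\begin{itemize}
\item The morphism $e=(k,\id_\ast)\colon(0,\ast)\to(1,s)$ is marked.
\item In $\cP$, both whiskerings $(\id,\beta)\cdot e$ and $(\id,\beta')\cdot e$ equal $(\id_k,F(k)(\beta))=(\id_k,\id)$.
\item In $\cP[E_\cP^{-1}]$, $\Loc(e)$ is an equivalence. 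Whiskering with an equivalence is faithful on 2-cells, so $\Loc(\id,\beta)=\Loc(\id,\beta')$.
\item The 2-cells of $L(P)(1)\subseteq 1\downarrow\widehat P$ are 2-cells of $\cP[E_\cP^{-1}]$. Hence $(\eta_P)_1\colon\cP_1\to L(P)(1)$ identifies $\beta$ and $\beta'$, and $\eta_P$ itself is not locally faithful, so it is not a biequivalence.
\end{itemize}

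So this is not a justification that more care would supply: with $L$ as constructed, the statement fails for this $P$. The paper's own proof rests on the same assertion. It says applying $(\eta_P)_c$ ``cannot introduce any new relations'', i.e.\ that $\Loc$ is an inclusion on 2-cells as claimed in \cref{constr:pushout_localization}. Following the paper would therefore not have helped. The underlying defect is that the 2-cells of $L(P)(c)$ are computed in the global localization $\cP[E_\cP^{-1}]$. There, marked morphisms lying over non-identity morphisms of $\cC$ are also inverted.

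\paragraph{Comparison with the paper's route.} Apart from this, your outline follows the paper: reduction to fibers via \cref{bieq.on.fibers}, and essential surjectivity via a cleavage. Your check that $\eta_P$ is cartesian, which \cref{bieq.on.fibers} needs, is a point the paper leaves implicit.

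\paragraph{The deferred step.} The part you defer is the actual content of the paper's argument, and as you set it up it does not go through.
\begin{itemize}
\item \emph{The induction hypothesis is too weak.} An induction over words from $(x,\id_c)$ to $(x',\id_c)$ is not self-sustaining, since subwords start and end at objects $(y,g)$ outside the fiber. What is needed is a square-filling statement for every generator $(u,\varphi)\colon(x,f)\to(y,g)$, relative to a cleavage $\chi_{f,x}$ normalized by $\chi_{\id,x}=\id_x$. It asks for a vertical $u^*$ and an invertible $\tau\colon(\chi_{g,y},\id)(u^*,\id)\Rightarrow(u,\varphi)(\chi_{f,x},\id)$. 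One obtains these by lifting $\varphi$ to a cartesian $\chi_\varphi$ and then factoring through $\chi_{g,y}$. These squares paste along words, and the normalization makes them trivial at fiber endpoints.
\item \emph{Ingredient (3) is too narrow.} The generators include formal inverses $(u^{-1},\widehat P\eta_u\cdot f)$ of marked $u$ that are not vertical. There $\gamma_u$ does not apply, since it requires $u$ to already be an equivalence in $\cP$. The paper instead shows that the $u^*$ produced for $(u,\id)$ is marked, by \enumref{prop:propertiescartesian}{0} and \enumref{prop:propertiescartesian}{1.5}. It is therefore an equivalence by \enumref{prop:propertiescartesian}{2}, and the square for the formal inverse is filled using $(u^*)^{-1}$ together with the unit and counit of $u^*$.
\end{itemize}
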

    \begin{proof}
By \cref{bieq.on.fibers}, it suffices to show that the 2-functor induced on fibers $$(\eta_P)_c\colon P_c\to (\textstyle{\int}^+_{\cC}L(P))_c=L(P)(c)$$  is a biequivalence for each $c\in\cC$. Since $P$ is fibrant, we may start by choosing a cleavage consisting of $P$-cartesian lifts $\chi_{f,x}\colon f^*x\to x$ for each morphism $f\colon c\to Px$ in $\cC$, and $\chi_{\varphi}\colon v\Rightarrow u$ for each 2-cell $\varphi\colon f\Rightarrow Pu$ in $\cC$. We ask only that $\chi_{\id,x}=\id_x$ and $\chi_{\id_{Pu}}=\id_u$ but no other compatibilities (e.g.\ between composites or whiskerings), and so this is guaranteed to exist by the axiom of choice. 
Now, for every object $(x,f)$ in $L(P)(c)$, we consider the map $(\chi_{f,x},\id_f)\colon (f^*x,\id_c)\to (x,f)$. Note that $(f^*x,\id_c)=(\eta_P)_c (x)$ and that $(\chi_{f,x},\id)$ is an equivalence in $L(P)(c)$ since the map $\chi_{f,x}$ is cartesian and hence marked in $\cP$; see \cref{marked.become.invertible}. This shows that  $(\eta_P)_c$ is essentially surjective on objects.

To show that $(\eta_P)_c$ is full on morphisms up to an invertible 2-cell, it suffices to prove that if $(u,\varphi)\colon (x,f)\to (y,g)$ is either of the two types of generating morphisms in $L(P)(c)$, then there exists an invertible 2-cell $\tau$ in $L(P)(c)$ of the form
\[\begin{tikzcd}[cramped, sep=12pt]
	{(x,f)} && {(y,g)} \\
	\\
	{(f^{\ast}x,\id_c)} && {(g^{\ast}y, \id_c)}
	\arrow["{(u,\varphi)}", from=1-1, to=1-3]
	\arrow["{(\chi_{f,x},\id)}", "\mathrel{\rotatebox[origin=c]{-90}{$\simeq$}}"', from=3-1, to=1-1]
	\arrow[from=3-1, to=3-3]
	\arrow["\tau"', between={0.3}{0.7},"\cong", Rightarrow, from=3-3, to=1-1]
	\arrow["{(\chi_{g,y}, \id)}"',"\mathrel{\rotatebox[origin=c]{90}{$\simeq$}}", from=3-3, to=1-3]
\end{tikzcd}\]
where the bottom horizontal morphism is in the image of $(\eta_P)_c$. 

The first type of morphism is of the form $(u,\varphi)$ for $u\colon x\to y$ in $\cP$ and $\varphi\colon g\Rightarrow Pu.f$ in $\cC$. Recall that $f=P(\chi_{f,x})$, and hence, since $P$ is a 2-fibration, this lifts to a cartesian 2-cell $\chi_{\varphi}\colon \widetilde{g}\Rightarrow u.\chi_{f,x}$ in $\cP$. Moreover, we can also consider the lifting problem below,
\[\begin{tikzcd}[row sep = small]
				f^*x &&&& c \\
				&& {} && {} \\
				g^*y && y && c && Py
				\arrow["\widetilde{g}", from=1-1, to=3-3]
				\arrow["\id"', from=1-5, to=3-5]
				\arrow[""{name=0, anchor=center, inner sep=0}, "g", from=1-5, to=3-7]
				\arrow["P"{pos=0.3}, shorten >=19pt, maps to, from=2-3, to=2-5]
				\arrow["\chi_{g,y}"', from=3-1, to=3-3]
				\arrow["g"', from=3-5, to=3-7]
				\arrow["\id", shorten <=8pt, shorten >=8pt, Rightarrow, from=3-5, to=0]
			\end{tikzcd}\]
which, as $\chi_{g,y}$ is cartesian, admits a lift given by a morphism $u^*\colon f^*x\to g^* y$ and an invertible 2-cell $\lambda\colon \chi_{g,y}u^*\Rightarrow \widetilde{g}$. Note that this gives a morphism $(\eta_P)_c(u^*)=(u^*,\id)\colon (f^*x,\id_c)\to (g^*y,\id_c)$. The desired 2-cell $\tau$ is then the composite $\chi_{\varphi}\lambda$, which is invertible in $L(P)(c)$ as both $\chi_{\varphi}$ and $\lambda$ are marked in $\cP$; see \cref{marked.become.invertible}. 

The second type of morphism is of the form $(u^{-1},\widehat{P}\eta_u\cdot f)\colon (y,Pu\cdot f)\to (x,f)$ which is the formal inverse adjoint equivalence to the morphism $(u,\id)$ for a marked morphism $u\colon x\to y$ in $\cP$. If we run the argument above for the morphism $(u,\id)$ to get $\tau\colon (\chi_{Pu.f,y},\id)(u^*,\id)\Rightarrow (u,\id)(\chi_{f,x},\id)$, the fact that $\varphi=\id$ implies that $\chi_\varphi=\id$, and in particular, we have $\widetilde{Pu.f}=u.\chi_{f,x}$, which is marked. Since we have an invertible 2-cell $\lambda\colon \chi_{Pu.f,y}u^*\Rightarrow \widetilde{Pu.f}$, we see that $\chi_{Pu.f,y}u^*$ is also marked (by \enumref{prop:propertiescartesian}{0}), so $u^*$ must be marked as well (by \enumref{prop:propertiescartesian}{1.5}). But we also know that $Pu^*=\id$ by construction, so \enumref{prop:propertiescartesian}{2} tells us that $u^*$ is an equivalence. If $(u^*, (u^*)^{-1},\eta_{u^*},\epsilon_{u^*})$ denotes some adjoint equivalence data for $u^*$ in $\cP_c$, one can then check that the following provides the desired invertible 2-cell in $L(P)(c)$:

\begin{equation}\label{eqn.tau.inverse}
\begin{tikzcd}[cramped, column sep=18pt, row sep=10pt]
	{(y,Pu.f)} && {(x,f)} &&&&& \bullet && \bullet && \\
	&&&& {\tau' } & {} & {} &&& \bullet \\
	{((Pu.f)^{\ast}y,\id_c)} && {(f^{\ast}x,\id_c)} &&& \bullet && \bullet && \bullet && \bullet
	\arrow["{(u^{-1},\widehat{P}\eta_u\cdot f)}", from=1-1, to=1-3]
	\arrow["{\chi_{Pu.f,y}}", from=1-8, to=1-10]
	\arrow["{u^{-1}}", from=1-10, to=3-12]
	\arrow["{=}"{description, pos=0.9}, draw=none, from=2-5, to=2-6]
	\arrow["{(\chi_{Pu.f,y},\id)}","\mathrel{\rotatebox[origin=c]{-90}{$\simeq$}}"', from=3-1, to=1-1]
	\arrow[from=3-1, to=3-3]
	\arrow["{\tau'}"',"\cong", between={0.3}{0.7}, Rightarrow, from=3-3, to=1-1]
	\arrow["{(\chi_{f,x}, \id)}"',"\mathrel{\rotatebox[origin=c]{90}{$\simeq$}}", from=3-3, to=1-3]
	\arrow[""{name=0, anchor=center, inner sep=0}, equals, from=3-6, to=1-8]
	\arrow["{(u^{\ast})^{-1}}"', from=3-6, to=3-8]
	\arrow["{u^{\ast}}"{description}, from=3-8, to=1-8]
	\arrow["{\chi_{f,x}}"', from=3-8, to=3-10]
	\arrow["\tau"', between={0.3}{0.7}, Rightarrow, from=3-10, to=1-8]
	\arrow["u"{description}, from=3-10, to=1-10]
	\arrow[""{name=1, anchor=center, inner sep=0}, equals, from=3-10, to=3-12]
	\arrow["{\eta_{u^*}}"{pos=0.3}, between={0.1}{0.5}, Leftarrow, from=2-10, to=1]
	\arrow["{\epsilon_{u^*}}", between={0.3}{0.7}, Rightarrow, from=3-8, to=0]
\end{tikzcd}
\end{equation}

Finally, we focus on 2-cells. Note that applying $(\eta_P)_c$ cannot introduce any new relations that were not already present in $\cP$, so we only need to prove that $(\eta_P)_c$ is full on 2-cells. For this, it suffices to show that if $\sigma\colon (u,\varphi)\Rightarrow (v,\psi)$ is any of the generating 2-cells in $L(P)(c)$, then there exists some 2-cell $\sigma^*$ between the previously constructed data as below
\[\begin{tikzcd}[cramped,sep=13]
	&& {} && \\
	{(x,f)} &&&& {(y,g)} \\
	& {} & {} & {} \\
	&& {} \\
	{(f^{\ast}x,\id_c)} & {} && {} & {(g^{\ast}y,\id_c)} \\
	&& {}
	\arrow["{(v,\psi)}", curve={height=-18pt}, dashed, from=2-1, to=2-5]
	\arrow["{(u,\varphi)}"', curve={height=18pt}, from=2-1, to=2-5]
	\arrow["\sigma"', between={0.3}{0.7}, Rightarrow, from=3-3, to=1-3]
	\arrow["{(\chi_{f,x},\id)}","\mathrel{\rotatebox[origin=c]{-90}{$\simeq$}}"', from=5-1, to=2-1]
	\arrow["{(v^{\ast},\id)}", curve={height=-18pt}, dashed, from=5-1, to=5-5]
	\arrow["{(u^{\ast},\id)}"', curve={height=18pt}, from=5-1, to=5-5]
	\arrow["{\tau_u}"{pos=0.8},"\cong"'{pos=0.8}, shift left=1, between={0.5}{1}, Rightarrow, from=5-2, to=3-2]
	\arrow["{\tau_v}"'{pos=0.8},"\cong"{pos=0.8}, shift right =1, between={0.5}{1}, Rightarrow, dashed, from=5-4, to=3-4]
	\arrow["{(\chi_{g,y},\id)}"',"\mathrel{\rotatebox[origin=c]{90}{$\simeq$}}", from=5-5, to=2-5]
	\arrow["{\sigma^{\ast}}"', between={0.3}{0.7}, Rightarrow, from=6-3, to=4-3]
\end{tikzcd}\]
such that $\tau_v (\chi_{g,y}\cdot \sigma^*)= (\sigma\cdot \chi_{f,x})\tau_u$. 

Suppose that $\sigma\colon (u,\varphi)\Rightarrow (v,\psi)$ is a 2-cell of the first type, with $u,v\colon x\to y$, $\sigma\colon u\Rightarrow v$ in $\cP$, $\varphi\colon g\Rightarrow Pu.f$, $\psi\colon g\Rightarrow Pv.f$ in $\cC$, and $\psi=(P\sigma \cdot f)\varphi$. If we consider the lifts $\chi_\varphi\colon \widetilde{g}_u\Rightarrow u.\chi_{f,x}$ of $\varphi$ and $\chi_\psi\colon \widetilde{g}_v\Rightarrow v.\chi_{f,x}$ of $\psi$, we can construct the lifting problem below,
\begin{equation}\label{lift.unit}\begin{tikzcd}[cramped,sep=scriptsize]
	{\widetilde{g}_u} &&&&&& g && \\
	&&& {} && {} \\
	{\widetilde{g}_v} && v.\chi_{f,x} &&&& g && {Pv.f}
	\arrow["{(\sigma\cdot \chi_{f,x}).\chi_{\varphi}}", Rightarrow, from=1-1, to=3-3]
	\arrow[equals, from=1-7, to=3-7]
	\arrow[""{name=0, anchor=center, inner sep=0}, "{(P\sigma\cdot f).\varphi}", Rightarrow, from=1-7, to=3-9]
	\arrow["P", maps to, from=2-4, to=2-6]
	\arrow["{\chi_{\psi}}"', Rightarrow, from=3-1, to=3-3]
	\arrow["\psi"', Rightarrow, from=3-7, to=3-9]
	\arrow["{=}"{description}, draw=none, from=3-7, to=0]
\end{tikzcd}
\end{equation}
which yields some 2-cell $\alpha\colon \widetilde{g}_u\Rightarrow\widetilde{g}_v$ such that $\chi_\psi \alpha=(\sigma\cdot\chi_{f,x})\chi_\varphi$. The 2-cell $\alpha$ then gives rise to the lifting problem 
            \begin{equation}\label{lift.unit.2}
			\begin{tikzcd}
				f^*x &&&&& c \\
				{} && {} &&& {} && {} \\
				g^*y &&& y && c &&& Py
				\arrow["{u^*}"', curve={height=12pt}, from=1-1, to=3-1]
				\arrow["{v^*}", curve={height=-12pt}, dashed, from=1-1, to=3-1]
				\arrow[""{name=0, anchor=center, inner sep=0}, "{\widetilde{g}_v}"{pos=0.6}, curve={height=-12pt}, from=1-1, to=3-4]
				\arrow[""{name=1, anchor=center, inner sep=0}, "\widetilde{g}_u"'{pos=0.6}, curve={height=12pt}, from=1-1, to=3-4]
				\arrow[""{name=2, anchor=center, inner sep=0}, "\id"', curve={height=12pt}, from=1-6, to=3-6]
				\arrow[""{name=3, anchor=center, inner sep=0}, "{\id}", curve={height=-12pt}, dashed, from=1-6, to=3-6]
				\arrow[""{name=4, anchor=center, inner sep=0}, "g"'{pos=0.6}, curve={height=12pt}, from=1-6, to=3-9]
				\arrow[""{name=5, anchor=center, inner sep=0}, "{g'}"{pos=0.6}, curve={height=-12pt}, from=1-6, to=3-9]
				\arrow["{\lambda_v}"', shift left=5, shorten <=14pt, shorten >=27pt, Rightarrow, dashed, from=2-1, to=2-3]
				\arrow["{\id}"', shift left=5, shorten <=14pt, shorten >=27pt, Rightarrow, dashed, from=2-6, to=2-8]
				\arrow[""{name=6, anchor=center, inner sep=0}, "{\chi_{g,y}}"', from=3-1, to=3-4]
				\arrow[""{name=7, anchor=center, inner sep=0}, "g"', from=3-6, to=3-9]
				\arrow["\alpha", shift right=5, shorten <=3pt, shorten >=3pt, Rightarrow, from=1, to=0]
				\arrow["\id", shorten <=5pt, shorten >=5pt, Rightarrow,  from=2, to=3]
				\arrow["\id", shift right=5, shorten <=3pt, shorten >=3pt, Rightarrow, from=4, to=5]
				\arrow["{\lambda_u}"{pos=0.3}, shift left=4, shorten <=4pt, shorten >=20pt, Rightarrow, from=3-1, to=6]
				\arrow["\id"{pos=0.3}, shift left=4, shorten <=4pt, shorten >=20pt, Rightarrow, from=3-6, to=7]
			\end{tikzcd}
			\end{equation} from which we get a 2-cell $\sigma^*\colon u^*\Rightarrow v^*$ in $\cP_c$ satisfying $\lambda_v(\chi_{g,y}\cdot \sigma^*)=\alpha\lambda_u$. Recalling that $\tau_u=\chi_\varphi \lambda_u$ and $\tau_v=\chi_\psi \lambda_v$, we see that  $\sigma^*\colon (u^*,\id)\Rightarrow (v^*,\id)$ is the desired 2-cell.

            Next, let $\beta^{-1}\colon (v,P\beta\cdot f)\Rightarrow (u,\id)$ be the inverse of a marked 2-cell $\beta\colon u\Rightarrow v$. If we apply the above argument to $\beta\colon (u,\id)\Rightarrow (v,P\beta\cdot f)$, the fact that $\beta$ is marked implies that $\chi_\psi$ and $(\beta\cdot \chi_{f,x})\chi_\varphi$ are two cartesian lifts of $\psi$. Therefore, the 2-cell $\alpha\colon\widetilde{g}_u\Rightarrow\widetilde{g}_v$ from (\ref{lift.unit}) is invertible, and thus, using  uniqueness of the lift in (\ref{lift.unit.2}) applied to $\alpha$, its inverse, and their composites, shows that $\beta^*\colon u^*\Rightarrow v^*$ must be invertible as well. The required 2-cell corresponding to the formal inverse $\beta^{-1}$ is then given by $(\beta^*)^{-1}$.

            Lastly, a 2-cell of the form $\eta_u\colon (\id_{Px},\id)\Rightarrow (u^{-1}, \widehat{P}\eta_u\cdot f). (u,\id)$ for some marked morphism $u\colon x\to y$ in $\cP$ has, by construction, a corresponding 2-cell $$\eta_{u^*}\colon (\id_{f^*x},\id)\Rightarrow ((u^*)^{-1}, P\eta_{u^*}). (u^*,\id).$$ This satisfies the required pasting equality $\tau' (\chi_{f,x}\cdot \eta_{u^*})= (\eta_u\cdot \chi_{f,x})\tau$, by construction of $\tau'$ (see \ref{eqn.tau.inverse}) and by the triangle identities. The inverse 2-cell $\eta_u^{-1}$ corresponds to $\eta_{u^*}^{-1}$, and an analogous argument works for the 2-cells $\epsilon_u$ and $\epsilon_u^{-1}$.
    \end{proof}

\subsection{The counit}\label{subsec.counit}

We would now like to define a natural transformation between $\id$ and  $L\int_\cC^+ $ which is pointwise a weak equivalence, but this is not quite possible without further assumptions. Instead, we will define a zig-zag of natural transformations \[\id \Leftarrow L({\textstyle \int}_\cC^+ ,\mathrm{spl})\Rightarrow L{\textstyle \int}_\cC^+\]  which are pointwise weak equivalences. This is enough for our purposes as these will descend to natural isomorphisms when we pass to the homotopy categories.

We start by introducing the functor $L({\textstyle \int}_\cC^+ ,\mathrm{spl})\colon 2\Fun[\cC^\coop,\ttwocat]\to 2\Fun[\cC^\coop,\ttwocat]$.

\begin{constr}
    Given a 2-functor $F\colon\cC^\coop\to\ttwocat$, let $\pi^\mathrm{spl}\colon (\int_\cC^+F,\mathrm{spl})\to\Csharp$ denote the marked 2-functor  whose underlying 2-functor is the projection $\pi\colon\int_\cC^+F\to\cC$, and where the marking on the 2-category $\int_\cC^+ F$ consists of all the morphisms of the form $(f,\id)$, and of all the 2-cells $(\alpha,\widehat{\alpha})$ such that $\widehat{\alpha}$ is invertible.

   This induces a functor $(\int_\cC^+,\mathrm{spl})\colon 2\Fun[\cC^\coop,\ttwocat]\to \mslicetwocat$, since the action of the Grothendieck construction on a natural transformation $\theta$ will map $(f,\id)$ to $(f,\theta_c(\id))=(f,\id)$, and hence, it preserves this new marking, as well.  
\end{constr}

\begin{rmk}\label{rmk.iota}
    Note that this marking constitutes a subset of the usual cartesian marking obtained by the marked Grothendieck construction, and so the identity 2-functor induces a map in $\mslicetwocat$
    \[\begin{tikzcd}
        (\int_\cC^+F,\mathrm{spl})\ar[rr,"\iota_F"]\ar[dr,"\pi^\mathrm{spl}"'] && \int_\cC^+ F\ar[ld,"\pi"]\\
        & \Csharp
    \end{tikzcd}\]  
\end{rmk}

\begin{defn}
We define a 2-natural transformation $\mu_F\colon L(\int_\cC^+ F,\mathrm{spl})\Rightarrow L\int_\cC^+ F$ by $\mu_F=L(\iota_F)$; that is, by applying the functor $L\colon \mslicetwocat\to 2\Fun[\cC^\coop,\ttwocat]$ to the map $\iota_F$ of \cref{rmk.iota}.
\end{defn}

\begin{lem}
    The 2-natural transformations $\mu_F\colon L(\int_\cC^+ F,\mathrm{spl})\Rightarrow L\int_\cC^+ F$ assemble into a natural transformation $\mu\colon L(\int_\cC^+ ,\mathrm{spl})\Rightarrow L\int_\cC^+$.
\end{lem}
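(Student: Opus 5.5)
The statement asks for naturality of $\mu$ in $F$. Concretely: for every 2-natural transformation $\theta\colon F\Rightarrow G$ in $2\Fun[\cC^\coop,\ttwocat]$ we must show
\[
L\big({\textstyle\int}_\cC^+\theta\big)\circ\mu_F=\mu_G\circ L\big({\textstyle\int}_\cC^+\theta,\mathrm{spl}\big).
\]
The plan is to reduce this to naturality of $\iota$ and then use functoriality of $L$. Since $\mu_F=L(\iota_F)$ and $L$ is a functor, it is enough to check the following square in $\mslicetwocat$ before applying $L$:
\[
{\textstyle\int}_\cC^+\theta\circ\iota_F=\iota_G\circ\big({\textstyle\int}_\cC^+\theta,\mathrm{spl}\big).
\]
In other words, $\iota$ should itself be a natural transformation $(\int_\cC^+,\mathrm{spl})\Rightarrow\int_\cC^+$ of functors $2\Fun[\cC^\coop,\ttwocat]\to\mslicetwocat$. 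The claim then follows by whiskering with $L$.

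To check this square, I would first note that both composites are marked 2-functors over $\Csharp$, so they are equal as soon as their underlying 2-functors agree. The underlying 2-functor of each $\iota$ is an identity. The underlying 2-functor of $(\int_\cC^+\theta,\mathrm{spl})$ is, by construction, the same as that of $\int_\cC^+\theta$, namely the map from \cref{Def:marked_GC}:
\[
(c,x)\mapsto(c,\theta_cx),\qquad (f,\widehat f)\mapsto(f,\theta_c\widehat f),\qquad (\alpha,\widehat\alpha)\mapsto(\alpha,\theta_c\widehat\alpha).
\]
Hence both composites have underlying 2-functor $\int\theta$, and the square commutes.

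The remaining point is well-definedness: each map in the square must preserve markings. For $\iota_F$ and $\iota_G$ this is \cref{rmk.iota}. For $\int_\cC^+\theta$ it is \cref{Def:marked_GC}. For the split version, the paper's construction of $(\int_\cC^+,\mathrm{spl})$ already notes that $\theta$ sends $(f,\id)$ to $(f,\id)$ and preserves invertible second components. There is no real obstacle here. The only subtlety is being careful that $L$ is applied to an honest commutative square of morphisms in $\mslicetwocat$, so that functoriality of $L$ gives the required equality of 2-natural transformations.
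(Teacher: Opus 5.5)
Your proposal is correct and follows essentially the same route as the paper: you check that the square $\int_\cC^+\theta\circ\iota_F=\iota_G\circ(\int_\cC^+\theta,\mathrm{spl})$ commutes in $\mslicetwocat$, which holds because both composites have the same underlying 2-functor. You then deduce the naturality of $\mu$ from the functoriality of $L$, exactly as the paper does.
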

\begin{proof}
   Given a 2-natural transformation $\theta\colon F\Rightarrow G$, we must check that the square below left commutes
   \[\begin{tikzcd}
       L(\int_\cC^+ F,\mathrm{spl})\rar["\mu_F", Rightarrow]\dar[Rightarrow,"L\int_\cC^+\theta"'] & L\int_\cC^+ F\dar["L\int_\cC^+\theta",Rightarrow]\\
       L(\int_\cC^+ G,\mathrm{spl})\rar["\mu_G", Rightarrow] & L\int_\cC^+ G
   \end{tikzcd}\qquad\qquad
   \begin{tikzcd}
       (\int_\cC^+ F,\mathrm{spl})\rar\dar["\int_\cC^+\theta"'] & \int_\cC^+ F\dar["\int_\cC^+\theta"]\\
       (\int_\cC^+ G,\mathrm{spl})\rar & \int_\cC^+ G
   \end{tikzcd}\] but this is ensured by the functoriality of $L$, as the square above right commutes, by definition.
\end{proof}

The fact that  $\mu\colon L(\int_\cC^+ ,\mathrm{spl})\Rightarrow L\int_\cC^+$ is a pointwise weak equivalence is not an artifact of the Grothendieck construction, but rather of the close relation between the two markings considered, as we now show. 

\begin{lem}\label{lem.split.localization}
    Let $P\colon\markedscat{\cP}\to\Csharp$ be a marked 2-functor, and $\cP^\mathrm{spl}=(\cP,X,E^2_\cP)$ denote a marked 2-category with the same underlying 2-category and same marked 2-cells, and with a subset of marked morphisms $X\subseteq E^1_\cP$ with the property that every marked morphism $u\colon x\to x'$ in $E^1_\cP$ can be factored as
    \[\begin{tikzcd}[cramped,sep=scriptsize]
    	x && {} && {x'} \\
    	\\
    	&& z
    	\arrow["u", from=1-1, to=1-5]
    	\arrow["{u_1}"',"\mathrel{\rotatebox[origin=c]{-30}{$\simeq$}}", from=1-1, to=3-3]
    	\arrow["{u_2}"', from=3-3, to=1-5]
    \end{tikzcd}\]
    where $u_1$ is an equivalence in $\cP$ and $u_2$ is a morphism in $X$. Then the marked 2-functor $\cP^\mathrm{spl}\to \markedscat{\cP}$ over $\Csharp$ induces pointwise biequivalences $L(P^\mathrm{spl})(c)\to L(P)(c)$ for every $c\in\cC$.
\end{lem}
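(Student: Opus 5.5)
We fix $c\in\cC$ and write $\iota\colon \cP^\mathrm{spl}\to\markedscat{\cP}$ for the identity marked 2-functor. The plan is to show directly that the restricted 2-functor $L(\iota)_c\colon L(P^\mathrm{spl})(c)\to L(P)(c)$ is a biequivalence. It is induced by $\widehat{\iota}\colon \cP[(X,E^2_\cP)^{-1}]\to\cP[E_\cP^{-1}]$, which is the identity on objects. So $L(\iota)_c$ is the identity on objects $(x,f)$, and it sends generators to generators: the morphisms $(u,\varphi)$ go to themselves, and the formal inverses $(u^{-1},\widehat{P}\eta_u\cdot f)$ for $u\in X$, together with their $2$-cells $\eta_u^{\pm1},\epsilon_u^{\pm1}$, go to the corresponding generators of $L(P)(c)$. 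Essential surjectivity is therefore immediate. The work is to show that $L(\iota)_c$ is essentially full on morphisms and fully faithful on hom-categories.

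\textbf{Fullness up to isomorphism.} We check this on generators of $L(P)(c)$, since both source and target are generated by the listed data and $L(\iota)_c$ is a 2-functor. Generators of the first type $(u,\varphi)$ lie in the image already. For a formal inverse $(u^{-1},\widehat{P}\eta_u\cdot f)$ with $u\in E^1_\cP$, we use the hypothesis to factor $u=u_2u_1$ with $u_1$ an equivalence in $\cP$ and $u_2\in X$. In $L(P)(c)$ we then have $(u,\id)=(u_2,\id)(u_1,\id)$. By \cref{marked.become.invertible}, $(u^{-1},\widehat{P}\eta_u\cdot f)$ is an inverse equivalence of $(u,\id)$. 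By uniqueness of inverse equivalences up to a unique compatible invertible 2-cell (as in \cref{gamma.u.exists}), it is isomorphic to the composite
\[(\underline{u_1}^{-1},P\eta_{\underline{u_1}}\cdot f)\,(u_2^{-1},\widehat{P}\eta_{u_2}\cdot Pu_1.f),\]
where $\underline{u_1}^{-1}$ is genuine inverse data for $u_1$ in $\cP$. The first factor is a generator of the first type. The second factor is a formal-inverse generator with $u_2\in X$, so it lies in $L(P^\mathrm{spl})(c)$. Composites of morphisms in the image are again in the image up to isomorphism, so fullness up to invertible 2-cells follows.

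\textbf{Full faithfulness on 2-cells.} This is the main obstacle. Fullness means checking that the extra generating 2-cells $\eta_u^{\pm1},\epsilon_u^{\pm1}$ for $u\in E^1_\cP\setminus X$, after transport along the isomorphisms above, are obtained from 2-cells of $L(P^\mathrm{spl})(c)$. I would show this using the triangle identities and the explicit form of $\gamma_u$: the transported unit and counit should be pastings of $\eta_{u_2},\epsilon_{u_2}$ with the genuine equivalence data of $u_1$. The generators $\beta^{-1}$ coincide on both sides, because $E^2$ is unchanged. Faithfulness is harder, because the localization pushout could introduce new relations between old 2-cells. For this I would build a retraction-type comparison. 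Using the universal property of the pushout in \cref{constr:pushout_localization}, choose for each $u\in E^1_\cP\setminus X$ adjoint equivalence data in $\cP[(X,E^2_\cP)^{-1}]$, obtained by composing $u_2^{-1}$ with $\underline{u_1}^{-1}$ and adjointifying. This gives a 2-functor $\rho\colon\cP[E_\cP^{-1}]\to\cP[(X,E^2_\cP)^{-1}]$ over $\cC[\cC^{-1}]$ with $\rho\widehat{\iota}=\id$. The composite $\widehat{P}\rho=\widehat{P}$ holds, up to checking that $P$ of the chosen data agrees with the formal data in $\cC[\cC^{-1}]$; this check is delicate and may require adjusting the choice of data via the unique comparison isomorphisms. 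Then $\rho$ induces $c\downarrow\rho$, which restricts to a functor $L(P)(c)\to L(P^\mathrm{spl})(c)$ that is a left inverse of $L(\iota)_c$ on hom-categories. This gives faithfulness. Combining the three parts shows that $L(\iota)_c$ is a biequivalence.
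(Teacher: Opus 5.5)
Your treatment of objects and 1-cells is the paper's argument. The objects and first-type generators agree, and each extra generator $(u^{-1},\widehat{P}\eta_u\cdot f)$ with $u\in E^1_\cP\setminus X$ is joined by $\gamma_u$ to $(\underline{u_1}^{-1},P\eta_{\underline{u_1}}\cdot f)(u_2^{-1},\widehat{P}\eta_{u_2}\cdot Pu_1.f)$. For 2-cells the paper only asserts, by ``quick inspection'', that a 2-cell of $L(P)(c)$ with boundary in $L(P^{\mathrm{spl}})(c)$ already lies there. It tacitly treats $L(P^{\mathrm{spl}})(c)$ as a sub-2-category, so faithfulness is taken for granted.

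You are right that faithfulness needs an argument, but the step you use for it fails as stated: in general no $\rho$ over $\cC[\cC^{-1}]$ exists, so $c\downarrow\rho$ is unavailable. The forgetful functor $U\colon\twocat\to\cat$ preserves the defining pushouts, so the underlying category of $\cC[\cC^{-1}]$ is $U\cC$ with inverses freely adjoined. Hence $\widehat{P}(u^{-1})$ is the single free letter $(Pu)^{-1}$, whereas $\widehat{P^{\mathrm{spl}}}\rho(u^{-1})=P\underline{u_1}^{-1}.(Pu_2)^{-1}$. In fact $(Pu)^{-1}$ lies in the image of $\widehat{P^{\mathrm{spl}}}$ only if $Pu=Pw$ for some $w\in X$. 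This is a failure of strict equality, so re-choosing data via comparison isomorphisms cannot repair it. It can be arranged in the situation of \cref{split.counit.weak.equiv}, where $Pu_1=\id$, but not under the lemma's hypotheses. Also, the equation you wrote, $\widehat{P}\rho=\widehat{P}$, should read $\widehat{P^{\mathrm{spl}}}\rho=\widehat{P}$.

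You never need $\rho$ to live over $\cC[\cC^{-1}]$. Since $L(\iota)_c$ acts on 2-cells by $\widehat{\iota}$, faithfulness follows from $\rho\widehat{\iota}=\id$ alone, which does hold by the universal property of the two pushouts.

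For fullness on 2-cells, your per-generator computation is the right one: transported along $\gamma_u$, the generators $\eta_u^{\pm1},\epsilon_u^{\pm1}$ become the composite unit and counit for $u_2u_1$. Passing from generators to arbitrary composites is the missing step, since a composite may run through 1-cells containing letters $u^{-1}$ with $u\notin X$. For that the transport isomorphisms must be chosen multiplicatively. This is possible because every 1-cell of $\cP[E_\cP^{-1}]$ has a unique normal form as a word in morphisms of $\cP$ and formal inverses. Let $\theta_m$ be the horizontal composite of $\gamma_u$ at each letter $u^{-1}$ with $u\notin X$, and identities elsewhere. Then:
\begin{itemize}
\item $\theta_m$ is a 2-cell of $L(P)(c)$, being built from whiskerings of the $\gamma_u$ of \cref{gamma.u.exists};
\item its target comes from $L(P^{\mathrm{spl}})(c)$;
\item $\theta_m=\id$ whenever $m$ already comes from $L(P^{\mathrm{spl}})(c)$.
\end{itemize}
Write an arbitrary 2-cell as a vertical composite of whiskered generators and conjugate each factor by $\theta$. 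By interchange this reduces the claim to your per-generator check. With these two adjustments your argument is complete, and it makes explicit what the paper's inspection leaves implicit.
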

\begin{proof}
    Note that $L(P^\mathrm{spl})(c)$ and $L(P)(c)$ have the same objects, and the same generating morphisms of the first type. The difference lies in the generating morphisms of the second type $(u^{-1},\widehat{P}\eta_u.f)\colon(x',Pu.f)\to (x,f)$ for a marked $u\colon x\to x'$ since there are fewer such morphisms in $L(P^\mathrm{spl})(c)$ than in $L(P)(c)$. It is enough to show that every such morphism in $L(P)(c)$ is related by an invertible 2-cell to some morphism in $L(P^\mathrm{spl})(c)$, as a quick inspection of the generating 2-cells reveals that any of these 2-cells in $L(P)(c)$ whose boundary is already in $L(P^\mathrm{spl})(c)$ must already be in $L(P^\mathrm{spl})(c)$.

    Consider then a morphism  $(u^{-1},\widehat{P}\eta_u.f)\colon(x',Pu.f)\to (x,f)$ for some $u\colon x\to x'$ in $E^1_\cP$. By assumption, we can factor $u=u_2u_1$ where $u_1$ is an equivalence in $\cP$ and $u_2$ is in $X$. If $(u_1,\underline{u_1}^{-1},\eta_{\underline{u_1}},\epsilon_{\underline{u_1}})$ denotes some adjoint equivalence data in $\cP$ for $u_1$, and $(u,u^{-1},\eta_u,\epsilon_u)$, $(u_2,u_2^{-1},\eta_{u_2},\epsilon_{u_2})$ denote the formal adjoint equivalence data for $u$ and $u_2$ in $\cP[E_\cP^{-1}]$,  respectively, we see that both $(u,u^{-1},\eta_u,\epsilon_u)$ and $(u,\underline{u_1}^{-1}u_2^{-1},(\underline{u_1}^{-1}\cdot \eta_{u_2}\cdot u_1)\eta_{\underline{u_1}},\epsilon_{u_2}(u_2\cdot \epsilon_{\underline{u_1}}\cdot u_2^{-1}))$ are adjoint equivalences data for $u$ in $\cP[E_\cP^{-1}]$. Hence there exists some invertible 2-cell $\gamma_u\colon u^{-1}\Rightarrow \underline{u_1}^{-1}u_2^{-1}$ in $\cP[E_\cP^{-1}]$ such that $(\gamma_u\cdot u)\eta_u=(\underline{u_1}^{-1}\cdot \eta_{u_2}\cdot u_1)\eta_{\underline{u_1}}$.

    The 2-cell $\gamma_u$ then gives an invertible 2-cell in $L(P)(c)$ as explained in \cref{gamma.u.exists}
    \[\begin{tikzcd}[cramped,sep=scriptsize]
    	{(x',Pu.f)} && {} && {(x,f)} \\
    	\\
    	&& {(z,Pu_1.f)}
    	\arrow["{(u^{-1},\widehat{P}\eta_u.f)}", from=1-1, to=1-5]
    	\arrow["{(u_2^{-1}, \widehat{P^{\mathrm{spl}}}\eta_{u_2}\cdot(Pu_1.f))}"',"\mathrel{\rotatebox[origin=c]{-30}{$\simeq$}}", from=1-1, to=3-3]
    	\arrow["{\gamma_u}"',"\cong", between={0.3}{0.7}, Rightarrow, from=1-3, to=3-3]
    	\arrow["{(\underline{u_1}^{-1},P\eta_{\underline{u_1}}\cdot f)}"', from=3-3, to=1-5]
    \end{tikzcd}\]
    where both of the maps along the bottom are morphisms in $L(P^\mathrm{spl})(c)$; this concludes our proof.
\end{proof}

As a corollary, we obtain the corresponding result for $\mu$.

\begin{cor}\label{split.counit.weak.equiv}
The natural transformation $\mu\colon L(\int_\cC^+ -,\mathrm{spl})\Rightarrow L\int_\cC^+$ is pointwise a weak equivalence.   
\end{cor}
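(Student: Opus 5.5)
The plan is to apply \cref{lem.split.localization} to $P=\pi\colon \int_\cC^+F\to\Csharp$, with $X$ the split marking consisting of the morphisms $(f,\id)$. Since weak equivalences in the projective model structure on $2\Fun[\cC^\coop,\ttwocat]$ are levelwise biequivalences, it suffices to show that each component $(\mu_F)_c=L(\iota_F)(c)\colon L(\int_\cC^+F,\mathrm{spl})(c)\to L(\int_\cC^+F)(c)$ is a biequivalence. This is exactly the conclusion of \cref{lem.split.localization}, once its hypotheses hold.

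The hypotheses to check are the following.
\begin{itemize}
\item The two markings share the same underlying 2-category and the same marked 2-cells. In both, these are the 2-cells $(\alpha,\widehat\alpha)$ with $\widehat\alpha$ invertible, by \cref{Const:marked-scaled-GC} and the definition of the split marking.
\item The split marking is contained in the cartesian one, $X\subseteq E^1$. Each $(f,\id)$ has invertible second component, so it is marked in $\int_\cC^+F$; this is \cref{rmk.iota}.
\item Every marked morphism factors as an equivalence followed by a morphism in $X$. Take a marked morphism $(f,\widehat f)\colon (c,x)\to(c',x')$, so that $\widehat f$ is an equivalence in $Fc$. Following the proof of \cref{prop:markingsinGC}, write
\[
(f,\widehat f)=(f,\id_{Ff(x')})\,.\,(\id_c,\widehat f).
\]
The second factor $(f,\id)$ lies in $X$.
\end{itemize}

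The only point needing a little care is that the first factor $(\id_c,\widehat f)$ must be an equivalence in $\int_\cC^+F$. I would prove this directly. Choose adjoint equivalence data $(\widehat f,\widehat f^{-1},\eta,\epsilon)$ in $Fc$. Then $(\id_c,\widehat f^{-1})$ is an inverse, with unit $(\id_{\id_c},\eta)$ and counit $(\id_{\id_c},\epsilon)$. These are legitimate 2-cells because $F(\id_{\id_c})$ is an identity.

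The compositions in $\int_\cC^+F$ restricted to the fibre over $c$, that is to first components $\id_c$, reduce to the compositions in $Fc$, because $F\id_c=\id$. Hence the triangle identities transfer directly from $Fc$. Naturality in $F$ was already established in the previous lemma, so we conclude that $\mu$ is pointwise a weak equivalence.
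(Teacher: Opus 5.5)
Your proposal is correct and matches the paper's proof: apply \cref{lem.split.localization} to $\pi\colon\int_\cC^+F\to\Csharp$, using the factorization $(f,\widehat f)=(f,\id).(\id_c,\widehat f)$. Your explicit checks fill in details the paper leaves implicit, namely that the two markings have the same marked 2-cells and that $(\id_c,\widehat f)$ is an equivalence in $\int_\cC^+F$.
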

\begin{proof}
    This is a consequence of \cref{lem.split.localization} as any marked morphism $(f,\widehat{f})$ in the marked Grothendieck construction $\int_\cC^+ F$ is such that $\widehat{f}$ is an equivalence and can be factored as
    \[\begin{tikzcd}[cramped,sep=scriptsize]
    	{(c,x)} && {} && {(c',x')} \\
    	\\
    	&& {(c,Ff(x'))}
    	\arrow["{(f,\widehat{f})}", from=1-1, to=1-5]
    	\arrow["{(\id_c,f)}"',"\mathrel{\rotatebox[origin=c]{-30}{$\simeq$}}", from=1-1, to=3-3]
    	\arrow["{(f,\id)}"', from=3-3, to=1-5]
    \end{tikzcd}\]
\end{proof}

Next, we define a natural transformation $\epsilon\colon L(\int_\cC^+  -,\mathrm{spl})\Rightarrow\id$.

\begin{constr}\label{constr.counit.split}
    Given a 2-functor $F\colon\cC^\coop\to\ttwocat$ and an object $d\in\cC$, we define a 2-functor $(\epsilon_F)_d\colon L(\int_\cC^+ F,\mathrm{spl})(d)\to Fd$ as follows:
    \begin{itemize}[leftmargin=*]
        \item On objects, it is given by $((c,x),f\colon d\to c)\mapsto Ff(x)$.
        \item It acts on each of the generating morphisms as follows:
        \begin{itemize}
            \item $((k,\widehat{k}),\varphi)\colon ((c,x),f)\to ((c',x'),f')\mapsto Ff(x)\xrightarrow{Ff(\widehat{k})} FfFk(x')\xrightarrow{(F\varphi)_{x'}} Ff'(x')$ for some morphism $(k,\widehat{k})$ in $\int_\cC^+ F$ and some 2-cell $\varphi\colon f'\Rightarrow kf$. 
            
            In particular, it maps a morphism $((k,\id),\id)\colon ((c,Fk(x')), f)\to ((c',x'),kf)$ for $(k,\id)$ marked to $FfFk(x')\xrightarrow{\id} F(kf)(x')$.  
            \item A generator $((k,\id),\id)^{-1}$ which is an inverse equivalence to $((k,\id),\id)$ is mapped to $F(kf)(x')\xrightarrow{\id} FfFk(x')$.
        \end{itemize}
        \item It acts on each of the generating 2-cells as follows:
        \begin{itemize}
            \item A 2-cell $(\sigma, \widehat{\sigma})\colon ((k,\widehat{k}),\varphi) \Rightarrow ((j,\widehat{j}),\psi)$ for $(\sigma,\widehat{\sigma})\colon (k,\widehat{k})\Rightarrow (j,\widehat{j})$ in $\int_\cC^+ F$ and $\varphi,\psi$ in $\cC$ is mapped to the 2-cell $(F\varphi)_{x'}\cdot Ff(\widehat{\sigma})$ depicted in
	\[\begin{tikzcd}[ampersand replacement=\&,
    column sep=6pt,
    row sep=1.8ex]
	\&\&\& {Ff.Fk(x')} \&\&\& \\
	\&\& {} \\
	{Ff(x)} \& {} \&\& {} \&\& {} \& {Ff'(x')} \\
	\&\& {} \\
	\&\&\& {Ff.Fj(x')}
	\arrow["{(F\varphi)_{x'}}", from=1-4, to=3-7]
	\arrow["{Ff(\widehat{\sigma})}"', between={0.3}{0.7}, shift left= 3,Rightarrow, from=2-3, to=4-3]
	\arrow["{Ff(\widehat{k})}", from=3-1, to=1-4]
	\arrow["{Ff(\widehat{j})}"', from=3-1, to=5-4]
	\arrow["{\scriptstyle{=}}"{pos=0.8}, draw=none, from=3-4, to=3-6]
	\arrow["{(Ff\cdot F\sigma)_{x'}}"{description}, from=5-4, to=1-4]
	\arrow["{(F\psi)_{x'}}"', from=5-4, to=3-7]
	\end{tikzcd}\]
	where the right triangle commutes since $(\sigma \cdot f).\varphi = \psi$. 
    \item A 2-cell $(\sigma,\widehat{\sigma})^{-1}$ which is the inverse of some marked $(\sigma,\widehat{\sigma})$ (i.e.\ such that $\widehat{\sigma}$ is invertible) is mapped to $(F\psi)_{x'}\cdot Ff(\widehat{\sigma}^{-1})$, the inverse of $(\epsilon_F)_d(\sigma,\widehat{\sigma})$.
\item The 2-cells corresponding to formal units and counits are mapped to identities.
        \end{itemize}
    \end{itemize}

    Note that no choices are involved in the construction, and so the 2-functoriality of $(\epsilon_F)_d$ is a consequence of that of $F$. 
\end{constr}

\begin{lem}
    The 2-functors $(\epsilon_F)_d$ of \cref{constr.counit.split} assemble into a 2-natural transformation $\epsilon_F\colon L(\int_\cC^+ F,\mathrm{spl})\Rightarrow F$.
\end{lem}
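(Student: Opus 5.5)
We must check two things: for each $r\colon d\to d'$ in $\cC$ the square $Fr\circ(\epsilon_F)_{d'} = (\epsilon_F)_{d}\circ L(\int_\cC^+F,\mathrm{spl})(r)$ commutes as 2-functors $L(\int_\cC^+F,\mathrm{spl})(d')\to Fd$, and for each 2-cell $\beta\colon r\Rightarrow s$ the whiskering $(\epsilon_F)_d\cdot L(\int_\cC^+F,\mathrm{spl})(\beta)$ equals $F\beta\cdot(\epsilon_F)_{d'}$. Since $L(\int_\cC^+F,\mathrm{spl})(d')$ is the sub-2-category generated by the listed morphisms and 2-cells, and both composites are 2-functors, the first check reduces to objects and generators. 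Recall $L(-)(r)$ acts by precomposing with $r$: $((c,x),f)\mapsto((c,x),fr)$, $((k,\widehat k),\varphi)\mapsto((k,\widehat k),\varphi\cdot r)$, 2-cells unchanged.

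On objects, $(\epsilon_F)_d((c,x),fr)=F(fr)(x)=FrFf(x)$, using strict 2-functoriality of $F\colon\cC^\coop\to\ttwocat$. For a generator $((k,\widehat k),\varphi)$ we get $F(fr)(\widehat k)$ followed by $(F(\varphi\cdot r))_{x'}$; functoriality gives $F(fr)=Fr\circ Ff$ and $F(\varphi\cdot r)=Fr\cdot F\varphi$, so this equals $Fr\big((F\varphi)_{x'}.Ff(\widehat k)\big)$. The formal inverses $((k,\id),\id)^{-1}$ go to identities on both sides, since $L(-)(r)$ sends them to the corresponding formal inverses. For 2-cells, $(\sigma,\widehat\sigma)$ gives $(F(\varphi\cdot r))_{x'}\cdot F(fr)(\widehat\sigma)=Fr\big((F\varphi)_{x'}\cdot Ff(\widehat\sigma)\big)$, inverses of marked 2-cells are handled by uniqueness of inverses, and formal units and counits go to identities on both sides.

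For the 2-cell condition, the component of $L(\int_\cC^+F,\mathrm{spl})(\beta)$ at $((c,x),f)$ is $((\id_c,\id),f\cdot\beta)\colon((c,x),fs)\to((c,x),fr)$. Here $(\id_c,\id)$ is a first-type generator with $\widehat k=\id_x$, so $(\epsilon_F)_d$ sends it to $(F(f\cdot\beta))_x=(F\beta)_{Ff(x)}$, which is the component of $F\beta\cdot(\epsilon_F)_{d'}$ at $((c,x),f)$. Equality of the two 2-natural transformations then follows because they agree on all components; the naturality conditions hold automatically, since both sides are 2-natural.

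The main obstacle is bookkeeping. One has to confirm that $L(-)(r)$ really sends each formal generator (inverse equivalences, units and counits, inverse 2-cells) to the corresponding generator, so that the generator-wise comparison is legitimate. One also has to track the variance conventions of $\cC^\coop$, in particular that $F(\varphi\cdot r)$ is the whiskering $Fr\cdot F\varphi$ with the correct orientation. Both follow directly from \cref{defn:induced-2functor-comma} and the strictness of $F$, and no choices intervene, as noted after \cref{constr.counit.split}.
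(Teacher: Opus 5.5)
Your proposal is correct and takes the same approach as the paper, which dismisses this lemma as a ``tedious but straightforward verification'' following from the 2-functoriality of $F$. You carry out that verification: strict naturality on objects and on each type of generator (via $F(fr)=Fr\circ Ff$, $F(\varphi\cdot r)=Fr\cdot F\varphi$, uniqueness of inverses, and formal inverses, units and counits going to identities), and the 2-cell condition componentwise via $(F(f\cdot\beta))_x=(F\beta)_{Ff(x)}$.
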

\begin{proof}
    This is a tedious but straightforward verification, which follows from the 2-functoriality of $F$.
\end{proof}

\begin{lem}
    The 2-natural transformations $\epsilon_F$ assemble into a natural transformation $\epsilon\colon L(\int_\cC^+ -,\mathrm{spl})\Rightarrow \id$.
\end{lem}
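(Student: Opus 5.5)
The claim is that for every 2-natural transformation $\theta\colon F\Rightarrow G$ the naturality square
\[
\epsilon_G\circ L({\textstyle\int}_\cC^+\theta,\mathrm{spl})=\theta\circ\epsilon_F
\]
holds as an equality of 2-natural transformations $L(\int_\cC^+F,\mathrm{spl})\Rightarrow G$. Equality of 2-natural transformations is checked componentwise. So I will fix $d\in\cC$ and show that the two composite 2-functors
\[
L({\textstyle\int}_\cC^+F,\mathrm{spl})(d)\longrightarrow Gd,\qquad \theta_d\circ(\epsilon_F)_d \quad\text{and}\quad (\epsilon_G)_d\circ L({\textstyle\int}_\cC^+\theta,\mathrm{spl})_d,
\]
agree. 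By construction $L(\int_\cC^+F,\mathrm{spl})(d)$ is the smallest sub-2-category of an oplax slice containing the listed generators. It therefore suffices to compare the two composites on objects, on the two types of generating morphisms, and on the generating 2-cells: two 2-functors that agree on generators agree on everything these generate.

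First recall how $L(\int_\cC^+\theta,\mathrm{spl})_d$ acts. It sends $((c,x),f)\mapsto((c,\theta_cx),f)$, $((k,\widehat k),\varphi)\mapsto((k,\theta_c\widehat k),\varphi)$, and $(\sigma,\widehat\sigma)\mapsto(\sigma,\theta_c\widehat\sigma)$. It sends formal inverses, units and counits to the corresponding formal data, because the induced map on localizations preserves the formal adjoint equivalence data, as noted in the construction of $L$ on morphisms.

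The comparison on each generator is then as follows.

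\emph{Objects.} One side gives $\theta_d(Ff(x))$ and the other gives $Gf(\theta_cx)$; these agree by 2-naturality of $\theta$ on the 1-cell $f$.

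\emph{Generating morphisms of the first type.} One side gives $\theta_d\big((F\varphi)_{x'}\circ Ff(\widehat k)\big)$ and the other gives $(G\varphi)_{\theta_{c'}x'}\circ Gf(\theta_c\widehat k)$. The equality $\theta_d Ff=Gf\theta_c$ handles the second factor. The equality $\theta_d\cdot F\varphi=G\varphi\cdot\theta_{c'}$ handles the first; this is the compatibility of a 2-natural transformation with 2-cells.

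\emph{Formal inverses $((k,\id),\id)^{-1}$.} Both composites give identities, since $\theta_c(\id)=\id$ keeps split-marked morphisms split-marked.

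\emph{Generating 2-cells of the first type.} The argument is the same as for morphisms, whiskered: $\theta_d\big((F\varphi)_{x'}\cdot Ff(\widehat\sigma)\big)=(G\varphi)_{\theta x'}\cdot Gf(\theta_c\widehat\sigma)$.

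\emph{Inverses of marked 2-cells.} These follow because 2-functors preserve inverses.

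\emph{Formal units and counits.} Both composites send them to identities.

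The main (mild) obstacle is bookkeeping on the 2-cell generators: one must check that the whiskering formula in \cref{constr.counit.split} is built only from $F$ applied to data and components $(F\varphi)_{x'}$. This ensures that each piece commutes with $\theta$ via the two 2-naturality axioms $\theta_d F(-)=G(-)\theta_c$ on 1-cells and on 2-cells. No choices enter the construction of $\epsilon$, so no coherence issues arise, and naturality in $F$ follows.
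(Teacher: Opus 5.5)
Your proposal is correct and takes the same route as the paper. The paper likewise checks the naturality square componentwise at each $d\in\cC$, unpacks the definitions, and concludes from the 2-functoriality of $\theta_d$ (identities go to identities) and the 2-naturality of $\theta$ on 1-cells and 2-cells; your generator-by-generator verification just spells out the paper's ``unpacking the definitions'' in full.
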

\begin{proof}
    Given a 2-natural transformation $\theta\colon F\Rightarrow G$, we must check that the following square of 2-functors commutes for each $d\in\cC$:
    \[\begin{tikzcd}
        L(\int_\cC^+ F,\mathrm{spl})(d)\rar["{(\epsilon_F)_d}"] \dar["{(L\int_\cC^+\theta)_d}"'] & Fd\dar["\theta_d"]\\
        L(\int_\cC^+ G,\mathrm{spl})(d)\rar["{(\epsilon_G)_d}"] & Gd
    \end{tikzcd}\] where we recall that the left vertical arrow is obtained by applying $L$ to the 2-functor $\int_\cC^+\theta$ given by $(c,x)\mapsto (c,\theta_d(x))$, $(k,\widehat{k})\mapsto (k,\theta_d(\widehat{k}))$ and $(\alpha,\widehat{\alpha})\mapsto (\alpha,\theta_d(\widehat{\alpha}))$. Unpacking the definitions, one can see that the fact that this commutes is a direct consequence of the 2-functoriality of $\theta_d$ (by which identities are mapped to identities) and the 2-naturality of $\theta$. 
\end{proof}

\begin{prop}\label{counit.is.weak.equiv}
    The natural transformation $\epsilon\colon L(\int_\cC^+ -,\mathrm{spl})\Rightarrow \id$ is pointwise a weak equivalence.
\end{prop}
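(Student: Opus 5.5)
We need to show that for each $F$ and each $d\in\cC$, the 2-functor $(\epsilon_F)_d\colon L(\int_\cC^+F,\mathrm{spl})(d)\to Fd$ is a biequivalence. Weak equivalences in the projective model structure are levelwise, so this suffices. The plan is to build an explicit section and show it is a biequivalence inverse up to equivalence.

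\textbf{The section.} Define $s_d\colon Fd\to L(\int_\cC^+F,\mathrm{spl})(d)$ by
\[
x\mapsto ((d,x),\id_d),\qquad \widehat{k}\mapsto ((\id_d,\widehat{k}),\id),\qquad \widehat{\sigma}\mapsto (\id,\widehat{\sigma}).
\]
These are generating morphisms and 2-cells of the first type, and $s_d$ is a strict 2-functor because composition in $\int_\cC^+F$ restricted to the fiber over $\id_d$ is just composition in $Fd$. Directly from \cref{constr.counit.split} we get $(\epsilon_F)_d\, s_d=\id_{Fd}$, since $F\id_d=\id$ and $\varphi=\id$.

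\textbf{Essential surjectivity.} Since $(\epsilon_F)_d$ has a section, it is surjective on objects. To check that it is a biequivalence it therefore suffices to show two things: every object of the source is equivalent to one in the image of $s_d$, and $(\epsilon_F)_d$ is locally an equivalence.

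For the first, take an object $((c,x),f\colon d\to c)$. The morphism $(f,\id)\colon (d,Ff(x))\to(c,x)$ is marked in the split marking. By \cref{marked.become.invertible}, the generator $((f,\id),\id)\colon ((d,Ff(x)),\id_d)\to((c,x),f)$ is an adjoint equivalence, and it is sent to an identity.

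\textbf{Local equivalence, and the main obstacle.} For hom-categories, I will argue that each generating morphism $((k,\widehat k),\varphi)\colon((c,x),f)\to((c',x'),f')$ is isomorphic, after conjugating by these equivalences, to $s_d$ of its image $(F\varphi)_{x'}\,Ff(\widehat k)$. The strategy is a factorization
\[
((k,\widehat k),\varphi)=((k,\id),\id)\circ((\id,\widehat k),\id)\circ(\text{a }\varphi\text{-type morphism}),
\]
compared via generating 2-cells of the first type. Concretely, an explicit invertible 2-cell $(\id,\ldots)$ in $\int_\cC^+F$ relates $(f'$-pullback of $(k,\widehat k))$ to $(\id_d,(F\varphi)_{x'}Ff(\widehat k))$ followed by $(f',\id)$. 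The formal inverses $((k,\id),\id)^{-1}$ are handled by the triangle identities.

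This gives fullness up to isomorphism. Full faithfulness on 2-cells is where I expect the real difficulty. The key tool is that 2-cells in $L(\ldots)(d)$ between first-type morphisms are 2-cells $(\sigma,\widehat\sigma)$ in $\int_\cC^+F$ satisfying $(\sigma\cdot f).\varphi=\psi$. Transporting along the equivalences above reduces these to 2-cells between morphisms of the form $(\id_d,\widehat u)$ with $\sigma=\id$, which are exactly 2-cells $\widehat u\Rightarrow\widehat v$ in $Fd$. Here one must verify that the conjugation identifies the action of $(\epsilon_F)_d$ with this bijection, and that the formal unit and counit 2-cells introduce no extra 2-cells, since they map to identities.

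If this direct bookkeeping becomes unwieldy, I would instead use 2-out-of-3. Namely, I would show $s_d$ is a biequivalence by the same essential-surjectivity argument together with a local analysis only on the image of $s_d$. Then $(\epsilon_F)_d s_d=\id$ gives the result.
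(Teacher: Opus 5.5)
Your reduction is sound. $s_d$ is a strict 2-functor with $(\epsilon_F)_d\,s_d=\id_{Fd}$, so by 2-out-of-3 it suffices that $s_d$ is a biequivalence. Essential surjectivity via $((f,\id),\id)$ is correct.

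Essential surjectivity on hom-categories also works, by pasting comparison cells over generators. One correction: for a first-type generator $((k,\widehat k),\varphi)$ the comparison cell is the split-marked 2-cell $(\varphi,\id)$ of $\int_\cC^+F$, inverted through a $\beta^{-1}$-generator. The comma condition forces its $\cC$-component to be $\varphi$, so it cannot be of the form $(\id,\dots)$.

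\textbf{The gap.} Fullness on 2-cells is only sketched, and the sketch rests on a false claim. The 2-cells of $L(\int_\cC^+F,\mathrm{spl})(d)$ are 2-cells of the oplax comma over the \emph{localization} $\cP[E_\cP^{-1}]$. Between two first-type morphisms $m,n$ there are composites $(\eta_u^{-1}\cdot n).(u^{-1}\cdot\Xi).(\eta_u\cdot m)$, where $\Xi\colon (u,\id)m\Rightarrow(u,\id)n$ is a first-type 2-cell that need not be a whiskering by $u$.

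For instance, take $F$ constant at $[0]$. Let $\cC$ be free on $g,g_1,g_2\colon d\to c$, $k\colon c\to c'$, $\varphi_i\colon g\Rightarrow g_i$ and $\alpha\colon kg_1\Rightarrow kg_2$, subject to $\alpha.(k\cdot\varphi_1)=k\cdot\varphi_2$. This gives a 2-cell between the first-type morphisms $((g_i,\id),\varphi_i)\colon((d,*),\id_d)\to((c,*),g)$, although $\cC$ has no 2-cell $g_1\Rightarrow g_2$. So 2-cells between first-type morphisms are not just 2-cells of $\int_\cC^+F$ satisfying the comma condition.

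Likewise, the fact that $(\epsilon_F)_d$ sends $\eta_u,\epsilon_u$ to identities says nothing about whether they generate 2-cells outside the image of $s_d$. To close this by hand you would need a pseudonatural transformation $s_d(\epsilon_F)_d\Rightarrow\id$ that:
\begin{itemize}
\item is trivial on the image of $s_d$;
\item is well defined on arbitrary composites of generators;
\item is compatible with every generating 2-cell, including $\eta_u^{\pm1}$ and $\epsilon_u^{\pm1}$ via the triangle identities.
\end{itemize}
That amounts to redoing the analysis of \cref{Lem:unitiswequiv} for the non-fibrant split marking. Your fallback via 2-out-of-3 only restates this problem.

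\textbf{The missing idea.} You already have this result for free. Under $(\int_\cC^+F)_d\cong Fd$, the composite $(\mu_F)_d\circ s_d$ is exactly the fiber over $d$ of the unit $\eta_{\pi_F}$ at the fibrant object $\pi_F$. The proof of \cref{Lem:unitiswequiv} shows precisely that this fiber map is a biequivalence. Moreover, $(\mu_F)_d$ is a biequivalence by \cref{lem.split.localization}. Two applications of 2-out-of-3 then show that $s_d$, and hence $(\epsilon_F)_d$, is a biequivalence.

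This is the paper's argument in fiberwise form. Your $s_d$ is the fiber of $\overline{\eta_\pi}$. The paper uses $\int_\cC^+\epsilon_F\circ\overline{\eta_\pi}=\id$ and $\int_\cC^+\mu_F\circ\overline{\eta_\pi}=\eta_\pi$, together with the fact that $\int_\cC^+$ reflects weak equivalences (\cref{GCpreservesandreflectsweakequivs}).
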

\begin{proof}
    Let $F\colon\cC^\coop\to\ttwocat$ be a 2-functor; we must show that the 2-natural transformation $\epsilon_F$ is a weak equivalence. Since $\int_\cC^+$ reflects weak equivalences, by \cref{GCpreservesandreflectsweakequivs}, it suffices to show that $\int_\cC^+\epsilon_F$ is a weak equivalence. For this, consider the following diagram:
    \[\begin{tikzcd}[cramped,sep=scriptsize]
    	{\int_{\cC}^{+}F} && {} && {\int_{\cC}^{+}F} \\
    	&& {\int_{\cC}^{+}L\left(\int_{\cC}^{+}F,\mathrm{spl}\right)} \\
    	\\
    	&& {\int_{\cC}^{+}L\int_{\cC}^{+}F}
    	\arrow[equals, from=1-1, to=1-5]
    	\arrow["{\overline{\eta_{\pi}}}"', from=1-1, to=2-3]
    	\arrow["{\eta_{\pi}}"',"\mathrel{\rotatebox[origin=c]{-40}{$\simeq$}}", curve={height=15pt}, from=1-1, to=4-3]
    	\arrow["\int_{\cC}^{+}\epsilon_F"', from=2-3, to=1-5]
    	\arrow["{\int_{\cC}^{+}\mu_F}","\mathrel{\rotatebox[origin=c]{90}{$\simeq$}}"', from=2-3, to=4-3]
    \end{tikzcd}\]
    The map $\overline{\eta_\pi}\colon\int_\cC^+ F\to \int_\cC^+L(\int_\cC^+F,\mathrm{spl})$ is defined identically to $\eta_\pi$, which we can do as the image of the map $\eta_\pi\colon\int_\cC^+ F\to \int_\cC^+L\int_\cC^+F$ of \cref{constr.unit} lies in $\int_\cC^+L(\int_\cC^+F,\mathrm{spl})$. Moreover, one can see that the left triangle commutes, as the map $\int_\cC^+\mu_F$ acts as an inclusion on the data which is already present in $\int_\cC^+ F$. The top triangle commutes, as the composite acts by 
\[
		\begin{aligned}
		(c,x) &\mapsto (c, ((c,x),\id_{c}))\mapsto (c,F\id_c(x))=(c,x) && \text{on objects,}\\
		(f,\widehat{f}) &\mapsto (f,((f,\widehat{f}),\id))\mapsto (f,F\id\cdot F\id(\widehat{f}))=(f,\widehat{f}) && \text{on morphisms,}\\
		(\alpha,\widehat{\alpha}) &\mapsto (\alpha, (\alpha,\widehat{\alpha}))\mapsto (\alpha,\widehat{\alpha}) && \text{on $2$-cells.}
		\end{aligned}
	\] 
    
    Now, the map $\int_\cC^+ \mu_F$ is a weak equivalence since $\mu_F$ is a weak equivalence, by \cref{split.counit.weak.equiv}, and $\int_\cC^+$ preserves weak equivalences, by \cref{GCpreservesandreflectsweakequivs}. Furthermore, the map $\eta_\pi$ is also a weak equivalence as proved in \cref{Lem:unitiswequiv}. Hence, $\overline{\eta_\pi}$ is a weak equivalence, by 2-out-of-3, and thus, by the same reason, so is $\int_\cC^+ \epsilon_F$. 
\end{proof}

\subsection{The equivalence on homotopy categories}

We now  combine our results to show that $\int_\cC^+$ gives an equivalence on homotopy categories. As established in \cref{GCpreservesandreflectsweakequivs}, the functor $\int_\cC^+$ preserves all weak equivalences, and so it is its own derived functor. In order to define a derived functor for $L$, we start by studying its behaviour with respect to weak equivalences.

\begin{prop}\label{L.pres.weak.equiv}
    The functor $L\colon \mslicetwocat\to 2\Fun[\cC^\coop,\ttwocat]$ preserves weak equivalences between fibrant objects.
\end{prop}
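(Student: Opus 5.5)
We use a two-out-of-three argument built from the unit. Let $H\colon P\to Q$ be a weak equivalence between fibrant objects, so $UH$ is a biequivalence. We must show that $L(H)\colon L(P)\Rightarrow L(Q)$ is a levelwise biequivalence. By \cref{GCpreservesandreflectsweakequivs}, $\int_\cC^+$ reflects weak equivalences, so it suffices to show that $\int_\cC^+L(H)$ is a weak equivalence in $\mslicetwocat$.

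By \cref{eta.is.natural} we have the commutative square
\[\begin{tikzcd}
P \rar["\eta_P"] \dar["H"'] & \int_\cC^+L(P)\dar["\int_\cC^+L(H)"]\\
Q\rar["\eta_Q"'] & \int_\cC^+L(Q)
\end{tikzcd}\]
in $\mslicetwocat$. Since $P$ and $Q$ are fibrant, \cref{Lem:unitiswequiv} shows that the horizontal maps are weak equivalences. By hypothesis $H$ is a weak equivalence. Two-out-of-three, which holds in the model structure of \cref{Thm:the ms}, then gives that $\int_\cC^+L(H)\circ\eta_P=\eta_Q\circ H$ is a weak equivalence, and hence so is $\int_\cC^+L(H)$.

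All four objects in the square are fibrant: $P$ and $Q$ by hypothesis, and the two Grothendieck constructions by \cref{cor:projectionisfibrant}. So these weak equivalences are just underlying biequivalences, and the argument can equally be phrased as two-out-of-three for biequivalences in $\twocat$. We conclude by reflection.

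The only point needing care is that \cref{Lem:unitiswequiv} requires fibrancy. This is exactly why the statement is restricted to weak equivalences between fibrant objects.
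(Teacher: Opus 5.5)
Your proposal is correct and follows essentially the same route as the paper's proof. Both use naturality of $\eta$ from \cref{eta.is.natural} and \cref{Lem:unitiswequiv} at the fibrant objects $P$ and $Q$, then two-out-of-three to make $\int_\cC^+L(H)$ a weak equivalence, and finally the reflection part of \cref{GCpreservesandreflectsweakequivs}.
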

\begin{proof}
    Let $H\colon P\to Q$ be a weak equivalence between fibrant objects in $\mslicetwocat$, and consider the diagram
    \[\begin{tikzcd}[cramped,sep=scriptsize]
    	\cP && \cQ \\
    	\\
    	{\int_{\cC}^{+}L(P)} && {\int_{\cC}^{+}L(Q)}
    	\arrow["H","\simeq"', from=1-1, to=1-3]
    	\arrow["{\eta_P}"',"\mathrel{\rotatebox[origin=c]{-90}{$\simeq$}}", from=1-1, to=3-1]
    	\arrow["{\eta_Q}","\mathrel{\rotatebox[origin=c]{90}{$\simeq$}}"', from=1-3, to=3-3]
    	\arrow["{\int_{\cC}^{+}L(H)}"', from=3-1, to=3-3]
    \end{tikzcd}\]
     which commutes, by naturality of $\eta$; see \cref{eta.is.natural}. Since $P$ and $Q$ are fibrant, we know, by \cref{Lem:unitiswequiv}, that the vertical maps are weak equivalences. Hence, by 2-out-of-3, we see that $\int_\cC^+ L(H)$ is a weak equivalence, and as the functor $\int_\cC^+$ reflects weak equivalences, by \cref{GCpreservesandreflectsweakequivs}, we conclude that $L(H)$ must be a weak equivalence, as desired.
\end{proof}

    We can finally prove the main result of this section.

    \begin{thm}\label{mainthm}
         The marked Grothendieck construction 
   ${\textstyle \int}_{\cC}^+ \colon 2\Fun[\cC^{\coop},\ttwocat] \to \mslicetwocat$
   induces an equivalence between homotopy categories.
    \end{thm}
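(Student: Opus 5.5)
We will show that $\int_\cC^+$ and a derived version of $L$ are mutually inverse on homotopy categories. By \cref{GCpreservesandreflectsweakequivs}, $\int_\cC^+$ preserves all weak equivalences, so it descends directly to a functor $\Ho 2\Fun[\cC^{\coop},\ttwocat]\to\Ho\mslicetwocat$. For $L$, we first fix a fibrant replacement functor $R$ on $\mslicetwocat$, which exists since the model structure of \cref{Thm:the ms} is combinatorial. We take it to be a functorial factorization $P\xrightarrow{r_P} RP\to\Csharp$ with $r_P$ a trivial cofibration. By \cref{L.pres.weak.equiv}, the composite $LR$ sends weak equivalences to weak equivalences. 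Indeed, if $H$ is a weak equivalence, then $RH$ is a weak equivalence between fibrant objects by 2-out-of-3. Hence $LR$ descends to a functor $\mathbb{L}\colon\Ho\mslicetwocat\to\Ho2\Fun[\cC^{\coop},\ttwocat]$.

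Next we produce the two natural isomorphisms. For $\id\cong\int_\cC^+\mathbb{L}$, we use the zig-zag of natural transformations
\[P\xrightarrow{r_P} RP\xrightarrow{\eta_{RP}}{\textstyle\int}_\cC^+LRP.\]
Here $r_P$ is a weak equivalence by construction, and $\eta_{RP}$ is a weak equivalence by \cref{Lem:unitiswequiv}, since $RP$ is fibrant. Naturality of the whole zig-zag follows from functoriality of $R$ and \cref{eta.is.natural}, so it gives a natural isomorphism in $\Ho\mslicetwocat$.

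For $\mathbb{L}\int_\cC^+\cong\id$, we use the zig-zag
\[LR{\textstyle\int}_\cC^+F\xleftarrow{L(r)} L{\textstyle\int}_\cC^+F\xleftarrow{\mu_F}L({\textstyle\int}_\cC^+F,\mathrm{spl})\xrightarrow{\epsilon_F}F.\]
The map $\mu_F$ is a pointwise weak equivalence by \cref{split.counit.weak.equiv}, and $\epsilon_F$ is one by \cref{counit.is.weak.equiv}. The object $\int_\cC^+F$ is already fibrant by \cref{cor:projectionisfibrant}, so $r\colon\int_\cC^+F\to R\int_\cC^+F$ is a weak equivalence between fibrant objects. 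Then \cref{L.pres.weak.equiv} shows that $L(r)$ is a weak equivalence. All three transformations are natural in $F$, so passing to homotopy categories yields a natural isomorphism $\mathbb{L}\int_\cC^+\cong\id$.

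The main subtlety is the bookkeeping of fibrancy. $L$ is only known to be homotopical on fibrant objects, so the fibrant replacement must be inserted consistently, and naturality of each leg must be checked at the point-set level before localizing. Once that is done, all the needed weak-equivalence statements have already been established. Therefore $\int_\cC^+$ and $\mathbb{L}$ are inverse equivalences of homotopy categories.
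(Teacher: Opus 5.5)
Your proposal is correct and follows essentially the same argument as the paper. You define $\mathbb{L}$ as $L$ applied to a fibrant replacement, get $\id\cong\int_\cC^+\mathbb{L}$ from $\eta$, and get $\mathbb{L}\int_\cC^+\cong\id$ from the zig-zag through $L(\int_\cC^+-,\mathrm{spl})$. Your only departure is to use a functorial fibrant replacement $R$ and the extra leg $L(r)$, which makes precise two points the paper leaves implicit: its use of ``any fibrant replacement'' and its identification $\mathbb{L}\int_\cC^+F=L\int_\cC^+F$ for the already-fibrant $\int_\cC^+F$.
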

    \begin{proof}
  Define the inverse functor $\mathbb{L}\colon \Ho\mslicetwocat\to \Ho 2\Fun[\cC^{\coop},\ttwocat]$ by $\mathbb{L}(P)=L(P^\mathrm{fib})$, where $P^\mathrm{fib}$ is any fibrant replacement of $P$ in $\mslicetwocat$. The fact that $L$ preserves weak equivalences between fibrant objects as established in \cref{L.pres.weak.equiv} ensures that $\mathbb{L}$ is a well-defined functor on homotopy categories.

  The natural transformation $\eta\colon \id\Rightarrow \int_\cC^+ L$ gives a natural transformation $\id\Rightarrow{\textstyle \int}_\cC^+\mathbb{L}$, whose components are given by \[P\xrightarrow{\sim} P^\mathrm{fib}\xrightarrow{\eta_{P^\mathrm{fib}}} {\textstyle \int}_\cC^+ L(P^\mathrm{fib})={\textstyle \int}_\cC^+\mathbb{L}(P).\] Since each $\eta_{P^\mathrm{fib}}$ is a weak equivalence, by \cref{Lem:unitiswequiv}, we see that this is a natural isomorphism between functors on homotopy categories. Similarly, the zig-zag of natural transformations \[\id \Leftarrow L({\textstyle \int}_\cC^+ -,\mathrm{spl})\Rightarrow L{\textstyle \int}_\cC^+\] which are pointwise weak equivalences, by \cref{split.counit.weak.equiv,counit.is.weak.equiv}, determines a natural isomorphism $\id\Rightarrow L{\textstyle \int}_\cC^+=\mathbb{L}{\textstyle \int}_\cC^+$ between functors on homotopy categories. 

  This shows that there are natural isomorphisms $\id\cong\int_\cC^+\mathbb{L}$ and $\id\cong \mathbb{L}\int_\cC^+$, which proves that the functors on homotopy categories $\int_\cC^+$ and $\mathbb{L}$ are inverse equivalences.
    \end{proof}

In fact, the marked Grothendieck construction establishes not only an equivalence of homotopy categories, but an equivalence on the underlying $(\infty, 1)$-categories of these model categories.
This result can be interpreted as recovering an equivalence of weak $(3, 1)$-categories akin to that underlying Buckley's triequivalence of tricategories.

\begin{cor}
\label{eq.infty}
The functor 
   $${\textstyle \int}_{\cC}^+ \colon 2\Fun[\cC^{\coop},\ttwocat] \to  \mslicetwocat$$
   induces an equivalence on underlying $(\infty,1)$-categories.
    % The composite functor     
    %  \[         
    %  \mathbb{L}\colon\left(\mslicetwocat\right)_{\textrm{fibrant}}
    %         \hookrightarrow
    %     \mslicetwocat
    %         \xrightarrow{L}
    %     2\Fun[\cC^{\coop},\ttwocat]
    % \]
    % is an equivalence on underlying $(\infty,1)$-categories (of the respective relative categories).
\end{cor}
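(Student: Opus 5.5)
The plan is to treat both sides as relative categories, namely $2\Fun[\cC^{\coop},\ttwocat]$ with the levelwise biequivalences and $\mslicetwocat$ with the marked weak equivalences. The underlying $(\infty,1)$-categories are then the Dwyer--Kan localizations $N(-)[W^{-1}]$. By \cref{GCpreservesandreflectsweakequivs}, $\int_\cC^+$ is a relative functor, so it induces a functor of $\infty$-categories $\mathbf{\int}\colon 2\Fun[\cC^{\coop},\ttwocat][W^{-1}]\to\mslicetwocat[W^{-1}]$. It suffices to produce a functor in the other direction together with natural equivalences to the identity on both composites. The key observation is that a natural transformation between relative functors that is pointwise a weak equivalence induces a natural equivalence on localizations.

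First I would handle the failure of $L$ to be relative on all objects by restricting to fibrant objects. Let $\mathcal F\subseteq\mslicetwocat$ denote the full relative subcategory of fibrant objects. Since $\mslicetwocat$ is a model category by \cref{Thm:the ms}, the inclusion $\mathcal F\hookrightarrow\mslicetwocat$ induces an equivalence on $\infty$-localizations; this is a standard fact, for instance via a functorial fibrant replacement $R$ given by the small object argument, which is relative and comes with a natural weak equivalence $\id\Rightarrow R$. Next, \cref{cor:projectionisfibrant} shows that $\int_\cC^+$ lands in $\mathcal F$, and \cref{L.pres.weak.equiv} shows that $L|_{\mathcal F}$ is relative. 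It therefore suffices to prove that $\int_\cC^+\colon 2\Fun[\cC^{\coop},\ttwocat]\to\mathcal F$ and $L|_{\mathcal F}$ induce inverse equivalences of localizations.

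On $\mathcal F$, the unit $\eta\colon\id\Rightarrow\int_\cC^+L$ is natural by \cref{eta.is.natural}, and it is pointwise a weak equivalence on fibrant objects by \cref{Lem:unitiswequiv}. Hence it induces a natural equivalence $\id\simeq\mathbf{\int}\circ\mathbf L$ on $\mathcal F[W^{-1}]$. For the other composite I would use the zig-zag $\id\xLeftarrow{\epsilon}L(\int_\cC^+-,\mathrm{spl})\xRightarrow{\mu}L\int_\cC^+$. Both $\epsilon$ and $\mu$ are natural transformations, and each is pointwise a weak equivalence by \cref{counit.is.weak.equiv,split.counit.weak.equiv}.

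The main obstacle is that the middle functor $L(\int_\cC^+-,\mathrm{spl})$ must itself be relative for the zig-zag to induce natural transformations of $\infty$-localized functors. The objects $(\int_\cC^+F,\mathrm{spl})$ are not fibrant, so \cref{L.pres.weak.equiv} does not apply directly. I would deduce relativity from 2-out-of-3. Given a levelwise weak equivalence $\theta$, naturality of $\epsilon$ gives a square whose horizontal sides are $\epsilon_F$ and $\epsilon_G$, both weak equivalences, and whose right side is $\theta$, also a weak equivalence. Hence $L(\int_\cC^+\theta,\mathrm{spl})$ is a weak equivalence. With this in hand, both transformations in the zig-zag become natural equivalences after localization, giving $\mathbf L\circ\mathbf{\int}\simeq\id$. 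This establishes the equivalence of underlying $(\infty,1)$-categories.
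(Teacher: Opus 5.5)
Your proposal is correct and follows the paper's proof essentially step by step: you restrict $L$ to fibrant objects, use the unit $\eta$ for one composite and the zig-zag $\id \Leftarrow L(\int_\cC^+-,\mathrm{spl}) \Rightarrow L\int_\cC^+$ for the other, and get relativity of the middle functor by 2-out-of-3 from the naturality square of $\epsilon$. The only difference is cosmetic: you justify that a model category and its full subcategory of fibrant objects have equivalent localizations via a functorial fibrant replacement, whereas the paper cites Dwyer--Kan for this.
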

\begin{proof}
First note that the image of $\int_\cC^+$ lies in the subcategory $\left(\mslicetwocat\right)_{\textrm{fib}}\subset \mslicetwocat$ consisting of the fibrant objects, by \cref{cor:projectionisfibrant}. We claim that the restriction of the functor $L$ to this subcategory, which we denote by $L_\textrm{fib}\colon \left(\mslicetwocat\right)_{\textrm{fib}}\to 2\Fun[\cC^{\coop},\ttwocat]$, gives an inverse equivalence on underlying $(\infty,1)$-categories. This would conclude our proof, as the underlying $(\infty, 1)$-categories of any model structure $\mathcal{M}$ and of its full relative subcategory of fibrant objects $\mathcal{M}_\textrm{fib}$ are equivalent; see for instance \cite[Proposition 5.2]{dwyerkan}. 

    By \cref{GCpreservesandreflectsweakequivs,L.pres.weak.equiv} we have that the functors $\int_\cC^+$ and $L_\textrm{fib}$ preserve weak equivalences and thus give rise to functors between the underlying $(\infty,1)$-categories $(\int_\cC^+)_\infty$ and $(L_\textrm{fib})_\infty$.  Moreover, the natural transformation $\eta\colon \id\Rightarrow \int_\cC^+ L_\textrm{fib}$ is a pointwise weak equivalence  by \cref{Lem:unitiswequiv}, and so it induces a natural isomorphism $\id\cong (\int_\cC^+)_\infty (L_\textrm{fib})_\infty$. Similarly, the zig-zag of natural transformations $\id \Leftarrow L({\textstyle \int}_\cC^+ -,\mathrm{spl})\Rightarrow L{\textstyle \int}_\cC^+=L_\textrm{fib}{\textstyle \int}_\cC^+$ consists of pointwise weak equivalences by \cref{split.counit.weak.equiv,counit.is.weak.equiv}. %These transformations restrict to transformations on $\mathbb L$, as $\int_\cC^+$ lands on fibrant objects (\cref{cor:projectionisfibrant}). {\color{red}By the lemma,} the functor $L({\textstyle \int}_\cC^+ -,\mathrm{spl})$ is also relative when restricted to fibrant objects, and hence, by \cite[Corollary 2.9]{gurski17}, the composite functor in the equation above is an equivalence on the underlying $(\infty,1)$-categories (there modeled as the complete Segal spaces of their respective relative categories).
    To show that this zig-zag induces a natural isomorphism $\id\cong (L_\textrm{fib})_\infty (\int_\cC^+)_\infty$, it suffices to prove that the functor $L({\textstyle \int}_\cC^+ -,\mathrm{spl})$ also preserves all weak equivalences, and hence gives a functor on underlying $(\infty,1)$-categories. For this, let $\theta\colon F\Rightarrow G$ be a weak equivalence; we can then consider the naturality square
     \[\begin{tikzcd}
        L(\int_\cC^+ F,\mathrm{spl})\rar["{\epsilon_F}", Rightarrow] \dar["{L\int_\cC^+\theta}"', Rightarrow] & F\dar["\theta", Rightarrow]\\
        L(\int_\cC^+ G,\mathrm{spl})\rar["{\epsilon_G}", Rightarrow] & G
    \end{tikzcd}\] in which $\theta,\epsilon_F$ and $\epsilon_G$ are weak equivalences, and so by 2-out-of-3 we have that $L\int_\cC^+\theta$ is a weak equivalence as desired.
\end{proof}

\printbibliography

\end{document}

%% file: NEWPREAMBLE.tex
\usepackage{sidecap}
\usepackage{quiver}

\usepackage{xcolor}
\definecolor{burgundy}{rgb}{0.5, 0.0, 0.13}

\usepackage{verbatim}
\usepackage{lmodern}
\usepackage{amsmath}
\usepackage{amssymb} 
\usepackage{amsthm} 
\usepackage{thmtools}
\usepackage{enumitem}
\usepackage{graphicx}
\usepackage[normalem]{ulem}
\usepackage[hidelinks]{hyperref}%hidelinks avoid the red boxes around the hyperlinks
\hypersetup{
  colorlinks = true,
  linktoc = all,
  linkcolor = burgundy, 
  urlcolor = Gray,
  citecolor = burgundy,
  pdfencoding = unicode,
  hypertexnames = false,
}

\usepackage{xurl}                   % let URLs break at any character
\usepackage[ 
style           	= alphabetic, 
sorting 			= nyt,
url					= false,
citetracker     	= true, 
natbib          	= true, 
hyperref      		= true,
backend      		= biber,
maxbibnames         = 99,
maxcitenames        = 2,
giveninits          = true,
uniquename    		= init,
parentracker  	  	= true,
backref        	 	= true,
backrefstyle    	=three]{biblatex}
\AtBeginBibliography{\Urlmuskip=0mu\relax} % no stretched gaps inside broken URLs
\renewcommand*{\bibnamedash}{\underline{\hspace{3em}}\kern 0.1em} 
\DeclareFieldFormat{postnote}{#1}
\DeclareFieldFormat[article,inbook,incollection,inproceedings,patent,thesis,unpublished]{title}{\textit{#1\isdot}} 
\DeclareFieldFormat{journaltitle}{#1}  % Makes Journal title in normal text
\DeclareFieldFormat[book,inbook,incollection,inproceedings]{series}{#1} 

\renewbibmacro*{in:}{\setunit{\addcomma\space}}

\usepackage{mathtools}
\usepackage{indentfirst}
\usepackage{tabularx}
\usepackage{bbm}
\usepackage[capitalise]{cleveref}
\usepackage{stmaryrd}
\usepackage{tabularx}
\usepackage{mathrsfs}

\usepackage[left=1.1in,top=2cm,right=1.1in,bottom=2cm]{geometry}

\makeatletter
\newcommand{\mylabel}[2]{#2\def\@currentlabel{#2}\label{#1}}
\makeatother

\usepackage{array}
\newcolumntype{C}[1]{>{\centering\let\newline\\\arraybackslash\hspace{0pt}}m{#1}}

\usepackage{tikz} %pictures
\usepackage{tikz-cd} %diagrams
\newcommand{\btk}{\begin{tikzcd}}
\newcommand{\etk}{\end{tikzcd}}
\usetikzlibrary{arrows,calc,decorations.markings,bending}
\usepackage{xparse}

\usepackage{blindtext}

\makeatletter
\newcommand{\@bbify}[1]{
  \ifcsname b#1\endcsname
  \message{WARNING: Overwriting b#1 with blackboard letter!}
  \fi
  \expandafter\edef\csname b#1\endcsname
  {\noexpand\ensuremath{\noexpand\mathbb #1}\noexpand\xspace}}
\newcommand{\@calify}[1]{
  \ifcsname c#1\endcsname
  \message{WARNING: Overwriting c#1 with calligraphic letter!}
  \fi 
  \expandafter\edef\csname c#1\endcsname
  {\noexpand\ensuremath{\noexpand\mathcal #1}\noexpand\xspace}}
\newcommand{\@bfify}[1]{
  \ifcsname bf#1\endcsname
  \message{WARNING: Overwriting c#1 with bold letter!}
  \fi
  \expandafter\edef\csname bf#1\endcsname
  {\noexpand\ensuremath{\noexpand\mathbf #1}\noexpand\xspace}}
\newcounter{@letter}\stepcounter{@letter}
\loop\@bbify{\Alph{@letter}}\@calify{\Alph{@letter}}\@bfify{\Alph{@letter}}
\ifnum\the@letter<26\stepcounter{@letter}\repeat
\makeatother

\tikzstyle{d}=[double distance=.3ex]
\tikzstyle{w}=[preaction={draw=white,-,line width=5pt}]

\newcounter{diagram}
\renewcommand{\thediagram}{\thetheorem}

\tikzset{%
node distance=1.5cm, la/.style={scale=0.8}, lasmall/.style={scale=0.75}, over/.style={auto=false,fill=white,inner sep=1.5pt, minimum size=0, outer sep=0},
    symbol/.style={%
        draw=none,
        every to/.append style={%
            edge node={node [sloped, allow upside down, auto=false]{$#1$}}},
            
    }, pro/.style={postaction={decorate,decoration={
        markings,
        mark=at position .5 with {\node at (0,0) {$\bullet$};}
      }},
      inner sep=.9ex,
      },
      prosmall/.style={postaction={decorate,decoration={
        markings,
        mark=at position .5 with {\node at (0,0) {$\scriptstyle \bullet$};}
      }},
      inner sep=.9ex,
      },
  n/.style={double equal sign distance, -implies}, t/.style={double distance=2.5pt, -implies, postaction={draw,-}},
}

\newcommand{\Set}{\mathrm{Set}}

\newcommand{\an}{\ensuremath{\mathrm{An}}}

\newcommand{\nfib}{\ensuremath{\mathrm{NFib}}}

\newcommand{\Path}{\mathrm{Path}}
\newcommand{\id}{\mathrm{id}}

\newcommand{\op}{\mathrm{op}}

\newcommand{\cat}{\mathrm{Cat}}

\newcommand{\twocat}{2\mathrm{Cat}}
\newcommand{\ttwocat}{\mathbf{2}\cat}

\newlist{rome}{enumerate}{7}
\setlist[rome]{label=(\roman*)}

\newtheorem{thm}{Theorem}[section]
\newtheorem{cor}[thm]{Corollary}
\newtheorem{prop}[thm]{Proposition}
\newtheorem{lem}[thm]{Lemma}

\declaretheorem[name=Theorem,numbered=yes]{theoremA}

\numberwithin{equation}{thm}
\theoremstyle{definition}
\newtheorem{defn}[thm]{Definition}

\newtheorem{ex}[thm]{Example}
\newtheorem{notation}[thm]{Notation}

\newtheorem{constr}[thm]{Construction}

\theoremstyle{remark}
\newtheorem{rmk}[thm]{Remark}

\crefname{section}{Section}{Sections}
\crefname{thm}{Theorem}{Theorems}
\crefname{cor}{Corollary}{Corollaries}
\crefname{prop}{Proposition}{Propositions}
\crefname{lem}{Lemma}{Lemmata}
\crefname{claim}{Claim}{Claims}
\crefname{defn}{Definition}{Definitions}
\crefname{terminology}{Terminology}{Terminologies}
\crefname{ex}{Example}{Examples}
\crefname{notation}{Notation}{Notations}
\crefname{descr}{Description}{Descriptions}
\crefname{constr}{Construction}{Constructions}
\crefname{rmk}{Remark}{Remarks}
\crefname{obs}{Observation}{Observations}
\crefname{paragr}{Paragraph}{Paragraphs}

\usepackage[
  textwidth = 2.5cm,
  textsize = small, 
  colorinlistoftodos
]{todonotes}

\DeclareMathOperator{\Ho}{Ho}

\newcommand{\bicat}{\mathrm{Bicat}}

\newcommand{\Cat}{\mathrm{Cat}}

\newcommand{\Csharp}{\cC^{\sharp}}
\newcommand{\mslicetwocat}{\twocat^{+}_{/ \Csharp}}

\newcommand{\markedscat}[1]{\left({#1}, E^1_{#1},E^2_{#1}\right)}

\DeclareMathOperator{\Fun}{Fun}

\DeclareMathOperator{\interval}{\mathbb{I}}

\newcommand{\coop}{\mathrm{coop}}

\newcommand{\set}[1]{\left\lbrace #1 \right\rbrace }

\newcommand{\stlabel}[1]{{\upshape{(\ref*{#1}.\arabic*)}}}
\newcommand{\enumref}[2]{(\hyperref[#1.#2]{\ref*{#1}.\ref*{#1.#2}})}
\newcommand{\enumrefalt}[2]{\hyperref[#1.#2]{\ref*{#1}.\ref*{#1.#2}}}

\newcommand{\walkingadjoint}{E_{\mathrm{adj}}}

\NewDocumentCommand{\celli}{ O{} O{n} O{\cellslide} O{\celllength} m m m }{
  \coordinate (mid) at ($({#5})!{#3}!({#6})$);
  \coordinate (start) at ($(mid)!{#4}!({#5})$);
  \coordinate (end) at ($(mid)!{#4}!({#6})$);
  \draw[#2] (start) to node(label)[inner sep=4pt,outer sep=0,minimum size=0,#1]{{#7}} (end);
   \coordinate (far) at ($(end)+(mid)-(label)$);
   \node[] at ($(end)!7pt!(far)$) {$\scriptscriptstyle\cong$} ;
}

\def\cellslide{0.5}
\def\celllength{.2cm}

\NewDocumentCommand{\cell}{ O{} O{n} O{\cellslide} O{\celllength} m m m }{
  \coordinate (mid) at ($({#5})!{#3}!({#6})$);
  \coordinate (start) at ($(mid)!{#4}!({#5})$);
  \coordinate (end) at ($(mid)!{#4}!({#6})$);
  \draw[#2] (start) to node
  [inner sep=6pt,outer sep=0,minimum size=0,#1]{{#7}} (end);
}

\DeclareMathOperator{\Loc}{Loc}

%% file: references.bib
@string{C = {Carbon}}

@string{F = {Fuel}}

@string{I = {Ind. Eng. Chem. Res.}}

@phdthesis{maillard:tel-03475256,
  TITLE = {{Finite group representations through 2-sheaves}},
  AUTHOR = {Maillard, Jun},
  URL = {https://theses.hal.science/tel-03475256},
  NUMBER = {2021LILUI038},
  SCHOOL = {{Universit{\'e} de Lille}},
  YEAR = {2021},
  MONTH = Jun,
  TYPE = {Theses},
  HAL_ID = {tel-03475256},
  HAL_VERSION = {v1},
}

@incollection {vistoli2007notesgrothendiecktopologiesfibered,
    AUTHOR = {Vistoli, Angelo},
     TITLE = {Grothendieck topologies, fibered categories and descent
              theory},
 BOOKTITLE = {Fundamental algebraic geometry},
    SERIES = {Math. Surveys Monogr.},
    VOLUME = {123},
     PAGES = {1--104},
 PUBLISHER = {Amer. Math. Soc., Providence, RI},
      YEAR = {2005},
      ISBN = {0-8218-3541-6},
   MRCLASS = {14F20 (14A20)},
  MRNUMBER = {2223406},
}

@article {Lack2004,
	AUTHOR = {Lack, Stephen},
	TITLE = {A {Q}uillen model structure for bicategories},
	JOURNAL = {$K$-Theory},
	FJOURNAL = {$K$-Theory. An Interdisciplinary Journal for the Development,
	Application, and Influence of $K$-Theory in the Mathematical
	Sciences},
	VOLUME = {33},
	YEAR = {2004},
	NUMBER = {3},
	PAGES = {185--197},
	ISSN = {0920-3036,1573-0514},
	MRCLASS = {55U35 (18D05 18D15 18G55)},
	MRNUMBER = {2138540},
	MRREVIEWER = {J\v{i}r\'{\i}\ Rosick\'{y}},
	DOI = {10.1007/s10977-004-6757-9},
	URL = {https://doi.org/10.1007/s10977-004-6757-9},
}

@article {Lack2002,
	AUTHOR = {Lack, Stephen},
	TITLE = {A {Q}uillen model structure for 2-categories},
	JOURNAL = {$K$-Theory},
	FJOURNAL = {$K$-Theory. An Interdisciplinary Journal for the Development,
	Application, and Influence of $K$-Theory in the Mathematical
	Sciences},
	VOLUME = {26},
	YEAR = {2002},
	NUMBER = {2},
	PAGES = {171--205},
	ISSN = {0920-3036,1573-0514},
	MRCLASS = {55U35 (18D05 18G55)},
	MRNUMBER = {1931220},
	MRREVIEWER = {J.\ Daniel\ Christensen},
	DOI = {10.1023/A:1020305604826},
	URL = {https://doi.org/10.1023/A:1020305604826},
}

@article {Buckley2014,
	AUTHOR = {Buckley, Mitchell},
	TITLE = {Fibred 2-categories and bicategories},
	JOURNAL = {J. Pure Appl. Algebra},
	FJOURNAL = {Journal of Pure and Applied Algebra},
	VOLUME = {218},
	YEAR = {2014},
	NUMBER = {6},
	PAGES = {1034--1074},
	ISSN = {0022-4049,1873-1376},
	MRCLASS = {18D30 (18D05)},
	MRNUMBER = {3153612},
	MRREVIEWER = {R.\ H.\ Street},
	DOI = {10.1016/j.jpaa.2013.11.002},
	URL = {https://doi.org/10.1016/j.jpaa.2013.11.002},
}

@article {mosersarazola2023model,
    AUTHOR = {Moser, Lyne and Sarazola, Maru},
     TITLE = {A model structure for {G}rothendieck fibrations},
   JOURNAL = {J. Pure Appl. Algebra},
  FJOURNAL = {Journal of Pure and Applied Algebra},
    VOLUME = {228},
      YEAR = {2024},
    NUMBER = {10},
     PAGES = {Paper No. 107692, 29},
      ISSN = {0022-4049,1873-1376},
   MRCLASS = {18D30 (18A25 18N40 18N60)},
  MRNUMBER = {4741462},
MRREVIEWER = {Philippe\ Gaucher},
       DOI = {10.1016/j.jpaa.2024.107692},
       URL = {https://doi-org.proxybmath.univ-lille.fr/10.1016/j.jpaa.2024.107692},
}

@book {Niles2021,
	AUTHOR = {Johnson, Niles and Yau, Donald},
	TITLE = {2-dimensional categories},
	PUBLISHER = {Oxford University Press, Oxford},
	YEAR = {2021},
	PAGES = {xix+615},
	ISBN = {978-0-19-887138-5; 978-0-19-887137-8},
	MRCLASS = {18-02 (18N10 18N15 18N20)},
	MRNUMBER = {4261588},
	MRREVIEWER = {Robert\ Laugwitz},
	DOI = {10.1093/oso/9780198871378.001.0001},
	URL = {https://doi.org/10.1093/oso/9780198871378.001.0001},
}

@misc{guetta2023fibrantlyinduced,
	title={Fibrantly-transferred model structures}, 
	author={Léonard Guetta and Lyne Moser and Maru Sarazola and Paula Verdugo},
	year={2023},
	eprint={2301.07801},
	archivePrefix={arXiv},
	primaryClass={math.AT},
    note={To appear in Transactions of the AMS.}
}

@book {hovey,
    AUTHOR = {Hovey, Mark},
     TITLE = {Model categories},
    SERIES = {Mathematical Surveys and Monographs},
    VOLUME = {63},
 PUBLISHER = {American Mathematical Society, Providence, RI},
      YEAR = {1999},
     PAGES = {xii+209},
      ISBN = {0-8218-1359-5},
   MRCLASS = {55U35 (18D15 18G30 18G55)},
  MRNUMBER = {1650134},
MRREVIEWER = {Teimuraz\ Pirashvili},
}

@article{KellyLack_loc_presentable,
  title={{V}-Cat is locally presentable or locally bounded if {V} is so},
  author={G. M. Kelly and Stephen Lack},
  journal={Theory and Applications of Categories},
  year={2001},
  volume={8},
  pages={555-575},
  url={https://api.semanticscholar.org/CorpusID:18917884}
}

@article {Moser_2019,
    AUTHOR = {Moser, Lyne},
     TITLE = {Injective and projective model structures on enriched diagram
              categories},
   JOURNAL = {Homology Homotopy Appl.},
  FJOURNAL = {Homology, Homotopy and Applications},
    VOLUME = {21},
      YEAR = {2019},
    NUMBER = {2},
     PAGES = {279--300},
      ISSN = {1532-0073,1532-0081},
   MRCLASS = {18G55 (18D20 55U35)},
  MRNUMBER = {3923784},
MRREVIEWER = {Philippe\ Gaucher},
       DOI = {10.4310/HHA.2019.v21.n2.a15},
       URL = {https://doi-org.proxybmath.univ-lille.fr/10.4310/HHA.2019.v21.n2.a15},
}

@article {MR1624638,
    AUTHOR = {Borceux, F. and Quinteiro, C. and Rosick\'{y}, J.},
     TITLE = {A theory of enriched sketches},
   JOURNAL = {Theory Appl. Categ.},
  FJOURNAL = {Theory and Applications of Categories},
    VOLUME = {4},
      YEAR = {1998},
     PAGES = {No. 3, 47--72},
      ISSN = {1201-561X},
   MRCLASS = {18C10 (18D20)},
  MRNUMBER = {1624638},
MRREVIEWER = {R.\ H.\ Street},
}

@book {Cisinski-Deglise2019,
    AUTHOR = {Cisinski, Denis-Charles and D\'eglise, Fr\'ed\'eric},
     TITLE = {Triangulated categories of mixed motives},
    SERIES = {Springer Monographs in Mathematics},
 PUBLISHER = {Springer, Cham},
      YEAR = {2019},
     PAGES = {xlii+406},
      ISBN = {978-3-030-33241-9; 978-3-030-33242-6},
   MRCLASS = {14F42 (14C15 14C35 18G80 19D55)},
  MRNUMBER = {3971240},
MRREVIEWER = {Igor\ A.\ Rapinchuk},
       DOI = {10.1007/978-3-030-33242-6},
       URL = {https://doi-org.proxybmath.univ-lille.fr/10.1007/978-3-030-33242-6},
}

@article {pronk1996etendues,
    AUTHOR = {Pronk, Dorette A.},
     TITLE = {Etendues and stacks as bicategories of fractions},
   JOURNAL = {Compositio Math.},
  FJOURNAL = {Compositio Mathematica},
    VOLUME = {102},
      YEAR = {1996},
    NUMBER = {3},
     PAGES = {243--303},
      ISSN = {0010-437X,1570-5846},
   MRCLASS = {18E35 (18D05)},
  MRNUMBER = {1401424},
MRREVIEWER = {A.\ Verschoren},
       URL = {http://www.numdam.org.proxybmath.univ-lille.fr/item?id=CM_1996__102_3_243_0},
}

@article{muro2014dwyer,
  title={Dwyer--Kan homotopy theory of enriched categories},
  author={Muro, Fernando},
  journal={preprint arXiv:\allowbreak 1201.1575},
  year={2014}
}

@article{vazquez2021three,
  title={The Three F's for Bicategories I: Localization by Fractions is Exact},
  author={Bustillo Vazquez, P  and Pronk, D and Szyld, M},
  journal={arXiv preprint arXiv:\allowbreak 2112.00205},
  year={2021}
}

@article {garcia2CartesianFibrationsModel2021,
    AUTHOR = {Abell\'an Garc\'ia, Fernando and Stern, Walker H.},
     TITLE = {2-{C}artesian fibrations {I}: {A} model for {$\infty
              $}-bicategories fibred in {$\infty $}-bicategories},
   JOURNAL = {Appl. Categ. Structures},
  FJOURNAL = {Applied Categorical Structures. A Journal Devoted to
              Applications of Categorical Methods in Algebra, Analysis,
              Computer Science, Logic, Order and Topology},
    VOLUME = {30},
      YEAR = {2022},
    NUMBER = {6},
     PAGES = {1341--1392},
      ISSN = {0927-2852,1572-9095},
   MRCLASS = {18N45 (18N10 18N40)},
  MRNUMBER = {4519636},
       DOI = {10.1007/s10485-022-09693-x},
       URL = {https://doi-org.proxybmath.univ-lille.fr/10.1007/s10485-022-09693-x},
}

@article {abellan2CartesianFibrationsII2023,
    AUTHOR = {Abell\'an, Fernando and Stern, Walker H.},
     TITLE = {2-{C}artesian fibrations {II}: {A} {G}rothendieck construction
              for {$\infty$}-bicategories},
   JOURNAL = {J. Inst. Math. Jussieu},
  FJOURNAL = {Journal of the Institute of Mathematics of Jussieu. JIMJ.
              Journal de l'Institut de Math\'ematiques de Jussieu},
    VOLUME = {25},
      YEAR = {2026},
    NUMBER = {2},
     PAGES = {663--747},
      ISSN = {1474-7480,1475-3030},
   MRCLASS = {18N65 (18N40 18N45)},
  MRNUMBER = {5038573},
       DOI = {10.1017/S1474748025101436},
       URL = {https://doi-org.proxybmath.univ-lille.fr/10.1017/S1474748025101436},
}

@article {gagnaEquivalenceAllModels2022,
    AUTHOR = {Gagna, Andrea and Harpaz, Yonatan and Lanari, Edoardo},
     TITLE = {On the equivalence of all models for
              {$(\infty,2)$}-categories},
   JOURNAL = {J. Lond. Math. Soc. (2)},
  FJOURNAL = {Journal of the London Mathematical Society. Second Series},
    VOLUME = {106},
      YEAR = {2022},
    NUMBER = {3},
     PAGES = {1920--1982},
      ISSN = {0024-6107,1469-7750},
   MRCLASS = {18N65 (18C35 18N40 18N50 55U10 55U35)},
  MRNUMBER = {4498545},
MRREVIEWER = {Viktoriya\ Ozornova},
       DOI = {10.1112/jlms.12614},
       URL = {https://doi-org.proxybmath.univ-lille.fr/10.1112/jlms.12614},
}

@incollection {grothendieckdescent,
    AUTHOR = {Grothendieck, Alexander},
     TITLE = {Technique de descente et th\'eor\`emes d'existence en
              g\'eom\'etrie alg\'ebrique. {I}. {G}\'en\'eralit\'es.
              {D}escente par morphismes fid\`element plats},
 BOOKTITLE = {S\'eminaire {B}ourbaki, {V}ol. 5},
     PAGES = {Exp. No. 190, 299--327},
 PUBLISHER = {Soc. Math. France, Paris},
      YEAR = {1995},
      ISBN = {2-85629-038-8},
   MRCLASS = {14A15},
  MRNUMBER = {1603475},
}

@incollection {street1,
    AUTHOR = {Street, Ross},
     TITLE = {Fibrations and {Y}oneda's lemma in a {$2$}-category},
 BOOKTITLE = {Category {S}eminar ({P}roc. {S}em., {S}ydney, 1972/1973)},
    SERIES = {Lecture Notes in Math.},
    VOLUME = {Vol. 420},
     PAGES = {104--133},
 PUBLISHER = {Springer, Berlin-New York},
      YEAR = {1974},
   MRCLASS = {18D05},
  MRNUMBER = {396723},
MRREVIEWER = {A.\ Pultr},
}

@article {street2,
    AUTHOR = {Street, Ross},
     TITLE = {Fibrations in bicategories},
   JOURNAL = {Cahiers Topologie G\'eom. Diff\'erentielle},
  FJOURNAL = {Cahiers de Topologie et G\'eom\'etrie Diff\'erentielle},
    VOLUME = {21},
      YEAR = {1980},
    NUMBER = {2},
     PAGES = {111--160},
      ISSN = {0008-0004},
   MRCLASS = {18D05 (18D30)},
  MRNUMBER = {574662},
MRREVIEWER = {Timothy\ Porter},
}

@article {benabou,
    AUTHOR = {B\'enabou, Jean},
     TITLE = {Fibered categories and the foundations of naive category
              theory},
   JOURNAL = {J. Symbolic Logic},
  FJOURNAL = {The Journal of Symbolic Logic},
    VOLUME = {50},
      YEAR = {1985},
    NUMBER = {1},
     PAGES = {10--37},
      ISSN = {0022-4812,1943-5886},
   MRCLASS = {18A15 (03B30 03G30 18D30)},
  MRNUMBER = {780520},
MRREVIEWER = {Andreas\ Blass},
       DOI = {10.2307/2273784},
       URL = {https://doi.org/10.2307/2273784},
}

@incollection {graydescent,
    AUTHOR = {Gray, John W.},
     TITLE = {Fibred and cofibred categories},
 BOOKTITLE = {Proc. {C}onf. {C}ategorical {A}lgebra ({L}a {J}olla, {C}alif.,
              1965)},
     PAGES = {21--83},
 PUBLISHER = {Springer-Verlag New York, Inc., New York},
      YEAR = {1966},
   MRCLASS = {18.10 (14.00)},
  MRNUMBER = {213413},
MRREVIEWER = {A.\ Heller},
}

@article {giraud1,
    AUTHOR = {Giraud, Jean},
     TITLE = {M\'ethode de la descente},
   JOURNAL = {Bull. Soc. Math. France M\'em.},
  FJOURNAL = {Soci\'et\'e{} Math\'ematique de France. Bulletin. M\'emoire},
    VOLUME = {2},
      YEAR = {1964},
     PAGES = {viii+150},
      ISSN = {0583-8665},
   MRCLASS = {14.55},
  MRNUMBER = {190142},
MRREVIEWER = {R.\ Deheuvels},
}

@article {giraud2,
    AUTHOR = {Giraud, Jean},
     TITLE = {Cohomologie non ab\'elienne},
   JOURNAL = {C. R. Acad. Sci. Paris},
  FJOURNAL = {Comptes Rendus Hebdomadaires des S\'eances de l'Acad\'emie des
              Sciences},
    VOLUME = {260},
      YEAR = {1965},
     PAGES = {2666--2668},
      ISSN = {0001-4036},
   MRCLASS = {18.20 (14.55)},
  MRNUMBER = {201490},
MRREVIEWER = {R.\ Deheuvels},
}

@incollection {catlogic1,
    AUTHOR = {Gray, John W.},
     TITLE = {The categorical comprehension scheme},
 BOOKTITLE = {Category {T}heory, {H}omology {T}heory and their
              {A}pplications, {III} ({B}attelle {I}nstitute {C}onference,
              {S}eattle, {W}ash., 1968, {V}ol. {T}hree)},
    SERIES = {Lecture Notes in Math.},
    VOLUME = {No. 99},
     PAGES = {242--312},
 PUBLISHER = {Springer, Berlin-New York},
      YEAR = {1969},
   MRCLASS = {18.10},
  MRNUMBER = {249483},
MRREVIEWER = {D.\ Pumpl\"un},
}

@incollection {lawverelogic,
    AUTHOR = {Lawvere, F. William},
     TITLE = {Equality in hyperdoctrines and comprehension schema as an
              adjoint functor},
 BOOKTITLE = {Applications of {C}ategorical {A}lgebra ({P}roc. {S}ympos.
              {P}ure {M}ath., {V}ol. {XVII}, {N}ew {Y}ork, 1968)},
    SERIES = {Proc. Sympos. Pure Math.},
    VOLUME = {XVII},
     PAGES = {1--14},
 PUBLISHER = {Amer. Math. Soc., Providence, RI},
      YEAR = {1970},
   MRCLASS = {18.10},
  MRNUMBER = {257175},
MRREVIEWER = {H.\ Gonshor},
}

@incollection {catlogic2,
    AUTHOR = {Par\'e, Robert and Schumacher, Dietmar},
     TITLE = {Abstract families and the adjoint functor theorems},
 BOOKTITLE = {Indexed categories and their applications},
    SERIES = {Lecture Notes in Math.},
    VOLUME = {661},
     PAGES = {1--125},
 PUBLISHER = {Springer, Berlin-New York},
      YEAR = {1978},
      ISBN = {3-540-08914-4},
   MRCLASS = {18A40},
  MRNUMBER = {514193},
MRREVIEWER = {George\ D.\ Reynolds},
}

@book {typetheory,
    AUTHOR = {Jacobs, Bart},
     TITLE = {Categorical logic and type theory},
    SERIES = {Studies in Logic and the Foundations of Mathematics},
    VOLUME = {141},
 PUBLISHER = {North-Holland Publishing Co., Amsterdam},
      YEAR = {1999},
     PAGES = {xviii+760},
      ISBN = {0-444-50170-3},
   MRCLASS = {03G30 (03-02 03B15 03B40)},
  MRNUMBER = {1674451},
MRREVIEWER = {Andreas\ Blass},
}

@book{grothendieck,
  title={Theorie des Intersections et Theoreme de Riemann-Roch: Seminaire de Geometrie Algebrique du Bois Marie 1966/67 (SGA 6)},
  author={Grothendieck, A and Ferrand, D and Jouanolou, Jean-Pierre and Jussila, O and Kleiman, S and Raynaud, M and Serre, Jean-Pierre},
  year={2006},
  publisher={Springer}
}

@unpublished{grothcat1,
      title={Grothendieck construction for bicategories}, 
      author={Igor Bakovic},
      year={2009},
      url  = {http://www.irb.hr/korisnici/ibakovic/sgc.pdf},
      note = {\url{http://www.irb.hr/korisnici/ibakovic/sgc.pdf}},
}

@article {grothcat3,
    AUTHOR = {Hermida, Claudio},
     TITLE = {Some properties of {${\bf Fib}$} as a fibred {$2$}-category},
   JOURNAL = {J. Pure Appl. Algebra},
  FJOURNAL = {Journal of Pure and Applied Algebra},
    VOLUME = {134},
      YEAR = {1999},
    NUMBER = {1},
     PAGES = {83--109},
      ISSN = {0022-4049},
   MRCLASS = {18D20 (03B15 18A15)},
  MRNUMBER = {1665258},
MRREVIEWER = {Martin Hofmann},
       DOI = {10.1016/S0022-4049(97)00129-1},
       URL = {https://doi-org.proxy1.library.jhu.edu/10.1016/S0022-4049(97)00129-1},
}

@article {graycats,
    AUTHOR = {Gordon, R. and Power, A. J. and Street, Ross},
     TITLE = {Coherence for tricategories},
   JOURNAL = {Mem. Amer. Math. Soc.},
  FJOURNAL = {Memoirs of the American Mathematical Society},
    VOLUME = {117},
      YEAR = {1995},
    NUMBER = {558},
     PAGES = {vi+81},
      ISSN = {0065-9266,1947-6221},
   MRCLASS = {18D05},
  MRNUMBER = {1261589},
MRREVIEWER = {Kimmo\ I.\ Rosenthal},
       DOI = {10.1090/memo/0558},
       URL = {https://doi.org/10.1090/memo/0558},
}

@article {local1,
    AUTHOR = {Descotte, M. E. and Dubuc, E. J. and Szyld, M.},
     TITLE = {Model bicategories and their homotopy bicategories},
   JOURNAL = {Adv. Math.},
  FJOURNAL = {Advances in Mathematics},
    VOLUME = {404},
      YEAR = {2022},
     PAGES = {Paper No. 108455, 56},
      ISSN = {0001-8708,1090-2082},
   MRCLASS = {18N10 (18N45 55P60 55U40)},
  MRNUMBER = {4423809},
MRREVIEWER = {Niall\ Taggart},
       DOI = {10.1016/j.aim.2022.108455},
       URL = {https://doi.org/10.1016/j.aim.2022.108455},
}

@article {local3,
    AUTHOR = {Renaudin, Olivier},
     TITLE = {Plongement de certaines th\'eories homotopiques de {Q}uillen
              dans les d\'erivateurs},
   JOURNAL = {J. Pure Appl. Algebra},
  FJOURNAL = {Journal of Pure and Applied Algebra},
    VOLUME = {213},
      YEAR = {2009},
    NUMBER = {10},
     PAGES = {1916--1935},
      ISSN = {0022-4049,1873-1376},
   MRCLASS = {18G55 (55U35)},
  MRNUMBER = {2526867},
MRREVIEWER = {Philippe\ Gaucher},
       DOI = {10.1016/j.jpaa.2009.02.014},
       URL = {https://doi.org/10.1016/j.jpaa.2009.02.014},
}

@article {dwyerkan,
    AUTHOR = {Dwyer, W. G. and Kan, D. M.},
     TITLE = {Function complexes in homotopical algebra},
   JOURNAL = {Topology},
  FJOURNAL = {Topology. An International Journal of Mathematics},
    VOLUME = {19},
      YEAR = {1980},
    NUMBER = {4},
     PAGES = {427--440},
      ISSN = {0040-9383},
   MRCLASS = {55U35 (55U10)},
  MRNUMBER = {584566},
MRREVIEWER = {Edgar\ H.\ Brown, Jr.},
       DOI = {10.1016/0040-9383(80)90025-7},
       URL = {https://doi-org.ezp1.lib.umn.edu/10.1016/0040-9383(80)90025-7},
}
